\newcommand{\bZ}{\mathbb{Z}}
\newcommand{\bC}{\mathbb{C}}
\newcommand{\bR}{\mathbb{R}}
\newcommand{\mr}{\mathrm}
\newcommand{\ra}{\rightarrow}
\newcommand{\xra}{\xrightarrow}
\newcommand{\dd}{\partial}
\newcommand{\be}{\begin{equation}}
\newcommand{\ee}{\end{equation}}
\newcommand{\bt}{\bullet}
\newcommand{\tr}{\mathrm{tr}\;}
\newcommand{\hra}{\hookrightarrow}
\newcommand{\proj}{\twoheadrightarrow}
\newcommand{\mleft}{\left(\begin{array}}
\newcommand{\mright}{\end{array}\right)}
\newcommand{\Det}{\mathrm{Det}}
\newcommand{\bT}{\mathbb{T}}
\newcommand{\Dens}{\mathrm{Dens}}
\newcommand{\rk}{\mathrm{rk}\;}
\newcommand{\mc}{\mathcal}
\newcommand{\E}{\mathcal{E}}
\newcommand{\F}{\mathcal{F}}
\newcommand{\EL}{\mathcal{EL}}
\newcommand{\LL}{\mathcal{L}}
\newcommand{\ii}{\mathbf{i}}
\newcommand{\B}{\mathcal{B}}
\newcommand{\HH}{\mathcal{H}}
\newcommand{\pp}{\mathbf{p}}
\newcommand{\K}{\mathbf{K}}
\newcommand{\din}{\mathrm{in}}
\newcommand{\dout}{\mathrm{out}}
\newcommand{\til}{\widetilde}
\newcommand{\Sym}{\mathrm{Sym}\,}
\newcommand{\Azm}{A_\mathrm{res}}
\newcommand{\Bzm}{B_\mathrm{res}}
\newcommand{\Afluct}{A_\mathrm{fluct}}
\newcommand{\Bfluct}{B_\mathrm{fluct}}
\newcommand{\fluct}{\mathrm{fluct}}
\newcommand{\Aout}{A_\mathrm{out}}
\newcommand{\Bin}{B_\mathrm{in}}
\newcommand{\red}{\mathrm{r}}
\newcommand{\can}{\mathrm{can}}
\newcommand{\wavy}[1]{\stackrel{#1}{\rightsquigarrow}}
\newcommand{\D}{\mathcal{D}}
\newcommand{\rel}{\mathrm{rel}}
\newcommand{\zm}{\mathrm{res}}
\newcommand{\gzm}{\mathrm{gres}}
\newcommand{\gl}{\mathrm{g}}
\newcommand{\comp}{\mathrm{c}}
\newcommand{\bulk}{\mathrm{bulk}}
\newcommand{\cob}[1]{\xLongrightarrow{#1}}
\newcommand{\Ppol}{\mathcal{P}}
\newcommand{\Qpol}{\mathcal{Q}}
\newcommand{\Fun}{\mathrm{Fun}}
\newcommand{\ddelta}{\mathfrak{D}}
\newcommand{\zeromodes}{residual fields}
\newcommand{\KK}{\mathbb{K}}
\newcommand{\g}{\mathfrak{g}}
\newcommand{\sA}{\mathsf{A}}
\newcommand{\sB}{\mathsf{B}}
\newcommand{\sF}{\mathsf{F}}
\newcommand{\sfi}{\mathsf{i}}
\newcommand{\sfp}{\mathsf{p}}
\newcommand{\sfK}{\mathsf{K}}
\newcommand{\wed}{\ast}
\newcommand{\DDelta}{\mathbf{\Delta}}
\newcommand{\HDens}{\mathrm{Dens}^{\frac12}}
\newcommand{\lan}{\left\langle}
\newcommand{\ran}{\right\rangle}
\newcommand{\ola}{\overleftarrow}
\newcommand{\ora}{\overrightarrow}
\newcommand{\Ad}{\mathrm{Ad}}
\newcommand{\sd}{\mathsf{d}}
\newcommand{\EE}{\mathcal{E}}
\newcommand{\R}{\mathcal{R}}
\newcommand{\mf}{\mathfrak}
\newcommand{\nn}{n}
\newcommand{\n}{n}
\newcommand{\nnn}{k}
\newtheorem{theorem}{Theorem}[section]
\newtheorem{Proposition}[theorem]{Proposition}
\newtheorem{lemma}[theorem]{Lemma}
\newtheorem{corollary}[theorem]{Corollary}
\newtheorem{example}[theorem]{Example}
\newtheorem{remark}[theorem]{Remark}
\newtheorem{definition}[theorem]{Definition}
\newtheorem{assumption}[theorem]{Assumption}
\begin{document}
\title{
A cellular topological field theory
}

\author{Alberto S. Cattaneo}
\address{Institut f\"ur Mathematik,
Universit\"at Z\"urich,
Winterthurerstrasse 190,
CH-8057, Z\"urich, Switzerland}

\email{
cattaneo @math.uzh.ch
}

\author{Pavel Mnev}
\address{
University of Notre Dame, Notre Dame, Indiana 46556, USA
}
\address{
St. Petersburg Department of V. A. Steklov Institute of Mathematics of the Russian Academy of Sciences, Fontanka 27, St. Petersburg, 191023 Russia}
\email{
pmnev @nd.edu
}

\author{Nicolai Reshetikhin}
\address{Department of Mathematics,
University of California, Berkeley
California 94305,
USA}
\address{
ITMO University. Saint Petersburg 197101, Russia}
\address{
KdV Institute for Mathematics, University of Amsterdam, 1098 XH Amsterdam, The
Netherlands
}
\email{reshetik @math.berkeley.edu}

\thanks{This research was (partly) supported by the NCCR SwissMAP, funded by the Swiss National Science Foundation, and by the COST Action MP1405 QSPACE, supported by COST (European Cooperation in Science and Technology). P. M. acknowledges partial support of RFBR Grant No. 17-01-00283a}

\date{\today}

\maketitle

\begin{abstract}
We present a construction of cellular $BF$ theory (in both abelian and non-abelian variants) on cobordisms equipped with cellular decompositions. Partition functions of this theory are invariant under subdivisions,  satisfy  a version of the quantum master equation,
and satisfy Atiyah-Segal-type gluing formula with respect to composition of cobordisms.
\end{abstract}

\tableofcontents

\section{Introduction}

In this paper we present a combinatorial model of a topological field theory on  cobordisms endowed with a cellular decomposition and a  local system $E$, where the fields are modelled on cellular cochains. The model is compatible with composition (concatentation) of cobordisms.  In the limit of a dense cellular decomposition (with mesh going to zero), our combinatorial model converges, in an appropriate sense (for details, see Section \ref{sss: cont limit}), to the topological $BF$ theory in the Batalin--Vilkovisky (BV) formalism. Cellular cochains in this context 
arise as a combinatorial replacement of differential forms -- the fields of the continuum model.

Quantization of this model is given by well-defined finite-dimensional integrals (which replace in this context the functional integral of quantum field theory). The model  is formulated in the Batalin--Vilkovisky formalism (or rather its extension, ``BV-BFV formalism'' \cite{CMR,CMRpert}, for manifolds with boundary, which is compatible with gluing/cutting).\footnote{We will give a short, working-knowledge introduction to the BV and the BV-BFV formalisms in this paper, but the reader is referred to the literature, especially \cite{CMRpert}, for more details.}
 The construction of quantization depends on a choice of retraction of cellular cochains onto cohomology (in particular, a choice of chain homotopy); this retraction represents the data of gauge-fixing in this context. The space of choices is contractible.

The result of quantization  is a cocycle (the partition function) in a certain cochain complex, constructed as a tensor product of a complex associated to the boundary (the space of states) and a complex associated to the ``bulk'' -- the cobordism itself (half-densities on the space of ``residual fields'' modelled on cellular cohomology). The partition function satisfies a gluing rule (a variant of Atiyah-Segal gluing axiom of quantum field theory)
with respect to concatenation of cobordisms and, when considered modulo coboundaries, is independent of the cellular decomposition of the cobordism. 
The cocycle property of the partition function is a variant of the Batalin-Vilkovisky quantum master equation modified for the presence of boundary. Changing the choices involved in quantization changes the partition function by a coboundary. 

The model presented in this paper is, on one side, an explicit example of the BV-BFV framework  for quantization of gauge theories in a way compatible with cutting-pasting of the spacetime manifolds, developed by the authors in \cite{CMR,CMRpert} (a short survey of the BV-BFV programme can be found in \cite{CMRsurvey}). On the other side, it is a development of the work \cite{SimpBF, DiscrBF} and provides a replacement for the path integral in a topological field theory by a coherent (w.r.t.\ aggregations) system of cellular models, in such a way that each of them can be used to calculate the partition function as a finite-dimensional integral (exactly, i.e., without having to pass to a limit of dense refinement).\footnote{
Besides casting the model into the BV-BFV setting, with cobordisms and Segal-like gluing,
some of the important advancements 
over \cite{SimpBF,DiscrBF} are the following: 
general regular CW complexes are allowed (as opposed to simplicial and cubic complexes); the new construction of the cellular action which is intrinsically finite-dimensional and in particular does not use regularized infinite-dimensional super-traces; a systematic, intrinsically finite-dimensional, treatment of the behavior w.r.t. moves of CW complexes -- elementary collapses and cellular aggregations; understanding the constant part of the partition function (leading to the contribution of the Reidemeister torsion and the $\bmod\,16$ phase); incorporating the twist by a  nontrivial local system.
} 

We present both the abelian and the non-abelian versions of the model. In the abelian version, when defined on a closed manifold endowed with an acyclic local system, 
the partition function  is the Reidemeister torsion. For a non-acyclic local system, one gets the Reidemeister torsion (which is, in this case, not a number, but an element of the determinant line of cellular cohomology, defined modulo sign), up to a factor depending on Betti numbers and containing a $\bmod\; 16$ complex phase.  

In the non-abelian case, the model depends on the choice of a unimodular Lie algebra $\g$ of coefficients. The action of the model is constructed in terms of local unimodular $L_\infty$ algebras defined on $\g$-valued cochains on closures of individual cells.\footnote{
In our presentation of this result (Theorem \ref{thm: cellBF}), the local unimodular $L_\infty$ algebras are packaged into generating functions -- the local building blocks $\bar{S}_e$ for the cellular action.
To be precise, the sum of building blocks $\bar{S}_{e'}$ over all cells $e'$ belonging to the closure of the given cell $e$ is the generating function for the operations (structure constants) of the local unimodular $L_\infty$ algebra assigned to $e$, in the sense of 
Section \ref{sec: uL_infty} and (\ref{BF_infty ansatz for S_cell}).
} On $0$-cells these local unimodular $L_\infty$ algebras coincide with $\g$;   on higher-dimensional cells 
they are constructed by induction in skeleta. Each step of this induction is an inductive construction in its own right where one starts with the algebra for the boundary of the cell $\dd e$ and extends it by a piece corresponding to the component of the cellular differential mapping $\dd e$ to $e$ and then continues to add higher operations to correct for the error in the coherence relations of the algebra.  This is an inductive  construction by obstruction theory which has a solution which is explicit once certain choice of local chain homotopies is made. Moreover, the space of choices involved is contractible and two different cellular actions are ``homotopic'' in the appropriate sense (i.e., related by a canonical BV transformation). Operations of the local unimodular $L_\infty$ algebras for cells are expressed in terms of nested commutators, traces in $\g$ and certain interesting structure constants which can be made rational (with a good choice of local chain homotopies). For example, for $1$-cells these constants are expressed in terms of Bernoulli numbers. 

The non-abelian partition function for a closed manifold with cellular decomposition 
is expressed in terms of the Reidemeister torsion, the $\bmod\; 16$ phase, and a sum of Feynman diagrams.
The latter 
 encode  the data of the induced unimodular $L_\infty$ algebra structure on the cohomology of the manifold. The classical $L_\infty$ part of this algebra contains the Massey brackets (also known as Massey-Lie brackets). on cohomology and is, in case of a simply connected manifold, a complete invariant of the rational homotopy type of the manifold. Also, this $L_\infty$ algebra yields 
a deformation-theoretic description of the formal neighborhood of the (possibly, singular) point corresponding to the local system 
$E$ on the moduli space of 
local systems (in non-abelian case $E$ is interpreted as a choice of background flat bundle around which the theory is perturbatively quantized). The quantum part of the partition function (corresponding to the ``unimodular'' or ``quantum'' operations of the algebraic structure  on cohomology) is related to the behavior of the Reidemeister torsion in the neighborhood of $E$ on the moduli space of local systems. In the case of a cobordism, we have a version of this structure relative to the boundary and compatible with concatenation of cobordisms. The space of states associated to a boundary component in the non-abelian case is the same as in the abelian case as a graded vector space but with a more complicated differential. The cohomology of this differential is the Chevalley-Eilenberg cohomology of the 
$L_\infty$ algebra structure on the cohomology of the boundary and thus
 is an invariant of the rational homotopy type of the boundary.
 
The non-abelian actions assigned to CW complexes, when considered modulo canonical BV transformations, are compatible with local moves of CW complexes -- cellular aggregations (inverses of subdivisions) and Whitehead's elementary collapses (which, together with their inverses, elementary expansions, generate the simple-homotopy equivalence of CW complexes). Both moves -- an aggregation and a collapse are represented by 
a fiber BV integral (a.k.a.\  BV pushforward) along the corresponding fibration of fields on a bigger complex over fields on a smaller complex.  Expansions and  collapses are the more fundamental moves (aggregations can be decomposed as expansions and collapses) but generally do not preserve the property of CW complexes to correspond to manifolds. In fact, we consider two versions of the non-abelian theory: 
\begin{enumerate}[I]
\item The ``canonical" version -- Section \ref{sec: non-ab BF I}. Here the fields are a cochain and a chain of the same CW complex $X$ which is not required to be a manifold. The cellular actions are compatible with elementary collapses (Lemma \ref{lemma: collapse}) and the partition function, defined via BV pushforward to cohomology,  is  a simple-homotopy invariant (Proposition \ref{prop: non-ab BF I simple homotopy invariance}). Here one has a version of Mayer-Vietoris gluing formula for cellular actions (see (\ref{simpBF (vi)}) of Theorem \ref{thm: simpBF}), which  is not of Segal type, since one of the fields has ``wrong''  (covariant) functoriality.
\item The version on cobordisms (of a fixed dimension) -- Section \ref{sec: non-ab BF cobordism}, with Segal-type gluing formula w.r.t. concatenation of cobordisms. Here the fields are a pair of cochains of $X$ and the dual complex $X^\vee$, and $X$ is required to be a cellular decomposition of a cobordism. In this picture one does not have elementary expansions and collapses on the nose, but one has cellular aggregations, and one can prove the compatibility of the theory w.r.t.\ aggregations by temporarily passing to the canonical version and presenting the aggregation via expansions and collapses (Proposition \ref{prop: aggregations}, (\ref{prop 8.15 (iii)}) of Theorem \ref{prop 8.15}).
\end{enumerate}
One can regard the passage from more dense to more sparse cellular decompositions via BV pushforwards as a version of Wilson's renormalization group flow, passing from a higher energy effective theory to a lower energy effective theory.

It is important to note two (related) features that set the cellular model apart from continuum field theories in the BV-BFV formalism and could be regarded as artifacts of discretization:
\begin{itemize}
\item The polarization of the space of phase spaces (a.k.a.\ spaces of boundary fields) assigned to the boundaries is built into the theory on a cobordism already at the classical level, 
via the convention for the Poincar\'e dual of the cellular decomposition (and thus is built into the definition of the space of classical fields (\ref{cell BF fields on a cobordism})).\footnote{In this paper we use the convention that the polarization is linked to the designation of boundaries as in/out. Thus we link out-boundaries to ``$A$-polarization'' and in-boundaries with ``$B$-polarization''. This convention is entirely optional. On the other hand, the link between polarization (\ref{polarization}) and the notion of the dual CW complex (Section \ref{sec: Lefschetz for cell decomp, cobordism}) is essential for the construction. } This is different from the usual situation \cite{CMRpert} where one chooses the polarization at a later stage, as a datum necessary for quantization.
\item The BV 2-form on the space of fields is degenerate in presence of the boundary, i.e., it is a ($-1$-shifted) pre-symplectic structure, rather than a symplectic one. However, once restricted to the subspace of fields satisfying an admissible boundary condition (as determined by polarization of boundary phase spaces), the BV 2-form becomes non-degenerate. This property hinges on the link between the convention for the Poincar\'e dual complex and the polarization stressed above. As a consequence, BV integrals make sense fiberwise, in the family  over the space $\mc{B}_\dd$ parameterizing the admissible boundary conditions.
\end{itemize}

Throughout the paper we use the language of perturbative integrals (i.e., of stationary phase asymptotic formula for oscillating integrals) and we use the ``Planck constant'' $\hbar$ as the conventional bookkeeping formal parameter 
controlling the frequency of oscillations. However, one can always choose $\hbar$ to be a finite real number instead: by the virtue of the model at hand, we do not encounter series in $\hbar$ of zero convergence radius, as would be usual for stationary phase asymptotics (in fact, in $BF$ theory one does not encounter Feynman diagrams with more than one loop, so the typical power series in $\hbar$ we see in the paper truncate at the order $\mc{O}(\hbar^1)$).

\subsection{Main results}

\subsubsection{Abelian cellular $BF$ theory on cobordisms (Sections \ref{sec: closed}--\ref{sec: quantum cell ab BF on mfd with bdry}) } \label{sec: intro main results abelian}
\begin{enumerate}[I.]
\item \textbf{Classical abelian $BF$ theory on a cobordism endowed with cellular decomposition.} 
For $M$ an $\nn$-dimensional cobordism  between closed $(\nn-1)$-manifolds $M_\din, M_\dout$, endowed with a cellular decomposition $X$ and a coefficient local system (flat bundle) $E$ of rank $m$ with holonomies of determinant $\pm 1$ ($E$ plays the role of an external parameter -- the twist of the model), we construct (Section \ref{sec: classical cell ab BF on mfd with bdry}; the case of $M$ closed is considered as a warm-up in Section \ref{sec: classical closed}) a field theory in BV-BFV formalism with the following data.
\begin{itemize}
\item The space of fields assigned to the cobordism $(M,X)$ is 
$\F=C^\bt(X,E)[1]\oplus C^\bt(X^\vee,E^*)[\nn-2]$. Here $X^\vee$ is the dual cellular decomposition to $X$ (see Section \ref{sec: reminder on Poincare duality} for details on the cellular dual  for a cellular decomposition of a cobordism).
\item The BV action is $S=\lan B,d_EA \ran + \lan B,A \ran_\din $ where $(A,B)\in \F$ is the cellular field.
\item The BV 2-form on fields is induced from chain level Poincar\'e duality.
\item The cohomological vector field $Q$ is the sum of lifts of cellular coboundary operators on CW complexes $X$ and $X^\vee$, twisted by the local system, to vector fields on the space of fields.
\item The boundary of $(M,X)$ gets assigned the space of boundary fields (or the ``phase space'') $\F_\dd=C^\bt(X_\dd,E)[1]\oplus C^\bt(X_\dd^\vee,E^*)[\nn-2]$ (naturally split into in-boundary-fields and out-boundary fields) which carries:
\begin{itemize}
\item A degree zero symplectic structure (induced from chain level Poincar\'e duality on the boundary), with a preferred primitive $1$-form $\alpha_\dd=\lan B,\delta A \ran_\dout - \lan \delta B, A \ran_\din$ (i.e., it distinguishes between in- and out-boundaries). 
\item A natural projection of bulk fields onto boundary fields (pullback by the geometric inclusion of the boundary) $\pi: \F\ra \F_\dd$. 
\item The cohomological vector field $Q_\dd$ on $\F_\dd$ which is constructed analogously to the bulk -- as a lift of the cellular coboundary operator (on the boundary of the cobordism). This vector field has a degree $1$ Hamiltonian $S_\dd=\lan B, dA \ran_\dd$.
\end{itemize}
\end{itemize}
We prove that this set of data satisfies the structural relations of a classical BV-BFV theory \cite{CMR} -- Proposition \ref{lemma: i_Q=dS+alpha_bdry}. Concatenation of cobordisms here maps to fiber product of the corresponding BV-BFV packages -- Section \ref{sec: class gluing}.
\item \textbf{Quantization on a closed manifold.} In Section \ref{sec: quantization closed} we construct the finite-dimensional ``functional'' integral quantization of the abelian cellular theory on a closed $\nn$-manifold. The partition function $Z$ of the theory is defined as a BV pushforward (fiber BV integral) of the exponential of the cellular action from $\F$ to residual fields modelled on cohomology, $\F^\zm=H^\bt(M,E)[1]\oplus H^\bt(M,E^*)[\nn-2]$. 

Gauge-fixing data for the BV pushforward -- the splitting of fields into residual fields plus the complement and a choice of a Lagrangian subspace in the complement -- is inferred from a choice of ``induction data'' or ``retraction'' (see Section \ref{sec: HPT}) of cellular cochains onto cohomology (i.e. a choice of cellular representatives of cohomology classes, a choice of a projection onto cohomology and a chain homotopy between the identity and projection to cohomology -- the latter plays the role of the propagator in the theory, cf. Remark \ref{rem: propagator, closed case}).

We compute the partition function (Proposition \ref{prop: Z closed})\footnote{Partition functions are defined up to sign for the purposes of this paper, so that we don't need to keep track of orientations on the spaces of fields and gauge-fixing Lagrangians.} to be
$$Z= \xi_\hbar^{H^\bt}\cdot \tau(M,E) \quad \in \mathbb{C}\otimes \mr{Det}\, H^\bt(M,E)/\{\pm 1\}$$
where: 
\begin{itemize}
\item $\tau(M,E)$ is the Reidemeister torsion,
\item $\xi_\hbar^{H^\bt}$ is a complex coefficient depeneding on the Betti numbers of $M$ (twisted by the local system $E$): $\xi_\hbar^{H^\bt}=\prod_{k=0}^\nn (\xi_\hbar^k)^{\dim H^k(M,E)}$ with 
$$\xi_\hbar^k = (2\pi \hbar)^{-\frac14+ \frac12 k (-1)^{k-1}} (e^{-\frac{\pi i}{2}}\hbar)^{\frac14+ \frac12 k (-1)^{k-1}} $$
\end{itemize}
In particular, $Z$ depends only on the topology of $M$ and not on the cellular decomposition $X$. Note that $Z$ contains, via the factor $\xi_\hbar^{H^\bt}$, a $\bmod\,16$ complex phase. 

The mechanism that leads to the factor $\xi_\hbar^{H^\bt}$ (discussed in detail in Section \ref{sec: normalization}) is that, in order to have a partition function independent of $X$, we need to scale the reference half-density on the space of fields (playing the role of the  ``functional integral measure'' in the context of cellular theory) in a particular way -- it differs from the standard cellular half-density by a product over cells $e$ of $X$ of local factors $(\xi_\hbar^{\dim e})^m$ depending only on dimensions of cells. This a baby version of renormalization in the cellular theory and it leads to a partition function containing the factor $\xi_\hbar^{H^\bt}$.
\item \textbf{Quantization on a cobordism.} In Section \ref{sec: quantum cell ab BF on mfd with bdry}, we construct the quantum BV-BFV theory on a cobordism $M$ endowed with cellular decomposition $X$ by quantizing the classical cellular theory via BV pushforward to residual fields, in a family over $\mc{B}_\dd=C^\bt(X_\dout,E)[1]\oplus C^\bt(X^\vee_\din,E^*)[\nn-2] = \B_\dout\oplus \B_\din$ -- the base of a Lagrangian fibration $p:\F_\dd\ra \mc{B}_\dd$ of the boundary phase space determining the quantization of the boundary. 

The resulting quantum theory is the following assignment:
\begin{itemize}
\item To the out-boundary, the theory assigns the space $\HH_\dout^{(A)}$ of half-densities on $\B_\dout$ which is a cochain complex with the differential (the ``quantum BFV operator'', arising as the geometric quantization of the Hamiltonian for the boundary cohomological vector field) $\widehat{S}_\dout=-i\hbar \lan d_E A_\dout , \frac{\dd}{\dd A_\dout} \ran$. Likewise to the in-boundary, the theory assigns the space $\HH_\din^{(B)}$ of half-densities on $\B_\din$ with the differential $\widehat{S}_\din=-i\hbar \lan d_E B_\din , \frac{\dd}{\dd B_\din} \ran$.\footnote{Superscripts pertain to the polarization $p: \F_\dd\ra\B_\dd$ (field $A$ fixed on the out-boundary and field $B$ fixed on the in-boundary) used to quantize the in/out-boundary.}
\item To the bulk (the cobordism itself), the theory assigns: 
\begin{itemize}
\item  The space of residual fields built out of cohomology relative to in/out boundary, $\F^\zm=H^\bt(M,M_\dout;E)[1]\oplus H^\bt(M,M_\din;E^*)[\nn-2]$.
\item The partition function 
$$Z=(\mu^\hbar_{\B_\dd})^{\frac12}\cdot \xi_\hbar^{H^\bt}\cdot \tau(M,M_\dout;E)\cdot e^{\frac{i}{\hbar} (\lan B_\zm,A_\dout \ran_\dout +\lan B_\din, A_\zm \ran_\din-\lan B_\din , \mathbf{K} A_\dout \ran_\din)}$$
(see Proposition \ref{prop: Z}). The partiton function is an element of the space of states for the boundary tensored with half-densities of residual fields. Here the coefficient $\xi_\hbar^{H^\bt}\in \mathbb{C}$ is as in closed case, but defined using Betti numbers for cohomology relative to the out-boundary; $(\mu^\hbar_{\B_\dd})^{\frac12}$ is an appropriately normalized half-density on $\B_\dd$; $\K$ is a the chain homotopy part of the retraction of cochains of $X$ relative to the out-boundary to the cohomology relative to the out-boundary (which, as in the closed case, plays the role of gauge-fixing data).
\end{itemize}
\end{itemize}

This data satisfies the following properties.
\begin{enumerate}[(i)]
\item\label{intro (i)} \textbf{Modified quantum master equation} ((\ref{prop: Z (ii)}) of Proposition \ref{prop: Z}): the partition function satisfies the quantum master equation modified by a boundary term:
$$\left(\frac{i}{\hbar}\widehat{S}_\dd-i\hbar \Delta_\zm\right)Z=0$$
\item\label{intro (ii)} \textbf{Dependence on gauge-fixing choices} ((\ref{prop: Z (iii)}) of Proposition \ref{prop: Z}): change of the gauge-fixing data (the retraction of relative cochains onto cohomology) induces a change of partition function of the form
$$Z \mapsto Z+ \left(\frac{i}{\hbar}\widehat{S}_\dd-i\hbar \Delta_\zm\right)(\cdots)$$
\item\label{intro (iii)} \textbf{Gluing property} (Proposition \ref{prop: gluing}): partition function on a concatentation of two cobordisms can be calculated from the partition function on the two constituent cobordisms by first pairing the states in the gluing interface, and then evaluating the BV pushforward to the residual fields for the glued cobordism.
\item\label{intro (iv)} \textbf{``Topological property''}: the partition function function considered modulo $\left(\frac{i}{\hbar}\widehat{S}_\dd-i\hbar \Delta_\zm\right)$-exact terms is independent of changes of the cellular decomposition $X$ of the cobordism $M$, assuming that the cellular decomposition of the boundary is kept fixed.
\end{enumerate}
Here the first three properties 
are the axioms of a quantum BV-BFV theory \cite{CMRpert,CMRsurvey}, and the last one is a manifestation of the quantum field theory being topological. 

The ``topological property'' 
can be improved by passing to the cohomology of the space of states (Section \ref{sec: reduced space of states}): this cohomology (the ``reduced space of states'') is independent of $X_\dd$ and the corresponding reduced partition function satisfies the BV-BFV axioms above and is completely independent of the cellular decomposition $X$ (i.e., one does not have to fix the decomposition of the boundary).

\end{enumerate}

\subsubsection{``Canonical'' non-abelian 
$BF$ theory on CW complexes (Section \ref{sec: non-ab BF I})}
\begin{enumerate}[I.]
\item \textbf{Non-abelian cellular action: existence/uniqueness result.} Theorem \ref{thm: cellBF}: For $X$ a finite regular CW complex and $\g$ a unimodular Lie algebra, we prove, in a constructive way, that there exists 
a BV action $S_X$ on the space of fields modelled on cellular cochains and chains $\sF_X=C^\bt(X)\otimes\g[1]\oplus C_\bt(X)\otimes\g^*[-2]$, satisfying the following properties:
\begin{itemize}
\item $S_X$ satisfies the Batalin-Vilkovisky quantum master equation $\frac12\{S_X,S_X\}-i\hbar\Delta S_X=0$ or, equivalently, $\Delta e^{\frac{i}{\hbar}S_X}=0$. Here $\{,\}$ and $\Delta$ are the odd-Poisson bracket and the BV Laplacian on functions on $\sF_X$ induced from canonical pairing of cochains with chains.
\item The action $S_X$ has the form $S_X=\sum_{e\subset X}\bar{S}_e$; i.e., $S_X$
 is given as the sum over all cells $e$ of $X$ of certain local building blocks $\bar{S}_e$  depending only on the fields restricted to the closure $\bar{e}$ of the cell $e$. The local building blocks satisfy the following ansatz:
 \begin{multline*} 
\bar{S}_{e}
=\sum_{n=1}^\infty\sum_{\Gamma_0}\sum_{e_1,\ldots,e_n\subset \bar{e}}\frac{1}{|\mr{Aut}(\Gamma_0)|} C^e_{\Gamma_0,e_1,\ldots,e_n}
\left\langle \sB_{e},\mr{Jacobi}_{\,\Gamma_0}(\sA_{e_1},\ldots,\sA_{e_n})\right\rangle_\g-\\
-i\hbar \sum_{n= 2}^\infty \sum_{\Gamma_1} 
\sum_{e_1,\ldots,e_n\subset \bar{e}}\frac{1}{|\mr{Aut}(\Gamma_1)|} C^e_{\Gamma_1,e_1,\ldots,e_n}
\mr{Jacobi}_{\,\Gamma_1}(\sA_{e_1},\ldots,\sA_{e_n})
\end{multline*}
Here $\sA_{e},\sB_e$ are the cochain and chain field (valued in $\g$ and $\g^*$, respectively), evaluated on the cell $e$. In the sum above, $\Gamma_0$ runs over binary rooted trees with $n$ leaves, which we decorate with the $n$-tuple of faces (of arbitrary codimension) $e_1,\ldots,e_n\subset \bar{e}$; likewise $\Gamma_1$ runs over oriented connected graphs with one cycle with $n$ incoming leaves and all internal vertices having incoming/outgoing valency $(2,1)$. $\mr{Jacobi}_\Gamma(\cdots)$ is, for $\Gamma=\Gamma_0$ a binary rooted tree, a nested commutator of elements of $\g$, as prescribed by the tree combinatorics. For $\Gamma=\Gamma_1$ a 1-loop graph, it is the trace of an endomorphism of $\g$ given as a nested commutator with one of the slots kept as the input of the endomorphism and other slots populated by fields $\sA_{e_i}$. $C_{\Gamma,e_1,\ldots,e_n}^e$ are some structure constants (i.e. the theorem is that they can be constructed in such a way that the quantum master equation holds for $S_X$).
\item We have two ``initial conditions'':\footnote{The role of these two conditions is to exclude trivial solutions to quantum master equation, e.g. $S_X=0$, and also to have uniqueness up to homotopy 
for solutions satisfying the stated properties.} 
\begin{itemize}
\item $S_X$ is given as the ``abelian (canonical) action'' $\lan \sB_X, d \sA_X \ran$ plus higher order corrections in fields.
\item For $e$ a $0$-cell, the building block encodes the data of the Lie algebra structure on the space of coefficients $\g$: $\bar{S}_e=\lan \sB_e ,\frac12 [\sA_e,\sA_e]\ran$.
\end{itemize}
\end{itemize}

This existence theorem is supplemented by a uniqueness up to homotopy statement (i.e. up to canonical transformations of solutions of the quantum master equation;\footnote{
One says that two solutions of the quantum master equation $S_0$ and $S_1$ are related by a canonical BV transformation (or ``homotopy'') if they can be connected by a family of solutions $S_t$ such that $\frac{d}{dt} S_t=\{S_t,R_t\}-i\hbar R_t$ with $R_t$ a degree $-1$  ``generator.'' This definition implies that $\frac{d}{dt}e^{\frac{i}{\hbar}S_t}=\Delta \left( e^{\frac{i}{\hbar}S_t} R_t\right)$ and hence $\Delta$-closed exponentials $e^{\frac{i}{\hbar}S_1}$ and $e^{\frac{i}{\hbar}S_0}$ differ by a $\Delta$-exact term.
}
 in this case, the generator of the canonical transformation turns out to satisfy the same ansatz as $S_X$ above) -- Lemma \ref{lemma: cellBF well-definedness}.

The local building blocks $\bar{S}_e$ can be chosen universally, uniformly for all CW complexes $X$, so that they 
depend only on the combinatorics of the closure of the cell $e$ and not on the rest of the combinatorial data of $X$ (Remark \ref{rem: standard building blocks}).

Structure constants $C_{\Gamma,e_1,\ldots,e_n}^e$ occurring in the local building blocks can be chosen to be rational by making a good choice in the construction of the Theorem \ref{thm: cellBF}.

\item \textbf{Compatibility with local moves of CW complexes.} Cellular actions, when considered up to canonical BV transformations, are compatible with Whitehead elementary collapses of CW compexes and with cellular aggregations. More precisely, if $X,Y$ are CW complexes and $X$ is an elementary collapse of $Y$, the BV pushforward of $S_Y$ from cellular fields on $Y$ to cellular fields on $X$ is a canonical transformation of $S_X$ (Lemma \ref{lemma: collapse}). Likewise, if $X$ is a cellular aggregation of $Y$ (i.e., $Y$ is a subdivision of $X$), the same holds (Proposition \ref{prop: aggregations}). 

As a corollary of compatibility of cellular actions with elementary collapses, the partition function, defined as the BV pushforward to cohomology (more precisely, to $\sF^\zm=H^\bt(X)\otimes\g[1]\oplus H_\bt(X)\otimes \g^*[-2]$), which is a $\Delta$-cocycle as a consequence of Theorem \ref{thm: cellBF},  is invariant under simple-homotopy equivalence of CW complexes if considered modulo $\Delta$-coboundaries (Proposition \ref{prop: non-ab BF I simple homotopy invariance}). 
\end{enumerate}

\subsubsection{Non-abelian cellular $BF$ theory in BV-BFV setting (Section \ref{sec: non-ab BF cobordism})}


We combine the results of Sections \ref{sec: classical cell ab BF on mfd with bdry}-\ref{sec: quantum cell ab BF on mfd with bdry} and Section \ref{sec: non-ab BF I} to construct cellular non-abelian $BF$ theory on $\nn$-cobordisms 
in BV-BFV formalism. 
\begin{enumerate}[I.]
\item \textbf{Classical non-abelian theory on a cobordism (BV-BFV setting).} We fix a unimodular Lie algebra $\g$ corresponding to a Lie group $G$. For $M$ an $\nn$-cobordism with a cellular decomposition $X$ and a $G$-local system $E$, we construct the space of fields $\F$, space of boundary fields $\F_\dd$,  the BV 2-form on $\F$, the symplectic form on $\F_\dd$ together with the primitive $\alpha_\dd$ exactly as in the abelian case. 
The action $S$, cohomological vector field $Q$ and their boundary counterparts $S_\dd$, $Q_\dd$ are constructed in terms of the local building blocks $\bar{S}_e$ (or, equivalently, local unimodular $L_\infty$ algebras) assigned by the construction of Theorem \ref{thm: cellBF} to cells $e$ of $X$, see  (\ref{S cell non-ab}), (\ref{Q non-ab cob}), (\ref{S bdry non-ab}). Moreover, in this setting  the pullback map of bulk fields to the boundary is deformed to a nontrivial $L_\infty$ morphism (\ref{Pi^*}). This set of data satisfies the axioms of a classical BV-BFV theory (Proposition \ref{prop: non-ab class BV-BFV}).
\item \textbf{Quantization.} The quantization (Section \ref{sec: non-ab quantization cob}) is constructed along the same lines as in the abelian case -- as a geometric (canonical) quantization on the boundary and a finite-dimensional BV pushforward in the bulk. The resulting spaces of states assigned to the in/out-boundaries are same as in the abelian case as graded vector spaces but carry nontrivial differentials (\ref{Shat out}), (\ref{Shat in}) deforming the differentials arising in the abelian case. Residual fields on a cobordism are same as in the abelian case, while the partition function is more involved -- we develop the corresponding Feynman diagram expansion in Proposition \ref{prop: Z non-ab}. As in the abelian theory, this set of data satisfies the properties (i)--(iv) of Section \ref{sec: intro main results abelian} (modified quantum master equation, exact dependence on gauge-fixing choices, Segal's gluing property, independence on cellular decomposition) -- Theorem \ref{prop 8.15}.
\end{enumerate}

\subsection{Open questions/What is not in this paper}
\begin{enumerate}
\item Construction of more general cellular AKSZ theories: our construction of cellular non-abelian $BF$ action in Theorem \ref{thm: cellBF} develops the theory from its value on $0$-cells, by iterative extension to higher-dimensional cells. It would be very interesting to repeat the construction starting from the target data of a more general AKSZ theory assigned to a $0$-cell. It would be particularly interesting to construct cellular versions of Chern-Simons theory and $BF+B^3$ theory in dimension $3$. Chern-Simons theory has the added complication that one has to incorporate in the construction of the BV 2-form the Poincar\'e duality on a single cellular decomposition, without using the dual one.
\item Comparison of the cellular non-abelian $BF$ theory constructed here with non-perturbative answers in terms of the representation theory data of the structure  group  $G$: comparison with zero area limit of Yang-Mills theory in dimension $2$ and comparison with Ponzano-Regge state sum model (defined in terms of $6j$ symbols) in dimension $3$. Cellular $BF+B^3$ theory should be compared with Turaev-Viro state sum model (based on $q6j$ symbols for the quantum group corresponding to $G$).
\item In this paper we use, for the construction of quantization, special polarizations of phase spaces assigned to the boundary components of an $\nn$-manifold -- the ``$A$-polarization'' and ``$B$-polarization''. It would be interesting to consider more general polarizations and construct the corresponding version of Hitchin's connection (mimicking the situation in Chern-Simons theory), controlling the dependence of the quantum theory on an infinitesimal change of the polarization.
\item $Q$-exact renormalization flow along the poset of CW complexes, arising from the fact that the ``standard'' cellular action is sent by a BV pushforward along a cellular aggregation to an action on the aggregated complex which differs from the standard one by a canonical transformation (see \cite{PM_2005draft} for an example of an explicit computation). Keeping track of these canonical transformations should 
lead 
to the picture of a ``$Q$-exact'' Wilsonian RG flow along cellular aggregations (and to the related notion of the combinatorial $Q$-exact stress-energy tensor). This picture is expected to be related to Igusa-Klein's higher Reidemeister torsion.
\item Observables supported on CW subcomplexes, possibly meeting the boundary.
\item 
Gluing and cutting
with corners of codimension $\geq 2$, or the version for the (fully) extended cobordism category, in the sense of Baez-Dolan-Lurie.
\item Partition functions in this paper are constructed up to sign, so as not to deal with orientations of spaces of fields and gauge-fixing Lagrangians. It would be interesting to construct a sign-refined version of the theory.

\end{enumerate}

\subsection{
Plan of the paper
}

In Sections \ref{sec: reminder on Poincare duality}, \ref{sec: reminder on cell local systems}
we recall and set up the conventions and notations for chain-level Poincar\'e duality for cellular decompositions of manifolds with boundary (Section \ref{sec: reminder on Poincare duality}) and local systems in this setting (Section \ref{sec: reminder on Poincare duality}). This sets the stage for the construction of the space of fields of the cellular model.

In Section \ref{sec: HPT} we recall the homological perturbation theory which later plays the crucial role for defining the gauge-fixing for the quantization.

In Section \ref{sec: closed} we construct the abelian cellular theory on a closed manifold endowed with a cellular decomposition. We first set up the classical theory (Section \ref{sec: classical closed}) and then construct the quantization (\ref{sec: quantization closed}).

In Section \ref{sec: classical cell ab BF on mfd with bdry} we construct the extension of the abelian cellular theory to cobordisms, in the BV-BFV setting, on the classical level. 

The quantization of the abelian model on cobordisms is constructed in Section \ref{sec: quantum cell ab BF on mfd with bdry}. In particular, we prove the gluing property of the partition functions in Section \ref{sec: quantum gluing}.

In Section \ref{sec: non-ab BF I} we construct the ``canonical version'' (i.e. with covariant $B$-field) of the non-abelian $BF$ theory on arbitrary regular CW complexes in BV formalism and establish the invariance of the theory (up to canonical transformations) under cellular aggregations and elementary collapses of CW complexes.

In Section \ref{sec: non-ab BF cobordism} we construct the non-abelian cellular $BF$ theory on cobordisms in BV-BFV setting and its quantization. We prove that the quantization satisfies the axioms of a quantum BV-BFV theory and is independent (modulo canonical transformations) of the cellular decomposition of the cobordism.

\subsection*{Acknowledgements}
We thank Nikolai Mnev for inspiring discussions, crucial to this work. 
We are grateful to the anonymous referee for insightful comments and questions that helped improve the paper.
P. M. thanks the University of Zurich and the Max Planck Institute of Mathematics in Bonn, where he was affiliated during different stages of this work, for providing the excellent research environment.

\section{Reminder: Poincar\'e duality for cellular decompositions of manifolds}
\label{sec: reminder on Poincare duality}

\subsection{Case of a closed manifold}\label{sec: Poincare duality, closed mfd}
Let $M$ be a compact oriented\footnote{This assumption is made for convenience and can be dropped, see Remark \ref{rem: orientability} below.} 
piecewise-linear\footnote{
Throughout this paper we will be working in the piecewise-linear category. One can replace PL manifolds with smooth manifolds everywhere, but then instead of gluing of manifolds along a common boundary, one should talk about cutting a manifold along a submanifold of codimension~$1$ or work with manifolds with collars in order to achieve the correct gluing of smooth structures. 
For details on oriented intersection of chains in piecewise-linear setting, we refer the reader to \cite{McClure}.
} (PL) $n$-dimensional manifold without boundary, endowed with  a cellular decomposition $X$ (with cells being finite unions of simplices of a triangulation compatible with the PL structure), which we assume to be a regular CW complex.\footnote{Recall that a 
CW complex is said to be \emph{regular} if the characteristic maps from standard open balls to open cells $\chi: \mr{int}(B^k) \xra{\sim} e\subset X$ extend to homeomorphisms of closed balls to corresponding closed cells $\bar\chi: B^k \xra{\sim} \bar{e}\subset X$.
Another term for a regular CW complex is ``ball complex.''
} One can construct the {\it dual} cellular decomposition of $X^\vee$, uniquely defined up to PL homeomorphism, 
such that:
\begin{itemize}
\item There is a bijection $\varkappa$ between $k$-cells of $X$ and $(n-k)$-cells of $X^\vee$. (One calls $\varkappa(e)$ the {\it dual cell} for $e$.)
\item For a cell $e$ of $X$, $\varkappa(e)\subset \mr{star}(e)\subset M$.\footnote{The standard terminology is that for a cell $e$ of any CW complex $X$, the {\it star} of $e$ is the subcomplex of $X$ consisting of all cells of $X$ containing $e$. The {\it link} of $e$ is the union of cells of $\mr{star}(e)$ which do not intersect $e$.}
\item $e$ intersects $\varkappa(e)$ transversally and at a unique point.
\end{itemize}
Choosing (arbitrarily) orientations of cells of $X$, we can infer the choice of orientations of cells of $X^\vee$ in such a way that the intersection pairing is $e\cdot \varkappa(e)=+1$. More generally, for $e_i$ running over $k$-cells of $X$, we have
$e_i\cdot \varkappa(e_j)=+\delta_{ij}$.

On the level of cellular chains, we have a non-degenerate intersection pairing
\be \cdot:\quad C_k(X;\bZ)\otimes C_{n-k}(X^\vee;\bZ)\ra \bZ \label{intersection pairing closed}\ee
which induces a {\it chain} isomorphism between cellular chains and cochains
\be C_\bt(X;\bZ),\dd\quad \xra{\sim}\quad  C^{n-\bt}(X^\vee;\bZ),d \label{Poincare chains to cochains}\ee
which in turn induces the Poincar\'e duality between homology and cohomology
$$\underbrace{H_\bt(X;\bZ)}_{=H_\bt(M;\bZ)}\xra{\sim} \underbrace{H^{n-\bt}(X^\vee;\bZ)}_{=H^{n-\bt}(M;\bZ)}$$

\begin{remark}\label{rem: barycentric subdiv}
One construction of the dual cellular decomposition $X^\vee$ is via the barycentric subdivision $\beta(X)$ of $X$ -- a simplicial complex, constructed combinatorially as the nerve of the partially ordered set of cells of $X$ (with ordering given by adjacency). The combinatorial simplex $\sigma=(e_0<\ldots < e_k)$ has dimension $k$ and can be geometrically realized as a simplex inside $e_k$ with vertices $\dot{e}_0,\ldots\dot{e}_k$, where $\dot{e}$ is some a priori fixed point in an open cell $e$ -- the {\it barycenter} of $e$. Next, one constructs $X^\vee$ out of $\beta(X)$ as follows. For $v$ a vertex ($0$-cell) of $X$, we set $\varkappa(v)$ to be the star of $v$ in $\beta(X)$. For $e$ a $k$-cell of $X$, we set
\be \varkappa(e)=\bigcap_{v\in e} \mr{star}_{\beta(X)}(v)\quad \cap \mr{star}_X(e) \label{varkappa(e) via barycentric subdivision}\ee
where the first intersection runs over vertices of $e$.
\end{remark}

\begin{remark}\label{rem: orientability} Orientability of $X$ is not required to define the complex $X^\vee$. However, global orientation is necessary to 
define the intersection pairing between cells of $X$ and cells of $X^\vee$
in such a way that (\ref{Poincare chains to cochains}) becomes a chain map.
In a more general setup, we can allow $X$ to be possibly non-orientable. Denote by $\mr{Or}$ the orientation $\mathbb{Z}_2$-local system on $X$ (see Section \ref{sec: reminder on cell local systems} below for a reminder on cellular local systems); the role of orientation is played by a choice of a primitive element $\sigma\in H_n(X,\mr{Or};\mathbb{Z})$. Then (\ref{intersection pairing closed}) becomes $\cdot:C_k(X;\mathbb{Z})\otimes C_{n-k}(X^\vee,\mr{Or};\mathbb{Z})\ra \bZ$. (We twist one of the two factors by the $\mr{Or}$, it is unimportant which factor is twisted.) This pairing depends on the class $\sigma$. Likewise (\ref{Poincare chains to cochains}) becomes $C_\bt(X;\bZ),\dd\;\; \xra{\sim}\;\;  C^{n-\bt}(X^\vee,\mr{Or};\bZ),d$. This setup can be straightforwardly adapted to the setting of cellular complexes with boundary -- we always have to twist one side in Poincar\'e-Lefschetz duality by the orientation local system. However, for simplicity, in this paper we will always be assuming that $\mr{Or}$ is trivial and $X$ is oriented.
\end{remark}

\begin{remark}
We could require $X$ to be a triangulation and $X^\vee$ the dual cellular complex. We are not imposing this requirement, because later the fields $A,B$ of our theory will be cochains on $X$ and $X^\vee$ and it seems unnecessary to break the symmetry between $A$ and $B$ (present in the abelian theory) by forcing $A$ to live on a triangulation.
\end{remark}

\subsection{Case of a manifold with boundary}\label{sec: Lefschetz for cell decomp}
Let $M$ be a compact oriented 
$n$-manifold with boundary $\dd M$. Assume that we have a cellular decomposition $X$ of $M$, which restricts on the boundary to a cellular decomposition $X_\dd$ of $\dd M$.

We can construct\footnote{We can again use the construction of Remark \ref{rem: barycentric subdiv}. Cells $\varkappa(e)$ are then defined exactly as in (\ref{varkappa(e) via barycentric subdivision}) and cells $\varkappa_\dd(e)$ for boundary cells $e\subset X_\dd$ are constructed as $\varkappa_\dd(e)=\overline{\varkappa(e)}\cap \dd M$.

} a new cellular decomposition $X^{\vee_+}$ of $M$ such that the following holds.
\begin{itemize}
\item For every $k$-cell $e$ of $X$ we have an $(n-k)$-cell $\varkappa(e)\subset \mr{star}_X(e)\subset M-\dd M$ of $X^{\vee_+}$.
\item For every $k$-cell $e$ of $X_\dd$, apart from the $(n-k)$-cell $\varkappa(e)$, $X^{\vee_+}$ contains an $(n-k-1)$-cell $\varkappa_\dd(e)\subset \dd M$ of the dual boundary complex $(X_\dd)^\vee$.
\item Cells of the form $\varkappa(e)$, $\varkappa_{\dd}(e)$ (for boundary cells $e$) exhaust the CW complex $X^{\vee_+}$.
\item For $e$ a cell of $X-X_\dd$, $e$ and $\varkappa(e)$ intersect transversally and at a single point. For $e$ a boundary cell of $X$, $e$ meets the closed cell $\overline{\varkappa(e)}$ at a single point.
\end{itemize}

\begin{figure}[!htbp]
\begin{center}
\includegraphics[scale=0.8]{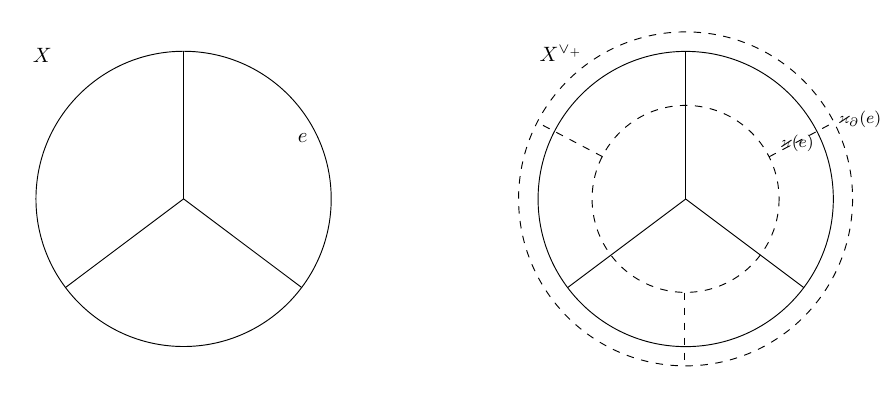}
\caption{A cellular decomposition $X$ of a closed 2-disk $M$ (drawn in solid lines), and the corresponding dual decomposition $X^{\vee_+}$ of a slightly larger disk $M_+$ (drawn in dashed lines).}
\end{center}
\end{figure}

Again, orientations of $X^{\vee_+}$ can be inferred from some chosen orientations of cells of $X$ in such a way that the intersection 
is:
$$e_i\cdot\varkappa(e_j)=+\delta_{ij}$$
In case of $e_j$ being a boundary cell, we have to {\it regularize} the intersection, which we can do by regarding $X^{\vee_+}$ as a cellular decomposition of $M_+$ -- an extension of $M$ by attaching
a collar $\dd M\times [0,\epsilon]$ at the boundary $\dd M$. Then all intersections of cells of $X$ in $M\subset M_+$ and cells of $X^{\vee_+}$ in $M_+$ are transversal. Note that with this regularization, for $e_i$ any cell and $e_j$ a boundary cell of $X$, we have $$e_i\cdot \varkappa_\dd (e_j)=0$$

Intersection pairing defined as above induces a non-degenerate pairing between absolute and relative chains:
\be \cdot:\quad C_k(X;\bZ)\otimes C_{n-k}(X^{\vee_+},X_\dd^\vee;\bZ)\ra \bZ \label{Lefschetz for chains +}\ee
which in turn gives rise to a chain isomorphism between absolute chains and relative cochains
$$C_\bt(X;\bZ),\dd \quad\xra{\sim}\quad C^{n-\bt}(X^{\vee_+},X_\dd^\vee;\bZ),d$$
On the level of homology/cohomology, one obtains the usual Poincar\'e-Lefschetz duality
$$\underbrace{H_\bt(X;\bZ)}_{\cong H_\bt(M;\bZ)}\xra{\sim} \underbrace{H^{n-\bt}(X^{\vee_+},X^\vee_\dd;\bZ)}_{\cong H^{n-\bt}(M,\dd M;\bZ)}$$

Note that unlike the case of closed manifolds, where the operation $X\mapsto X^\vee$ is an involution on cellular decompositions, for manifolds with boundary $X^{\vee_+}$ always has more cells than $X$ and $X\mapsto X^{\vee_+}$ cannot be an involution.

We define $X^{\vee_-}$ as a CW subcomplex of $X^{\vee_+}$ obtained by removing the cells $\varkappa(e)$ and $\varkappa_\dd(e)$ for every boundary cell $e$ of $X$. Topologically, $X^{\vee_-}$ is $X^{\vee_+}$ with a collar near the boundary removed, i.e. the underlying topological space is $M_-\subset M \subset M_+$, where $M-M_-\simeq \dd M \times [-\epsilon,0]$. The counterpart of (\ref{Lefschetz for chains +}) is the non-degenerate intersection pairing
\be \cdot:\quad C_k(X,X_\dd;\bZ)\otimes C_{n-k}(X^{\vee_-};\bZ)\ra \bZ  \label{Lefschetz for chains -}\ee

\begin{figure}[!htbp]
\begin{center}
\includegraphics[scale=0.8]{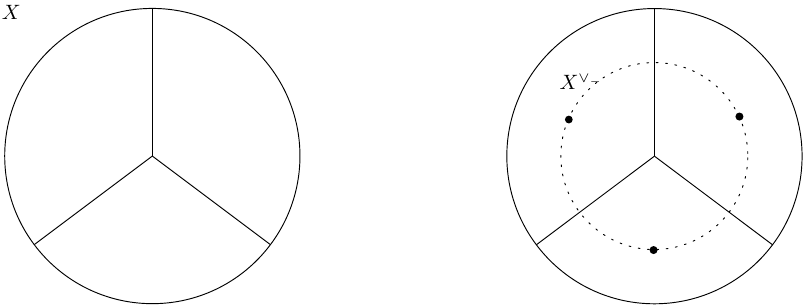}
\caption{A cellular decomposition $X$ of a closed 2-disk $M$ (drawn in solid lines) and the dual decomposition $X^{\vee_-}$ of a smaller disk $M_-$ (drawn in dotted lines).}
\end{center}
\end{figure}

\begin{definition} \label{def: product type}
We will say that a cellular decomposition $X$ of a manifold $M$ with boundary is 
of \emph{product type}
near the boundary if, for any $k$-cell $e_\dd$ of $X_\dd$, there exists a unique $(k+1)$-cell $e$ of $X-X_\dd$ such that $e_\dd\subset \dd e$.\footnote{In other words, we are asking for the intersection of $X$ with a thin tubular neighborhood of the boundary to look like the product CW-complex $X_\dd\times [0,1]$ intersected with $X_\dd\times [0,\epsilon)$. Morally, even though there is no metric in our case, one should think of this property as an analog of the property of a Riemannian metric on a smooth manifold with boundary to be of product form near the boundary.}
\end{definition}

\begin{figure}[!htbp]
\begin{center}
\includegraphics[scale=0.8]{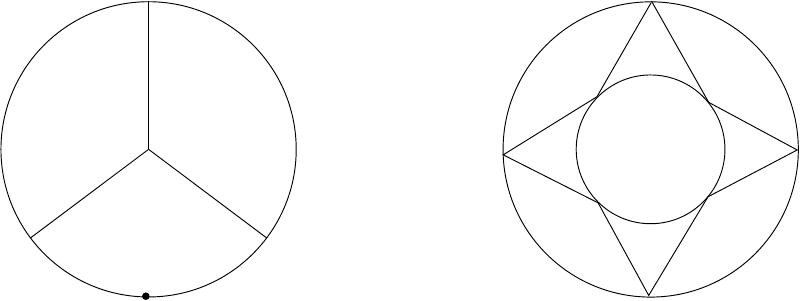}
\caption{Two examples of non-product type behavior of a cellular decomposition (in this case, of a 2-disk) near the boundary. Note the extra vertex on the boundary in the example on the left.}
\end{center}
\end{figure}

There is the obvious geometric inclusion of the boundary
$\iota_+: X^\vee_\dd\hra X^{\vee_+}$. There is also a cellular map 
$\iota_-: X_\dd^\vee \hra X^{\vee_-}$ which sends $\varkappa_\dd(e)$ to $\dd\varkappa(e)\cap \dd M_-$ for any cell $e$ of $X_\dd$. Under the assumption that $X$ is of product type near the boundary, as defined above, $\iota_-$ is in fact an inclusion. In particular, in this case the boundary of the dual complex $X^{\vee_-}\cap \dd M_-$ is isomorphic, as a CW complex, to $X_\dd$.

As opposed to $X^{\vee_+}$, complex $X^{\vee_-}$ has {\it less} cells then $X$, and, in the 
case of product type near the boundary, the ``double duals'' $(X^{\vee_+})^{\vee_-}$, $(X^{\vee_-})^{\vee_+}$ are isomorphic to $X$ as CW complexes.

Note also that $X^{\vee_+}$ is always of product type near the boundary. 

\subsubsection{Cutting a closed manifold into two pieces}\label{sec: cutting a closed mfd}
Let $M$ be a closed manifold cut along a codimension $1$ submanifold $\Sigma$ into two parts -- manifolds with boundary $M_I$, $M_{II}$, with $\dd M_I=\dd M_{II}=\Sigma$. Let $X$ be a cellular decomposition of $M$ such that $X_\Sigma=\Sigma\cap X$ is a subcomplex of $X$. Denote $X_I$, $X_{II}$ the induced cellular decompositions of $M_I$, $M_{II}$. Then one has the obvious (pushout) relation:
$$X=X_I\cup_{X_\Sigma} X_{II}$$
For the dual decompositions, one has
$$X^\vee=X^{\vee_+}_I\cup_{X_\Sigma^\vee} X^{\vee_-}_{II}$$
assuming that $X_{II}$ is of product type near $X_\Sigma$, and
$$X^\vee=X^{\vee_-}_I\cup_{X_\Sigma^\vee} X^{\vee_+}_{II}$$
if $X_I$ is of product type near $X_\Sigma$. It can happen, of course, that $X$ is well-behaved on both sides of $X_\Sigma$, and then both formulae above hold.

\subsection{Case of a cobordism}\label{sec: Lefschetz for cell decomp, cobordism}
A more symmetric version of the construction of Section \ref{sec: Lefschetz for cell decomp} is as follows. Let $M$ be a compact oriented 
manifold with boundary split into two disjoint parts $\dd M=M_\din\sqcup M_\dout$\footnote{By convention, we endow $M_\dout$ with the orientation induced from the orientation of $M$, whereas $M_\din$ is endowed with orientation opposite to the one induced from $M$. Thus, as an oriented manifold the boundary splits as $\dd M=\overline{M_\din}\sqcup M_\dout$ where the overline stands for orientation reversal.} 
(i.e.\ we color the boundary components of $M$ in two colors -- ``in'' and ``out'').
We call this set of data a cobordism and denote it by $M_\din\cob{M} M_\dout$. Let $X$ be a cellular decomposition of $M$ inducing decompositions $X_\din$, $X_\dout$ of the in- and out-boundary, respectively.
When talking about a cobordism with a cellular decomposition, we always make the following assumption.
\begin{assumption}\label{assumption: X of product type near out-boundary}
$X$ is of product type near $M_\dout$.
\end{assumption}
We make no assumption on the behavior of $X$ near $M_\din$. Then we define the dual CW complex as
$$X^\vee:=X^{\vee_+}-\left(\varkappa_\dd(X_\dout)\cup \varkappa(X_\dout)\right)$$
We think of $X$ on the l.h.s. as including the information about which boundary component is ``in'' and which is ``out''. The underlying manifold $\widetilde M$ of $X^\vee$ is $M$ with a collar at $M_\din$ adjoined and a collar at $M_\dout$ removed; we also regard $\widetilde M$ as having the 
in/out coloring of boundary opposite to that of $M$.\footnote{Observe that $X^\vee$ is automatically of product type near the in-boundary of $M$, i.e. near the out-boundary of $\widetilde M$.}

Note that if $\dd M=M_\din$, then $X^\vee=X^{\vee_+}$. If $\dd M=M_\dout$, then $X^\vee=X^{\vee_-}$.
For the numbers of cells (= ranks of chain groups), we have
$$\rk C_k(X^\vee)=\rk C_k(X)+\rk C_k(X_\din)-\rk C_k(X_\dout)$$
Also note that $(X^\vee)^\vee=X$, thus in this formalism taking dual is again an involution.

One has a non-degenerate intersection pairing
\be \cdot:\quad C_k(X,X_\dout;\bZ)\otimes C_{n-k}(X^\vee,X^\vee_\din;\bZ)\ra \bZ \label{Lefschetz for chains mixed}\ee
which gives the chain isomorphism
$$C_\bt(X,X_\dout;\bZ),\dd\quad \xra{\sim}\quad  C^{n-\bt}(X^\vee,X^\vee_\din;\bZ),d$$
and the Poincar\'e-Lefschetz duality between homology and cohomology
$$\underbrace{H_\bt(X,X_\dout;\bZ)}_{=H_\bt(M,M_\dout;\bZ)} \xra{\sim}  \underbrace{H^{n-\bt}(X^\vee,X^\vee_\din;\bZ)}_{=H^{n-\bt}(M,M_\din;\bZ)}$$

\section{Reminder: cellular local systems}\label{sec: reminder on cell local systems}

Let $X$ be a CW complex and $G$ a Lie group.
\begin{definition}
We define a $G$-local system on $X$ as a pair $(\E,\rho)$ consisting of a functor $\E$ from the 
partially ordered set of cells of $X$ (by incidence) 
to the category $G\rightrightarrows \ast$ with single object and with $\mr{Hom}(\ast,\ast)=G$, and a linear representation $\rho:\,G\ra \mr{Aut}(V)$ with $V$ a 
 finite-dimensional vector space.
\end{definition}

One can twist cellular chains of $X$ by $\mc{E}$ as follows. As vector spaces, we set $C_k(X,\mc{E})=V\otimes C_k(X;\bZ)$. Assume all cells of $X$ are oriented (in an arbitrary way). If for a $k$-cell $e$ we have $\dd e=\sum_j \epsilon_j\; e_j$ with $\epsilon_j$ coefficients of the boundary map (in case of a regular complex, $\epsilon_j\in\{\pm 1 ,0\}$), then we set
$$\dd_\mc{E}(v\otimes e)=\sum_j \epsilon_j\; \rho(\mc{E}(e>e_j)) (v)\;\;\otimes e_j$$
where $v\in V$ is an arbitrary vector. 
Functoriality of $\mc{E}$ implies that $\dd_{\mc{E}}^2=0$.

Consider the dual local system $\E^*$ on $X$, which is the same as $\E$ as a functor, but accompanied with the dual vector representation, 
$\rho^*=(\rho^{-1})^T:\,G\ra V^*$. Then the twisted cochains on $X$, $C^\bt(X,\E)$, are constructed as the dual complex to $C_\bt(X,\E^*)$, i.e. $C^k(X,\E)=(C_k(X,\E^*))^*$, with differential $d_\E$ given by the transpose of $\dd_{\E^*}$.

\begin{definition}
We call two local systems $\E$ and $\E'$ on $X$ equivalent, if there exists a natural transformation between them, i.e. for every cell $e\subset X$ there is a group element $g(e)\in G$, so that for any pair of cells $e'\subset e$ we have $$\E'(e>e')=g(e)\;\E(e>e')\; g(e')^{-1}$$
\end{definition}

A local system $\E$ induces a holonomy functor from the fundamental groupoid of $X$ to $G\rightrightarrows \ast$, by associating to a path
$$\chi=(v_0<e_1>v_1<e_2>\cdots <e_N>v_N)$$
from $v_0$ to $v_N$,
a group element
$$\mr{hol}_\E(\chi)=\E(e_N>v_N)^{-1}\cdot\E(e_{N}>v_{N-1})\cdots\E(e_1>v_1)^{-1}\cdot \E(e_1>v_0)\quad \in G$$
Picking a base point $x_0$ -- a vertex of $X$, and restricting the construction to closed paths from $x_0$ to $x_0$ along $1$-cells of $X$, we get a representation of the fundamental group $\pi_1(X,x_0)$ in $G$.

Let $M$ be a 
manifold (possibly, with boundary) endowed with a principal $G$-bundle $P\ra M$ with a flat connection $\nabla_P$.
Let $E=P\times_G V$ be the associated vector bundle with the corresponding flat connection $\nabla_E$. If $X$ is a cellular decomposition of $M$, one can construct a local system $E_X$ on $X$ by marking a point (barycenter) $\dot{e}$ in each cell $e$ and setting
\be E_X(e > e')=\mr{hol}_{\nabla_P}(\gamma_{\dot e'\ra \dot e}) \qquad \in \mr{Hom}(P_{\dot e'}, P_{\dot e})\simeq G \label{cell local system from flat bundle}\ee
-- the parallel transport of the connection $\nabla_P$ along any path $\gamma_{\dot e'\ra \dot e}$ from the barycenter $\dot e'$ to $\dot e$, staying inside the cell $e$.\footnote{Note that functoriality in this construction corresponds to flatness of $\nabla_P$. 
}
 To identify the hom-space between fibers of $P$ over $\dot e'$ and $\dot e$ with the group, we assume that $P$ is trivialized over barycenters of all cells.

The twisted cochain complex $(C^\bt(X,E_X),d_{E_X})$ is quasi-isomorphic to $(\Omega^\bt(M,E),d_{\nabla_E})$ -- the de Rham complex of $M$ twisted by the flat bundle $E$.

In case of a manifold without boundary, the intersection pairing (\ref{intersection pairing closed}) together with the canonical pairing $V\otimes V^*\ra \bR$ induce a non-degenerate pairing
\be \langle,\rangle:\quad C_k(X,E_X)\otimes C_{n-k}(X^\vee,E^*_{X^\vee})\ra \bR \label{intersection pairing closed twisted}\ee
Similarly, in presence of a boundary, one has versions of cellular Poincar\'e-Lefschetz pairing (\ref{Lefschetz for chains +},\ref{Lefschetz for chains -},\ref{Lefschetz for chains mixed}) with coefficients in $E_X, E^*_{X^\vee}$.

The local system $E^*_{X^\vee}$ that we need in (\ref{intersection pairing closed twisted}) can be constructed from a flat principal bundle by applying construction (\ref{cell local system from flat bundle}) to $X^\vee$ and using the dual representation $\rho^*$. Alternatively, $E_{X^\vee}$ can be constructed directly on the combinatorial level, from $E_X$, by the construction below.

\subsection{Local system on the dual cellular decomposition: a combinatorial description}\label{sec: local systems on the dual cell decomp}
The local system $E_{X^\vee}$ on the dual cellular decomposition $X^\vee$ of a closed manifold $M$ can be described combinatorially in terms of $E_X$ as follows:
\be E_{X^\vee}\left(\,\varkappa(e')>\varkappa(e)\,\right)= E_X(e>e')^{-1}\qquad \in G \label{dual loc system}\ee
for $e,e'$ any pair of incident cells of $X$. 
The dual local system $E^*_{X^\vee}$ on $X^\vee$, which appears in the intersection pairing (\ref{intersection pairing closed twisted}) is same as $E_{X^\vee}$, but accompanied by the dual representation $\rho^*:\, G\ra V^*$.

Note that the combinatorial definition (\ref{dual loc system}) agrees with the construction of $E_{X^\vee}$ by calculating holonomies of the original flat $G$-bundle $P$ on $M$ between barycenters of cells of $X^\vee$, as in (\ref{cell local system from flat bundle}). For this to be true it is important that the barycenters are chosen in such a way that $\dot e = e\cap \varkappa(e) = \dot{\varkappa}(e)$.

In the case of a manifold $M$ with boundary, the local system on $X^{\vee_+}$ is constructed by (\ref{dual loc system}) supplemented by definitions
\begin{multline}\label{loc system on X^vee_+}
E_{X^{\vee_+}}\left(\,\varkappa_\dd(e')>\varkappa_\dd(e)\,\right) = E_X(e>e')^{-1},\quad E_{X^{\vee_+}}\left(\,\varkappa(e')>\varkappa_\dd(e)\,\right) =  E_X(e>e')^{-1},\\ E_{X^{\vee_+}}\left(\,\varkappa(e)>\varkappa_\dd(e)\,\right) = 1
\end{multline}
for $e,e'$ cells of $X_\dd$. The third equation above can be regarded as a special case of the second, with $e'=e$.


Assuming that $X$ 
is of product type near the boundary, the local system on $X^{\vee_-}$ is
obtained by restricting $E_{X^{\vee_+}}$ to the CW subcomplex $X^{\vee_-}\subset X^{\vee_+}$.

Associated to the inclusion $\iota_-:\, X_\dd^\vee\hra X^{\vee_-}$ is the pull-back map for cochains
$$ \iota_-^*:\quad  C^k(X^{\vee_-},E_{X^{\vee_-}})\ra  C^k(X^\vee_\dd,E_{X^\vee_\dd}) $$
defined as
\be v\otimes \varkappa(e)^*\mapsto \left\{ \begin{array}{ll} \rho(E_X(e> \Tilde e)^{-1})(v)\otimes\varkappa_\dd(\Tilde e)^* & \mbox{if }e\mbox{ is adjacent to }\dd M\\
0 & \mbox{otherwise}\end{array} \right. \label{iota_-^* def}\ee
extended by linearity to all cochains.
Here $e$ is a $k$-cell of $X-X_\dd$; for  $e$ adjacent to the boundary, $\Tilde e= \dd e\cap \dd M$ is a single $(k-1)$-cell, since $X$ is assumed to be of product type near the boundary; $v\in V$ is an arbitrary coefficient; for $e$ a $k$-cell of $X$, $e^*\in C^k(X,\bZ)$ denotes the corresponding basis integral cochain.
Defined as above, $\iota_-^*$ is a chain map. Parallel transport by the local system appears in (\ref{iota_-^* def}) to account for the collar $M-M_-$ (i.e. because we have to move from the local system trivialized at barycenters $\dot{\varkappa}(e)\in \dd M_-$ to trivialization at barycenters $\dot{\varkappa}_\dd(\Tilde{e})\in \dd M$).

In case of the dual $X^{\vee_+}$, the pull-back to the boundary $\iota_+^*:\quad  C^k(X^{\vee_+},E_{X^{\vee_+}})\ra  C^k(X^\vee_\dd,E_{X^\vee_\dd})$ is simpler:
\be \iota_+^*:\; v\otimes (e^\vee)^* \;\mapsto\; \left\{ \begin{array}{ll}v\otimes (e^\vee)^* & \mbox{if }e^\vee=\varkappa(e_\dd)\mbox{ for }e_\dd\subset X_\dd \\
0 & \mbox{otherwise} \end{array}\right. \label{iota_+^* def}\ee
Absence of the transport by local system here, as opposed to (\ref{iota_-^* def}), corresponds to the fact that we implicitly gave the local system a trivial extension to the collar $M_+-M$ in (\ref{loc system on X^vee_+}).

\begin{remark} 
Let $M=M_I\cup_\Sigma M_{II}$ be a closed manifold cut into two, as in Section \ref{sec: cutting a closed mfd}, with cellular decompositions $X=X_I\cup_{X_\Sigma} X_{II}$, and assume that $X_I$ is of product type near $X_\Sigma$. Let also $E_X$ be a local system on $X$. We can restrict $E_X$ to $X_I,X_\Sigma,X_{II}$.
Then alongside the obvious fiber product diagram for cochains on $X$:
$$
\begin{CD}
C^k(X,E_X) @>>> C^k(X_I,E_{X_I}) \\
@VVV  @VVV \\
C^k(X_{II},E_{X_{II}}) @>>> C^k(X_\Sigma,E_{X_\Sigma})
\end{CD}
$$
we have the fiber product diagram for cochains on $X^\vee$:
$$
\begin{CD}
C^k(X^\vee,E_{X^\vee}) @>>> C^k(X_I^{\vee_-},E_{X_I^{\vee_-}}) \\
@VVV  @V\iota_{I+}^*VV \\
C^k(X_{II}^{\vee_+},E_{X_{II}^{\vee_+}}) @>\iota_{II-}^*>> C^k(X_\Sigma^\vee,E_{X_\Sigma^\vee})
\end{CD}
$$
\end{remark}

For $M$ a general cobordism with a cellular decomposition $X$ and a local system $E_X$, the corresponding local system on $X^\vee$ is given by  combining the two constructions above, for $E_{X^{\vee_+}}$ (used near $M_\din$) and $E_{X^{\vee_-}}$ (used near $M_\dout$); we should assume that $X$ is of product type near $M_\dout$.

\section{Reminder: homological perturbation theory}\label{sec: HPT}
\begin{definition}
Let $C^\bt,d$ and $C'^\bt,d'$ be two cochain complexes of vector spaces (e.g. over $\bR$). We define the
\emph{induction data} from $C^\bt$ to $C'^\bt$  as a triple of linear maps $(i,p,K)$ with
$$
i: C'^\bt \hra C^\bt, \qquad p: C^\bt\proj C'^\bt,\qquad K: C^\bt\ra C^{\bt-1}
$$
satisfying
\be p\,i=\mr{id}_{C'},\quad di=id', \quad p\,d=d'p,\quad dK+Kd=\mr{id}_C-ip,\quad Ki=0,\quad p\,K=0,\quad K^2=0 \label{ind data equations}\ee
\end{definition}

Induction data exist iff $i: C'\ra C$ is a subcomplex such that $i_*$ is the identity in cohomology (i.e. $C'$ is a {\it deformation retract} of $C$).

We will use the notation
\be\label{retraction} C^\bt,d \wavy{(i,p,K)} C'^\bt,d' \ee for 
induction data. We will also sometimes call the whole collection of data - a pair of complexes and the induction data between them - a {\it retraction}.

\begin{lemma}[Homological perturbation lemma \cite{GugenheimLambe}]\label{lemma: homological perturbation}
If $(i,p,K)$ are induction data from $C^\bt,d$ to $C'^\bt, d'$ and $\delta: C^\bt\ra C^{\bt+1}$ is such that $(d+\delta)^2=0$, then the complex
$$C'^\bt, d'+p\;(\delta-\delta K \delta+ \delta K \delta K \delta-\cdots)\;i $$
is a deformation retract of $C^\bt, d+\delta$, with induction data given by
$$(i-K \delta i+K\delta K\delta i-\cdots ,\quad p-p\,\delta K+p\,\delta K \delta K-\cdots,\quad K - K\delta K+K\delta K\delta K-\cdots )$$
assuming all the series above converge.
\end{lemma}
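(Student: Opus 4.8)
The statement to prove is the Homological Perturbation Lemma (Lemma \ref{lemma: homological perturbation}). The plan is to verify directly that the perturbed maps
\[
\widetilde{i} = i - K\delta i + K\delta K\delta i - \cdots, \quad \widetilde{p} = p - p\,\delta K + p\,\delta K\delta K - \cdots, \quad \widetilde{K} = K - K\delta K + K\delta K\delta K - \cdots
\]
together with the perturbed differential $\widetilde{d}' = d' + p\,(\delta - \delta K\delta + \cdots)\,i$ satisfy the defining equations (\ref{ind data equations}) for induction data from $(C^\bt, d+\delta)$ to $(C'^\bt, \widetilde{d}')$. A useful bookkeeping device is to introduce the formal geometric series $A = \sum_{n\ge 0}(-\delta K)^n = (\mathrm{id} + \delta K)^{-1}$ and $A' = \sum_{n\ge 0}(-K\delta)^n = (\mathrm{id} + K\delta)^{-1}$, so that $\widetilde{i} = A'\,i$, $\widetilde{p} = p\,A$, $\widetilde{K} = A'K = KA$ (the last two agreeing because $K\delta K = K\delta K$ telescopes the same way from either side), and the perturbed differential is $\widetilde{d}' = d' + p\,\delta\,A\,i = d' + p\,A'\,\delta\,i$. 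First I would establish the two elementary identities $A = \mathrm{id} - \delta K A = \mathrm{id} - A\delta K$ and $A' = \mathrm{id} - K\delta A' = \mathrm{id} - A'K\delta$, and the ``moving $K$ past the series'' identity $KA = A'K$, all of which follow from $A(\mathrm{id}+\delta K) = \mathrm{id}$ etc.

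\medskip

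The main computation is to verify $(d+\delta)\widetilde{i} = \widetilde{i}\,\widetilde{d}'$ and $\widetilde{p}\,(d+\delta) = \widetilde{d}'\,\widetilde{p}$ and the homotopy relation
\[
(d+\delta)\widetilde{K} + \widetilde{K}(d+\delta) = \mathrm{id}_C - \widetilde{i}\,\widetilde{p}.
\]
For the side conditions, I expect $\widetilde{K}\widetilde{i} = 0$ to follow from $Ki = 0$ together with the fact that every term of $\widetilde{i}$ ends in $i$ on the right and $\widetilde{K}$ can be written as $A'K$ acting on the left, so one reduces to $A'K(\text{stuff})i$; more carefully, $\widetilde{K}\widetilde{i} = KA\cdot A'i$ — hmm, this needs the cleaner route: write $\widetilde{K}\widetilde{i} = A'K\cdot A'i$ and use $KA' = K(\mathrm{id} - K\delta A') = K - K^2\delta A' = K$ wait that's $KA'$, let me instead use that $A'$ is a series in $K\delta$ so $KA'i$... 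Actually the efficient approach is: $\widetilde{K}\widetilde{i} = (KA)(A'i)$, and then use $A A' $ relation, OR directly observe $\widetilde{i} = A'i$ satisfies $K\widetilde{i} = K A' i$; expanding $A' = \mathrm{id} - K\delta A'$ gives $KA'i = Ki - K^2\delta A' i = 0$ since $Ki = 0$ and $K^2 = 0$. So $K\widetilde{i} = 0$, hence $\widetilde K \widetilde i = KA \widetilde i$... no — I want $\widetilde K \widetilde i$, and $\widetilde K = A' K$, so $\widetilde K \widetilde i = A' K \widetilde i = A'(K\widetilde i) = 0$. Good, that works cleanly. Symmetrically $\widetilde{p}\widetilde{K} = 0$ from $pK = 0$, and $\widetilde{K}^2 = 0$ from $K^2 = 0$ by a similar telescoping ($\widetilde K^2 = A'K\cdot KA = A' K^2 A = 0$). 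The relation $\widetilde{d}' \widetilde{i} = d\widetilde{i} + \delta\widetilde{i}$ — wait, $\widetilde i \widetilde d' $ vs $(d+\delta)\widetilde i$ — I would plug in $\widetilde{i} = A'i$, use $d A' = A'' d$-type intertwining which itself requires the key sub-lemma that $d$ and $K$ interact via $dK + Kd = \mathrm{id} - ip$, and push $d$ through the series $A'$ carefully, collecting the $ip$ correction terms which reassemble into $\widetilde{i}\widetilde{d}'$ and a $(d+\delta)$-coboundary that vanishes.

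\medskip

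The hard part will be keeping the bookkeeping of infinitely many terms under control — specifically, showing that when $d$ (or $d+\delta$) is commuted through the infinite series defining $\widetilde{i}, \widetilde{p}, \widetilde{K}$, the ``leftover'' terms involving $ip$ (coming from $dK + Kd = \mathrm{id} - ip$) and the terms involving $\delta K\delta K \cdots$ recombine exactly into the claimed perturbed differential $\widetilde{d}'$ and nothing extra. My strategy for this is to avoid raw term-by-term manipulation and instead work entirely with the closed forms $A = (\mathrm{id}+\delta K)^{-1}$, $A' = (\mathrm{id}+K\delta)^{-1}$, treating them as honest inverses (justified by the convergence hypothesis, e.g. $\delta$ nilpotent or a filtration argument), and repeatedly apply the three structural identities: (a) $dK + Kd = \mathrm{id} - ip$, (b) the original six relations (\ref{ind data equations}), and (c) $(d+\delta)^2 = 0$, i.e. $d\delta + \delta d + \delta^2 = 0$. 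With these, each required identity becomes a finite algebraic manipulation of the symbols $A, A', d, d', \delta, i, p, K$. The one genuinely delicate point is justifying the interchange of $d$ with the infinite sum and the convergence of the resulting rearrangement; under the stated assumption that all series converge (in practice: $\delta$ lowers a filtration degree, or we work in the category of filtered/pronilpotent complexes), this is routine, but I would state the hypothesis precisely as part of the proof. I would conclude by noting that the perturbed differential squares to zero either as a consequence of the verified induction-data equations (since $\widetilde{d}' = \widetilde{d}'$ and $di = id'$ forces $(\widetilde d')^2 = \widetilde p (d+\delta)^2 \widetilde i = 0$ after using the relations) or by a direct check using $(d+\delta)^2 = 0$.
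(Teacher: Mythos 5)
The paper does not actually prove this lemma: it states it and refers to \cite{GKR} (and \cite{GugenheimLambe}) for the proof, so there is no in-paper argument to compare against. Your proposal is the standard direct verification found in those references: resum the series into $A=(\mathrm{id}+\delta K)^{-1}$, $A'=(\mathrm{id}+K\delta)^{-1}$, write $\widetilde i = A'i$, $\widetilde p = pA$, $\widetilde K = KA=A'K$, $\widetilde d{}' = d' + p\,\delta A' i$, and check the relations (\ref{ind data equations}) using only $dK+Kd=\mathrm{id}-ip$, the original side conditions, and $d\delta+\delta d+\delta^2=0$. Your treatment of the side conditions is correct ($K\widetilde i = Ki - K^2\delta A' i=0$, hence $\widetilde K\widetilde i = A'K\widetilde i=0$; symmetrically $\widetilde p\widetilde K=0$ and $\widetilde K^2=A'K^2A=0$), as is the remark that $(\widetilde d{}')^2=0$ follows once the chain-map relations are established; the convergence caveat you flag is exactly the hypothesis of the lemma (in the paper's applications $\delta$ is strictly triangular with respect to a filtration, so the series terminate).

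What you have not done is the heart of the verification: $(d+\delta)\widetilde i = \widetilde i\,\widetilde d{}'$, $\widetilde p\,(d+\delta)=\widetilde d{}'\,\widetilde p$, $\widetilde p\,\widetilde i=\mathrm{id}$, and $(d+\delta)\widetilde K+\widetilde K(d+\delta)=\mathrm{id}-\widetilde i\,\widetilde p$ are only asserted to follow by ``finite algebraic manipulation''. They do, and by essentially the route you indicate: for instance, left-multiplying the first identity by $(\mathrm{id}+K\delta)$ and eliminating $\delta^2$ via $(d+\delta)^2=0$ and $Kd$ via the homotopy relation reduces it to $di=id'$ plus the term $ip\,\delta A' i$ appearing on both sides; and conjugating the homotopy identity by $(\mathrm{id}+K\delta)$ on the left and $(\mathrm{id}+\delta K)$ on the right reduces both sides to $\mathrm{id}-ip+ip\,\delta K+K\delta\, ip+dK\delta K+K\delta K d$. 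So the strategy is sound and completes, but these computations are where all the work lies and should be written out; also your parenthetical about ``a $(d+\delta)$-coboundary that vanishes'' is a red herring — no such term arises, the $ip$-correction terms simply reassemble into $\widetilde d{}'$.
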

\noindent For the proof, see e.g. \cite{GKR}.

Given induction data $(i,p,K)$, we have a splitting of $C$ into the image of $C'$ and an acyclic complement $C''=\ker p$:
$$C^\bt=i(C'^\bt)\oplus \underbrace{C''^\bt}_{\ker p}$$
The second term can be in turn represented as a sum of images of $d$ and $K$. Putting these splittings together, we have a
(formal) {\it Hodge decomposition}
\be C^\bt=i(C'^\bt)\oplus \underbrace{(\ker p\cap d(C^{\bt-1}))\oplus K(C^{\bt+1})}_{C''^\bt} \label{HPT: Hodge decomp}\ee

An important special case is when $C'^\bt=H^\bt(C)$ -- the cohomology of $C^\bt$. Then, in addition to the axioms above, for the induction data from $C^\bt$ to $H^\bt(C)$ we require the following to hold.
\begin{assumption} \label{assumption on p}
The projection $p$ restricted to closed cochains $\ker d\subset C^\bt$ agrees with the canonical projection associating to a closed cochain its cohomology class.
\end{assumption}
In this case the Hodge decomposition (\ref{HPT: Hodge decomp}) becomes
$$C^\bt = i(H^\bt)\oplus C^\bt_\mr{exact}\oplus \underbrace{C^\bt_{K\mr{-exact}}}_{\mr{im}(K)}$$
with the differential now being an isomorphism $d: C^\bt_{K\mr{-exact}}\xra{\sim} C^{\bt+1}_\mr{exact}$ and vanishing on the first two terms of the decomposition.

\subsection{Composition of induction data}
Two retractions
$$C^\bt_{(0)},d_{(0)} \wavy{(i_{01},p_{01},K_{01})} C^\bt_{(1)},d_{(1)},\qquad  C^\bt_{(1)},d_{(1)} \wavy{(i_{12},p_{12},K_{12})} C^\bt_{(2)},d_{(2)}$$
can be composed as follows:
$$ C^\bt_{(0)},d_{(0)} 
\wavy{(i_{02},p_{02},K_{02})}
C^\bt_{(2)},d_{(2)} $$
with composed induction data
\be (i_{02}=i_{01} i_{12},\; p_{02}= p_{12} p_{01},\; K_{02}=K_{01}+i_{01} K_{12} p_{01} ) \label{composition of ind data}\ee

\subsection{Induction data for the dual complex and for the algebra of polynomial functions}
\label{sec: induction data for dual and sym}
For $C^\bt,d$ a cochain complex, we can construct its dual -- the complex of dual vector spaces $C^{*\;k}=\mr{Hom}(C^{n-k},\bR)$ endowed with the transpose differential 
$(d^*)_k=(-1)^{n-k-1}(d_{n-k-1})^T: C^{*\; k}\ra C^{*\; k+1}$.\footnote{Having in mind Poincar\'e duality, we are incorporating the degree shift by $n$ in the definition of the dual (which is superfluous in purely algebraic setting). In our setup, the canonical pairing $\langle,\rangle: C^{*\;k}\otimes C^{n-k}\ra \bR$ has degree $-n$.}

If $(i,p,K)$ are induction data from $C,d$ to $C',d'$, then we have the {\it dual induction data} 
$C^*\wavy{(i^*,p^*,K^*)}C'^*$
given by
\be (i^*=p^T,\quad p^*=i^T,\quad (K^*)_k=(-1)^k (K_{n-k+1})^T) \label{dual ind data}\ee

Given a finite set of retractions
\be C_{j}^\bt\wavy{(i_j,p_j,K_j)}C'^\bt_j \label{C_j retraction}\ee
for $1\leq j\leq r$, one can construct the induction data for the tensor product $\bigotimes_j C_j\wavy{(i^\otimes,p^\otimes,K^\otimes)} \bigotimes_j C'_j$ where
$$i^\otimes = \bigotimes_j i_j,\quad p^\otimes = \bigotimes_j p_j,\quad K^\otimes=\sum_j (i_1 p_1)\otimes\cdots\otimes (i_{j-1} p_{j-1})\otimes  K_j\otimes \mr{id}_{j+1}\otimes \cdots\otimes \mr{id}_{r}$$
The construction of $K^\otimes$ above corresponds to the composition of a sequence of retractions
$$\bigotimes_{j=1}^r C_j\wavy{}C'_1\otimes\bigotimes_{j=2}^r C_j\wavy{}C'_1\otimes C'_2\otimes\bigotimes_{j=3}^r C_j\wavy{}\cdots  \wavy{}\bigotimes_{j=1}^r C'_j$$
where on each step we use the data (\ref{C_j retraction}) tensored with the identity on the other factors.
One can choose instead to perform the retractions $C_j\wavy{}C'_j$ in a different order, given by a permutation $\sigma$ of $\{1,2,\ldots,r\}$, which yields another formula for $K^\otimes$, depending on the permutation $\sigma$.
Symmetrization over permutations $\sigma$ leads us to the next construction -- retraction between symmetric algebras. In the setting of the present work, we are more interested in the symmetric algebra of dual complexes.

The symmetric algebra of the dual complex $\Sym C^*$ is naturally a differential graded commutative algebra and can be seen as the algebra of polynomial functions on $C$. Induction data from $\Sym C^*$ to $\Sym C'^*$
can be constructed as follows:
\begin{equation}\label{HPT Sym}
i_\Sym = p^*,\quad p_\Sym=i^*, \quad K_\Sym=\int_0^1 ((1-t)+t\, ip+dt\, K)^* 
\end{equation}
Here asterisks stand for pullbacks.
The expression for $K_\Sym$ can be understood in terms of the diagram
$$
\begin{CD}
T[1] [0,1]\times C @>{\lambda=(1-t)+t\, ip+dt\, K}>> C  \\
@V\pi_2 VV \\
C
\end{CD}
$$
where $T[1][0,1]$ is the odd tangent bundle of the interval, with even coordinate $t$ and odd tangent coordinate $dt$. Map $\lambda$ has total degree $0$. Now $K_\Sym$ can be defined as a transgression
$$K_\Sym=(\pi_2)_* \lambda^*:\quad \Sym C^* \ra \Sym C^*$$

Same formulae can be used for more general classes of functions than polynomials (e.g. smooth functions) on $C$, $C'$.

\subsection{Deformations of induction data}\label{sec: deformations of ind data}
Given a retraction $C^\bt,d\wavy{(i,p,K)}C'^\bt,d'$, one can analyze the possible infinitesimal deformations of the induction data $(i,p,K)$, as solutions of the system (\ref{ind data equations}). It turns out (see e.g. \cite{DiscrBF,CSinvar}) that a general infinitesimal deformation $(i,p,K)\mapsto (i+\delta i, p+\delta p, K+\delta K)$ is a sum of the following deformations.
\begin{enumerate}[(I)]
\item \label{HPT deform I} Deformation of $K$, not changing $i$ and $p$:
\be \delta i=\delta p=0,\quad \delta K=d\Lambda-\Lambda d\label{ind data deformation I}\ee
with generator $\Lambda:C''^\bt_{\mr{exact}}\ra C''^{\bt-2}_{K-\mr{exact}}$, extended by zero to the first and third terms of the Hodge decomposition (\ref{HPT: Hodge decomp}).
\item \label{HPT deform II} Deformation of $i$, not changing $p$, and changing $K$ in the ``minimal'' way:
\be \delta i=dI+Id',\quad \delta p=0,\quad \delta K=-I\; p\label{ind data deformation II}\ee
with $I: C'^\bt\ra C''^{\bt-1}_{K-\mr{exact}}$.
\item \label{HPT deform III} Deformation of $p$, not changing $i$, and changing $K$ in the ``minimal'' way:
\be \delta i=0,\quad \delta p=d'P+Pd,\quad \delta K=-i\; P  \label{ind data deformation III}\ee
with $P: C''^\bt_\mr{exact}\ra C'^{\bt-1}$, extended by zero as in (\ref{HPT deform I}).
\item \label{HPT deform IV} Deformation, induced by an (infinitesimal) automorphism of $C',d$:
\be \delta i=i\; \chi,\quad \delta p=-\chi\; p,\quad \delta K=0  \label{ind data deformation IV}\ee
with $\chi:C'^\bt\ra C'^\bt$ a chain map.
\end{enumerate}

In other words, if $\mathfrak{I}$ is the space of all induction data $C\wavy{} C'$, then the tangent space to $\mathfrak{I}$ at a point $(i,p,K)$ splits as a direct sum of four subspaces described by (\ref{ind data deformation I}--\ref{ind data deformation IV}):
$$T_{(i,p,K)}\mathfrak{I}=T_{(i,p,K)}^I\mathfrak{I}\oplus T_{(i,p,K)}^{II}\mathfrak{I}\oplus T_{(i,p,K)}^{III}\mathfrak{I}\oplus T_{(i,p,K)}^{IV}\mathfrak{I}$$

Note that in case of retraction to cohomology, $C^\bt\wavy{}H^\bt(C)$, deformations (\ref{ind data deformation IV}) are prohibited by Assumption \ref{assumption on p}.

\begin{lemma}\label{lemma: space of ind data}
\leavevmode
\begin{enumerate}[(a)]
\item\label{lemma: space of ind data (a)} The space of induction data $C^\bt\wavy{}H^\bt(C)$ satisfying Assumption \ref{assumption on p} is \emph{contractible}.
\item\label{lemma: space of ind data (b)} The space $\mathfrak{I}$ of induction data $C^\bt\wavy{}C'^{\bt}$ for a general deformation retract  $C'$ (without making Assumption \ref{assumption on p}), is  homotopic to $\mr{Aut}(C'^\bt,d')$.
\end{enumerate}
\end{lemma}

\begin{proof}
For (\ref{lemma: space of ind data (a)}), note that induction data $C^\bt\wavy{}H^\bt(C)$ (and the corresponding Hodge decompositions) are in one-to-one correspondence with pairs of 
a right splitting of the short exact sequence
$C^\bt_\mr{exact}\hra C^\bt_\mr{closed}\proj H^\bt(C)$ and 
a left splitting of the short exact sequence $C^\bt_\mr{closed}\hra C^\bt \xra{d} C^{\bt+1}_\mr{exact}$. Thus, it is contractible as a product of contractible spaces (of one-sided inverses of linear inclusions and projections). 

For (\ref{lemma: space of ind data (b)}), note that once chain maps $i,p$ satisfying $p\circ i=\mr{id}$ are fixed, the space of choices of $K$ is contractible, by applying (\ref{lemma: space of ind data (a)}) to $\ker p \wavy{} 0$. Thus, $\mathfrak{I}$ contracts onto the space of choices of pairs $(i,p)$. Fixing some $(i_0,p_0)$, one can obtain any other $(i,p)$ by applying to $(i_0,p_0)$ transformations (\ref{ind data deformation II}), (\ref{ind data deformation III}) (note that these formulae describe not just infinitesimal but also finite transformations; the space of generators $I,P$ is a product of $\mr{Hom}$ spaces of linear spaces and is therefore contractible), and composing with an automorphism of $C'^\bt$ as a complex (which is the finite version of (\ref{ind data deformation IV})). This proves (\ref{lemma: space of ind data (b)}).
\end{proof}

Point (\ref{lemma: space of ind data (a)}) is particularly important for us when discussing quantization, as it will imply that the space of choices of gauge-fixing is contractible.


\section{Cellular abelian $BF$ theory on a closed manifold}\label{sec: closed}
\subsection{Classical theory in BV formalism}\label{sec: classical closed}
In this section we introduce the BV theory for the case at hand. This means an odd (degree $-1$) graded symplectic space together with an even
(degree $0$) function $S$ that Poisson commutes with itself. Such a function is usually called the BV action and the condition
$\{S,S\}=0$
 is called the classical master equation. In addition, we define the second order differential operator $\Delta$, called the BV Laplacian, that generates the Poisson bracket as the defect of the Leibniz identity and we show that the BV action also satisfies the quantum master equation 
 $\frac{1}{2} \{S,S\}-i\hbar \Delta S =0 $, where $\hbar$ is a parameter. The ``Planck constant'' $\hbar$ can be interpreted as the distance from the classical theory (or the strength of quantization). If $\hbar$ is a nonzero real number, instead of a formal parameter, the quantum master equation may also be equivalently written as $\Delta e^{\frac i\hbar S}=0$.

Let $M$ be a closed oriented 
piecewise-linear
$n$-manifold and $X$ a cellular decomposition of $M$.

Let also $E$ be a rank $m$ vector bundle over $M$ endowed with a fiberwise density $\mu_E$ and with a flat connection $\nabla_E$, such that the parallel transport by $\nabla_E$ preserves the density. One can view $E$ as the associated vector bundle $P\times_{G} \bR^m$ for some principal flat $G$-bundle $P$ with $G=SL_\pm (m,\bR)$ the group of $m\times m$ real matrices with determinant $1$ or $-1$.\footnote{A special case of this situation is a flat {\it Euclidean} vector bundle, i.e. with fiberwise scalar product $(,)_E$ and a flat connection preserving it. In this case the structure group reduces to $O(m)\subset SL_\pm(m)$.} By abuse of notations, let $E$ also stand for the corresponding $SL_\pm(m)$-local system on $X$ (i.e. we will suppress the subscripts in $E_X$ and $E^*_{X^\vee}$).

The space of fields is a $\bZ$-graded vector space $\F^\bt$, with degree $k$ component given by
\be \F^k=C^{k+1}(X,E)\oplus C^{k+n-2}(X^\vee,E^*) \label{cell BF fields}\ee
It is concentrated in degrees
$k\in \{-1,\ldots,n-1\}\cup \{2-n,\ldots,2\}$ and is
equipped with a degree $-1$ constant symplectic structure (the BV 2-form)
\be \omega:\quad \F^k\otimes \F^{1-k}\ra \bR \label{omega}\ee
coming from the intersection pairing, see (\ref{omega = <delta B,delta A>}) below.

Introduce the {\it superfields} -- the shifted identity maps pre-composed with projections from $\F$ to the two terms in the r.h.s. of (\ref{cell BF fields}):$$A:\quad \F\ra  C^\bt(X,E)[1] \ra C^\bt(X,E),\qquad  B:\quad \F\ra  C^\bt(X^\vee,E^*)[n-2] \ra C^\bt(X^\vee,E^*)$$
One can also regard 
$A$ and $B$ as coordinate functions on $\F$ taking values in cochains of $X$ and $X^\vee$, respectively, so that the pair $(A,B)$ is a {\it universal} coordinate on $\F$ (i.e. a complete coordinate system). 

We write $A=\sum_{e\subset X} e^* A_e$ -- sum over cells of $X$ of ``local'' superfields $A_e:\F\ra E_{\dot e}$, taking values in the fiber of the local system over the barycenter $\dot e$ of the corresponding cell; $e^*\in C^\bt(X,\bZ)$ stands for the standard basis integral cochain associated to the cell $e$. Similarly, for the second superfield one has $B=\sum_{e^\vee\subset X^\vee}  B_{e^\vee} (e^\vee)^*$, with local components $B_{e^\vee}:\F\ra E^*_{\dot e^\vee}$. (Note that our  convention for ordering the cochain and the superfield component is different between superfields $A$ and $B$.) Internal degrees of field components are $|A_e|=1-\dim e$, $|B_{e^\vee}|=n-2-\dim e^\vee$; in particular, the \emph{total degree} (cellular cochain degree + internal degree) for the superfield $A$ is $1$ and for $B$ is $(n-2)$.

In terms of superfields, the symplectic form (\ref{omega}) is defined as
\be \omega= \langle \delta B, \delta A \rangle\quad \in \Omega^2(\F)_{-1} \label{omega = <delta B,delta A>}\ee
where $\delta$ is the de Rham differential on the space of fields\footnote{
In the language of the variational bicomplex, $\delta$ is the ``vertical differential'' mapping $\Omega^p(\F)\ra \Omega^{p+1}(\F)$. It is formal and we stress its distiction from the ``horizontal differential'' $d$ -- the cellular coboundary operator on cochains of $X$ which does care about the adjacency of cells in $X$.
}
and
$\langle,\rangle: C^{n-\bt}(X^\vee,E^*)\otimes C^\bt(X,E)\ra \bR$ is the inverse of the intersection pairing (\ref{intersection pairing closed twisted}) for chains.\footnote{
We will use the sign convention where the graded binary operation $\langle,\rangle$ is understood as taking a cochain on $X^\vee$ from the left side and a cochain on $X$ from the right side. In other words, the mnemonic rule is that, for the sake of Koszul signs, the {\it comma} separating the inputs in $\langle b,a\rangle$ carries degree $-n$. This pairing is related to the one which operates from the left on two inputs coming from the right by $\langle b,a \rangle=(-1)^{n\deg b}\langle b,a \rangle'$.
}
The symplectic form $\omega$ induces the degree $+1$ Poisson bracket $\{,\}$ and the BV Laplacian\footnote{\label{footnote: Delta_can vs Delta} Recall that, generally, to define the BV Laplacian on {\it functions} (as opposed to half-densities) on an odd-symplectic manifold $\mathcal{M}$, one needs a volume element on $\mathcal{M}$ \cite{SchwarzBV}. In our case, the space of fields is linear, and so possesses a canonical (constant) volume element determined up to normalization. Since the BV Laplacian is not sensitive to rescaling the volume element by a constant factor, we have a preferred BV Laplacian.} $\Delta=\left\langle\frac{\dd}{\dd A},\frac{\dd}{\dd B}\right\rangle$ on the appropriate space of functions on $\F$ which we denote by $\Fun(\F)$. For the purpose of this paper we choose $\Fun(\F)=\widehat{\Sym^\bt}\F^*$ -- the algebra of polynomial functions on $\F$ completed to formal power series.\footnote{In the context of classical abelian $BF$ theory we could instead work with smooth functions on $\F$.}

\begin{remark}\label{rem: closed non-orientable case}
We can allow $M$ to be non-orientable as in Remark \ref{rem: orientability}: we twist the $B$-superfield by the orientation local system $\mr{Or}$ (which superfield to twist is an arbitrary choice). In this case the space of fields becomes $\F=C^\bt(X,E)[1]\oplus C^\bt(X^\vee,E^*\otimes \mr{Or})[n-2]$ and the intersection pairing depends on a choice of primitive top class $\sigma\in H_n(M,\mr{Or})$.
\end{remark}

The BV action of the model is
\be S=\langle B , dA \rangle \quad \in \Fun(\F)_0 \label{S}\ee
where $d$ is the 
coboundary operator in $C^\bt(X,E)$ twisted by the local system.
It satisfies the classical master equation
$$\{S,S\}=0$$
Indeed, the left hand side is $\{S,S\}=2\langle B, d^2 A\rangle=0$. Moreover, one has $\Delta S=\mr{Str}_{C^\bt(X,E)} d=0$ (the supertrace of the coboundary operator vanishes since $d$ changes degree). This implies that the quantum master equation is also satisfied:
\be \Delta e^{\frac{i}{\hbar} S}=0 \quad \Leftrightarrow \quad \frac{1}{2} \{S,S\}-i\hbar \Delta S =0 \label{QME}\ee

The Hamiltonian vector field corresponding to $S$ is the degree $+1$ linear map
$d_X+ d_{X^\vee}: \F\ra \F$
dualized to a map $\F^*\ra \F^*$ and extended to $\Fun(\F)$ as a derivation:
$$Q=\{S,\bt\}=\langle dA,\frac{\dd}{\dd A} \rangle+ \langle dB,\frac{\dd}{\dd B} \rangle$$

The Euler-Lagrange 
equations\footnote{The Euler--Lagrange equations describe the critical locus of $S$ or, equivalently, the zero locus of $Q$.
}
for (\ref{S}) read $dA=0$, $dB=0$. The space of solutions
$$\EL=C^\bt_\mr{closed}(X,E)[1]\oplus C^\bt_\mr{closed}(X^\vee,E^*)[n-2]\qquad \subset \F$$
is coisotropic in $\F$ and its reduction
$$\underline \EL= H^\bt(X,E)[1]\oplus H^\bt(X^\vee,E^*)[n-2]\quad = H^\bt(M,E)[1]\oplus H^\bt(M,E^*)[n-2]$$
 is independent of the cellular decomposition $X$. We will use it as the space of {\it {\zeromodes}} for quantization (in the sense that the partition function will be defined using the framework of effective BV actions, as a fiber BV integral over the space of fields as fibered over {\zeromodes}, cf. \cite{DiscrBF,CSinvar,1DCS,CMRpert}).

\subsection{Quantization}\label{sec: quantization closed}
Our goal in this section is to construct the 
 partition function $Z$ for cellular abelian $BF$ theory on a closed manifold $M$ with cell decomposition $X$, such that $Z$ is invariant under subdivisions of $X$. The 
 partition function will be constructed as a half-density
 on the space of \zeromodes\ $\underline \EL$ via a finite-dimensional  fiber BV integral.

Recall that, for a 
finite dimensional graded vector space $W^\bt$,
one can define the {\it determinant line}
$$\mr{Det}\;W^\bt :=  \bigotimes_{k} \left(\wedge^{\dim W^k} W^k\right)^{(-1)^k} $$
where for $L$ a line (i.e. a $1$-dimensional vector space), $L^{-1}$ denotes the dual line $L^*$. If furthermore $W^\bt$ is based, with $w^k=(w^k_1,\ldots,w^k_{N_k})$ a basis in $W^k$, then one has an associated element
\be\mu=\left(\bigotimes_{k\;\mr{even}}w^k_1\wedge\cdots \wedge w^k_{N_k}\right)\otimes \left(\bigotimes_{k\;\mr{odd}}(w^k_1)^*\wedge\cdots \wedge (w^k_{N_k})^*\right)\quad \in \mr{Det}\; W^\bt \label{coord density}\ee
where $(w^k)^*$ is the basis in $(W^*)^{-k}=(W^k)^*$ dual to $w^k$.

Tensoring the cellular basis in $C^k(X;\bZ)$ with
the standard  basis in $\bR^m$ (or any {\it unimodular} basis, i.e. one on which the standard density on $\bR^m$ evaluates to $1$),
we obtain a preferred basis in $C^\bt(X,E)$. Associated to it by the construction above is an element
$\mu_C \in \mr{Det}\;C^\bt(X,E)$ (well-defined modulo sign).

Passing to densities (for more details see Appendix \ref{AA} and \cite{MnevTorsions}), we have a canonical isomorphism
\be\Det\; C^\bt(X,E)\;/\{\pm 1\} \cong \Dens\;  C^\bt(X,E)[1] 
\xra{(\ast)^{\otimes 2}}
\Dens\;\F
\xra{\sqrt{\ast}}
\Dens^{\frac{1}{2}}\F  \label{det-dens}\ee
where on the r.h.s. we have half-densities on $\F$. 
The middle isomorphism comes from the fact that, for $W^\bt$ a graded vector space,
\be\Det\; (W\oplus W^*[-1])\cong (\Det\; W)^{\otimes 2}\label{Det(W+W^*)}\ee
Denote by $\mu_\F^{1/2}\in \Dens^{\frac{1}{2}}\F$ the image of $\mu_C$ under the isomorphism (\ref{det-dens}).\footnote{The superscript in $\mu_\F^{1/2}$ stands for both the weight of the density and for the square root.}

One can combine the action $S$ with $\mu_\F^{1/2}$ into 
a (coordinate-dependent) half-density $e^{\frac{i}{\hbar}S}\mu_\F^{1/2} 
$ which, as a consequence of (\ref{QME}), satisfies the equation
$$\Delta_\mr{can}(e^{\frac{i}{\hbar}S}\mu_\F^{1/2} )=0$$
where $\Delta_\mr{can}$ is the canonical BV Laplacian on half-densities \cite{Khudaverdian,Severa}.\footnote{The canonical BV Laplacian is related to the BV Laplacian $\Delta=\Delta_{\mu_\F}$ on functions 
by $\Delta_\can (f\, \mu_\F^{1/2})=\Delta(f)\,\mu_\F^{1/2}$, where $f\in \Fun(\F)$.}

\subsubsection{Gauge fixing, perturbative partition function}
Choose representatives of cohomology classes $\ii:H^\bt(M,E)\hra C^\bt_\mr{closed}(X,E)
$
and a right-splitting $\K: C^\bt_\mr{exact}(X,E)\ra C^{\bt-1}(X,E)$ of the short exact sequence
$$C^\bt_\mr{closed}(X,E)\ra C^\bt(X,E)\xra{d} C^{\bt+1}_\mr{exact}(X,E)$$
Thus we have a Hodge decomposition
\be C^\bt(X,E)=\underbrace{\ii(H^\bt(M,E))\oplus C^\bt_\mr{exact}(X,E)}_{C^\bt_\mr{closed}(X,E)}\oplus 
\;\mr{im}(\K)
\label{Hodge decomp closed case}\ee
We extend the domain of $\K$ to the whole of $C^\bt(X,E)$ by defining it to be zero on the first and third terms of (\ref{Hodge decomp closed case}).

Hodge decomposition (\ref{Hodge decomp closed case}) together with the dual decomposition\footnote{Here the second term on the r.h.s. is nondegenerately paired to the third term on the r.h.s. of (\ref{Hodge decomp closed case}) by Poincar\'e duality and vice versa; the first terms are paired between themselves. The map $\K^\vee_k: C^k(X^\vee,E^*)\ra C^{k-1}(X^\vee,E^*)$ is defined as the dual (transpose) of $\K_{n-k+1}: C^{n-k+1}(X,E)\ra C^{n-k}(X,E)$, up to the sign $(-1)^{n-k}$.}
\be C^\bt(X^\vee,E^*)=\ii^\vee(H^\bt(M,E^*))\oplus C^\bt_\mr{exact}(X^\vee,E^*)\oplus \mr{im}(\K^\vee)\label{dual Hodge decomp, closed}\ee
gives the symplectic splitting
$$\F=\ii(\underline\EL)\oplus \F_\mr{fluct}$$
and produces the Lagrangian subspace $\LL=\mr{im}(\K)[1]\oplus \mr{im}(\K^\vee)[n-2]\subset \F_\fluct$.

For half-densities on $\F$, we have
$$\Dens^{\frac12}\F\cong \Dens^{\frac12}\underline\EL\;\otimes\;\underbrace{\Dens^{\frac12}\F_\mr{fluct}}_{\cong \Dens\;\LL}$$
We are using the general fact \cite{Manin} that, for a Lagrangian subspace of an odd-symplectic space $L\subset V$, one has a canonical isomorphism $\Dens^{\frac12}V\ra \Dens\, L$ arising from (\ref{Det(W+W^*)}). 

\begin{remark}
We are free to rescale the  reference half-density $\mu_\F^{1/2}$ on fields by a factor  $\xi_\hbar$. The requirement of having the partition function invariant under subdivisions of $X$ can be achieved, as we will see in Section \ref{sec: normalization},  by introducing such a  factor $\xi_\hbar\in \bC$, which 
is a certain {\it extensive}\footnote{That is, a product over cells of $X$ of certain universal elementary factors, depending only on the dimension of the cell, see Lemma \ref{lemma: zeta = xi/xi} below.} product of powers of $i$, $\hbar$ and $2\pi$. 
\end{remark}

According to the BV quantization scheme, the gauge-fixed partition function on $X$ is defined as
the fiber BV integral\footnote{See \cite[Section 2.2.2]{CMRpert} for details on fiber BV integrals.}
\begin{multline}\label{Z fiber BV}
Z(X,E)=\int_\LL e^{\frac{i}{\hbar}S(\ii(\Azm)+\Afluct,\ii^\vee(\Bzm)+\Bfluct)} \;\xi_\hbar\; \mu_\F^{1/2}
= \\ =\int_\LL e^{\frac{i}{\hbar}S(A_\mr{fluct},B_\mr{fluct})}\;\xi_\hbar\;\mu_\F^{1/2}
\qquad
\in \bC\otimes \Dens^{\frac12}(\underline{\EL})\cong \bC\otimes \Det\; H^\bt(M,E)\;/\{\pm 1\}
\end{multline}
where $\Azm,\Bzm$ are the superfields for $\underline{\EL}$ and $\Afluct,\Bfluct$ are the superfields for $\F_\fluct$. By BV-Stokes' theorem for fiber BV integrals, the value of the integral is independent of the choice of $\ii,\K$.
A special feature of the model at hand is that the value of the integral is a constant (coordinate-independent) half-density. 

\begin{remark}
A Berezin measure $\mathfrak{m}$ on a superspace $V=(V^\mr{even},V^\mr{odd})$ is not exactly the same as a density $\mu$ on $V$. 
Indeed, for a parity-preserving automorphism of $V$, $g=\left(\begin{array}{cc} g^{even} & 0 \\ 0 & g^\mr{odd} \end{array}\right)$, with $g^\mr{even}\in GL(V^\mr{even})$, $g^\mr{odd}\in GL(V^\mr{odd})$, the Berezin integral behaves as
$$\int_V \mathfrak{m}\cdot f=\int_V \underbrace{|\det g^\mr{even}|\cdot (\det g^\mr{odd})^{-1}\;\mathfrak{m}}_{=:(g^{-1})_*\mathfrak{m}}\cdot g^*f$$
for $f\in \Fun(V)$ an integrable 
function. On the other hand, a density on $V$ transforms as
$$\mu\mapsto |\det g^\mr{even}|\cdot |\det g^\mr{odd}|^{-1}\;\mu$$
(see Appendix \ref{appendix: det lines, densities, torsion}). Thus, a Berezin measure changes its sign when acted on by an automorphism which changes the orientation of $V^\mr{odd}$, whereas a density does not. In this work we are calculating partition functions modulo signs,
so we can identify Berezin measures with densities.
\end{remark}

Integral (\ref{Z fiber BV}) is a conditionally convergent\footnote{Convergence is due to the fact that, by construction, the point $(A,B)=0$ is an isolated critical point of the action $S$ restricted to $\LL$.} Gaussian integral over a  finite dimensional superspace; we will show in Section \ref{sec: normalization}, Proposition \ref{prop: Z closed}, that its value 
is
\be Z(X,E)=\xi_\hbar^{H^\bt}\tau(X,E) \label{Z=tau}\ee
where $\tau(X,E)$ is the Reidemeister torsion (or, equivalently, ``$R$-torsion'', see, e.g. \cite{Milnor66, Turaev}) of the CW-complex $X$ with local system $E$ and the coefficient $\xi_\hbar^{H^\bt}\in\bC$ depends only on Betti numbers.
Note that the $R$-torsion for a non-acyclic local system is indeed an element of $\Det\; H^\bt(M,E)\;/\{\pm 1\}$, not a number. By the combinatorial invariance property of the $R$-torsion, the partition function (\ref{Z fiber BV}) depends only on the manifold $M$ and the local system $E$, but not on a particular cellular decomposition $X$.

In the special case of an acyclic local system, $H^\bt(M,E)=0$, the determinant line $\Det\;H^\bt\cong \bR$ is the trivial line and the partition function (\ref{Z fiber BV}) is an actual number, defined modulo sign.

Result (\ref{Z=tau}) can be viewed as a combinatorial version, generalized to possibly non-acyclic local systems, of the result of \cite{Schwarz79}, where analytic torsion was interpreted as a functional BV integral for abelian $BF$ theory.

\begin{remark} We note, anticipating the discussion of the non-abelian case in Section \ref{s:BVpfsheca}, that the partition function (\ref{Z=tau}) is invariant under simple-homotopy equivalence of cellular complexes (the equivalence relation generated by elementary expansions and collapses, see Definition \ref{def: simple-homotopy} below for a reminder), since the Reidemeister torsion is a simple-homotopy invariant, see \cite{Milnor66}.
\end{remark}

\subsubsection{The propagator}
\label{rem: propagator, closed case}
Denote by $p_1, p_2$ the cellular projections from the product CW-complex $X\times X^\vee$ (a cellular decomposition of $M\times M$) to the first and second factor, respectively.
Let $\KK\in C^{n-1}(X\times X^\vee,p_1^* E\otimes p_2^* E^*)$ be the {\it parametrix} for the operator $\K$, i.e. the image of $\K$ under the isomorphism
$$\underbrace{\mr{End}(C^\bt(X,E))_{-1}}_{\ni\;\; \K}\simeq \bigoplus_{k=0}^{n-1} C^k(X,E)\otimes C^{n-k-1}(X^\vee,E^*) \simeq \underbrace{C^{n-1}(X\times X^\vee,p_1^* E\otimes p_2^* E^*)}_{\ni\;\; \KK} $$
Here in the first isomorphism we use the Poincar\'e duality to identify $C^{n-k-1}(X^\vee,E^*)$ with the dual of $C^{k+1}(X,E)$. Then $\KK$ is the propagator of the theory, i.e. (up to a factor of $i\hbar$) the normalized expectation value of the product of evaluations of fluctuations of fields at two cells:
\begin{multline} i\hbar\, \KK(e,e^\vee)= \ll A_\fluct(e)\cdot B_\fluct(e^\vee)\gg:= \\ 
=\frac{1}{Z}\int_\LL e^{\frac{i}{\hbar}S(A,B)} A_\fluct(e)\cdot B_\fluct(e^\vee)\; \xi_\hbar \mu_\F^{1/2}\quad \in E_{\dot e}\otimes E^*_{\dot e^\vee} 
\label{propagator}\end{multline}
Here $e\subset X$, $e^\vee\subset X^\vee$ are two arbitrary cells; $E_{\dot e}$, $E^*_{\dot e^\vee}$ are the fibers of $E$, $E^*$ over the corresponding barycenters; $A_\fluct(e): \F_\fluct\ra E_{\dot e}$ and $B_\fluct(e^\vee): \F_\fluct\ra E^*_{\dot e^\vee}$ are the fluctuations of fields evaluated at the cells $e,e^\vee$. Propagator (\ref{propagator}) between two cells vanishes unless the relation $\dim e+\dim e^\vee=n-1$ holds.
Let furthermore $[h_\alpha]$ be a basis in cohomology $H^\bt(M,E)$, $[h^\vee_\alpha]$ the corresponding dual basis in $H^{n-\bt}(M,E^*)$, and let $\chi_\alpha=\ii [h_\alpha]$, $\chi^\vee_\alpha=\ii^\vee [h^\vee_\alpha]$ be the representatives of cohomology in cochains. Then, due to the equations (\ref{ind data equations}) satisfied by $\K$, we have the following  equations satisfied by the the parametrix:
\begin{multline}\label{propagator equations}
d_{X\times X^\vee} \KK = \sum_{e\subset X} e^*\otimes \varkappa(e)^*\otimes\mathbf{1}-\sum_\alpha \chi_\alpha\otimes \chi^\vee_\alpha, \\
\sum_{e'\subset X}\KK(e,\varkappa(e'))\cdot \chi_\alpha(e') = 0,\quad
\sum_{e'\subset X}\chi^\vee_\alpha(\varkappa(e'))\cdot\KK(e',e^\vee) = 0, \quad
\sum_{e'\subset X}\KK(e,\varkappa(e'))\cdot\KK(e',e^\vee) = 0
\end{multline}
Here $\mathbf{1}\in E_{\dot e}\otimes E^*_{\dot {\varkappa(e)}}$ is the element corresponding to the identity $\mr{id}\in \mr{End}(E_{\dot e})$; $\varkappa(e')$ is the cell dual to $e'$, as in Section \ref{sec: reminder on Poincare duality}. In the last three equations implicit in the notations is the convolution $\mr{tr}\,:\; E^*_{\dot{\varkappa(e')}}\otimes E_{\dot e}\ra \bR$.


\subsubsection{Fixing the normalization of densities}\label{sec: normalization}
Now we 
will focus on
the factors $i$, $\hbar$ and $2\pi$ coming from the Gaussian integral (\ref{Z fiber BV}) and will fix the normalization factors $\xi_\hbar$, $\xi_\hbar^{H^\bt}$ in (\ref{Z fiber BV},\ref{Z=tau}).

The model Gaussian integrals over a pair of even or odd variables are:
\be
\int_{\bR^2}dx\, dy\; e^{\frac{i}{\hbar}yx} = 2\pi\hbar,\qquad 
\int_{\Pi\bR^2}\D\theta\, \D\eta\; e^{\frac{i}{\hbar}\eta \theta}=\frac{i}{\hbar} 
\label{model integrals}
\ee
Note that the first integral is conditionally convergent. In the second integral
$\Pi\bR^2$ stands for the odd plane, with Grassmann coordinates $\theta,\eta$; we write ``$\D$'' in $\D\theta$, $\D\eta$ to emphasize that the Berezin integration measure is not a differential form.

More generally, for $\mathsf{B}$ an even non-degenerate bilinear form on superspace $\bR^{N|N'}$, we have
\be\int_{\bR^{2N|2N'}} 
\prod_{j=1}^N d x^j_\mr{even}\; d y^j_\mr{even}\cdot \prod_{j'=1}^{N'} \D x^{j'}_\mr{odd}\; \D y^{j'}_\mr{odd}\quad
e^{\frac{i}{\hbar}\mathsf{B}(\vec{y},\vec{x})}= (2\pi\hbar)^N \left(\frac{i}{\hbar}\right)^{N'} 
\mr{Sdet}(\mathsf{B})^{-1}
\label{model integral multi-dim}\ee
Note that, if $\mathsf{B}_\mr{even}$, $\mathsf{B}_\mr{odd}$ denote the even-even and odd-odd blocks of the matrix of $\mathsf{B}$, i.e., if $\mathsf{B}=
\left(\begin{array}{c|c} \mathsf{B}_\mr{even} & 0 \\ \hline 0 & \mathsf{B}_\mr{odd} \end{array}\right)
$, then we have $\mr{Sdet}(\mathsf{B})=\frac{{\det}\mathsf{B}_\mr{even}}{ {\det}\mathsf{B}_\mr{odd}}$.

Therefore, for the fiber BV integral in (\ref{Z fiber BV}), without the factor $\xi_\hbar$ (which is yet to be specified), we have the following.
\begin{lemma}\label{lemma: torsion as BV integral}
\be \int_\LL e^{\frac{i}{\hbar}S} \mu_\F^{1/2}= \zeta_\hbar \tau(X,E) \label{fiber BV int with zeta}\ee
where the factor is
\be \zeta_\hbar=(2\pi\hbar)^{\frac{1}{2}\dim\LL^\mr{even}}\left(\frac{i}{\hbar}\right)^{\frac{1}{2}\dim\LL^\mr{odd}}=
(2\pi\hbar)^{\dim\mr{im}(K)^\mr{odd}}\left(\frac{i}{\hbar}\right)^{\dim\mr{im}(K)^\mr{even}} \label{zeta_hbar}\ee
\end{lemma}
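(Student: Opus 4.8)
The plan is to evaluate the fiber BV integral (\ref{Z fiber BV}) directly, using the splitting of $\F$ into {\zeromodes} and fluctuations and the model Gaussian formulas (\ref{model integrals})--(\ref{model integral multi-dim}). First I would observe that, restricted to the Lagrangian $\LL=\mr{im}(\K)[1]\oplus\mr{im}(\K^\vee)[n-2]\subset \F_\fluct$, the action $S=\langle B, dA\rangle$ becomes a non-degenerate quadratic form: the coboundary operator $d$ restricts to an isomorphism $\mr{im}(\K)\xra{\sim} C^\bt_\mr{exact}(X,E)$ (this is the content of the Hodge decomposition (\ref{Hodge decomp closed case}) together with the equations (\ref{ind data equations})), and under Poincar\'e duality the factor $\mr{im}(\K^\vee)[n-2]$ of the $B$-field is paired with $C^\bt_\mr{exact}(X,E)[1]$, so $\langle B_\fluct, dA_\fluct\rangle$ pairs the two halves of $\LL$ non-degenerately. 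Hence the integral is a convergent Gaussian of exactly the type (\ref{model integral multi-dim}), with $\mathsf{B}$ being the bilinear form $(B_\fluct, A_\fluct)\mapsto \langle B_\fluct, dA_\fluct\rangle$ on $\LL$.

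\textbf{Key steps.} (1) Decompose $\LL$ into its even and odd parts. Since $A_\fluct$ ranges over $\mr{im}(\K)[1]$ and $B_\fluct$ over $\mr{im}(\K^\vee)[n-2]$, and these are dually paired, the contribution is $(2\pi\hbar)^{\#}\,(i/\hbar)^{\#'}$ times $\mr{Sdet}$ of the pairing matrix, where $\#=\dim\LL^\mr{even}/2$ and $\#'=\dim\LL^\mr{odd}/2$; by the dual pairing each ``$d$-block'' appears once in the count, giving $\#=\dim\mr{im}(\K)^\mr{odd}$ and $\#'=\dim\mr{im}(\K)^\mr{even}$ (the parities swap because the $A$-field is shifted by $1$). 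This yields the factor $\zeta_\hbar$ as written in (\ref{zeta_hbar}). (2) Identify the remaining determinant factor. The reference half-density $\mu_\F^{1/2}$ was built from the canonical cellular-times-unimodular basis of $C^\bt(X,E)$ via (\ref{det-dens}); the super-determinant of $d|_{\mr{im}(\K)}$ computed with respect to the induced bases on $\mr{im}(\K)$ and $C^\bt_\mr{exact}(X,E)$, combined with the choice of cohomology representatives $\ii$, is by definition (see Appendix \ref{appendix: det lines, densities, torsion} and \cite{MnevTorsions, Milnor66, Turaev}) the Reidemeister torsion $\tau(X,E)$, regarded as an element of $\Det\, H^\bt(M,E)/\{\pm1\}$. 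Here it is essential that the intersection pairing used to define $\mu_\F^{1/2}$ is the same Poincar\'e pairing (\ref{intersection pairing closed twisted}) used to identify $\mr{im}(\K^\vee)$ with the dual of $C^\bt_\mr{exact}(X,E)$, so no spurious factors appear.

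\textbf{Main obstacle.} The routine part is the bookkeeping of powers of $2\pi$, $i$, $\hbar$ and the matching of even/odd dimension counts; the genuine content is the second step: showing that the bare super-determinant produced by the Gaussian integral is exactly the torsion and not the torsion times some basis-dependent scalar. I expect the main subtlety to be tracking all the sign and Koszul-sign conventions (the degree-$-n$ comma in $\langle\,,\,\rangle$, the shifts $[1]$ and $[n-2]$, the sign $(-1)^{n-k}$ in the definition of $\K^\vee$, and the sign in (\ref{dual ind data})), and checking that these conspire so that the answer is sign-independent, which is consistent with our convention of defining $Z$ only up to sign. Once the bare integral is identified with $\zeta_\hbar\,\tau(X,E)$, the combinatorial invariance of $\tau$ (and the subsequent choice of $\xi_\hbar$ in Section \ref{sec: normalization} to absorb the $X$-dependence of $\zeta_\hbar$) gives the statement.
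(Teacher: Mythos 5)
Your proposal is correct and follows essentially the same route as the paper: restrict the quadratic action to $\LL$, evaluate the Gaussian via the model integrals (\ref{model integral multi-dim}) to extract $\zeta_\hbar$ (with the parity swap from the degree shift, as you note), and identify the remaining super-determinant of $d:\mr{im}(\K)\to C^\bt_\mr{exact}(X,E)$, taken relative to the cellular reference density, with $\tau(X,E)$ by the very definition of the $R$-torsion. The paper's proof merely makes your second step explicit by choosing bases of the Hodge summands normalized so that $\mu_H\cdot\mu_\mr{ex}\cdot\mu_\mr{coex}=\mu_C$, which is the same observation you phrase via the compatibility of $\mu_\F^{1/2}$ with the Poincar\'e pairing.
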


\begin{proof} Choose some bases for all terms in the r.h.s. of (\ref{Hodge decomp closed case}): a basis $c_H$ in cohomology $H^\bt(X,E)$, $c_\mr{ex}$ in $C^\bt_\mr{exact}$ and $c_\mr{coex}$ in $\mr{im}(K)$. We can assume that the product of the corresponding coordinate densities agrees with the density $\mu_C\in \Det\; C^\bt(X,E)/\{\pm 1\}$ associated to the cellular basis in cochains:
\be \mu_H\cdot\mu_\mr{ex}\cdot\mu_\mr{coex}=\mu_C \label{mu_H mu_ex mu_coex}\ee
This can always be arranged,  e.g., by rescaling one of the basis vectors in $c_\mr{ex}$.

We have dual bases $c_H^\vee,c_\mr{ex}^\vee,c_\mr{coex}^\vee$ on the terms of the dual Hodge decomposition (\ref{dual Hodge decomp, closed}) for $C^\bt(X^\vee,E^*)$. The corresponding densities $\mu_H^\vee\in\Dens\; H^\bt(X^\vee,E^*)[n-2]$, $\mu_\mr{ex}^\vee\in\Dens\; C^\bt_\mr{ex}(X^\vee,E)[n-2]$, $\mu_\mr{coex}^\vee\in \Dens\; \mr{im}(K^\vee)[n-2]$ are related to the ones on the l.h.s. of (\ref{mu_H mu_ex mu_coex}) by
$$\mu_H^\vee=\mu_H,\quad \mu_\mr{coex}^\vee=\mu_\mr{ex},\quad \mu_\mr{ex}^\vee=\mu_\mr{coex}$$

The integral on the l.h.s. of (\ref{fiber BV int with zeta}) yields
\be\int e^{\frac i\hbar \langle B_\mr{coex},d A_\mr{coex}\rangle}\underbrace{\D A_\mr{coex}}_{\mu_\mr{coex}}\; \underbrace{\D B_\mr{coex}}_{\mu^\vee_\mr{coex}}\;\mu_H=\zeta_\hbar\cdot \mu_H\cdot \mr{Sdet}_{C^\bt_\mr{coex}\ra C^{\bt+1}_\mr{ex}}(d)\label{BV int = torsion}\ee
where the super-determinant appearing on the r.h.s., 
$$\mr{Sdet}(d)=\prod_{k=0}^{n-1}({\det}_{C^k_\mr{coex}\ra C^{k+1}_\mr{ex}}(d))^{(-1)^k}$$ 
is the alternating product of determinants of matrices of isomorphisms $d: C^k_\mr{coex}(X,E)\ra C^{k+1}_\mr{ex}(X,E)$ with respect to the chosen bases $c_\mr{coex}$, $c_\mr{ex}$.
In the last two terms on the r.h.s. of (\ref{BV int = torsion}) one recognizes one of the definitions of $R$-torsion. The coefficient $\zeta_\hbar$ arises as in (\ref{model integral multi-dim}).
\end{proof}

Consider the Hilbert polynomial, packaging the information on dimensions of cochain spaces into a generating function:
\be \Ppol_{C^\bt}(t)=\sum_{k=0}^n t^k\cdot \dim C^k(X,E) \label{P_C}\ee
and the polynomial, counting dimensions of $K$-exact cochains by degree:
$$\Qpol(t)=\sum_{k=0}^n t^k\cdot \dim\, \mr{im}(K)^k $$

Hodge decomposition (\ref{Hodge decomp closed case}) implies the relation $\Ppol_{C^\bt}(t)=\Ppol_{H^\bt}(t)+(1+t)\cdot \Qpol(t)$, or equivalently
\be \Qpol(t)=\frac{\Ppol_{C^\bt}(t)-\Ppol_{H^\bt}(t)}{1+t} \label{Q via P}\ee
where
\be\label{P_H}\Ppol_{H^\bt}(t)=\sum_{k=0}^n t^k\cdot \dim H^k(M,E)\ee
Note that $\Ppol_{C^\bt}(-1)=\Ppol_{H^\bt}(-1)$ is the Euler characteristic $\chi(C^\bt(X,E))=\mr{rk}(E)\cdot\chi(M)$, and hence there is no singularity on the r.h.s. of (\ref{Q via P}).

Exponents in (\ref{zeta_hbar}) can be expressed in terms of values of $\Qpol$ at $t=\pm 1$:
\begin{multline}\label{dim im(K) via P,Q}
\dim\mr{im}(K)^\mr{even}=\frac{\Qpol(1)+\Qpol(-1)}{2}=\frac{1}{4}(\Ppol_{C^\bt}(1)-\Ppol_{H^\bt}(1))+ \frac{1}{2}(\Ppol'_{C^\bt}(-1)-\Ppol'_{H^\bt}(-1))\\
\dim\mr{im}(K)^\mr{odd}=\frac{\Qpol(1)-\Qpol(-1)}{2}=\frac{1}{4}(\Ppol_{C^\bt}(1)-\Ppol_{H^\bt}(1))- \frac{1}{2}(\Ppol'_{C^\bt}(-1)-\Ppol'_{H^\bt}(-1))
\end{multline}
where prime stands for the derivative in $t$ (emerging from evaluating $\Qpol(-1)$ by applying L'H\^{o}pital's rule to the r.h.s. of (\ref{Q via P})). 

\begin{lemma} \label{lemma: zeta = xi/xi}
One can split the coefficient in (\ref{fiber BV int with zeta}) as
\be\zeta_\hbar=\frac{\xi_\hbar^{H^\bt}}{\xi_\hbar} \label{zeta via xi}\ee
with
\be\xi_\hbar=\prod_{k=0}^n (\xi_\hbar^k)^{\dim C^k(X,E)},\qquad \xi_\hbar^{H^\bt}=\prod_{k=0}^n (\xi_\hbar^k)^{\dim H^k(M,E)}\label{xi_hbar}\ee
where we denoted
\be \xi_\hbar^k= (2\pi \hbar)^{-\frac14+\frac12\, k\,(-1)^{k-1}}(e^{-\frac{\pi i}{2}}\hbar)^{\frac14+\frac12 \,k\,(-1)^{k-1}} \label{xi_hbar^k}\ee
\end{lemma}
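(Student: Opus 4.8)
The statement is a verification rather than a genuine theorem: the exponents $-\tfrac14+\tfrac12 k(-1)^{k-1}$ and $\tfrac14+\tfrac12 k(-1)^{k-1}$ appearing in the elementary factor $\xi_\hbar^k$ of (\ref{xi_hbar^k}) were reverse-engineered precisely so that the extensive product $\xi_\hbar$ over the cells of $X$ absorbs the non-topological part of $\zeta_\hbar$. So the plan is to express both sides of (\ref{zeta via xi}) in terms of the same combinatorial data, namely the difference of Hilbert polynomials $\Ppol_{C^\bt}(t)-\Ppol_{H^\bt}(t)$, and then to compare the $(2\pi\hbar)$- and $(i/\hbar)$-powers cell-degree by cell-degree.

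Concretely, I would set $c_k:=\dim C^k(X,E)-\dim H^k(M,E)$, so that $\Ppol_{C^\bt}(t)-\Ppol_{H^\bt}(t)=\sum_{k=0}^n c_k t^k$; by (\ref{Q via P}) this equals $(1+t)\Qpol(t)$, and its vanishing at $t=-1$ is the Euler characteristic identity noted after (\ref{P_H}). Evaluating at $t=1$ gives $\Ppol_{C^\bt}(1)-\Ppol_{H^\bt}(1)=\sum_k c_k$, and differentiating at $t=-1$ gives $\Ppol'_{C^\bt}(-1)-\Ppol'_{H^\bt}(-1)=\sum_k k\,c_k(-1)^{k-1}$. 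Substituting these into (\ref{dim im(K) via P,Q}) yields
$$\dim\mr{im}(K)^{\mr{even}}=\sum_k c_k\Bigl(\tfrac14+\tfrac12 k(-1)^{k-1}\Bigr),\qquad \dim\mr{im}(K)^{\mr{odd}}=\sum_k c_k\Bigl(\tfrac14-\tfrac12 k(-1)^{k-1}\Bigr).$$
The point to record here is that the coefficient of $c_k$ in $\dim\mr{im}(K)^{\mr{even}}$ is exactly the $(e^{-\pi i/2}\hbar)$-exponent of $\xi_\hbar^k$, while the coefficient of $c_k$ in $\dim\mr{im}(K)^{\mr{odd}}$ is exactly minus the $(2\pi\hbar)$-exponent of $\xi_\hbar^k$.

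To finish, I would plug the two displayed dimension formulas into the expression (\ref{zeta_hbar}) for $\zeta_\hbar$ and collect the powers degree by degree, obtaining $\zeta_\hbar=\prod_k\bigl[(2\pi\hbar)^{-a_k}(i/\hbar)^{b_k}\bigr]^{c_k}$ where $a_k,b_k$ denote the two exponents of $\xi_\hbar^k$. On the other side, $\xi_\hbar^{H^\bt}/\xi_\hbar=\prod_k(\xi_\hbar^k)^{\dim H^k(M,E)-\dim C^k(X,E)}=\prod_k(\xi_\hbar^k)^{-c_k}=\prod_k\bigl[(2\pi\hbar)^{-a_k}(e^{-\pi i/2}\hbar)^{-b_k}\bigr]^{c_k}$. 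Since $e^{-\pi i/2}=-i=i^{-1}$, one has $(e^{-\pi i/2}\hbar)^{-1}=(-i\hbar)^{-1}=i/\hbar$, so $(e^{-\pi i/2}\hbar)^{-b_k}=(i/\hbar)^{b_k}$, and the two products coincide term by term, which is (\ref{zeta via xi}).

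I do not expect any real obstacle; the content is entirely the bookkeeping of matching exponents. The only two points that need care are the degree-shift conventions already built into (\ref{zeta_hbar}), which is what converts $\tfrac12\dim\LL^{\mr{even}/\mr{odd}}$ into $\dim\mr{im}(K)^{\mr{odd}/\mr{even}}$, and the phase convention $e^{-\pi i/2}=-i$, which relates the $(e^{-\pi i/2}\hbar)$-factor in the definition of $\xi_\hbar^k$ to the $(i/\hbar)$-factor produced by the odd Berezin integral in (\ref{model integrals}).
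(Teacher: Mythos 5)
Your proposal is correct and follows essentially the same route as the paper: the paper's proof also combines the expression (\ref{zeta_hbar}) for $\zeta_\hbar$ with the identities (\ref{dim im(K) via P,Q}) and then matches exponents, only writing the bookkeeping in terms of $\Ppol_{C^\bt}(1)$, $\Ppol'_{C^\bt}(-1)$ (and their cohomological counterparts) rather than expanding degree by degree via $c_k=\dim C^k-\dim H^k$ as you do. The only difference is presentational, and your remark about the phase convention $e^{-\pi i/2}\hbar\leftrightarrow i/\hbar$ is the same convention the paper relies on implicitly.
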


\begin{proof}
Indeed, (\ref{zeta_hbar}) together with (\ref{dim im(K) via P,Q}) implies that one can write $\zeta_\hbar=\xi_\hbar^{H^\bt}/\xi_\hbar$ with
$$\xi_\hbar=\xi_\hbar^{C^\bt}=(2\pi\hbar)^{-\frac{1}{4}\Ppol_{C^\bt}(1)+\frac{1}{2}\Ppol'_{C^\bt}(-1)} \left(e^{-\frac{\pi i}{2}}\hbar\right)^{\frac{1}{4}\Ppol_{C^\bt}(1)+\frac{1}{2}\Ppol'_{C^\bt}(-1)}$$
and $\xi_\hbar^{H^\bt}$ given by the same formula, replacing cochains by cohomology. Then formulae (\ref{xi_hbar}) follow immediately from the definitions (\ref{P_C},\ref{P_H}) of $\Ppol_{C^\bt}(t)$, $\Ppol_{H^\bt}(t)$.
\end{proof}

Note that $\xi_\hbar$ is a product over cells of $X$ of factors depending only on dimension of the cell. We define the normalized density
$$\mu_\hbar:=\xi_\hbar\cdot \mu\quad \in \bC\otimes\Det\; C^\bt(X,E)/\{\pm 1\}$$
which can be seen as a product of normalized elementary densities for individual cells of $X$,
\be \mu_\hbar=\prod_{e\subset X}\underbrace{(\xi_\hbar^{\dim e})^{\mr{rk}E}\cdot\D A_e}_{=:\D_\hbar A_e}
\label{normalized density}\ee
with $\D A_e\in\Dens\; E_{\dot e}[1-\dim e]
$ the elementary density for the cell $e$, associated to 
a unimodular basis in the fiber of $E$ over the barycenter $\dot e$ of $e$.

On the other hand, $\xi_\hbar^{H^\bt}$ depends exclusively on Betti numbers of cohomology, and as such is manifestly independent under subdivisions of $X$. In particular, for an acyclic local system, $\xi_\hbar^{H^\bt}=1$.
Summarizing this discussion, the result (\ref{fiber BV int with zeta}) can be rewritten as follows.
\begin{Proposition}\label{prop: Z closed}
The perturbative partition function (\ref{Z fiber BV})
with normalization of integration measure fixed by (\ref{normalized density}) is
\be Z(X,E)=\int_\LL e^{\frac{i}{\hbar}S(A,B)}(\mu_\F^\hbar)^{1/2}= 
\xi_\hbar^{H^\bt}\cdot \tau(X,E)\qquad \in \bC\otimes \Dens^{\frac12}(\underline\EL) \label{fiber BV int with xi^H}\ee
with $\xi_\hbar^{H^\bt}$ given by (\ref{xi_hbar}), (\ref{xi_hbar^k}). Here the normalized half-density on the space of fields is $(\mu_\F^\hbar)^{1/2}=\sqrt{(\mu_\hbar)^{\otimes 2}}|_\LL=\xi_\hbar\cdot \mu_\F^{1/2}$. The partition function $Z$ is independent of the details of gauge-fixing and is invariant under subdivisions of $X$.
\end{Proposition}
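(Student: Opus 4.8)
The plan is to derive the formula by assembling Lemma~\ref{lemma: torsion as BV integral} and Lemma~\ref{lemma: zeta = xi/xi} with the bookkeeping of the normalized reference half-density, and then to read off the two invariance statements from the resulting closed expression. First I would unwind the normalization: by (\ref{normalized density}) and the discussion following it, the normalized half-density on fields is $(\mu_\F^\hbar)^{1/2}=\xi_\hbar\cdot\mu_\F^{1/2}$, where $\xi_\hbar\in\bC$ is the extensive constant (\ref{xi_hbar})--(\ref{xi_hbar^k}). Since $\xi_\hbar$ is a scalar it pulls out of the fiber BV integral, so
\[
Z(X,E)=\int_\LL e^{\frac{i}{\hbar}S(A,B)}(\mu_\F^\hbar)^{1/2}=\xi_\hbar\int_\LL e^{\frac{i}{\hbar}S(A,B)}\,\mu_\F^{1/2}.
\]
Applying Lemma~\ref{lemma: torsion as BV integral} to the remaining integral turns this into $\xi_\hbar\,\zeta_\hbar\,\tau(X,E)$, and substituting $\zeta_\hbar=\xi_\hbar^{H^\bt}/\xi_\hbar$ from (\ref{zeta via xi}) of Lemma~\ref{lemma: zeta = xi/xi} gives $Z(X,E)=\xi_\hbar^{H^\bt}\cdot\tau(X,E)$, which is the claimed (\ref{fiber BV int with xi^H}).

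Next I would verify that the right-hand side is an honest element of $\bC\otimes\Dens^{\frac12}(\underline\EL)\cong\bC\otimes\Det H^\bt(M,E)/\{\pm1\}$. The only point to check is that the quantity called $\tau(X,E)$ --- produced in the proof of Lemma~\ref{lemma: torsion as BV integral} as $\mu_H\cdot\mr{Sdet}(d)$ for the isomorphisms $d\colon C^k_\mr{coex}(X,E)\to C^{k+1}_\mr{ex}(X,E)$ --- is intrinsic. Here I would note that the \emph{same} bases of $C^\bt_\mr{exact}$ and of $\mr{im}(\K)$ enter the source and target of these maps through the Hodge decompositions (\ref{Hodge decomp closed case}), (\ref{dual Hodge decomp, closed}), so $\mr{Sdet}(d)$ transforms inversely to the Jacobian of any change of those bases; hence $\mu_H\cdot\mr{Sdet}(d)$ depends only on the chosen basis of $H^\bt$, and that dependence is precisely the transformation law of an element of $\Det H^\bt(M,E)$. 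Thus $\tau(X,E)$ is the Reidemeister ($R$-)torsion of $(X,E)$ living in the determinant line, defined up to sign, and the Gaussian integral (\ref{Z fiber BV}) converges conditionally because $S|_\LL$ is a nondegenerate quadratic form by construction of $\LL$.

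Independence of the gauge-fixing data then follows at once from the final formula: $\xi_\hbar^{H^\bt}$ is built only out of the Betti numbers $\dim H^k(M,E)$, and $\tau(X,E)\in\Det H^\bt(M,E)/\{\pm1\}$ is canonically attached to the pair $(X,E)$, so neither factor sees the choice of representatives $\ii$ or of the chain homotopy $\K$. Equivalently, I could appeal to the BV--Stokes theorem for fiber BV integrals together with the contractibility of the space $\mathfrak{I}^\mr{marked}$ of marked induction data from Section~\ref{sec: deformations of ind data}, which gives independence of (\ref{Z fiber BV}) on $\ii,\K$ directly.

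Finally, invariance under subdivisions of $X$: the factor $\xi_\hbar^{H^\bt}$ depends only on the $E$-twisted Betti numbers of $M$, which do not change when $X$ is subdivided, and $\tau(X,E)$ is subdivision-invariant by the combinatorial (simple-homotopy) invariance of $R$-torsion \cite{Milnor66,Turaev}; hence $Z(X,E)=\xi_\hbar^{H^\bt}\cdot\tau(X,E)$ depends only on $M$ and the local system $E$. I do not expect a substantive obstacle anywhere --- all the analytic work is already contained in Lemmas~\ref{lemma: torsion as BV integral} and~\ref{lemma: zeta = xi/xi} --- so the one step demanding genuine care is the density/determinant-line bookkeeping of the second paragraph, namely confirming that the finite-dimensional Gaussian integral really outputs the classical $R$-torsion together with its determinant-line nature, rather than a basis-dependent number.
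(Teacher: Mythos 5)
Your proposal is correct and follows essentially the same route as the paper: Proposition~\ref{prop: Z closed} is obtained there precisely by combining Lemma~\ref{lemma: torsion as BV integral} with the splitting $\zeta_\hbar=\xi_\hbar^{H^\bt}/\xi_\hbar$ of Lemma~\ref{lemma: zeta = xi/xi}, with gauge-fixing independence via BV--Stokes and subdivision invariance via the combinatorial invariance of the $R$-torsion together with the fact that $\xi_\hbar^{H^\bt}$ depends only on Betti numbers. Your extra check that $\mu_H\cdot\mr{Sdet}(d)$ is basis-independent and hence an element of $\Det H^\bt(M,E)/\{\pm 1\}$ is exactly what the paper subsumes in the phrase ``one recognizes one of the definitions of $R$-torsion.''
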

Formula (\ref{fiber BV int with xi^H}) is indeed just the formula (\ref{Z=tau}), where we have identified the factor in front of the torsion by (\ref{xi_hbar}).

For the later use, alongside with the notation $\D_\hbar A_e$ introduced in (\ref{normalized density}), we also introduce the notation 
\begin{equation}\label{D_h B}
\D_\hbar B_{e^\vee}:=(\xi_\hbar^{n-\dim e^\vee})^{\mr{rk}E} \D B_{e^\vee}
\end{equation}
-- the normalized elementary density for the field $B$, associated to a cell $e^\vee\subset X^\vee$ of the dual CW-complex; $\D B_{e^\vee}\in \Dens\,E^*_{\dot e^\vee}[n-2-\dim e^\vee]$ is the elementary density associated to a unimodular basis in the fiber of $E^*$ over the barycenter of $e^\vee$. With these definitions, for the normalized half-density on bulk fields, appearing in (\ref{fiber BV int with xi^H}), we have
$$(\mu_\F^\hbar)^{1/2}=\prod_{e\subset X}(\D_\hbar A_e)^{1/2} (\D_\hbar B_{\varkappa(e)})^{1/2}$$
with $\varkappa(e)\subset X^\vee$ the dual cell for $e$, as in Section \ref{sec: reminder on Poincare duality}.

\begin{remark}[Phase of the partition function]\label{rem: phase}
By the discussion above, in the case of a non-acyclic local system $E$, the partition function $Z(X,E)$ attains a nontrivial complex phase of the form $e^{\pi i s/8}\in U(1)/\{\pm 1\}$, with
\be s=\sum_k (-1+2k\,(-1)^{k})\cdot\dim H^k(M,E)\mod 8 \label{phase s}\ee
(we do not take it $\mr{mod}\, 16$, since we anyway only define the partition function modulo sign). This looks surprising, since the model integrals (\ref{model integrals}) contain simpler phases (integer powers of $e^{\frac{\pi i}{2}}$). The complicated phase arises because we split the factor $\zeta_\hbar$ in (\ref{fiber BV int with zeta}), which contains only a simple phase, into a factor with cellular locality and a factor depending only on cohomology (\ref{zeta via xi}).
\end{remark}

\begin{remark}
For a closed  manifold of dimension $\dim M=3\;\mr{mod}\; 4$, the phase of the partition function is trivial, $e^{\pi i s/8}=\pm1$, as follows from (\ref{phase s}) and from Poincar\'e duality.
\end{remark}

\begin{remark}[Normalization ambiguities]
One can change the definition (\ref{xi_hbar}) of $\xi_\hbar$ by a factor $\exp(\nu \underbrace{\Ppol(-1)}_{\chi(C^\bt(X,E))})$ with $\nu\in\bC$ a parameter. Performing the rescaling
\be \xi_\hbar\wavy{} e^{\nu \Ppol_{C^\bt}(-1)}\cdot\xi_\hbar, \qquad \xi_\hbar^{H^\bt}\wavy{} e^{\nu \Ppol_{H^\bt}(-1)}\cdot\xi_\hbar^{H^\bt}\label{xi_hbar rescaling}\ee
(or, equivalently, redefining $\xi_\hbar^k\wavy{}\xi_\hbar^k\cdot e^{(-1)^k\nu}$) does not change the quotient (\ref{zeta via xi}). For example, one can choose $\nu=\frac{i\pi}{8}$, which has the effect of changing the phase of the partition function of Remark \ref{rem: phase} from $e^{\pi i s/8}$ to $e^{\pi i s'/4}$ with
$$s'= \sum_k \left(-\frac{1-(-1)^k}{2}+k\,(-1)^{k}\right)\cdot\dim H^k(M,E)\mod 4$$
Another ambiguity in the phase of Remark \ref{rem: phase} stems from the possibility to change values of the model integrals (\ref{model integrals}) by some integral powers of $e^{2\pi i}$, which results in the shift of $s$ in (\ref{phase s}) by a multiple of $4\cdot\sum_k\dim H^k(M,E)$. Since we only consider $s\;\mr{mod}\; 8$, this shift can be viewed as a special case of the transformation (\ref{xi_hbar rescaling}), with $\nu=r\cdot\frac{i\pi}{2}$ for some $r\in\bZ$.  
\end{remark}

\section{Cellular abelian $BF$ theory on manifolds with boundary: classical theory}\label{sec: classical cell ab BF on mfd with bdry}

\subsection{Classical theory on a cobordism}

Let $M$ be a compact oriented piecewise-linear $n$-manifold with boundary $\dd M=\overline{M_\din}\sqcup M_\dout$. Overline indicates that we take $M_\din$ with the orientation opposite to that induced from $M$, whereas the orientation of $M_\dout$ agrees with that of $M$. Let $(E,\mu_E,\nabla_E)$ be a flat vector bundle over $M$ of rank $m$ with a horizontal fiberwise density $\mu_E$, and let $X$ be a cellular decomposition of $M$.

We define the space of fields to be the graded vector space
\be \F=C^\bt(X,E_X)[1]\oplus C^\bt(X^\vee,E^*_X)[n-2] \label{cell BF fields on a cobordism}\ee
where $X^{\vee}$ is defined as in Section \ref{sec: Lefschetz for cell decomp, cobordism}, $E_X$ is the cellular $SL_\pm(m)$-local system on $X$ induced by the vector bundle $E$, and $E^*_X$ is the dual local system on the dual cellular decomposition (cf. Section \ref{sec: local systems on the dual cell decomp}). We will suppress the local system in the notation for fields onwards: cochains on $X$ are always taken with coefficients in $E_X$, cochains on $X^\vee$ -- with coefficients in $E^*_X$.

The space of fields is equipped with a 
constant pre-symplectic structure of degree $-1$,
$$\omega: \F^k\otimes \F^{1-k}\ra \bR$$
which is degenerate if and only if $\dd M$ is non-empty.
We construct $\omega$ by 
combining
the non-degenerate pairing
\be  C^{n-\bt}(X^{\vee},X_\din^\vee)\otimes C^\bt(X,X_\dout)\ra \bR \label{nondeg pairing cochains}\ee
(the inverse of intersection pairing (\ref{Lefschetz for chains mixed})) with the zero maps
$$C^{n-\bt}(X^{\vee})\otimes C^\bt(X_\dout) \xra{0} \bR,\qquad  C^{n-\bt}(X^\vee_\din) \otimes C^\bt(X) \xra{0}\bR$$
to a 
pairing
\be \langle,\rangle:\quad  C^{n-\bt}(X^{\vee})\otimes C^\bt(X) \ra \bR \label{degen pairing for cochains}\ee
In terms of superfields
\be A: \F\ra C^\bt(X),\quad B: \F\ra C^\bt(X^{\vee}) \label{superfields}\ee
the presymplectic form is
$$\omega=\langle \delta B, \delta A\rangle\quad \in \Omega^2(\F)_{-1}$$

The space of boundary fields is defined as
\be \F_\dd=C^\bt(X_\dd)[1]\oplus C^\bt(X_\dd^\vee)[n-2] \label{cell boundary fields}\ee
(
with coefficients in
the pullback of the local system to the boundary); a (non-degenerate) degree $0$ symplectic form (the {\it BFV 2-form}) on $\F_\dd$ is given by
\begin{eqnarray}
\omega_\dd&=&\delta \alpha_\dd=\langle \delta B_\dout , \delta A_\dout \rangle_\dout - \langle \delta B_\din, \delta A_\din\rangle_\din\quad \in \Omega^2(\F_\dd)_{0}, \label{omega_bdry}\\
\alpha_\dd&=&\langle  B_\dout , \delta A_\dout \rangle_\dout - \langle \delta B_\din, A_\din\rangle_\din\quad \in \Omega^1(\F_\dd)_0 \label{alpha_bdry}
\end{eqnarray}
where $\langle,\rangle_{\mr{in/out}}: C^{n-1-k}(X_{\mr{in/out}}^\vee)\otimes C^k(X_{\mr{in/out}})\ra \bR$ is the inverse intersection pairing on the in/out boundary. Boundary superfields $A_\dd=(A_\din,A_\dout),B_\dd=(B_\din,B_\dout)$ in the formulae above are defined similarly to (\ref{superfields}). 
The projection
\be \pi:\F\twoheadrightarrow \F_\dd\label{pi}\ee
is defined as $\pi=\iota^*\oplus(\iota^\vee)^*$, where $\iota^\vee$ is defined to be $\iota_+$ for in-boundary and $\iota_-$ for the out-boundary (cf. Section \ref{sec: Lefschetz for cell decomp} for definition of cellular inclusions $\iota_\pm:X_{\mr{in/out}}\ra X$), i.e. cochains of $X$ are restricted to $X_\dd$, whereas cochains of $X^{\vee}$ are first restricted to $\dd \widetilde M$ (i.e. $M$ with a collar at $M_\dout$ removed and a collar at $M_\din$ added) and then parallel transported, using holonomy of $E^*$, to $\dd M$, cf. (\ref{iota_-^* def},\ref{iota_+^* def}) in Section \ref{sec: local systems on the dual cell decomp}.

We define the action as
\be S=\langle B, dA\rangle+ \langle (\iota^\vee)^* B,\iota^* A \rangle_\din\quad \in \Fun(\F)_0 \label{S with bdry term}\ee

Since $\omega$ is degenerate in the presence of a boundary, one cannot invert it to construct a  Poisson bracket on $\F$. Instead, following the logic of the BV-BFV formalism \cite{CMR}, we introduce a degree $+1$ vector field $Q$ as the map $d_X+d_{X^{\vee}}:\F\ra \F$ dualized to a map $\F^*\ra \F^*$ and extended by Leibnitz rule as a derivation on $\Fun(\F)$:
$$Q=\langle dA ,\frac{\dd}{\dd A}\rangle+\langle dB ,\frac{\dd}{\dd B}\rangle \quad \in \mathfrak{X}(\F)_1$$
(i.e. $QA=dA$, $QB=dB$, where $Q$ acts on functions on $\F$ while $d$ acts on cochains where the superfields take values).
This vector field is cohomological, i.e. $Q^2=0$, and projects to a cohomological vector field on $\F_\dd$,
$$\pi_*Q =Q_\dd=\langle dA_\dd ,\frac{\dd}{\dd A_\dd}\rangle+\langle dB_\dd ,\frac{\dd}{\dd B_\dd}\rangle \quad \in \mathfrak{X}(\F_\dd)_1$$
The projected vector field on the boundary is Hamiltonian w.r.t. to the BFV 2-form $\omega_\dd$, with degree $1$ Hamiltonian
\be S_\dd=\langle B_\dout  , dA_\dout  \rangle_\dout -
\langle B_\din , dA_\din \rangle_\din
\quad \in \Fun(\F_\dd)_1 \label{BFV action}\ee
-- the {\it BFV action} (i.e. the relation is: $\iota_{Q_\dd}\omega_\dd=\delta S_\dd$). On the other hand, $Q$ itself, instead of being the Hamiltonian vector field for $S$, satisfies the following.
\begin{Proposition}
\label{lemma: i_Q=dS+alpha_bdry}
The data of the cellular abelian $BF$ theory satisfy the equation
\be \delta S= \iota_Q \omega-\pi^*\alpha_\dd \label{i_Q=dS+alpha_bdry}\ee
\end{Proposition}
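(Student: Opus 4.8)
The statement is a direct computation with the superfields $A$, $B$ and the cellular (co)differentials, so the plan is simply to compute both sides of (\ref{i_Q=dS+alpha_bdry}) and compare. The only subtlety is bookkeeping of the boundary terms and Koszul signs, which are governed by the convention that the comma in $\langle\,,\,\rangle$ carries degree $-n$ (and $-(n-1)$ on the boundary) together with the defining relations of the cellular inclusions $\iota_\pm$.

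First I would compute $\delta S$ directly from (\ref{S with bdry term}). Using $\delta$(superfield)$=$ the corresponding $\delta$-superfield, one gets
$$\delta S=\langle \delta B, dA\rangle+\langle B, d\,\delta A\rangle + \langle(\iota^\vee)^*\delta B,\iota^* A\rangle_\din+\langle(\iota^\vee)^* B,\iota^*\delta A\rangle_\din .$$
The middle term of the first pair should be integrated by parts using that $d$ on $C^\bt(X,E)$ is, under the pairing (\ref{degen pairing for cochains}), the formal transpose of the differential $d_{X^\vee}$ on $C^\bt(X^\vee,E^*)$ \emph{up to a boundary correction}: the Poincar\'e--Lefschetz pairing (\ref{Lefschetz for chains mixed}) is only a chain isomorphism for the relative complexes, so $\langle B,d_XA\rangle - \langle d_{X^\vee}B,A\rangle$ equals a pure boundary contribution supported on $X_\din$ and $X_\dout$. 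Identifying this boundary contribution is the technical heart of the argument; it is exactly where the asymmetric definition of $X^\vee$ for a cobordism (Section \ref{sec: Lefschetz for cell decomp, cobordism}, with the collar removed at $M_\dout$ and added at $M_\din$) and the definitions (\ref{iota_-^* def}), (\ref{iota_+^* def}) of $\iota_\pm^*$ enter: the transport by the local system in $\iota_-^*$ is precisely what makes the out-boundary term come out as $\langle(\iota^\vee)^*B,\iota^*A\rangle_\dout$ with the correct coefficient, and the trivial extension of the local system to the collar at $M_\din$ makes the in-boundary term match the explicit second summand of $S$.

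Next I would compute the right-hand side. Since $\omega=\langle\delta B,\delta A\rangle$ and $\iota_Q\omega$ means contracting the $2$-form with $Q$, using $QA=dA$, $QB=dB$ one obtains
$$\iota_Q\omega=\langle dB,\delta A\rangle-(-1)^{?}\langle \delta B, dA\rangle,$$
the sign being fixed by the degree-$(-1)$ nature of $\omega$ and the degree-$(-n)$ comma convention; this is again just the transpose relation, now \emph{with} the same boundary defect as above but appearing with opposite sign. Finally $\pi^*\alpha_\dd$ is read off from (\ref{alpha_bdry}) and the definition $\pi=\iota^*\oplus(\iota^\vee)^*$: $\pi^*\alpha_\dd=\langle(\iota^\vee)^*B,\iota^*\delta A\rangle_\dout-\langle(\iota^\vee)^*\delta B,\iota^* A\rangle_\din$. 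Assembling $\iota_Q\omega-\pi^*\alpha_\dd$ and comparing term by term with $\delta S$, the ``bulk'' terms $\langle\delta B,dA\rangle$, $\langle dB,\delta A\rangle$ match automatically, the in-boundary terms match by the trivial-collar convention, and the out-boundary defect from the integration by parts is cancelled exactly by the $\langle(\iota^\vee)^*B,\iota^*\delta A\rangle_\dout$ piece of $\pi^*\alpha_\dd$.

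I expect the main obstacle to be the out-boundary defect term: verifying that the boundary contribution produced by moving $d_X$ across the Poincar\'e--Lefschetz pairing is literally $\langle(\iota^\vee)^*B,\iota^*\delta A\rangle_\dout$ (and not off by a sign, a transport by $E$, or a stray $\iota_+$ vs.\ $\iota_-$). This requires carefully unwinding (\ref{Lefschetz for chains mixed}), (\ref{iota_-^* def}) and the product-type assumption (Assumption \ref{assumption: X of product type near out-boundary}) so that each out-boundary $k$-cell $e_\dd$ has a unique $(k+1)$-cell $e$ above it, which is what pins down $(\iota^\vee)^*$ on $X^{\vee_-}$. Once that lemma-level identity is in hand, (\ref{i_Q=dS+alpha_bdry}) is a one-line comparison; I would state that identity separately (or fold it into the conventions of Section \ref{sec: local systems on the dual cell decomp}) to keep the proof of Proposition \ref{lemma: i_Q=dS+alpha_bdry} short.
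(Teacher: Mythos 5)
Your proposal is correct and follows essentially the same route as the paper: the ``boundary defect'' identity you would isolate as a separate lemma is precisely the paper's cellular Stokes formula (Lemma \ref{lemma: Stokes}), proved there by exactly the unwinding of $\iota_\pm^*$, the local-system transport and the product-type assumption that you describe, after which Proposition \ref{lemma: i_Q=dS+alpha_bdry} is the same term-by-term comparison of $\delta S$ with $\iota_Q\omega-\pi^*\alpha_\dd$ that you outline. The only loose ends in your sketch are the Koszul signs you left as ``$?$'', which are fixed by the paper's degree~$-n$ comma convention and do come out as stated.
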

This relation 
is a consequence of
the following. 
\begin{lemma}[Cellular Stokes' formula]\label{lemma: Stokes}
\be (-1)^{n+\deg b}\langle d b, a \rangle +\langle b, da \rangle= \langle (\iota^\vee)^* b, \iota^* a \rangle_\dout  - \langle (\iota^\vee)^* b, \iota^* a \rangle_\din \label{Stokes}\ee
with $a\in C^k(X)$, $b\in C^{n-k-1}(X^{\vee})$ any cochains.
\end{lemma}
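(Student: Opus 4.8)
The plan is to prove (\ref{Stokes}) by $\bR$-bilinearity combined with a ``bulk versus boundary'' splitting: the bulk part will cancel by the chain-level Poincar\'e-Lefschetz duality set up in Section \ref{sec: reminder on Poincare duality}, and the boundary part will reproduce, term by term, the two boundary pairings on the right. By bilinearity it suffices to take $a=e^*$ for a $k$-cell $e$ of $X$ and $b=f^*$ for an $(n-k-1)$-cell $f$ of $X^\vee$, each with an arbitrary coefficient in the fibre of $E$, resp.\ $E^*$, over the corresponding barycentre. Recall from (\ref{degen pairing for cochains}) that $\langle b,a\rangle=\langle\bar b,\bar a\rangle_\rel$, where $\bar a\in C^\bt(X,X_\dout)$ and $\bar b\in C^\bt(X^\vee,X^\vee_\din)$ are obtained by discarding the components supported on $X_\dout$, resp.\ $X^\vee_\din$, and $\langle\,,\,\rangle_\rel$ is the nondegenerate pairing inverse to (\ref{Lefschetz for chains mixed}). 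Since $X_\dout\subset X$ and $X^\vee_\din\subset X^\vee$ are subcomplexes, the coboundary of a relative cochain is again relative; one then checks that $R_\din(b):=\overline{db}-d\bar b$ and $R_\dout(a):=\overline{da}-d\bar a$ are themselves relative cochains, concentrated on the cells near $X^\vee_\din$, resp.\ near $M_\dout$, so that $\langle db,a\rangle=\langle d\bar b,\bar a\rangle_\rel+\langle R_\din(b),\bar a\rangle_\rel$ and $\langle b,da\rangle=\langle\bar b,d\bar a\rangle_\rel+\langle\bar b,R_\dout(a)\rangle_\rel$.

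Next I would kill the bulk terms using cellular Poincar\'e-Lefschetz duality: the chain isomorphism $C_\bt(X,X_\dout),\dd\ \xra{\sim}\ C^{n-\bt}(X^\vee,X^\vee_\din),d$ of Section \ref{sec: Lefschetz for cell decomp, cobordism}, once dualized and combined with the degree-$(-n)$ ``comma'' sign convention of the footnote to (\ref{omega = <delta B,delta A>}), is exactly the identity $(-1)^{n+\deg b}\langle d\bar b,\bar a\rangle_\rel+\langle\bar b,d\bar a\rangle_\rel=0$ (equivalently, the boundaryless instance of (\ref{Stokes})). Hence the left-hand side of (\ref{Stokes}) collapses to $(-1)^{n+\deg b}\langle R_\din(b),\bar a\rangle_\rel+\langle\bar b,R_\dout(a)\rangle_\rel$, and what remains is to recognize these two corrections as $-\langle(\iota^\vee)^*b,\iota^*a\rangle_\din$ and $+\langle(\iota^\vee)^*b,\iota^*a\rangle_\dout$ respectively.

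For the out-boundary term I would use Assumption \ref{assumption: X of product type near out-boundary}: product type of $X$ near $M_\dout$ provides, for each $k$-cell $e_\dd$ of $X_\dout$, a unique $(k+1)$-cell of $X-X_\dout$ containing it, so that $R_\dout(a)$ is the ``collar extension'' of $\iota^*a|_\dout$; pairing it against $\bar b$ extracts the coefficients of $b$ on the dual cells $\varkappa(\cdot)$, which after the parallel transport across the collar $M-M_-$ are, by the definition (\ref{iota_-^* def}) of $\iota_-^*$, precisely the components of $(\iota^\vee)^*b$ on $X_\dout^\vee$. Dually, for the in-boundary term I would use that $X^\vee$ is automatically of product type near $M_\din$, together with the definition (\ref{iota_+^* def}) of $\iota_+^*$ and the definition (\ref{loc system on X^vee_+}) of the local system on $X^{\vee_+}$ (on that end the relevant transport is trivial), to match $\langle R_\din(b),\bar a\rangle_\rel$ with the in-boundary pairing. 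The overall minus sign in front of the in-term is forced by the cobordism convention that $M_\din$ carries the orientation opposite to the one induced from $M$.

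The part I expect to be genuinely delicate is this last identification of the two boundary corrections; it is a matter of matching conventions rather than of any substantive idea. One has to reconcile (i) the Koszul signs built into $\langle\,,\,\rangle$ by the degree-$(-n)$ comma; (ii) the sign in the dual-incidence relation $[\varkappa(e):\varkappa(e')]_{X^\vee}=\pm[e':e]_X$ forced by the normalization $e\cdot\varkappa(e)=+1$; (iii) the orientation conventions for the boundary cells $\varkappa_\dd(\cdot)$, which are pinned down only through the regularization on $M_\pm$, where the intersections of cells of $X$ with cells of $X^{\vee_\pm}$ are made transversal; and (iv) the explicit $(-1)^{n+\deg b}$ in (\ref{Stokes}) --- all the while keeping track of the asymmetry between the two ends of the cobordism (product type of $X$ near $M_\dout$ versus product type of $X^\vee$ near $M_\din$, and a nontrivial holonomy factor in $\iota_-^*$ but none in $\iota_+^*$). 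The cleanest way to organize the computation would be to dispatch first the closed case $\dd M=\varnothing$, which isolates the interior sign and is simply the dualized chain isomorphism of Section \ref{sec: Poincare duality, closed mfd}, and then to treat the two collar neighborhoods one at a time, in each case reducing to a boundaryless computation on $M_+$, respectively $M_-$.
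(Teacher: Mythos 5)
Your argument is correct and is essentially the paper's own proof: your correction terms $R_\dout(a)=\overline{da}-d\bar a$ and $R_\din(b)=\overline{db}-d\bar b$ are exactly the maps $\phi(a|_\dout)$ and $\phi^\vee(b|_\din)$ appearing there, the bulk terms cancel by the same adjointness of $d_X$ and $d_{X^\vee}$ with respect to the nondegenerate relative (intersection) pairing, and the two boundary corrections are matched with $\langle\,,\rangle_\dout$ and $\langle\,,\rangle_\din$ through the definitions of $\iota_-^*$, $\iota_+^*$ and the product-type assumption, just as in the paper. The sign/convention bookkeeping you flag as delicate is treated in the paper at essentially the same level of detail (the identities $\langle b,\phi(a|_\dout)\rangle=\langle b|_\dout,a|_\dout\rangle_\dout$ and $\langle\phi^\vee(b|_\din),a\rangle=-(-1)^{n+\deg b}\langle b|_\din,a|_\din\rangle_\din$ are asserted with a one-line justification), so nothing substantive is missing from your outline.
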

\begin{proof}
For $a_\dout\in C^k(X_\dout)$, denote $\til {a_\dout}\in C^k(X)$ the extension of $a_\dout$ by zero on cells of $X-X_\dout$. Likewise denote $\til {b_\din}$ an extension of a cochain on $X^\vee_\din$ to $X^\vee$ by zero on cells of $X^\vee-X^\vee_\din$.
Define two degree $1$ maps
\be\begin{array}{lllllllll}\phi: & C^k(X_\dout) &\ra & C^{k+1}(X,X_\dout)  &,\qquad&
\phi^\vee: & C^k(X^\vee_\din) &\ra & C^{k+1}(X^\vee,X^\vee_\din)\\
& a_\dout &\mapsto  & d \til {a_\dout} -\til{da_\dout}   & & & b &\mapsto  & d \til{b_\din} -\til{db_\din} \end{array}
\label{phi def}\ee
Note that $\phi,\phi^\vee$ are chain maps:
$$d\phi+\phi d=0,\quad  d\phi^\vee+\phi^\vee d=0$$
and induce on the level of cohomology the standard homomorphisms $\phi_*: H^\bt(M_\dout)\ra H^{\bt+1}(M,M_\dout)$, $(\phi^\vee)_*: H^\bt(M_\din)\ra H^{\bt+1}(M,M_\din)$ -- connecting homomorphisms in the two long exact sequences of cohomology of pairs $(M,M_\dout)$ and $(M,M_\din)$.

Next, we have
$$\langle b, \phi(a_\dout) \rangle = \langle b|_\dout, a_\dout \rangle_\dout, \quad \langle \phi^\vee (b_\din), a\rangle = -(-1)^{n+\deg b}\langle b_\din, a|_\din \rangle_\din  $$
--- both right hand sides are sums of intersections in cells adjacent to the boundary and result in boundary terms on the left.\footnote{We are using the natural notations $a|_\din, a|_\dout$ for the components of the image of a cochain $a$ under restriction $\iota^*:C^\bt(X)\ra C^{\bt}(X_\din)\oplus C^\bt(X_\dout)$, and likewise $b|_\din,b|_\dout$ are the components of the image of $b$ under $(\iota^\vee)^*:C^\bt(X^\vee)\ra C^\bt(X_\din^\vee)\oplus C^\bt(X_\dout^\vee)$.}

To prove (\ref{Stokes}), we calculate
\begin{multline*}
(-1)^{n+\deg b+1}\langle db, a\rangle= (-1)^{n+\deg b+1}\langle d(b-\til{b|_\din})+\phi^\vee(b|_\din) \;,\; a- \til{a|_\dout} \rangle= \\ =
\langle b-\til{b|_\din} \;,\; d(a- \til{a|_\dout} )\rangle_\mr{int}+ \langle b|_\din , a|_\din \rangle_\din \\
= \langle b,da \rangle-\langle b, \phi(a|_\dout)\rangle + \langle b|_\din , a|_\din\rangle_\din \\
=\langle b,da \rangle- \langle b|_\dout  , a|_\dout \rangle_\dout+ \langle b|_\din , a|_\din\rangle_\din
\end{multline*}
We put the subscript ``int'' for the non-degenerate (inverse) intersection pairing (\ref{nondeg pairing cochains}) for which the respective coboundary maps $d_X$ and $d_{X^{\vee}}$ are mutually adjoint (up to a sign).  
\end{proof}

\begin{proof}[Proof of Proposition \ref{lemma: i_Q=dS+alpha_bdry}] Indeed, let us calculate the differential of the action (\ref{S with bdry term}) using cellular Stokes' theorem (\ref{Stokes}):
\begin{multline*}
\delta S= \\ =
\langle \delta B,dA \rangle - \underbrace{\langle B,d\delta A \rangle}_{-\langle dB,\delta A \rangle+ \langle B|_\dout,\delta A|_\dout \rangle_\dout-\langle B|_\din,\delta A|_\din\rangle_\din} + 
\langle \delta B|_\din,A|_\din \rangle_\din - \langle B|_\din,\delta A|_\din\rangle_\din   \\[1em] 
=\underbrace{\langle\delta B,dA\rangle+\langle dB,\delta A\rangle}_{\iota_Q \omega}\underbrace{-\langle B|_\dout,\delta A|_\dout \rangle_\dout+\langle \delta B|_\din,A_\din \rangle_\din}_{-\pi^*\alpha_\dd}
\end{multline*}
\end{proof}

\begin{remark} The boundary term in the action (\ref{S with bdry term}) was introduced so that equation (\ref{i_Q=dS+alpha_bdry}) is satisfied for the boundary primitive 1-form (\ref{alpha_bdry}). The latter is chosen so as to agree with $A$-polarization for the out-boundary and $B$-polarization for the in-boundary, which we are going to use in quantization of the model.
\end{remark}

\subsection{Euler-Lagrange spaces, reduction}
The Euler-Lagrange subspaces in $\F$, $\F_\dd$ are defined as zero-loci of $Q$, $Q_\dd$ respectively:
$$\EL= C^\bt_\mr{closed}(X)[1]\oplus C^\bt_\mr{closed}(X^{\vee})[n-2],\qquad
\EL_\dd= C^\bt_\mr{closed}(X_\dd)[1]\oplus C^\bt_\mr{closed}(X^{\vee}_\dd)[n-2]
$$
The respective EL moduli spaces ($Q$-reduced zero-loci of $Q$) are independent of the cellular decomposition:
$$
\begin{CD}
\EL/Q=H^\bt(M)[1]\oplus H^\bt(M)[n-2] \\
@V\pi_*VV \\
\EL_\dd/Q_\dd=\underline\EL_\dd=H^\bt(\dd M)[1]\oplus H^\bt(\dd M)[n-2]
\end{CD}
$$
The boundary moduli space inherits a (non-degenerate, degree $0$) symplectic structure $\underline\omega_\dd$, and the bulk moduli space inherits a degree $+1$ Poisson structure (cf. \cite{CMR}), with symplectic foliation given by fibers of $\pi_*$, which are isomorphic to
$$(\pi^*)^{-1}\{0\}=\frac{H^\bt(M,\dd M)}{H^{\bt-1}(\dd M)}[1]\oplus \frac{H^\bt(M,\dd M)}{H^{\bt-1}(\dd M)}[n-2]$$
Here quotients are over the image of the connecting homomorphism in the long exact sequence in cohomology of the pair $(M,\dd M)$.
Image of $\pi_*$ is a Lagrangian subspace of $\underline\EL_\dd$. The Hamilton-Jacobi action $S|_{\EL/Q}$ on the bulk moduli space is identically zero.

We refer the reader to  \cite{CMR} for generalities on Euler-Lagrange moduli spaces in the BV-BFV framework.

\subsection{Classical ``$A$-$B$'' gluing}\footnote{``$A$-$B$'' means that we stay in the setting of cobordisms and only allow attaching out-boundary (or ``$A$-boundary'', for the polarization we are going to put on it in the quantization procedure to follow) of one cobordism to in- (or ``$B$''-) boundary of the next one.}
\label{sec: class gluing}
Let $M$ be an $n$-dimensional cobordism from $M_1$ to $M_3$, cut by a codimension $1$ submanifold $M_2$ into cobordisms $M_1\cob{M_I} M_2$ and $M_2\cob{M_{II}}M_3$ (we use Roman numerals for $n$-manifolds and arabic numerals for $(n-1)$-manifolds). 

Let $X$ be a cellular decomposition of $M$ for which $M_2\cap X$ is a CW-subcomplex. Thus we have cellular decompositions $X_{1,2,3}, X_{I,II}$ of $M_{1,2,3}$ and $M_{I,II}$, respectively.
As usual, we assume that $X_I$ is of product type near $M_2$ and $X_{II}$ is of product type near $M_3$.
We also have Poincar\'e dual decompositions $X_{1,2,3}^\vee$ of $M_{1,2,3}$ and $X_I^\vee,X_{II}^\vee$ of $\til M_I$, $\til M_{II}$ - displaced versions of $M_{I,II}$ (cf. Section \ref{sec: Lefschetz for cell decomp, cobordism}). On the level of CW-complexes, we have both $X=X_I\cup_{X_2} X_{II}$ and $X^\vee=X^\vee_I\cup_{X_2^\vee} X_{II}^\vee$.

The space of fields associated to $(M,X)$ is expressed in terms of spaces of fields for $(M_I,X_I)$ and $(M_{II},X_{II})$ as
$$\F=\F_I\times_{\F_2} \F_{II}$$
-- the fiber product w.r.t. the projections $\F_I\stackrel{\pi_{I,2}}{\twoheadrightarrow}\F_2 \stackrel{\pi_{II,2}}{\twoheadleftarrow}\F_{II}$ -- ``out-part'' of projection (\ref{pi}) for $(M_I,X_I)$ and ``in-part'' of projection (\ref{pi}) for $(M_{II},X_{II})$, respectively. Recalling that we also have projections to boundary fields associated to $(M_{1,3},X_{1,3})$, we have the following diagram.
$$
\begin{CD}
\F @>>> \F_{II} @>\pi_{II,3}>> \F_3 \\
@VVV @V\pi_{II,2}VV \\
\F_{I} @>\pi_{I,2}>> \F_2 \\
@V\pi_{I,1}VV \\
\F_1
\end{CD}
$$

The presymplectic BV 2-form on $\F$ is recovered as the sum of pullbacks of presymplectic forms on $\F_I$ and $\F_{II}$.

For the action, we have
$$S=S_I+S_{II}-\langle B_2,A_2 \rangle$$
where the third term, associated to the gluing interface $(M_2,X_2)$, compensates for the boundary term in $S_{II}$. 

\section{Quantization in $A/B$-polarization}
\label{sec: quantum cell ab BF on mfd with bdry}
We choose the following Lagrangian fibration of the space of boundary fields:
\be\label{polarization}
\begin{CD}
\F_\dd=C^\bt(X_\dd)[1]\oplus C^\bt(X^{\vee}_\dd)[n-2] \\
@VpVV  \\
\B_\dd=C^\bt(X_\dout )[1]\oplus C^\bt(X_\din^\vee)[n-2]
\end{CD}
\ee
Notation $\B_\dd$ comes from ``base'' of the fibration. Pre-composing with $\pi: \F\ra \F_\dd$, we get the projection $p\circ \pi: \F\ra \B_\dd$. The presymplectic structure $\omega$ restricts to a {\it symplectic} (non-degenerate) structure on the fibers of $p\circ\pi$ in $\F$. Thus, for $b=(A_\dout ,B_\din)\in \B_\dd$, the fiber
\be \F_b=\pi^{-1}p^{-1}\{b\}\simeq C^\bt(X,X_\dout )[1]\oplus  C^\bt(X^\vee,X^\vee_\din)[n-2] \label{F_b}\ee
carries the degree $-1$ symplectic structure $\omega_b=\omega|_{\F_b}=\langle \delta B_{X^\vee-X^\vee_\din},\delta A_{X-X_\dout}\rangle_\mr{int}$ and  the BV Laplacian
$$\Delta_\mr{bulk}=\left\langle \frac{\dd}{\dd A_{X-X_\dout }} \;,\; \frac{\dd}{\dd B_{X^{\vee}-X^\vee_\din}}\right\rangle_\mr{int}:\quad  \Fun(\F_b)_k\ra \Fun(\F_b)_{k+1}$$
satisfying
$$\Delta_\mr{bulk}^2=0$$
As in the proof of Lemma \ref{lemma: Stokes}, we are emphasizing the non-degenerate intersection pairing, and the inverse one, with subscript ``int''.

Geometric (canonical) quantization of the
space of boundary fields $\F_\dd$ (with symplectic structure $\omega_\dd$ and the trivial prequantum line bundle with global connection $1$-form $\frac{i}{\hbar}\alpha_\dd$)
w.r.t. the real polarization given by the vertical tangent bundle  to the fibration (\ref{polarization}),
yields the {\it space of states}
\be\HH_\dd=\bC\otimes \Fun(\B_\dd) 
\label{H}\ee
associated to the boundary. 
\begin{remark} The splitting of the space of boundary fields $\F_\dd$ into contributions of in- and out-boundary induces a splitting of the space of states as
\be \HH_\dd=\underbrace{\HH_\din^{(B)}}_{\Fun_\bC(C^\bt(X^\vee_\din)[n-2])}\widehat\otimes \underbrace{\HH_\dout^{(A)}}_{\Fun_\bC(C^\bt(X_\dout)[1])} \label{H = H_in otimes H_out}\ee
where the superscripts $(A)$, $(B)$ stand for the respective polarizations (``$A$ fixed'' on the out-boundary and ``$B$ fixed'' on the in-boundary). Subscript $\bC$ corresponds to taking complex-valued functions.
\end{remark}

\begin{remark}\label{rem: pairing H^A with H^B}
For $N$ a closed $(n-1)$-manifold with a cellular decomposition $Y$, one can introduce a pairing between the spaces of states corresponding to $A$- and $B$-polarizations on $Y$, $\HH^{(A)}_Y\otimes \HH^{(B)}_{\bar Y}\ra \bC$ given by
\be (\phi(A_Y),\psi(B_Y))=\int_{\B^{(A)}_Y\times \B^{(B)}_Y} \phi(A_Y)\; \left(\D_\hbar A_Y\; e^{-\frac{i}{\hbar}\langle B_Y,A_Y \rangle}\; \D_\hbar B_Y\right)\; \psi(B_Y)\label{pairing H^A with H^B}\ee
for a pair of states $\phi\in \HH^{(A)}_Y=\Fun_\bC(\B^{(A)}_Y)$, $\psi\in \HH^{(B)}_{\bar Y}=\Fun_\bC(\B^{(B)}_Y)$, where $\B^{A}_Y=C^\bt(Y)[1]$, $\B^{(B)}_Y= C^\bt(Y^\vee)[n-2]$ are the bases of $A$- and $B$-polarizations on the space $\F_Y$, respectively.
Normalized densities $\D_\hbar A_Y$, $\D_\hbar B_Y$ are defined as products over cells of $Y$ or $Y^\vee$ of respective normalized densities, cf. (\ref{normalized density}), (\ref{D_h B}).
We use the bar to denote the orientation reversal\footnote{
We also understand that the orientation reversing identity map $Y\ra \bar Y$ acts on states by complex conjugation $\phi(A_Y)\mapsto \overline{\phi(A_Y)}$, $\psi(B_Y)\mapsto \overline{\psi(B_Y)}$. In particular, we have a sesquilinear pairing $\HH^{(A)}_Y\otimes \HH^{(B)}_Y\ra \bC$ (note that here $Y$ has the same orientation in both factors), given by the formula (\ref{pairing H^A with H^B}) with $\psi$ replaced by the complex conjugate $\bar\psi$.
}; in these notations, the spaces appearing on the r.h.s. of (\ref{H = H_in otimes H_out}) are $\HH^{(B)}_\din=\HH^{(B)}_{\bar X_\din}$
and $\HH^{(A)}_\dout=\HH^{(A)}_{X_\dout}$.\footnote{
Recall that $\bar{X}_\din$ is the CW complex $X_\din$ endowed with the orientation induced from the orientation of $X$.
}
The pairing (\ref{pairing H^A with H^B}) will play a role when we discuss the behavior of partition functions under gluing of cobordisms (Section \ref{sec: quantum gluing}, Proposition \ref{prop: gluing}) (with $N$ being the gluing interface between two cobordisms and $Y$ its cellular decomposition).
Using the pairing (\ref{pairing H^A with H^B}) for $N=M_\din$, $Y=X_\din$, the space of states associated to the boundary of a cobordism (\ref{H = H_in otimes H_out}) can be identified with the $\mr{Hom}$-space
\begin{equation}\label{HH as Hom-space}
\HH_\dd=\mr{Hom}(\HH^{(A)}_\din,\HH^{(A)}_\dout)
\end{equation}
\end{remark}
Quantization of boundary BFV action (\ref{BFV action}) yields
$$\widehat S_\dd= -i\hbar\;Q_{\B_\dd}\quad \in \mr{End}(\HH_\dd)_1,\qquad\mbox{where}\quad Q_{\B_\dd}=\langle dA_\dout ,\frac{\dd}{\dd A_\dout }\rangle+
\langle dB_\din,\frac{\dd}{\dd B_\din}\rangle\quad \in \mathfrak{X}(\B_\dd)_1$$
This odd, degree $+1$ operator on the space of states satisfies
$$\widehat S_\dd^2=0$$
Thus, $(\HH,\widehat S_\dd)$ is a cochain complex.


\begin{lemma}\label{lemma 7.3}
The action satisfies the following version of the quantum master equation modified by the boundary term:
\be \left(\frac{i}{\hbar}\widehat S_\dd-i\hbar\; \Delta_\mr{bulk}\right)\;e^{\frac{i}{\hbar}S(A,B)}=0 \label{QME with bdry term}\ee
where the exponential of the action is regarded as an element of $\Fun_\bC(\F)\cong\HH_\dd\widehat\otimes\, \Fun(\F_b)$. (Here we are exploiting the fact that 
all fibers $\F_b$ for different $b$ are isomorphic.)
\end{lemma}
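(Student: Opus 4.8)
The plan is to expand the left-hand side of (\ref{QME with bdry term}) in powers of $\hbar$ and reduce it to two algebraic identities — one on the fibers, one on the base of the polarization (\ref{polarization}). Since $\widehat S_\dd=-i\hbar\,Q_{\B_\dd}$ and $Q_{\B_\dd}$ is a degree $+1$ vector field on $\B_\dd$ (an odd derivation with vanishing zeroth-order part), one has $\frac{i}{\hbar}\widehat S_\dd\,e^{\frac{i}{\hbar}S}=Q_{\B_\dd}e^{\frac{i}{\hbar}S}=\frac{i}{\hbar}(Q_{\B_\dd}S)\,e^{\frac{i}{\hbar}S}$. Since $\Delta_\mr{bulk}$ is the canonical second-order BV Laplacian of the fiberwise symplectic form $\omega_b$, the standard identity $\Delta_\mr{bulk}(e^F)=(\Delta_\mr{bulk}F+\frac12\{F,F\}_\mr{bulk})e^F$ applied to $F=\frac{i}{\hbar}S$ gives $-i\hbar\,\Delta_\mr{bulk}e^{\frac{i}{\hbar}S}=(\Delta_\mr{bulk}S+\frac{i}{2\hbar}\{S,S\}_\mr{bulk})e^{\frac{i}{\hbar}S}$, where $\{\,,\,\}_\mr{bulk}$ is the odd Poisson bracket generated by $\Delta_\mr{bulk}$ (equivalently, induced by $\omega_b$). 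Adding the two contributions,
\[
\Big(\tfrac{i}{\hbar}\widehat S_\dd-i\hbar\Delta_\mr{bulk}\Big)e^{\frac{i}{\hbar}S}=\Big(\tfrac{i}{\hbar}\big(Q_{\B_\dd}S+\tfrac12\{S,S\}_\mr{bulk}\big)+\Delta_\mr{bulk}S\Big)e^{\frac{i}{\hbar}S},
\]
so it suffices to prove the two identities $\Delta_\mr{bulk}S=0$ and $Q_{\B_\dd}S+\tfrac12\{S,S\}_\mr{bulk}=0$.

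For the first identity, note that with respect to the polarization (\ref{polarization}) the fiber coordinates are the components $A_e$ with $e\subset X-X_\dout$ and $B_{e^\vee}$ with $e^\vee\subset X^\vee-X^\vee_\din$, and $\Delta_\mr{bulk}$ contracts such an $A$-leg against the $B$-leg at the Poincar\'e-dual cell. In these coordinates the boundary term $\langle(\iota^\vee)^*B,\iota^*A\rangle_\din$ of (\ref{S with bdry term}) equals $\langle B_\din,\iota^*A\rangle_\din$ — linear in the base field $B_\din$, with no fiber $B$-leg — hence is killed by $\Delta_\mr{bulk}$; and the bulk term $\langle B,dA\rangle$ contributes $\mr{Str}_{C^\bt(X,X_\dout)}(d)$, which vanishes because the coboundary raises degree by one. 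This is the exact analogue of the identity $\Delta S=\mr{Str}\,d=0$ used in the closed case (Section \ref{sec: classical closed}).

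The second identity is the classical master equation modified by the boundary term, and it is established by the same computation that proves Proposition \ref{lemma: i_Q=dS+alpha_bdry}. Writing $A=A_\mr{int}+\widetilde{A_\dout}$, $B=B_\mr{int}+\widetilde{B_\din}$ for the splitting into fiber and base parts and using the cellular Stokes decomposition $d\widetilde{A_\dout}=\widetilde{dA_\dout}+\phi(A_\dout)$ from the proof of Lemma \ref{lemma: Stokes} (where $\phi$ is the chain map there, $d\phi+\phi d=0$), one rewrites the action as
\[
S=\langle B_\mr{int},\,dA_\mr{int}+\phi(A_\dout)\rangle_\mr{int}+\langle B_\din,\,\iota^*A\rangle_\din,
\]
which is linear in each of $A_\dout$ and $B_\din$, with no term involving both. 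In $\{S,S\}_\mr{bulk}$ one pairs $\partial_{A_\mr{int}}S$ with $\partial_{B_\mr{int}}S$: the piece built from $d^2A_\mr{int}=0$ drops out; the piece from $d\phi(A_\dout)=-\phi(dA_\dout)$ produces, via the identity $\langle B_\mr{int},\phi(Y)\rangle_\mr{int}=\langle(\iota^\vee)^*B,Y\rangle_\dout$ proved inside Lemma \ref{lemma: Stokes}, a term proportional to $\langle(\iota^\vee)^*B,dA_\dout\rangle_\dout$; the piece from $\langle B_\din,\iota^*A\rangle_\din$ paired with $dA_\mr{int}$ produces, by adjointness of the coboundary operators on $X_\din$ and $X^\vee_\din$ under $\langle\,,\,\rangle_\din$, a term proportional to $\langle dB_\din,\iota^*A\rangle_\din$; and the pairing of $\langle B_\din,\iota^*A\rangle_\din$ with $\phi(A_\dout)$ vanishes because $\phi(A_\dout)$ is supported on cells adjacent to $X_\dout$, which are disjoint from $X_\din$ (the pieces $M_\din$, $M_\dout$ being disjoint and $X$ of product type near $M_\dout$, Assumption \ref{assumption: X of product type near out-boundary}). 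On the other hand $Q_{\B_\dd}S=\langle B_\mr{int},\phi(dA_\dout)\rangle_\mr{int}+\langle dB_\din,\iota^*A\rangle_\din=\langle(\iota^\vee)^*B,dA_\dout\rangle_\dout+\langle dB_\din,\iota^*A\rangle_\din$ by the same two identities, so that $Q_{\B_\dd}S=-\tfrac12\{S,S\}_\mr{bulk}$. The only genuine obstacle is sign bookkeeping: one must fix the Koszul/orientation conventions for $\langle\,,\,\rangle_\mr{int}$, $\langle\,,\,\rangle_\din$, for $\Delta_\mr{bulk}$, and for the transpose of $d$ so that the two boundary contributions enter with opposite signs; once that is done the cancellation is precisely as in the proof of Proposition \ref{lemma: i_Q=dS+alpha_bdry}.
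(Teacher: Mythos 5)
Your proposal is correct and follows essentially the same route as the paper's proof: it reduces the modified QME to the two identities $\Delta_\mr{bulk}S=0$ and $Q_{\B_\dd}S+\tfrac12\{S,S\}_{\omega_b}=0$, and verifies the latter by the same splitting of fields into base and fiber parts together with the map $\phi$ and the pairing identities from the proof of Lemma \ref{lemma: Stokes} (the paper phrases the same cancellation as $\tfrac12\{S,S\}_{\omega_b}=\pi^*S_\dd=-Q_{\B_\dd}S$). The only difference is that you defer the final sign bookkeeping to the conventions of Proposition \ref{lemma: i_Q=dS+alpha_bdry}, which is acceptable since the paper's own computation is equally brief on that point.
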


\begin{proof}\footnote{This Lemma follows from the general treatment in \cite{CMRpert}, Section 2.4.1. For reader's convenience, we give an adapted proof here.}
First calculate the Poisson bracket (corresponding to the fiber symplectic structure $\omega_b$ for some $b\in\B_\dd$) of the action with itself:
\begin{multline*}
\frac{1}{2}\{S,S\}_{\omega_b}=\langle dB-\til{dB} \;,\; dA-\til{dA} \rangle_\mr{int}
=\\
=\langle dB , dA\rangle
=\underbrace{-\langle B,d^2A \rangle}_0+\langle B|_\dout,d A|_\dout\rangle_\dout-\langle B|_\din, dA|_\din\rangle_\din=\pi^*S_\dd
\end{multline*}
Consider the splitting of superfields into $\B_\dd$-components and ``bulk'' components (corresponding to fibers of $p\circ \pi:\F\ra \B_\dd$):
$$A=\til{A|_\dout}+\underbrace{(A-\til{A|_\dout})}_{A_\bulk},\qquad B=\til{B|_\din}+\underbrace{(B-\til{B|_\din})}_{B_\bulk}$$
Substituting this splitting into the action, we have
$$S(A,B)=\langle B_\bulk, \phi(A|_\dout)+d A_\bulk\rangle+ \langle B|_\din, A_\bulk|_\din \rangle_\din$$
Next, calculate
$$Q_{\B_\dd} S=-\langle B_\bulk|_\dout, dA|_\dout\rangle_\dout+\langle B|_\din, dA_\bulk|_\din \rangle_\din=-\pi^* S_\dd$$
Therefore, we have
$$\left(\frac{i}{\hbar}\widehat S_\dd-i\hbar\Delta_\bulk\right)e^{\frac{i}{\hbar}S(A,B)}=\frac{i}{\hbar}\left(\underbrace{Q_{\B_\dd} S}_{-\pi^* S_\dd}+\underbrace{\frac{1}{2}\{S,S\}_{\omega_b}}_{\pi^* S_\dd}\underbrace{-i\hbar \Delta_\bulk S}_0\right)\cdot e^{\frac{i}{\hbar}S(A,B)}=0$$
\end{proof}

Denote $\mu_{\B_\dd}=\D \Bin\cdot \D \Aout\in \Dens\; \B_\dd$ the density on $\B_\dd$ associated to the cellular basis.
The corresponding normalized density is
$$\mu_{\B_\dd}^\hbar=\D_\hbar \Bin\cdot \D_\hbar \Aout=\prod_{e^\vee_\din\subset X^\vee_\din}\D_\hbar B_{e^\vee_\din}\cdot \prod_{e_\dout\subset X_\dout}\D_\hbar A_{e_\dout}$$
with $\D_\hbar B_{e^\vee}$, $\D_\hbar A_e$ defined as in Section \ref{sec: normalization}.

Denoting by $\mu_\mr{bulk}^{1/2}\in \Dens^{\frac12}\F_b$ the half-density on $\F_b$ associated to the cellular basis, 
by $\mu^{1/2}=\mu_{\B_\dd}^{1/2}
\cdot \mu_\mr{bulk}^{1/2}$ the half-density on $\F$,
and by $\mu_\hbar^{1/2}=(\mu_{\B_\dd}^\hbar)^{1/2}\cdot (\mu_\mr{bulk}^\hbar)^{1/2}$ the corresponding normalized half-density,
we can rewrite (\ref{QME with bdry term}) as
\be \left(\frac{i}{\hbar}\widehat S_\dd^\mr{can}-i\hbar\; \Delta_\mr{bulk}^\mr{can}\right)\;\underbrace{\left(e^{\frac{i}{\hbar}S(A,B)}\mu^{1/2}_\hbar\right)}_{\in \HH^\mr{can}_\dd\widehat\otimes \Dens^{\frac12,\Fun}_\bC(\F_b)}=0\label{QME with bdry term, half-densities}\ee
Here $\widehat S_\dd^\mr{can}$, $\Delta_\mr{bulk}^\mr{can}$ are the half-density (``canonical'') versions of the quantum BFV action and of the BV Laplacian, defined by
$$\widehat S_\dd^\mr{can}\left(\phi\cdot (\mu_\B^\hbar)^{1/2}\right)=\widehat S_\dd\left(\phi\right)\cdot (\mu_\B^\hbar)^{1/2},\qquad \Delta_\mr{bulk}^\mr{can}\left(f\cdot (\mu_\bulk^\hbar)^{1/2}\right)=\Delta_\mr{bulk}\left(f\right)\cdot (\mu_\bulk^\hbar)^{1/2}$$
for $\phi\in \HH$ and $f\in \Fun(\F_b)$.
We denote by
$$\HH^\mr{can}_\dd=\Dens^{\frac12,\Fun}_\bC(\B_\dd)$$
the version of the space of states where states are regarded as half-densities on $\B_\dd$; 
the superscript $\Fun$ means that we allow half-densities to be depend on 
coordinates (i.e. $\Fun$ stands for tensoring constant half-densities with functions), the subscript $\bC$ stands for tensoring the space of half-densities with complex numbers.

\subsection{Bulk gauge fixing}\label{sec 7.1 gauge-fixing}
Let us choose a realization of relative cohomology
$\ii: H^\bt(M,M_\dout)\ra C^\bt_\mr{closed}(X,X_\dout )$
and a section $\K$ of $C^\bt(X,X_\dout )\xra{d} C^{\bt+1}_\mr{exact}(X,X_\dout )$. We have the Hodge decomposition
\be C^\bt(X,X_\dout )=\mr{im}(\ii)\oplus C^\bt_\mr{exact}(X,X_\dout )\oplus \mr{im}(\K) \label{Hodge decomp}\ee
We extend $\K$ by zero on the first two terms on the r.h.s. to a map $\K: C^\bt(X,X_\dout )\ra C^{\bt-1}(X,X_\dout )$. Denote by $\pp: C^\bt(X,X_\dout )\ra H^\bt(X,X_\dout )$ the projection to cohomology arising from the decomposition (\ref{Hodge decomp}).
Using Poincar\'e-Lefschetz duality for cohomology and cochains, we construct the dual (transpose) maps
\begin{align*}
\ii^\vee=\pp^*:&\quad H^\bt(M,M_\din)\ra C^\bt_\mr{closed}(X^\vee,X^\vee_\din), \\
\pp^\vee=\ii^*:&\quad  C^\bt(X^\vee,X^\vee_\din)\ra H^\bt(M,M_\din), \\
\K^\vee_k=(-1)^{n-k} (\K_{n-k+1})^*:&\quad  C^k(X^\vee,X^\vee_\din)\ra C^{k-1}(X^\vee,X^\vee_\din)
\end{align*}
Then we have the dual Hodge decomposition for cochains of the dual complex:
\be C^\bt(X^\vee,X^\vee_\din)=\mr{im}(\ii^\vee)\oplus C^\bt_\mr{exact}(X^\vee,X^\vee_\din)\oplus \mr{im}(\K^\vee) \label{Hodge decomp dual}\ee

In other words, we chose some induction data $C^\bt(X,X_\dout)\wavy{(\ii,\pp,\K)} H^\bt(M,M_\dout)$ and inferred the dual one $C^\bt(X^\vee,X^\vee_\din)\wavy{(\ii^\vee,\pp^\vee,\K^\vee)} H^\bt(M,M_\din)$ using the construction (\ref{dual ind data}).

Hodge decompositions (\ref{Hodge decomp},\ref{Hodge decomp dual}) together
yield the symplectic splitting
$$\F_b=(\ii\oplus\ii^\vee)
\F^\zm_b
\oplus \F_\fluct$$
where
\be \F^\zm_b=H^\bt(M,M_\dout)[1]\oplus H^\bt(M,M_\din)[n-2]\label{bulk zero-modes}\ee
is our choice for the space of bulk {\zeromodes},
and we have a Lagrangian subspace $\LL=\mr{im}(\K)[1]\oplus \mr{im}(\K^\vee)[n-2]\subset \F_\fluct$.

\subsection{Perturbative partition function: integrating out bulk fields}
\label{sec 7.2 Z pert}

Substituting in the action (\ref{S with bdry term}) the decomposition of fields into coordinates on $\B_\dd$, bulk {\zeromodes} and fluctuations, we have
\begin{multline*}
S(A,B)=S(\til{\Aout} +\Azm+\Afluct \;,\; \til{ \Bin}+\Bzm+\Bfluct) = \\
=\langle \Bzm, \phi(\Aout) \rangle - \langle \phi^\vee (\Bin), \Azm  \rangle+
\langle \Bfluct, \phi(\Aout) \rangle - \\
- \langle \phi^\vee (\Bin), \Afluct  \rangle+
\langle \Bfluct, d\Afluct\rangle= \\
=\langle \left.\Bzm\right|_\dout, \Aout \rangle_\dout + \langle \Bin, \left.\Azm\right|_\din  \rangle_\din + \\ +
\left\langle \Bfluct+\K^\vee\phi^\vee(\Bin) \;,\;d(\Afluct+\K\phi(\Aout))  \right\rangle-\langle \phi^\vee(\Bin)\;,\; \K\phi(\Aout)\rangle
\end{multline*}
We are suppressing the inclusions $\ii,\ii^\vee$ in bulk \zeromodes\  $\ii\Azm,\ii^\vee\Bzm$ in the notation.

The fiber BV integral over bulk fields yields
\begin{multline}
Z(\underbrace{A_\dout ,B_\din}_b; \Azm,\Bzm)=
\\
=\int_{\LL\subset\F_\fluct\subset  \F_b} e^{\frac{i}{\hbar}S(\til{A_\dout} +\Azm+\Afluct \;,\; \til{ B_\din}+\Bzm+\Bfluct)}\mu^{1/2}_\hbar = \\
= e^{\frac{i}{\hbar} \langle  \left.\Bzm\right|_\dout, A_\dout  \rangle_\dout +
 \langle B_\din, \left. \Azm\right|_\din\rangle_\din - \langle \phi^\vee(B_\din) \;,\; \K \phi(A_\dout)  \rangle
}\cdot \\
\cdot \left(\int_\LL e^{\frac{i}{\hbar}S(\Afluct+\K\phi(\Aout),\Bfluct+\K^\vee\phi^\vee(\Bin))
} (\mu_\bulk^\hbar)^{1/2}\right) (\mu_\B^\hbar)^{1/2}
\label{Z integrating out bulk fields}
\end{multline}
\begin{Proposition}\label{prop: Z}
\begin{enumerate}[(i)]
\item \label{prop: Z (i)} Explicitly, the partition function (\ref{Z integrating out bulk fields}) is
\begin{multline} Z(\Bin,\Aout;\Azm,\Bzm)= \\ =
e^{\frac{i}{\hbar}  \langle \left. \Bzm \right|_\dout, A_\dout  \rangle_\dout +
\langle B_\din, \left. \Azm \right|_\din\rangle_\din
- \langle \phi^\vee(B_\din) \;,\; \K \phi(A_\dout)  \rangle}
\;\xi_\hbar^{H^\bt(M,M_\dout)}\cdot\underbrace{\tau(M,M_\dout)}_{\in \Dens^{\frac12}\F_b^\zm}\cdot(\mu_{\B_\dd}^\hbar)^{1/2}       
\\
\in \HH^\mr{can}_\dd\widehat\otimes \Dens^{\frac12,\Fun}_\bC(\F_b^\zm)
\label{Z integrating out bulk fields result}
\end{multline}
where $\tau(M,M_\dout )\in \Det\; H^\bt(M,M_\dout)/\{\pm 1\}$ is the R-torsion and normalization factor is $\xi_\hbar^{H^\bt(M,M_\dout)}=\prod_{k=0}^n (\xi_\hbar^k)^{\dim H^k(M,M_\dout)}$ with $\xi_\hbar^k$ as in (\ref{xi_hbar^k}).
\item \label{prop: Z (ii)} The partition function satisfies the quantum master equation
\be\left(\frac{i}{\hbar}\widehat S_\dd^\mr{can}-i\hbar\; \Delta_\zm^\mr{can}\right) Z(A_\dout,B_\din;\Azm,\Bzm) =0\label{QME with bdry term for Z}\ee
where $\Delta_\zm=\left\langle \frac{\dd}{\dd \Azm} , \frac{\dd}{\dd \Bzm} \right\rangle$ is the BV Laplacian on $\Fun(\F_b^\zm)$ and $\Delta_\zm^\mr{can}$ is the corresponding BV Laplacian on half-densities.
\item \label{prop: Z (iii)} Deformation of the induction data $C^\bt(X,X_\dout)\wavy{(\ii,\pp,\K)} H^\bt(M,M_\dout)$ induces a 
transformation of the partition function of the form
    $$Z \;\mapsto\; Z+ \left(\frac{i}{\hbar}\widehat S_\dd^\mr{can}-i\hbar\; \Delta_\zm^\mr{can}\right) (\cdots) $$
\end{enumerate}
\end{Proposition}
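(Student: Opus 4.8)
\textbf{Part (\ref{prop: Z (i)}).} The plan is to read the integral off the expression for $S(A,B)$ displayed immediately above the statement. There the fields on the fiber $\F_b$ have already been decomposed into coordinates $\Aout,\Bin$ on $\B_\dd$, residual fields $\Azm,\Bzm$, and fluctuations $\Afluct,\Bfluct$, and the shifted combinations $\Afluct+\K\phi(\Aout)$, $\Bfluct+\K^\vee\phi^\vee(\Bin)$ have been completed, so that $S$ becomes a fluctuation-independent prefactor $\langle\Bzm|_\dout,\Aout\rangle_\dout+\langle\Bin,\Azm|_\din\rangle_\din-\langle\phi^\vee(\Bin),\K\phi(\Aout)\rangle$ plus the purely fluctuational term $\langle\Bfluct+\K^\vee\phi^\vee(\Bin),d(\Afluct+\K\phi(\Aout))\rangle$. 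By translation invariance of the (Berezin/Lebesgue) integration measure on $\LL$, the remaining fiber BV integral over $\LL$ coincides with the closed-manifold integral of Lemma~\ref{lemma: torsion as BV integral} and Proposition~\ref{prop: Z closed}, now carried out for the complex of relative cochains $C^\bt(X,X_\dout)$ with its retraction $(\ii,\pp,\K)$ onto $H^\bt(M,M_\dout)$; hence it equals $\xi_\hbar^{H^\bt(M,M_\dout)}\cdot\tau(M,M_\dout)$ as a half-density on $\F_b^\zm$. Collecting the prefactor together with the normalized half-density $(\mu_{\B_\dd}^\hbar)^{1/2}$ produces (\ref{Z integrating out bulk fields result}).

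\textbf{Part (\ref{prop: Z (ii)}).} I would deduce this by pushing the half-density form of the ``pre-integration'' identity of Lemma~\ref{lemma 7.3} through the fiber BV integral. Along the symplectic splitting $\F_b=(\ii\oplus\ii^\vee)\F_b^\zm\oplus\F_\fluct$ the bulk BV Laplacian splits as $\Delta_\bulk=\Delta_\zm+\Delta_\fluct$, while $\widehat S_\dd$ involves only the $\B_\dd$-variables and so commutes with integration along $\F_\fluct$. Integrating the identity $(\frac{i}{\hbar}\widehat S_\dd^\mr{can}-i\hbar\Delta_\bulk^\mr{can})(e^{\frac{i}{\hbar}S}\mu_\hbar^{1/2})=0$ over $\LL\subset\F_\fluct$, and using that the restriction of a $\Delta_\fluct^\mr{can}$-exact half-density to a Lagrangian integrates to zero (the BV analogue of Stokes' theorem, cf.\ \cite{CMRpert}), leaves exactly $(\frac{i}{\hbar}\widehat S_\dd^\mr{can}-i\hbar\Delta_\zm^\mr{can})Z=0$. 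This is the general statement that the BV pushforward is a cochain map for the two boundary-modified differentials.

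\textbf{Part (\ref{prop: Z (iii)}).} Here I would run the standard BV--Stokes argument for a change of gauge-fixing. The marked space $\mathfrak{I}^{\mr{marked}}$ of retractions $C^\bt(X,X_\dout)\wavy{(\ii,\pp,\K)}H^\bt(M,M_\dout)$ is contractible (Section~\ref{sec: deformations of ind data}), so any two choices are joined by a path $t\mapsto(\ii_t,\pp_t,\K_t)$, along which both the Lagrangian $\LL_t=\mr{im}(\K_t)[1]\oplus\mr{im}(\K_t^\vee)[n-2]$ and the embedding $\ii_t\oplus\ii_t^\vee$ of residual fields move. Differentiating $Z$ in $t$ and invoking invariance of fiber BV integrals under deformation of the gauge-fixing data up to an exact term (\cite[Section~2.2.2]{CMRpert}) gives $\frac{d}{dt}Z_t=(\frac{i}{\hbar}\widehat S_\dd^\mr{can}-i\hbar\Delta_\zm^\mr{can})(R_t)$, where $R_t$ is the fiber integral of $e^{\frac{i}{\hbar}S}$ with the generator of the infinitesimal deformation (the maps $\Lambda$, $I$, $P$ of deformation types (\ref{HPT deform I})--(\ref{HPT deform III})) inserted; integrating over $t$ yields the asserted form. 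Deformations of type (\ref{HPT deform IV}) are absent here by Assumption~\ref{assumption on p}.

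\textbf{Main obstacle.} Parts (\ref{prop: Z (i)}) and (\ref{prop: Z (ii)}) are, once the algebra preceding the statement is in place, bookkeeping plus an appeal to the general BV-pushforward calculus of \cite{CMRpert}; the care needed is in keeping the canonical (half-density) versions of $\widehat S_\dd$ and $\Delta_\zm$ straight on $\HH^\mr{can}_\dd\widehat\otimes\Dens^{\frac12,\Fun}_\bC(\F_b^\zm)$ and in checking that the renormalization factor $\xi_\hbar$ built into $\mu_\hbar^{1/2}$ descends consistently to $\xi_\hbar^{H^\bt(M,M_\dout)}$ under reduction, exactly as in Lemma~\ref{lemma: zeta = xi/xi}. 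For part (\ref{prop: Z (iii)}) the subtler point --- and the one I expect to require the most work --- is that a deformation of the cohomology representatives $\ii$ also moves the prefactor in (\ref{Z integrating out bulk fields result}) through $\Bzm|_\dout$ and $\Azm|_\din$, so one must verify that this contribution too is absorbed into the $(\frac{i}{\hbar}\widehat S_\dd^\mr{can}-i\hbar\Delta_\zm^\mr{can})$-coboundary; this is where the full deformation classification of Section~\ref{sec: deformations of ind data} enters.
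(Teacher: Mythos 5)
Your treatment of parts (\ref{prop: Z (i)}) and (\ref{prop: Z (ii)}) follows the paper's own proof: for (i) the paper simply records the straightforward Gaussian integral over $\LL$, and your translation-along-$\LL$ reduction to the closed-case computation of Lemma \ref{lemma: torsion as BV integral}, now for the relative complex $C^\bt(X,X_\dout)\wavy{(\ii,\pp,\K)}H^\bt(M,M_\dout)$, is exactly what that computation amounts to; for (ii) the paper likewise integrates the half-density identity (\ref{QME with bdry term, half-densities}) over $\LL$, splits $\Delta^\can_\bulk=\Delta^\can_\zm+\Delta^\can_\fluct$ and discards the $\Delta^\can_\fluct$-exact piece by the BV Stokes theorem.

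For part (\ref{prop: Z (iii)}) you take a genuinely different route, and this is where your argument is thinner than the paper's. The paper does not appeal to the general gauge-fixing-independence statement; it takes a general infinitesimal deformation of $(\ii,\pp,\K)$, decomposes it into the three types (\ref{ind data deformation I})--(\ref{ind data deformation III}) with generators $\Lambda$, $I$, $P$, and verifies \emph{directly on the explicit formula} (\ref{Z integrating out bulk fields result}) that the variation equals $\left(\frac{i}{\hbar}\widehat S_\dd^\can-i\hbar\,\Delta_\zm^\can\right)R$ with the explicit generator $R=Z\cdot\left(-\langle I^\vee(\Bzm)|_\dout,\Aout\rangle_\dout+\langle \Bin, I(\Azm)|_\din\rangle_\din-\langle \phi^\vee(\Bin),\til\Lambda\,\phi(\Aout)\rangle\right)$. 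Your route -- contractibility of the marked space of induction data plus the theorem of \cite{CMRpert} that BV pushforwards of closed elements change by exact terms under deformation of the gauge-fixing -- handles deformations of type (\ref{HPT deform I}) (only $\K$ moves, so only the Lagrangian $\LL$ inside a fixed splitting moves) without further ado. But deformations of types (\ref{HPT deform II}) and (\ref{HPT deform III}) change $\ii$ and $\pp$, i.e. the splitting $\F_b=(\ii\oplus\ii^\vee)\F_b^\zm\oplus\F_\fluct$ and hence the fibration over residual fields itself, and they move the boundary prefactor through $\ii\Azm|_\din$ and $\ii^\vee\Bzm|_\dout$; the pushforward theorem as you invoke it does not automatically cover this. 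You correctly identify this as the delicate point but leave it unverified, and that verification is precisely what the paper's direct computation supplies (it is what produces the first two terms of the generator $R$ above). So your proof of (iii) is complete only for $\K$-deformations; to close it you must either extend the gauge-fixing-independence argument to deformations of the splitting, or carry out the explicit check on (\ref{Z integrating out bulk fields result}) as the paper does.
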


\begin{proof} Part (\ref{prop: Z (i)}) is a straightforward computation of the Gaussian integral over fluctuations in (\ref{Z integrating out bulk fields}).

Part (\ref{prop: Z (ii)}) is a consequence of (\ref{QME with bdry term, half-densities}) and the BV Stokes' theorem: one has
\begin{multline*}
\left(\frac{i}{\hbar}\widehat S_\dd^\mr{can}-i\hbar\; \Delta_\zm^\mr{can}\right) Z = \int_{\LL\subset\F_\fluct} \left(\frac{i}{\hbar}\widehat S_\dd^\mr{can}-i\hbar\; \Delta^\mr{can}_\bulk\right)\left(e^{\frac{i}{\hbar}S(A,B)}\mu_\hbar^{1/2}\right)+\\
+\int_{\LL\subset\F_\fluct} i\hbar\;\Delta^\can_\fluct \left(e^{\frac{i}{\hbar}S(A,B)}\mu_\hbar^{1/2}\right) =0
\end{multline*}
with $\Delta^\can_\fluct=\left\langle \frac{\dd}{\dd A_\fluct} , \frac{\dd}{\dd B_\fluct} \right\rangle$ the BV Laplacian on coordinate-dependent half-densities on the space of bulk fluctuations. 

For part (\ref{prop: Z (iii)}), it suffices to consider a general {\it infinitesimal} deformation of the induction data, given as a sum of deformations of the three types  (\ref{ind data deformation I},\ref{ind data deformation II},\ref{ind data deformation III}), cf. Section \ref{sec: deformations of ind data}, with
$\Lambda: C^\bt_\mr{exact}(X,X_\dout)\ra \mr{im}(\K)^{\bt-2}$,  $I: H^\bt(M,M_\dout)\ra \mr{im}(\K)^{\bt-1}$, $P:C^\bt_\mr{exact}(X,X_\dout)\ra H^{\bt-1}(M,M_\dout)$ the corresponding generators. Using the explicit formula for the partition function (\ref{Z integrating out bulk fields result}), one can check directly that the effect of such a deformation on $Z$ is given by
$$
Z\;\mapsto Z +\left(\frac{i}{\hbar}\widehat S_\dd^\mr{can}-i\hbar\; \Delta_\zm^\mr{can}\right) R
$$
with
$$
R= Z\cdot \left(-\langle \left. I^\vee(\Bzm) \right|_\dout,A_\dout  \rangle_\dout +
\langle B_\din, \left. I(\Azm) \right|_\din\rangle_\din-\langle \phi^\vee(B_\din) \;,\; \til\Lambda \phi(A_\dout)  \rangle \right)
$$
Here $\til\Lambda=\Lambda d K: C^\bt(X,X_\dout)\ra C^{\bt-2}(X,X_\dout)$ is the extension of $\Lambda$ from exact to all cochains, by zero on the first and third terms of (\ref{Hodge decomp}); the map $I^\vee:H^\bt(M,M_\din)\ra \mr{im}(\K)^{\bt-1}$ is defined as $(I^\vee)_k=(-1)^{n-k} (P_{n-k+1})^*$.
\end{proof}

\begin{remark}
Recall that, by Poincar\'e duality for torsions \cite{Milnor62}, one can relate the relative and absolute torsions as $\tau(M,M_\dout)=\tau(M,M_\din)^{(-1)^{n-1}}$. In particular, a more symmetric way to write the evaluation of the Gaussian integral over the fluctuation part of fields in (\ref{Z integrating out bulk fields}) is:
\begin{multline*}
(\tau(M,M_\dout)\cdot \tau(M,M_\din)^{(-1)^{n-1}})^{\frac12}\;\;\in \\
\in\;\; \left(\Det\; H^\bt(M,M_\dout)\otimes (\Det\; H^\bt(M,M_\din))^{(-1)^{n-1}}\right)^{\otimes \frac{1}{2}}/\{\pm 1\}
\end{multline*}
\end{remark}


\begin{remark} In the case $H^\bt(M,M_\dout)=0$ (and hence $H^\bt(M,M_\din)=0$, too), formula (\ref{Z integrating out bulk fields}) simplifies to
$$Z(A_\dout,B_\din)=e^{-\frac{i}{\hbar}\langle \phi^\vee(B_\din) \;,\; \K \phi(A_\dout)  \rangle}  \cdot \underbrace{\tau(M,M_\dout)}_{\in \bR/\{\pm 1\}}\cdot(\mu_{\B_\dd}^\hbar)^{1/2}$$
The $R$-torsion in this formula is a nonzero real number defined modulo sign. In the case $M_\din=\varnothing$, we also have a simplification:
$$Z(\Aout; \Azm,\Bzm)=
e^{\frac{i}{\hbar} \langle \left.\Bzm\right|_\dout, \Aout  \rangle_\dout }
\cdot\xi_\hbar^{H^\bt(M,M_\dout)}\tau(M,M_\dout)\cdot(\mu_{\B_\dd}^\hbar)^{1/2}
$$
and similarly in case $M_\dout=\varnothing$.
\end{remark}

\begin{remark}\label{rem: propagator cob}
The propagator can be introduced as the parametrix  $\KK\in C^{n-1}(X\times X^\vee; p_1^* E_X\otimes p_2^* E^*_{X^\vee})$ for $\K$, exactly as in Section \ref{rem: propagator, closed case}. Equations (\ref{propagator},\ref{propagator equations}) hold without changes (with the correction that $\chi_\alpha$, $\chi^\vee_\alpha$ are now the representatives of relative cohomology $H^\bt(M,M_\dout)$, $H^{n-\bt}(M,M_\din)$). Now the propagator satisfies additionally the boundary conditions
$$\KK(e,e^\vee)=0\quad\mbox{ if }\; e \in X_\dout \;\mbox{ or}\;\; e^\vee\in X_\din^\vee $$
\end{remark}

\subsection{Gluing}
\label{sec: quantum gluing}
Consider the situation of Section \ref{sec: class gluing}, i.e. a glued cobordism
\be M_1\cob{M}M_3\quad =\quad M_1\cob{M_I}M_2\cob{M_{II}}M_3 \label{glued cobordism}\ee
with a glued cellular decomposition
$$X_1\cob{X}X_3\quad =\quad X_1\cob{X_I}X_2\cob{X_{II}}X_3$$ (and the dual one).
Our goal of this Section is obtain an Atiyah-Segal-type gluing formula, expressing the partition function for $(M,X)$ in terms of partition functions for $(M_I,X_I)$ and $(M_{II},X_{II})$.

The zeroth approximation to the expected formula is:
$$Z(B_1,A_3)=\int_{\B_2^{(A)}\times \B_2^{(B)}} Z_I(B_1,A_2) \left[(\D_\hbar A_2)^{1/2}\cdot e^{-\frac{i}{\hbar}\langle B_2,A_2 \rangle} \cdot (\D_\hbar B_2)^{1/2}\right] Z_{II}(B_2,A_3) $$
where the integral is taken over the space of leaves (base) $\B_2^{(A)}$ of $A$-polarization on the interface $(M_2,X_2)$, parameterized by $A_2$, and over the space of leaves $\B_2^{B}$ of $B$-polarization on the interface, parameterized by $B_2$. In the formula above we are ignoring (for the moment) the issue that spaces of 
residual fields 
on left and right hand sides are generally different.

Consider the expression\footnote{
We are putting asterisks on boundary conditions $A_2^*$, $B_2^*$ at $X_2$ to distinguish them from the components $A_2^{II},B_2^I$ of bulk fields -- coordinates on fibers of $\F_{II}\ra \B_2^{(B)}$ and $\F_{I}\ra \B_2^{(A)}$, respectively. In other words, in (\ref{S_I+S_II-(B,A)}) we are counting each cell of $X_2$, $X_2^\vee$ twice: once as a boundary condition and once as a part of bulk fields.}
\be S(\underbrace{A_I}_{\mr{s.t.\;} A_I|_{X_2}=A_2^*},B_I) \;+\; S(A_{II},\underbrace{B_{II}}_{\mr{s.t.\;} B_{II}|_{X_2^\vee}=B_2^*})-\langle B_2^*,A_2^* \rangle \label{S_I+S_II-(B,A)}\ee
The second term here  
contains a boundary term $\langle B_2^*,A_2^{II}\rangle$ and no other terms dependent on $B_2^*$. Therefore, integrating out $B_2^*$, we impose the constraint $A_2^*=A_2^{II}$.
Integrating out both $A_2^*$ and $B_2^*$, we obtain the action on the whole (glued) cobordism, 
i.e.
\begin{multline} 
e^{\frac{i}{\hbar}S(A,B)} =\\
=\int_{\B_2^{(A)}\times \B_2^{(B)}} \D_\hbar A_2^*\; \D_\hbar B_2^*\;\; e^{-\frac{i}{\hbar}\langle B_2^*,A_2^* \rangle}\cdot e^{\frac{i}{\hbar} S(\til A_2^*+p_I^\rel(A),p_I(B))}\cdot e^{\frac{i}{\hbar}S( p_{II}(A), \til B_2^*+p_{II}^\rel(B) ) } \label{gluing e^S}
\end{multline}
Here $p_I$, $p_{II}$ is the projection of fields on $X$ to fields on $X_I$, $X_{II}$, respectively (by restriction); $p_{I,II}^\rel$ is the projection to cochains on $X_I$ or $X_{II}$ (resp. the dual complexes)  vanishing on $X_2$, e.g. $p_I^\rel(A)=p_I(A)-\til {A|_{X_2}}\in C^\bt(X_I,X_2)$. Note that normalization of the integration measure coming from conventions of Section \ref{sec: normalization} works 
correctly here.\footnote{\label{footnote: gluing normalization check}Indeed, for a $k$-cell $e\subset X_2$ we have $\int \D_\hbar B_{\varkappa_2(e)}^* \D_\hbar A_e^*\quad e^{-\frac{i}{\hbar} \langle B^*_{\varkappa_2(e)} , A^*_e \rangle_E}=(\underbrace{\xi_\hbar^{n-\dim \varkappa_2(e)} \xi_\hbar^{\dim e}}_{\xi_\hbar^{k+1}\xi_\hbar^k})^{\mr{rk} E}\cdot \left\{ \begin{array}{cl} (2\pi \hbar)^{\mr{rk}E} & \mr{if}\; k \;\mr{odd} \\ \left(\frac{i}{\hbar}\right)^{\mr{rk}E} & \mr{if}\; k \;\mr{even}\end{array} \right.=1$}

If we fix boundary conditions for fields in (\ref{gluing e^S}), $B_1$ on $X_1^\vee$ and $A_3$ on $X_3$, the l.h.s. of (\ref{gluing e^S}) becomes a function on $\F_{B_1,A_3}\simeq \F_{\B_1,0}^I\times \F_{0,A_3}^{II}$ (where subscripts denote the boundary conditions, as in (\ref{F_b}), and $\simeq$ is a symplectomorphism). Using the gauge-fixing on $X_I$, $X_{II}$, we can evaluate the fiber BV integral of (\ref{gluing e^S}), yielding a function of {\it composite} {\zeromodes} $\F^\mr{comp.\;\zm}_{B_1, A_3}=\F^{I\;\zm}_{B_1,0}\times \F^{II\;\zm}_{0,A_3}$:
\begin{multline}\label{Z composite}
Z^\mr{comp.\;\zm}(B_1,A_3;A_\zm^I,A_\zm^{II},B_\zm^I,B_\zm^{II})
=\int_{\B^{(A)}_2\times \B^{(B)}_2} Z_I(B_1,A_2;A_\zm^I,B_\zm^I) \cdot\\
\cdot \left[(\D_\hbar A_2)^{1/2}\cdot e^{-\frac{i}{\hbar}\langle B_2,A_2 \rangle} \cdot (\D_\hbar B_2)^{1/2}\right] \cdot Z_{II}(B_2,A_3;A_\zm^{II},B_\zm^{II})
\end{multline}
The next step is to pass from the composite {\zeromodes} in the expression above to standard bulk {\zeromodes} (\ref{bulk zero-modes}) on $X$.

\subsubsection{Gluing bulk {\zeromodes}}
Consider the cochain complex
\be C^\bt(X,X_3)\simeq C^\bt(X_I,X_2)\oplus  C^\bt(X_{II},X_3) \label{gluing cochains}\ee
Note that this is an isomorphism of (based) graded vector spaces but not of cochain complexes: the differential on the l.h.s. has the block triangular form
\be d_{C^\bt(X,X_3)}=\left(\begin{array}{cc} d_I & \phi_I p_2 \\ 0 & d_{II} \end{array}\right) \label{d upper triangular}\ee
where $p_2: C^\bt(X_{II},X_3)\ra C^\bt(X_2)$ is the restriction to the interface; $\phi_I: C^\bt(X_2)\ra C^{\bt+1}(X_I,X_2)$ is as in (\ref{phi def}). Similarly, for the dual cochain complex, we have
\be C^\bt(X^\vee,X_1^\vee)\simeq C^\bt(X_I^\vee,X_1^\vee)\oplus C^\bt(X_{II}^\vee,X_2^\vee),\qquad
d_{C^\bt(X^\vee,X_1^\vee)}=\left(\begin{array}{cc} d_I^\vee & 0 \\ \phi_{II}^\vee p_2^\vee & d_{II}^\vee \end{array}\right) \label{gluing dual cochains}\ee
with $p_2^\vee:C^\bt(X_I^\vee,X_1^\vee)\ra C^\bt(X_2^\vee) $ and $\phi_{II}^\vee: C^\bt(X_2^\vee)\ra C^{\bt+1}(X_{II}^\vee,X_2^\vee)$.

Returning to decomposition (\ref{gluing cochains}), we would like to view the total differential (\ref{d upper triangular}) as the diagonal part (the standard differential on the r.h.s. of (\ref{gluing cochains})) plus a strictly upper-triangular perturbation:

\be d_{C^\bt(X,X_3)}=\left(\begin{array}{cc} d_I & 0 \\ 0 & d_{II} \end{array}\right)+\left(\begin{array}{cc} 0 & \phi_I p_2 \\ 0 & 0 \end{array}\right) \label{d decomposition}\ee
By homological perturbation lemma (Lemma \ref{lemma: homological perturbation}),
using the direct sum induction data
\be C^\bt(X_I,X_2)\oplus C^\bt(X_{II},X_3),\;d_I\oplus d_{II}\quad \wavy{(\ii_I\oplus \ii_{II},\pp_I\oplus \pp_{II},\K_I\oplus \K_{II})}\quad H^\bt(M_I,M_2)\oplus H^\bt(M_{II},M_3) \label{C_I+C_II retraction}\ee
we 
construct the induced differential 
\be \ddelta=  \left(\begin{array}{cc} 0 & \pp_I (\phi_I p_2) \ii_{II} \\ 0 & 0 \end{array}\right) \label{delta}\ee
on the cohomology $H^\bt(M_I,M_2)\oplus H^\bt(M_{II},M_3)$ of the first term in r.h.s. of (\ref{d decomposition}), such that cohomology of $\ddelta$ is isomorphic to $H^\bt(M,M_3)$. Higher terms in the series for the induced differential vanish due to the special (upper-triangular) form of the perturbation of the differential.

In purely cohomological terms, without resorting to cochains, $\ddelta$ is the composition of the natural map $H^\bt(M_{II},M_3)\ra H^\bt(M_2)$ (pullback by the inclusion $M_2\hra M_{II}$ ) and the connecting homomorphism $H^\bt(M_2)\ra H^{\bt+1}(M_I,M_2)$ in the long exact sequence of the pair $(M_I,M_2)$.

Choose some induction data
\be H^\bt(M_I,M_2)\oplus H^\bt(M_{II},M_3),\;\ddelta \quad \wavy{(i_\gzm,p_\gzm,K_\gzm)} \quad  H^\bt(M,M_3) \label{gzm retraction}\ee
(``$\gzm$'' stands for ``gluing of {\zeromodes}''). 
This, together with the dual induction data (in the sense of Section \ref{sec: induction data for dual and sym}),
\be H^\bt(M_I,M_1)\oplus H^\bt(M_{II},M_2),\;\ddelta^\vee \quad \wavy{(i_\gzm^\vee,p_\gzm^\vee,K_\gzm^\vee)} \quad  H^\bt(M,M_1) \label{gzm dual}\ee
gives a splitting of the composite \zeromodes\  into standard \zeromodes\  plus a complement, and a Lagrangian in the complement:
\begin{multline}
\F^\mr{comp.\;z.m.}_{B_1,A_3}=(i_\gzm\oplus i_\gzm^\vee)\F^\zm_{B_1,A_3}\oplus\\
\oplus
\rlap{$\overbrace{\phantom{\left(\mr{im}(\ddelta)[1]\oplus \mr{im}(\ddelta^\vee)[n-2]\right)\oplus \left(\mr{im}(K_\gzm)[1]\oplus \mr{im}(K_\gzm^\vee)[n-2]\right)}}^{\F_\gzm^\mr{fluct}}$}
\left(\mr{im}(\ddelta)[1]\oplus \mr{im}(\ddelta^\vee)[n-2]\right)\oplus \underbrace{\left(\mr{im}(K_\gzm)[1]\oplus \mr{im}(K_\gzm^\vee)[n-2]\right)}_{\LL_\gzm} 
\end{multline}

Using this gauge-fixing data, we can construct the pushforward of (\ref{Z composite}) to the standard \zeromodes\ using the fiber BV integral.
\begin{Proposition}\label{prop: gluing}
The partition function of the glued cobordism $(M_1,X_1)\cob{(M,X)} (M_3,X_3)$ can be expressed in terms of the partition functions for constituent cobordisms $(M_1,X_1)\cob{(M_I,X_I)}(M_2,X_2)$, $(M_2,X_2)\cob{(M_{II},X_{II})}(M_3,X_3)$ as an integral over boundary conditions on the interface $(M_2,X_2)$ and the fiber BV integral for gluing the bulk residual fields: 
\begin{multline}\label{gluing formula}
Z(B_1,A_3; \Azm,\Bzm)= \\
=\int_{\LL_\gzm\subset \F_\gzm^\mr{fluct}}
\int_{\B^{(A)}_2\times \B^{(B)}_2} Z_I\left(\rlap{$\phantom{e^{\frac{i}{\hbar}}}$}
B_1,A_2;i_\gzm^I(\Azm)+A_\gzm^{\mr{fluct},I},i_\gzm^{\vee,I}(\Bzm)+B_\gzm^{\mr{fluct},I}\right)\cdot \\
\cdot\left[(\D_\hbar A_2)^{1/2}\cdot e^{-\frac{i}{\hbar}\langle B_2,A_2 \rangle} \cdot (\D_\hbar B_2)^{1/2}\right]\cdot \\
\cdot Z_{II}\left(
\rlap{$\phantom{e^{\frac{i}{\hbar}}}$}
B_2,A_3;i_\gzm^{II}(\Azm)+A_\gzm^{\mr{fluct},II},i_\gzm^{\vee,II}(\Bzm)+B_\gzm^{\mr{fluct},II}\right)
\end{multline}
The equality is modulo $\left(\frac{i}{\hbar}\hat S_\dd^\can-i\hbar\,\Delta_\zm^\can\right)$-coboundaries.
\end{Proposition}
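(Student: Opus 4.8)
The plan is to compute $Z(B_1,A_3;\Azm,\Bzm)$ with a gauge-fixing tailored to the decomposition $M=M_I\cup_{M_2}M_{II}$ and to recognize the fiber BV integral defining it as a composition of two BV pushforwards, the first reproducing $Z^\mr{comp.\,\zm}$ of (\ref{Z composite}) and the second the outer integral in (\ref{gluing formula}). By Proposition \ref{prop: Z}(\ref{prop: Z (iii)}), $Z(M,X)$ is independent, modulo $\left(\tfrac i\hbar\widehat S_\dd^\can-i\hbar\Delta_\zm^\can\right)$-exact terms, of the choice of induction data $C^\bt(X,X_3)\wavy{}H^\bt(M,M_3)$, so I am free to take the composition, in the sense of (\ref{composition of ind data}), of: the direct-sum data (\ref{C_I+C_II retraction}) corrected by the homological perturbation lemma against the off-diagonal perturbation of (\ref{d decomposition}) — which produces induction data from $C^\bt(X,X_3)$ onto $\big(H^\bt(M_I,M_2)\oplus H^\bt(M_{II},M_3),\ddelta\big)$ — with the data (\ref{gzm retraction}) from the latter onto $H^\bt(M,M_3)$, and dually on the $B$-side using (\ref{gzm dual}). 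Composition of induction data is induction data, so this is a legitimate gauge-fixing; its Hodge decomposition refines through the two stages, and the complement of the bulk residual fields inside $\F_b$ splits as $\F^\mr{fluct}_{I,II}\oplus\F^\mr{fluct}_\gzm$, where $\F^\mr{fluct}_{I,II}$ collects the fluctuations of the two pieces (read off from their own Hodge decompositions) together with the auxiliary interface doublet of (\ref{gluing e^S}), and $\F^\mr{fluct}_\gzm$ is the space appearing in the statement.

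Next, by the composition property of fiber BV integrals (\cite[Section 2.2.2]{CMRpert}), the BV pushforward $\F_b\to\F^\zm_b$ along the composed gauge-fixing equals the pushforward $\F^\mr{comp.\,\zm}\to\F^\zm_b$ along $\LL_\gzm\subset\F^\mr{fluct}_\gzm$ with data (\ref{gzm retraction}),(\ref{gzm dual}), precomposed with the pushforward $\F_b\to\F^\mr{comp.\,\zm}$ along $\F^\mr{fluct}_{I,II}$. For the inner pushforward I use identity (\ref{gluing e^S}): introducing the auxiliary interface fields $A_2^*,B_2^*$ decouples the two sides so that $e^{\frac i\hbar S(A,B)}=e^{\frac i\hbar S_I}\cdot e^{\frac i\hbar S_{II}}$, with $A_2^*$ serving as the out-boundary datum of $(M_I,X_I)$ and $B_2^*$ as the in-boundary datum of $(M_{II},X_{II})$; fixing the composite residual fields turns $A_\zm^I,B_\zm^I$ (resp.\ $A_\zm^{II},B_\zm^{II}$) into residual fields for the two pieces, and the fluctuations to be integrated are exactly those appearing in $Z_I$ and $Z_{II}$. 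Fubini then gives $\int_{\B_2^{(A)}\times\B_2^{(B)}}Z_I\,[\cdots]\,Z_{II}=Z^\mr{comp.\,\zm}$; the half-density bookkeeping is automatic, since the factors $(\D_\hbar A_2)^{1/2}$ carried by $Z_I$ and $(\D_\hbar B_2)^{1/2}$ carried by $Z_{II}$, together with the pairing kernel $e^{-\frac i\hbar\langle B_2,A_2\rangle}$ of (\ref{pairing H^A with H^B}), reassemble the full measure $\D_\hbar A_2^*\,\D_\hbar B_2^*$ of (\ref{gluing e^S}), and the Gaussian integral over that doublet contributes $1$ to the normalization.

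Finally, applying the outer pushforward to $Z^\mr{comp.\,\zm}$ gives, by construction, the right-hand side of (\ref{gluing formula}); combined with the previous step this identifies it with $Z(M,X)$ computed in the composed gauge-fixing, hence with $Z(M,X)$ modulo $\left(\tfrac i\hbar\widehat S_\dd^\can-i\hbar\Delta_\zm^\can\right)$-coboundaries. The remaining ambiguities — a different choice of $(i_\gzm,p_\gzm,K_\gzm)$ or of the gauge-fixings of the two pieces — change the right-hand side only by coboundaries as well, since by Proposition \ref{prop: Z}(\ref{prop: Z (iii)}) (and its analogue for the gluing-of-residual-fields step) any such change produces a coboundary on $(M_I,X_I)$, $(M_{II},X_{II})$ or in the outer step, and fiber BV integrals intertwine the modified-QME operators, propagating it to a coboundary on $(M,X)$. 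As a byproduct one recovers the multiplicativity of Reidemeister torsion and of the $\xi_\hbar^{H^\bt}$-factors along the short exact sequence $0\to C^\bt(X_I,X_2)\to C^\bt(X,X_3)\to C^\bt(X_{II},X_3)\to 0$ whose connecting homomorphism is $\ddelta$.

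I expect the main obstacle to be the decoupling claim underlying the factorization: that the composed gauge-fixing's fluctuation complement is genuinely $\F^\mr{fluct}_{I,II}\oplus\F^\mr{fluct}_\gzm$ with $\F^\mr{fluct}_{I,II}$ the honest fluctuations of the pieces plus the auxiliary doublet. Concretely this is the assertion that integrating out $(A_2^*,B_2^*)$ is precisely what deforms the block-diagonal differential $d_I\oplus d_{II}$ into the true differential (\ref{d upper triangular}) on $C^\bt(X,X_3)$ — encoded by (\ref{gluing e^S}) — and one must carry this through all normalization factors, through the relative-versus-absolute boundary conditions at the interface $M_2$, and through the orientation-reversal convention built into the interface pairing (\ref{pairing H^A with H^B}).
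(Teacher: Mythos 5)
Your route is the ``soft'' argument that the paper alludes to in the sentence preceding its proof (``the statement follows by construction from general properties of fiber BV integrals and from (\ref{gluing e^S})''), whereas the paper's actual proof is a direct computation: it substitutes the explicit formula (\ref{Z integrating out bulk fields result}) for $Z_I$, $Z_{II}$ into (\ref{Z composite}), performs the Gaussian integral over the interface and then over $\LL_\gzm$, and recognizes in the result the partition function of the glued cobordism evaluated with the induction data $(\ii_\gl,\pp_\gl,\K_\gl)$ of Proposition \ref{prop: glued ind data} -- an exact equality for that gauge-fixing, with coboundaries entering only through gauge-fixing independence. In outline your skeleton (composed induction data, (\ref{gluing e^S}), composite residual fields, composition of pushforwards) matches the paper's setup, so the difference is in how the central identity is established.

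The genuine gap is precisely the step you yourself flag as ``the main obstacle'' and then leave unresolved -- and it is not a bookkeeping point, it is the content of the paper's computation. Via (\ref{gluing e^S}) and Fubini, the convolution $\int_{\B^{(A)}_2\times\B^{(B)}_2} Z_I\,[\cdots]\,Z_{II}$ is a BV pushforward of $e^{\frac{i}{\hbar}S}$ taken with the \emph{plain direct-sum} splitting built from $(\ii_I\oplus\ii_{II},\,\pp_I\oplus\pp_{II},\,\K_I\oplus\K_{II})$, which is \emph{not} induction data for the glued differential (\ref{d upper triangular}): the chain-map and homotopy conditions fail because of the off-diagonal block $\phi_I p_2$. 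The data whose composition with (\ref{gzm retraction}) yields the gauge-fixing you invoke is the HPT-corrected triple $(\ii_\comp,\pp_\comp,\K_\comp)$ of (\ref{glued ind data (composite)}). The two choices share the same Lagrangian ($\mr{im}\,\K_\comp=\mr{im}\,\K_I\oplus\mr{im}\,\K_{II}$) but have different residual-field embeddings ($\ii_\comp$ versus $\ii_I\oplus\ii_{II}$), and the effective action does depend on that embedding -- compare the source terms $i_\gzm^I(\Azm)-\K_I\phi_{2\ra I}p_2\, i_\gzm^{II}(\Azm)$ in (\ref{Z glued}); moreover your claimed decomposition of the fluctuation complement cannot literally hold inside $\F_b$, since the interface doublet $(A_2^*,B_2^*)$ of (\ref{gluing e^S}) is auxiliary (each cell of $X_2$ is counted twice) rather than a subspace of the glued fields. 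So you must either verify -- as the paper does by explicit Gaussian integration -- that the two-stage procedure reproduces the one-step pushforward with $(\ii_\gl,\pp_\gl,\K_\gl)$ on the nose, or supply a separate gauge-independence argument at the composite-residual-field stage showing that the discrepancy between the direct-sum and the $\comp$-splittings is a modified-QME coboundary which the outer pushforward (a chain map) transports to a coboundary on the right-hand side of (\ref{gluing formula}); note that (\ref{prop: Z (iii)}) of Proposition \ref{prop: Z} as stated covers only changes of honest induction data on a single cobordism and does not apply verbatim. The paper's computation also delivers, along the way, the multiplicativity of $\tau$ and of the $\xi_\hbar$-factors through the isomorphism $\bT$, which your sketch records only as an expected byproduct.
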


Superscripts $I$, $II$ correspond to projections to the first and second terms of
the splittings in the l.h.s. of (\ref{gzm retraction},\ref{gzm dual}) in the obvious way. Schematically, the formula (\ref{gluing formula}) can be written as
$$Z=(p_\gzm)_*(Z_I\ast Z_{II})$$
where $Z_I\ast Z_{II}$ stands for the convolution as in (\ref{Z composite}) and $(p_\gzm)_*$ stands for the BV pushforward\footnote{We use the term BV pushforward as a synonym for the fiber BV integral.}
from composite {\zeromodes} to the standard {\zeromodes}.

The statement (\ref{gluing formula}) follows by construction from general properties of fiber BV integrals and from (\ref{gluing e^S}); we will give a proof by direct computation below, after describing the gluing on the level of induction data.

\begin{Proposition}\label{prop: glued ind data}
Induction data $ C^\bt(X,X_3)\wavy{(\ii_\gl,\pp_\gl,\K_\gl)} H^\bt(M,M_3)$ for the glued cobordism can be constructed in terms of the induction data for constituent cobordisms and the data (\ref{gzm retraction}) by the following formulae:
\begin{multline}\label{glued ind data}
\ii_\gl= \left(\begin{array}{cc} \ii_I & -\K_I \phi_I p_2 \ii_{II} \\ 0 & \ii_{II} \end{array}\right) \left(\begin{array}{c} i_\gzm^I \\ i_\gzm^{II} \end{array}\right),\;
\pp_\gl=\left(\begin{array}{cc} p_\gzm^I & p_\gzm^{II} \end{array}\right) \left(\begin{array}{cc} \pp_I & -\pp_I \phi_I p_2 \K_{II} \\ 0 & \pp_{II} \end{array}\right),\\
\K_\gl=\left(\begin{array}{cc} \K_I & -\K_I \phi_I p_2 \K_{II} \\ 0 & \K_{II} \end{array}\right)-\left(\begin{array}{cc} \ii_I & -\K_I \phi_I p_2 \ii_{II} \\ 0 & \ii_{II} \end{array}\right) K_\gzm \left(\begin{array}{cc} \pp_I & -\pp_I \phi_I p_2 \K_{II} \\ 0 & \pp_{II} \end{array}\right)
\end{multline}
\end{Proposition}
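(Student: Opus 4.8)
The plan is to build the glued retraction in two stages and combine them via the composition rule (\ref{composition of ind data}). The first stage is a homological-perturbation argument yielding a retraction of $C^\bt(X,X_3)$ onto the complex $(H^\bt(M_I,M_2)\oplus H^\bt(M_{II},M_3),\ddelta)$; the second stage composes it with the chosen retraction (\ref{gzm retraction}) of that complex onto $H^\bt(M,M_3)$.

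For the first stage I would begin with the identification (\ref{gluing cochains}) of based graded vector spaces, which holds because the product-type hypotheses produce the pushout decompositions $X=X_I\cup_{X_2}X_{II}$ and $X^\vee=X^\vee_I\cup_{X^\vee_2}X^\vee_{II}$. Under it the cellular differential of $C^\bt(X,X_3)$ takes the block-triangular form (\ref{d upper triangular}), i.e. $d_{C^\bt(X,X_3)}=(d_I\oplus d_{II})+\delta$ with $\delta=\left(\begin{smallmatrix}0 & \phi_I p_2\\ 0 & 0\end{smallmatrix}\right)$ strictly upper-triangular; $\delta$ is a legitimate perturbation because $d\phi_I+\phi_I d=0$, cf. (\ref{phi def}). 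Applying Lemma \ref{lemma: homological perturbation} to the direct-sum induction data (\ref{C_I+C_II retraction}) with perturbation $\delta$, the decisive observation is that $K=\K_I\oplus\K_{II}$ is block-diagonal while $\delta$ carries only the second summand into the first, so $\delta K\delta=0$; hence every term of the perturbation series containing two or more $\delta$'s vanishes. The induced differential is therefore exactly $\pp\,\delta\,\ii=\ddelta$ as in (\ref{delta}), and the deformed induction data truncate to $\ii'=\ii-K\delta\ii$, $\pp'=\pp-\pp\delta K$, $\K'=K-K\delta K$. A short block computation gives
\[
\ii'=\left(\begin{smallmatrix}\ii_I & -\K_I\phi_I p_2\ii_{II}\\ 0 & \ii_{II}\end{smallmatrix}\right),\quad
\pp'=\left(\begin{smallmatrix}\pp_I & -\pp_I\phi_I p_2\K_{II}\\ 0 & \pp_{II}\end{smallmatrix}\right),\quad
\K'=\left(\begin{smallmatrix}\K_I & -\K_I\phi_I p_2\K_{II}\\ 0 & \K_{II}\end{smallmatrix}\right),
\]
which are precisely the $2\times2$ blocks occurring in (\ref{glued ind data}).

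For the second stage I would substitute $(i_{01},p_{01},K_{01})=(\ii',\pp',\K')$ and $(i_{12},p_{12},K_{12})=(i_\gzm,p_\gzm,K_\gzm)$ into the composition formula (\ref{composition of ind data}), obtaining $\ii_\gl=\ii'\,i_\gzm$, $\pp_\gl=p_\gzm\,\pp'$ and $\K_\gl=\K'+\ii'\,K_\gzm\,\pp'$; plugging in the block forms above reproduces the formulae (\ref{glued ind data}) (modulo the sign bookkeeping for $K_\gzm$). Since Lemma \ref{lemma: homological perturbation} and the composition rule both return genuine induction data, $(\ii_\gl,\pp_\gl,\K_\gl)$ automatically satisfies all the relations (\ref{ind data equations}), and the isomorphism it induces on cohomology is the canonical identification $H^\bt(M,M_3)=H^\bt(\ddelta)\cong H^\bt(C^\bt(X,X_3))=H^\bt(M,M_3)$ coming from the Mayer--Vietoris description of $\ddelta$; so no extra verification is needed.

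The step I expect to take the most care is establishing the block-triangular shape (\ref{d upper triangular}) of the differential on $C^\bt(X,X_3)$ under the splitting (\ref{gluing cochains}): that the only off-diagonal coupling is $\phi_I p_2$ and there is no $C^\bt(X_I,X_2)\to C^\bt(X_{II},X_3)$ term, which rests on the fact that no cell of $X_{II}$ can have a cell of $X_I\setminus X_2$ in its boundary, together with $X_I$ being of product type near $M_2$ (which pins down $\phi_I$). Once this is in place, the identity $\delta K\delta=0$ and the ensuing collapse of the perturbation series are immediate, and the remainder is routine $2\times2$ matrix arithmetic with no analytic or further homotopical input.
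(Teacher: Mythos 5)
Your proposal is correct and follows essentially the same route as the paper's own proof: deform the direct-sum induction data (\ref{C_I+C_II retraction}) by the strictly upper-triangular perturbation via Lemma \ref{lemma: homological perturbation} (with the series truncating, exactly as you observe via $\delta K\delta=0$, yielding the composite data (\ref{glued ind data (composite)})), and then compose with (\ref{gzm retraction}) using (\ref{composition of ind data}). The sign in front of the $K_\gzm$ term that you flag is a matter of the paper's own bookkeeping conventions between (\ref{composition of ind data}) and (\ref{glued ind data}), not a gap in your argument.
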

(Subscript ``$\gl$'' here stands for ``glued''.)

\begin{proof}
Indeed, we first deform the induction data in (\ref{C_I+C_II retraction}) by the upper-triangular perturbation of the differential in (\ref{d decomposition}), using the homological perturbation lemma, which yields a retraction $C^\bt(X,X_3)\wavy{(\ii_\comp,\pp_\comp,\K_\comp)} H^\bt(M_I,M_2)\oplus H^\bt(M_{II},M_3),\ddelta $ with
\be \ii_\comp=\left(\begin{array}{cc} \ii_I & -\K_I \phi_I p_2 \ii_{II} \\ 0 & \ii_{II} \end{array}\right),\;
\pp_\comp=\left(\begin{array}{cc} \pp_I & -\pp_I \phi_I p_2 \K_{II} \\ 0 & \pp_{II} \end{array}\right),\;
\K_\comp=\left(\begin{array}{cc} \K_I & -\K_I \phi_I p_2 \K_{II} \\ 0 & \K_{II} \end{array}\right) \label{glued ind data (composite)}\ee
(Subscript ``$\comp$'' for stands for ``composite {\zeromodes}''.)
Then we compose it with  the retraction (\ref{gzm retraction}), using construction (\ref{composition of ind data}), which yields the retraction $ C^\bt(X,X_3)\wavy{(\ii_\gl,\pp_\gl,\K_\gl)} H^\bt(M,M_3)$ given by (\ref{glued ind data}).
\end{proof}

\begin{proof}[Proof of Proposition \ref{prop: gluing}]
Let us check by a direct computation that the
l.h.s and r.h.s. of (\ref{gluing formula}) coincide exactly (not modulo coboundaries)
for a special choice of gauge-fixing data in the integral (\ref{Z integrating out bulk fields}) on the glued cobordism, -- the one associated to the induction data (\ref{glued ind data}).
Indeed, substituting explicit expressions (\ref{Z integrating out bulk fields}) into (\ref{Z composite}), we have
\begin{multline*}
Z^\mr{comp.\; \zm}(B_1,A_3;A_\zm^I,A_\zm^{II},B_\zm^I,B_\zm^{II})=\\=
\int_{\B^{(A)}_2\times \B^{(B)}_2}
e^{\frac{i}{\hbar}\left(\langle \left.B^I_\zm\right|_2,A_2 \rangle_2+\langle B_1 ,\left.A^I_\zm\right|_1\rangle_1-\langle \phi^\vee_{1\ra I}(B_1),\K_I\phi_{2\ra I}(A_2)\rangle_I\right)}\cdot \\ \cdot
(\D_\hbar B_1)^{1/2}\cdot \tau_\hbar(M_I,M_2)\cdot (\D_\hbar A_2)^{1/2} 
\cdot\left[(\D_\hbar A_2)^{1/2}\cdot e^{-\frac{i}{\hbar}\langle B_2,A_2 \rangle} \cdot (\D_\hbar B_2)^{1/2}\right]\cdot \\
\cdot e^{\frac{i}{\hbar}\left(\langle \left.B^{II}_\zm\right|_3,A_3 \rangle_3+\langle B_2 ,\left.A^{II}_\zm\right|_2\rangle_2-\langle \phi^\vee_{2\ra II}(B_2),\K_{II}\phi_{3\ra II}(A_3)\rangle_{II}\right)} \cdot \\
\cdot (\D_\hbar B_2)^{1/2}\cdot \tau_\hbar(M_{II},M_3)\cdot (\D_\hbar A_3)^{1/2}=
\\
=e^{\frac{i}{\hbar} \left(\langle B_1, \left.A^I_\zm\right|_1\rangle_1+ \langle \left. B^{II}_\zm\right|_3,A_3 \rangle_3+ \langle \left.\left(B^I_\zm-\K_I^\vee\phi_{1\ra I}^\vee(B_1)\right)\right|_2 , \left.\left( A^{II}_\zm-\K_{II}\phi_{3\ra II}(A_3) \right)\right|_2 \rangle_2 \right) }
\cdot \\
\cdot(\D_\hbar B_1)^{1/2}\cdot \tau_\hbar(M_I,M_2)\cdot \tau_\hbar(M_{II},M_3)\cdot (\D_\hbar A_3)^{1/2}
\end{multline*}
Here we use the notation $\tau_\hbar(M,M_\dout)=\xi_\hbar^{H^\bt(M,M_\dout)}\tau(M,M_\dout)$.
Taking the Gaussian fiber BV integral over $\LL_\gzm\subset \F_\gzm$, we obtain
\begin{multline}\label{Z glued}
Z(B_1,A_3;A_\zm,B_\zm)=\\
{\scriptstyle \exp \frac{i}{\hbar} \left( \left\langle B_1, \left.\left( i_\gzm^I(A_\zm)- \K_I\phi_{2\ra I} p_2 i^{II}_\gzm(A_\zm) \right)\right|_1 \right\rangle_1 + \left\langle  \left.\left( i^{\vee,II}_\gzm(B_\zm)-\K_{II}^\vee\phi^\vee_{2\ra II} p_2^\vee i_\gzm^{\vee,I}(B_\zm) \right)\right|_3  ,A_3 \right\rangle_3 + \right.} \\ {\scriptstyle\left. +
\langle \left.\K_I^\vee\phi_{1\ra I}^\vee(B_1)\right|_2 , \left.\K_{II}\phi_{3\ra II}(A_3)\right|_2 \rangle_2
-\right.} \\  {\scriptstyle \left. -\left\langle (\mr{id}-\phi_{2\ra II}^\vee p^\vee_2 \K_I^\vee)\phi_{1\ra I}^\vee(B_1),\;\; (\ii_I\oplus \ii_{II})K_\gzm (\pp_I\oplus \pp_{II})\;\;(\mr{id}-\phi_{2\ra I}p_2 \K_{II})\phi_{3\ra II}(A_3)  \right\rangle_{I\cup II}
\right)  }\cdot\\
\cdot (\D_\hbar B_1)^{1/2}\cdot \underbrace{\frac{\xi_\hbar^{H^\bt(M,M_3)}}{\xi_\hbar^{H^\bt(M_I,M_2)}\cdot \xi_\hbar^{H^\bt(M_{II},M_3)}}\;\bT(\tau_\hbar(M_I,M_2)\cdot \tau_\hbar(M_{II},M_3))}_{\tau_\hbar(M,M_3)}\cdot (\D_\hbar A_3)^{1/2}
\end{multline}
Here $\bT: \Det \left(H^\bt(M_I,M_2)\oplus H^\bt(M_{II},M_3)\right)\xra{\cong} \Det H^\bt(M,M_3) $
is the canonical isomorphism between the determinant line of a cochain complex and the determinant line of cohomology, associated to the retraction (\ref{gzm retraction}). The factor $\frac{\xi}{\xi\cdot \xi}$  in front of $\bT(\cdots)$ appears as in Lemma \ref{lemma: torsion as BV integral}. In the expression (\ref{Z glued}) one recognizes the r.h.s. of (\ref{Z integrating out bulk fields}) with the gauge-fixing associated to the induction data of Proposition \ref{prop: glued ind data}.
\end{proof}

\begin{remark}
Note that the formula for $\K_\gl$ in (\ref{glued ind data}) is the gluing formula for propagators (cf. the analogous formula obtained in a different language in \cite{CMRpert}). In the special case $H^\bt(M_I,M_2)=H^\bt(M_{II},M_3)=0$ (or, more generally, $\ddelta=0$, or equivalently $H^\bt(M,M_3)\simeq H^\bt(M_I,M_2)\oplus H^\bt(M_{II},M_3)$), the formula simplifies to $\K_\comp$ of (\ref{glued ind data (composite)}). In terms of parametrices, in the latter case one has
\begin{multline*}
\KK_\comp(e_I,e_{I}^\vee)=\KK_I(e_I,e_I^\vee), \quad \KK_\comp(e_{II},e_{II}^\vee)=\KK_{II}(e_{II},e_{II}^\vee),\quad
\KK_\comp(e_{II},e_I^\vee)=0,\\ 
\KK_\comp(e_I,e_{II}^\vee)= - \sum_{e_2\in X_2} \KK_I(e_I,\varkappa_2(e_2))\cdot \KK_{II}(e_2,e_{II}^\vee)
\end{multline*}
\end{remark}

\subsection{Passing to the reduced space of states}\label{sec: reduced space of states}
If a state $\psi\in \HH_\dd=\Fun_\bC(\B_\dd)$ satisfies $\hat S_\dd \psi=0$, then it can be projected to the {\it reduced space of states} -- cohomology of the quantum BFV operator:
$$[\psi]\in\quad  \HH^\red_\dd=H^\bt_{\hat S_\dd}(\HH_\dd)\cong \Fun_\bC(
\B^\red_\dd
)$$
where $\B^\red_\dd=H^\bt(M_\dout)[1]\oplus H^\bt(M_\din)[n-2]$ is the moduli space of the $Q$-manifold $(\B_\dd,Q_{\B_\dd})$, i.e. the
zero-locus of the cohomological vector field $Q_{\B_\dd}$ reduced modulo the distribution induced by $Q_{\B_\dd}$ on the zero-locus (see \cite{CMR}).

\begin{remark} Returning to the setup of Remark \ref{rem: pairing H^A with H^B}, let $N$ be a closed $(n-1)$-manifold with a cellular decomposition $Y$. Given two states $\phi\in \HH^{(A)}_Y$, $\psi\in \HH^{(B)}_{\bar Y}$ that are annihilated by respective quantum BFV operators $\widehat S_Y^{(A)}\in \mr{End}(\HH^{(A)}_Y)_{1}$, $\widehat S_Y^{(B)}\in \mr{End}(\HH^{(B)}_{\bar Y})_{1}$, the pairing (\ref{pairing H^A with H^B}) between them can be expressed in terms of classes of states $\phi$, $\psi$ in $\hat S_Y$-cohomology, $[\phi]\in \HH^{(A),\red}_Y$, $[\psi]\in \HH^{(B),\red}_{\bar Y}$:
\be (\phi,\psi)=\int_{\B_Y^{(A),\red}\times \B_Y^{(B),\red}}[\phi]\cdot\left( \D_\hbar [A_Y]\; e^{-\frac{i}{\hbar}\langle [B_Y] , [A_Y] \rangle} \;\D_\hbar [B_Y] \right)\cdot  [\psi] \label{pairing H^A with H^B reduced}\ee
with $[A_Y], [B_Y]$ the superfields for $\B^{(A),\red}_Y=H^\bt(N)[1]$ and $\B^{(B),\red}_Y=H^\bt(N)[n-2]$, respectively. Here the normalized densities on reduced spaces
$\B_Y^{(A),\red}$,  $\B_Y^{(B),\red}$  
are defined as:
\be \D_\hbar [A_Y]= \underbrace{\prod_{k=0}^{n-1}\left(\xi_\hbar^k\right)^{\dim H^k(N)}}_{\xi_\hbar^{H^\bt(N)}}\cdot\tau(N),\qquad  \D_\hbar [B_Y]= \underbrace{\prod_{k=0}^{n-1}\left(\xi_\hbar^{n-k}\right)^{\dim H^k(N)}}_{\til \xi_\hbar^{H^\bt(N)}}\cdot\tau(N)^{(-1)^{n-1}} \label{normalized densities on Y}\ee
with $\tau(N)\in \Det\, H^\bt(N)/\{\pm 1\}\simeq \Dens\, B^{(A),\red}_Y$ the $R$-torsion of $N$ and factors $\xi_\hbar^k$ as in (\ref{xi_hbar^k}).\footnote{
Note that, using Poincar\'e duality on $N$, the factor $\til\xi_\hbar^{H^\bt(N)}$ in (\ref{normalized densities on Y}) can be expressed as $\prod_k \left(\xi_\hbar^k\right)^{\dim H^{k-1}(N)}=\xi_\hbar^{H^\bt(N)[-1]}$.
} Equality (\ref{pairing H^A with H^B reduced}) is checked straightforwardly (cf. Footnote \ref{footnote: gluing normalization check}) for states of the ``plane wave'' form, $\phi=e^{\frac{i}{\hbar}\langle \beta_Y , A_Y \rangle_Y}$, $\psi=e^{\frac{i}{\hbar}\langle B_Y , \alpha_Y \rangle_Y}$, with parameters $\beta_Y$ a closed cochain on $Y^\vee$ and $\alpha_Y$ a closed cochain on $Y$. Then (\ref{pairing H^A with H^B reduced}) follows by extension by bilinearity to all pairs of $\widehat S_\dd$-closed states.
\end{remark}

In terms of half-densities, the reduction sends 
a $\hat S_\dd^\can$-closed state $\psi\cdot \left(\mu_{\B_\dd}^\hbar\right)^{1/2}\in \HH^\can_\dd$ to
$$[\psi]\cdot \left(\mu^\hbar_{\B^\red_\dd}\right)^{1/2}
\quad \in \HH^{\red,\can}_\dd=\Dens^{\frac12,\Fun}_\bC(\B^\red_\dd)$$
where we define the normalized density on $\B^\red_\dd$ as
$$\mu^\hbar_{\B^\red_\dd} = \D_\hbar [A_\dout]\cdot \D_\hbar [B_\din] = \xi_\hbar^{H^\bt(M_\dout)}\til \xi_\hbar^{H^\bt(M_\din)}\cdot \tau(M_\dout)\cdot \tau(M_\din)^{(-1)^{n-1}}$$
with normalization factors as in (\ref{normalized densities on Y}).

If the partition function (\ref{Z integrating out bulk fields result}) were a $\hat S_\dd$-cocycle, we could construct the reduced partition function as the $\hat S_\dd$-cohomology class of $Z$. However, $Z$ generally only satisfies $\left(\frac{i}{\hbar}\hat S_\dd - i\hbar\,\Delta_\zm\right)Z=0$, but does not satisfy $\hat S_\dd Z =0$.
This problem is easily overcome as follows.


Fix some induction data 
$(\B_\dd,d_{X_\dout}\oplus d_{X^\vee_\din}) \wavy{(i_\B,p_\B,K_\B)} \B^\red_\dd=H^\bt(\B_\dd)$.
By the construction of Section \ref{sec: induction data for dual and sym}, we can infer the induction data $(i_\HH=p_\B^*,p_\HH=i_\B^*,K_\HH=\cdots)$ from the space of states $(\HH_\dd,\hat S_\dd)$ to the reduced space $\HH^\red_\dd$.
(For the moment we are discussing the non-canonical picture, where states are functions on $\B_\dd$ or $\B^\red_\dd$; we will switch to half-densities later.)
Denote
$$Z^\mr{mod}=i_\HH p_\HH Z = Z-\hat S_\dd (K_\HH Z)-K_\HH (\underbrace{\hat S_\dd Z}_{\hbar^2\Delta_\zm Z})=Z+\left(\frac{i}{\hbar}\hat S_\dd - i\hbar\,\Delta_\zm\right)(\cdots)$$
where we used the quantum master equation (\ref{QME with bdry term for Z}) and $(\cdots) = i\hbar\; K_\HH Z$.
By construction, $\hat S_\dd Z^\mr{mod}=0$. Also note that $Z^\mr{mod}$ differs from $Z$ by a
$\left(\frac{i}{\hbar}\hat S_\dd - i\hbar\,\Delta_\zm\right)$-exact term, i.e. by a BV canonical transformation, and we are ultimately only interested in partition functions modulo BV canonical transformations.

In this sense, the reduced partition function is simply
$$Z^\red:=[Z^\mr{mod}]=p_\HH Z\quad \in \HH^\red_\dd\widehat\otimes\, \Fun_\bC(\F_b^\zm)$$
i.e. the evaluation of (\ref{Z integrating out bulk fields result}) on chosen representatives of cohomology of $M_\dout$, $M_\din$, as boundary fields $\Aout,\Bin$.

In terms of half-densities, the pushforward of $\left(\mu_\B^\hbar\right)^{1/2}$ to the reduced space of states results in the appearance of square roots of torsions of the boundary in the canonical reduced partition function $Z^\red=(p_\HH)_*Z$. More precisely, we have the following, as a corollary of Proposition \ref{prop: Z}.
\begin{Proposition}
The canonical reduced partition function is
\begin{multline}\label{Z reduced}
Z^\red([\Aout],[\Bin];\Azm,\Bzm)
=\\
=
e^{\frac{i}{\hbar} \langle \left. \Bzm\right|_\dout, i_\B[\Aout]  \rangle_\dout +
\langle i_\B[\Bin], \left. \Azm \right|_\din\rangle_\din
- \langle \phi^\vee(i_\B[\Bin]) \;,\; \K \phi(i_\B[\Aout])  \rangle} \cdot \\
\cdot
\xi_\hbar{\left(M_\din \cob{M} M_\dout\right)}
\cdot\tau(M,M_\dout)\cdot \tau(M_\dout)^{\frac12}\cdot \tau(M_\din)^{\frac{(-1)^{n-1}}{2}}\qquad\in\\
\in \HH^{\red,\can}_\dd\widehat\otimes\; \Dens^{\frac12,\Fun}_\bC(\F_b^\zm)
\end{multline}
Here $[\Aout],[\Bin]$ are the superfields for $H^\bt(M_\dout)[1]$, $H^\bt(M_\din)[n-2]$. The normalization factor is
\begin{multline*}
\xi_\hbar{\left(M_\din \cob{M} M_\dout\right)}=\xi_\hbar^{H^\bt(M,M_\dout)}\left(\xi_\hbar^{H^\bt(M_\dout)}\right)^{\frac12}\left(\til \xi_\hbar^{H^\bt(M_\din)}\right)^{\frac12}=\\
=\prod_{k=0}^n (\xi_\hbar^k)^{\dim H^k(M,M_\dout)+\frac12 \dim H^k(M_\dout)+\frac12 \dim H^{n-k}(M_\din)}\quad \in\bC
\end{multline*}
\end{Proposition}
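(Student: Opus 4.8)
The plan is to derive the formula for $Z^\red$ as a corollary of Proposition \ref{prop: Z}(\ref{prop: Z (i)}), by applying the reduction map $p_\HH$ of Section \ref{sec: reduced space of states} and then carefully collecting the normalization factors. First I would start from the explicit expression (\ref{Z integrating out bulk fields result}) for the partition function $Z$ and observe that $p_\HH=\ii_\B^*$ acts on the boundary-field dependence simply by substituting for the boundary superfields $\Aout,\Bin$ the chosen cocycle representatives $\ii_\B[\Aout],\ii_\B[\Bin]$ of boundary cohomology. Consequently the exponential prefactor of (\ref{Z integrating out bulk fields result}) passes verbatim to the exponential appearing in (\ref{Z reduced}), and the bulk factor $\xi_\hbar^{H^\bt(M,M_\dout)}\cdot\tau(M,M_\dout)\in\Dens^{\frac12}\F_b^\zm$ is carried through unchanged.

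Second, I would compute the effect of the reduction on the half-density factor $(\mu_{\B_\dd}^\hbar)^{1/2}$. Writing $\mu_{\B_\dd}^\hbar=\D_\hbar\Bin\cdot\D_\hbar\Aout$ as a product of normalized cellular densities on $C^\bt(X_\din^\vee)[n-2]$ and $C^\bt(X_\dout)[1]$, I would split each of these complexes, using the induction data $(\ii_\B,\pp_\B,\K_\B)$, into its cohomology part, its exact part and its $\K_\B$-exact part; pushing the coordinate densities forward along the acyclic complement, the isomorphism $d\colon\operatorname{im}(\K_\B)\xrightarrow{\sim}C^\bt_\mr{exact}$ produces a super-determinant which is exactly the Reidemeister torsion of $M_\dout$, respectively of $M_\din$ --- the same super-determinant computation as in the proof of Lemma \ref{lemma: torsion as BV integral}, but now purely homological (no Gaussian integral, hence no $2\pi\hbar$ and no $i/\hbar$ contributions). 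Combined with the conventions (\ref{normalized densities on Y}) for the normalized half-densities on the reduced spaces, this identifies $(p_\HH)_*(\mu_{\B_\dd}^\hbar)^{1/2}$ with $(\D_\hbar[\Aout])^{1/2}(\D_\hbar[\Bin])^{1/2}$, where $\D_\hbar[\Aout]=\xi_\hbar^{H^\bt(M_\dout)}\tau(M_\dout)$ and $\D_\hbar[\Bin]=\til\xi_\hbar^{H^\bt(M_\din)}\tau(M_\din)^{(-1)^{n-1}}$; the exponent $(-1)^{n-1}$ on the in-boundary torsion comes from Poincar\'e duality on the boundary (as in the conventions (\ref{normalized densities on Y}) and the Remark after Proposition \ref{prop: Z}, cf.\ \cite{Milnor62}), reflecting that the in-boundary enters through the dual complex $X_\din^\vee$.

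Third, I would assemble the pieces: the torsion factors combine as $\tau(M,M_\dout)\cdot\tau(M_\dout)^{\frac12}\cdot\tau(M_\din)^{\frac{(-1)^{n-1}}{2}}$, and the scalar prefactors combine as $\xi_\hbar^{H^\bt(M,M_\dout)}\cdot(\xi_\hbar^{H^\bt(M_\dout)})^{\frac12}\cdot(\til\xi_\hbar^{H^\bt(M_\din)})^{\frac12}$, which is by definition $\xi_\hbar(M_\din\cob{M}M_\dout)$. Expanding via (\ref{xi_hbar^k}), (\ref{P_H}) and the identity $\til\xi_\hbar^{H^\bt(N)}=\prod_k(\xi_\hbar^k)^{\dim H^{n-k}(N)}$ then gives the stated product over $k$ of $(\xi_\hbar^k)^{\dim H^k(M,M_\dout)+\frac12\dim H^k(M_\dout)+\frac12\dim H^{n-k}(M_\din)}$, which is (\ref{Z reduced}).

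The one genuinely delicate point --- and the main obstacle --- is the normalization bookkeeping in the second step: a naive pushforward of densities would carry along the \emph{extensive} boundary factor $\xi_\hbar^{C^\bt(X_\dd)}$ (a product over boundary cells), whereas the reduced theory must instead carry the Betti-number factor $\xi_\hbar^{H^\bt}$ in order to be independent of the boundary decomposition $X_\dd$ and compatible with the gluing pairing. This replacement is precisely the renormalization built into the definitions (\ref{normalized densities on Y}), and verifying that it is the consistent choice --- in particular that it makes (\ref{pairing H^A with H^B reduced}) hold and is stable under subdivision --- is the real content behind the clean-looking formula; everything else is a routine Gaussian/super-determinant computation already packaged in Proposition \ref{prop: Z}.
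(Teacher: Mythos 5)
Your proposal is correct and follows essentially the same route as the paper: the paper obtains this Proposition directly as a corollary of Proposition \ref{prop: Z} by setting $Z^\red=p_\HH Z$, i.e.\ evaluating (\ref{Z integrating out bulk fields result}) on the representatives $i_\B[\Aout]$, $i_\B[\Bin]$, while in the half-density picture the factor $(\mu^\hbar_{\B_\dd})^{1/2}$ is replaced by $(\mu^\hbar_{\B^\red_\dd})^{1/2}$ as fixed by the conventions (\ref{normalized densities on Y}), which is exactly where your boundary torsions and the $\xi_\hbar$-bookkeeping come from. Your additional justification of those conventions (the determinant-line/super-determinant mechanism producing $\tau(M_\dout)$, $\tau(M_\din)^{(-1)^{n-1}}$, and consistency with the reduced pairing (\ref{pairing H^A with H^B reduced})) matches the paper's intent and does not change the argument.
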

The first term in the exponential in (\ref{Z reduced}) is the pairing of $H^k(M,M_\din)$ with $H^{n-k-1}(M_\dout)$ via the natural map $H^k(M,M_\din)\ra H^k(M)\ra H^k(M_\dout)$
and Poincar\'e pairing in cohomology of $M_\dout$, and similarly for the second term in the exponential. Third term generally depends on the details of
 gauge-fixing.

\begin{remark}\label{rem: K term in case of no zero modes}
In the case $H^\bt(M,M_\dout)=0$ (or equivalently, $H^\bt(M,M_\din)=0$), one has isomorphisms $H^\bt(M)\xra{\sim} H^\bt(M_\dout)$, $H^\bt(M)\xra{\sim} H^\bt(M_\din)$ (arising from long exact sequences of pairs $(M,M_\dout)$ and $(M,M_\din)$, respectively). In this case the pairing in the third term in the exponential in (\ref{Z reduced}) is the composition of the isomorphism $\theta:H^\bt(M_\dout)\xrightarrow{\sim}H^\bt(M)\xra{\sim} H^\bt(M_\din)$ with Poincar\'e pairing on $H^\bt(M_\din)$, i.e. $\langle [B_\din],\theta [A_\dout]\rangle_\din$.
\end{remark}

\begin{example}\label{example: Z red for only out-bdry} For $M$ with $M_\din=\varnothing$,
$$Z^\red=e^{\frac{i}{\hbar}\langle \iota^*B_\zm, [\Aout]\rangle}\cdot\xi_\hbar^{H^\bt(M,M_\dout)}\left(\xi_\hbar^{H^\bt(M_\dout)}\right)^{\frac12}\cdot\tau(M,M_\dout)\cdot\tau(M_\dout)^{1/2}$$
where $\iota^*:H^\bt(M)\ra H^\bt(M_\dout)$ is the pullback by the inclusion $\iota: M_\dout\hra M$ in cohomology; $\langle,\rangle$ is the Poincar\'e pairing in boundary cohomology. An analogous consideration applies in the case $M_\dout=\varnothing$.
\end{example}

\begin{example}[cylinder] Let $M=N\times [0,1]$, with in-boundary $M_\din=N\times \{0\}$ and out-boundary $M_\dout=N\times \{1\}$, endowed with arbitrary cellular decomposition. Then there are no 
residual fields
 and, by Remark \ref{rem: K term in case of no zero modes}, we have
\be Z^\red=e^{\frac{i}{\hbar}\langle [\Bin],[\Aout]\rangle}\;(\D_\hbar[\Bin])^{1/2}\cdot(\D_\hbar[\Aout])^{1/2} \label{Z reduced cylinder}\ee
with $\langle,\rangle$ the Poincar\'e pairing on $H^\bt(N)$.
Note that this partition function represents the \emph{identity} in $\mr{Hom}(\HH^{(A),\red}_N,\HH^{(A),\red}_N)$, cf. (\ref{HH as Hom-space}).
\end{example}

\begin{remark}\label{rem 7.16}
By construction, the reduced partition function satisfies the quantum master equation without boundary term:
\begin{equation}\label{QME reduced}
\Delta_\zm^\can Z^\red=0
\end{equation}
(which can be thought of as the equation (\ref{QME with bdry term for Z}) where the boundary BFV operator is killed by passing to the reduced space of states).
Changing the details of gauge-fixing, i.e. maps $\ii,\pp,\K,i_\B,p_\B,K_\B$, results in a 
transformation of the partition function of the form
$$Z^\red\mapsto Z^\red+\Delta_\zm^\can(\cdots)$$
These properties follow immediately from (\ref{prop: Z (ii)}), (\ref{prop: Z (iii)}) of Proposition \ref{prop: Z}.
\end{remark}

\begin{remark}
Observe the similarity between integration over bulk fluctuations of fields (\ref{Z integrating out bulk fields}) and pushforward to the reduced space of states on the boundary (\ref{Z reduced}). Both procedures involve similar sets of gauge-fixing/induction data, both deal with half-densities and produce the $R$-torsion in bulk/boundary.
\end{remark}

Equality (\ref{pairing H^A with H^B reduced}) implies that the gluing formula (\ref{gluing formula}) holds also in the setting of reduced boundary states.
\begin{corollary}[of Proposition \ref{prop: gluing}]
For a glued cobordism (\ref{glued cobordism}), we have
\begin{multline}\label{gluing formula bdry reduced}
Z^\red([B_1],[A_3]; \Azm,\Bzm)= \\
=\int_{\LL_\gzm\subset \F_\gzm^\fluct}
\int_{\B^{(A),\red}_2\times \B^{(B),\red}_2} Z_I^\red\left(\rlap{$\phantom{e^{\frac{i}{\hbar}}}$}
[B_1],[A_2];i_\gzm^I(\Azm)+A_\gzm^{\mr{fluct},I},i_\gzm^{\vee,I}(\Bzm)+B_\gzm^{\fluct,I}\right)\cdot \\
\cdot\left[(\D_\hbar [A_2])^{1/2}\cdot e^{-\frac{i}{\hbar}\langle [B_2],[A_2] \rangle} \cdot (\D_\hbar [B_2])^{1/2}\right]\cdot \\
\cdot Z_{II}^\red\left(
\rlap{$\phantom{e^{\frac{i}{\hbar}}}$}
[B_2],[A_3];i_\gzm^{II}(\Azm)+A_\gzm^{\mr{fluct},II},i_\gzm^{\vee,II}(\Bzm)+B_\gzm^{\fluct,II}\right)
\end{multline}
modulo $\Delta^\can_\zm$-coboundaries.
\end{corollary}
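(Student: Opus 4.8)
The plan is to deduce the reduced gluing formula (\ref{gluing formula bdry reduced}) from the non-reduced one, Proposition \ref{prop: gluing}, by pushing forward to reduced boundary states on the two external boundaries $M_1,M_3$ and, at the same time, replacing the pairing over the interface boundary fields by its reduced counterpart via (\ref{pairing H^A with H^B reduced}). First I would fix induction data $(\B_1,d)\wavy{}\B^\red_1$, $(\B_3,d)\wavy{}\B^\red_3$, $(\B_2,d)\wavy{}\B^\red_2$ on the in/out boundaries and on the interface $X_2$, and infer the corresponding induction data $(i_\HH,p_\HH,K_\HH)$ on the associated spaces of states as in Section \ref{sec: induction data for dual and sym}. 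Applying $p_\HH$ to the $\HH^{(B)}_{X_1}\widehat\otimes\HH^{(A)}_{X_3}$ tensor factor of both sides of (\ref{gluing formula}): on the left one gets $Z^\red([B_1],[A_3];\Azm,\Bzm)$ modulo a $\Delta^\can_\zm$-coboundary, since $p_\HH$ annihilates $\widehat S_\dd$-exact terms (from $p_\HH\widehat S_\dd=0$) and commutes with $\Delta^\can_\zm$ (which acts only in the residual variables), so the $\bigl(\tfrac{i}{\hbar}\widehat S^\can_\dd-i\hbar\Delta^\can_\zm\bigr)$-ambiguity of Proposition \ref{prop: gluing} collapses to a $\Delta^\can_\zm$-ambiguity. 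On the right, $p_\HH$ touches only the arguments $B_1$ and $A_3$, hence commutes with the interface integral $\int_{\B^{(A)}_2\times\B^{(B)}_2}$ and with the BV pushforward $\int_{\LL_\gzm}$ over $\F^\fluct_\gzm$ (which live in disjoint sets of variables), turning $Z_I$ into $Z_I^\red$ (reduced on $M_1$ only) and $Z_{II}$ into $Z_{II}^\red$ (reduced on $M_3$ only).

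The remaining step is to replace $\int_{\B^{(A)}_2\times\B^{(B)}_2}$ by $\int_{\B^{(A),\red}_2\times\B^{(B),\red}_2}$. Here one observes that the interface integral is exactly the pairing $(\,\cdot\,,\,\cdot\,)$ of (\ref{pairing H^A with H^B}) for $N=M_2$, $Y=X_2$, applied to $Z_I^\red$ — regarded, with its other arguments frozen, as a state in $\HH^{(A)}_{X_2}$ in the variable $A_2$ — and to $Z_{II}^\red$ — regarded as a state in $\HH^{(B)}_{\bar X_2}$ in the variable $B_2$. By the modified quantum master equation (\ref{QME with bdry term for Z}) for $Z_I$, whose boundary term reduces to the $X_2$-part $\widehat S_{X_2}$ after the $M_1$-reduction, the state $Z_I^\red$ is $\widehat S_{X_2}$-closed up to a $\Delta_{\zm}$-exact term; replacing it by $i_\HH p_\HH Z_I^\red$ changes it by a $\bigl(\tfrac{i}{\hbar}\widehat S^\can_{X_2}-i\hbar\Delta^\can_\zm\bigr)$-coboundary, hence — after the interface integration and the BV pushforward over $\F^\fluct_\gzm$, neither of which involves the residual variables — by a $\Delta^\can_\zm$-coboundary of the glued residual fields; the same argument applies to $Z_{II}^\red$. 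For the resulting $\widehat S_{X_2}$-closed states, identity (\ref{pairing H^A with H^B reduced}) rewrites the pairing as the integral over $\B^{(A),\red}_2\times\B^{(B),\red}_2$ with normalized densities $\D_\hbar[A_2],\D_\hbar[B_2]$ as in (\ref{normalized densities on Y}) and the Poincar\'e pairing $\langle[B_2],[A_2]\rangle$. Collecting the contributions yields (\ref{gluing formula bdry reduced}) modulo $\Delta^\can_\zm$-coboundaries.

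The main obstacle will be the bookkeeping of the coboundary ambiguities: one must verify that every ``up to exact'' produced along the way — the $\bigl(\tfrac{i}{\hbar}\widehat S^\can_\dd-i\hbar\Delta^\can_\zm\bigr)$-ambiguity from Proposition \ref{prop: gluing}, the ones from passing to $\widehat S_{X_2}$-closed representatives of the interface states, and the various commutations of $p_\HH$, the interface integral, and the BV pushforward over $\F^\fluct_\gzm$ — all descend to genuine $\Delta^\can_\zm$-coboundaries of the final (glued) residual fields, with no leftover $\widehat S$-type term surviving, the latter being killed precisely because the external boundary states have been reduced. An alternative, purely computational route avoids these issues altogether: substitute the explicit expressions (\ref{Z reduced}) for $Z_I^\red$ and $Z_{II}^\red$ into the right-hand side of (\ref{gluing formula bdry reduced}), carry out the Gaussian integral over $\B^{(A),\red}_2\times\B^{(B),\red}_2$ and the Gaussian BV pushforward over $\LL_\gzm\subset\F^\fluct_\gzm$, and match the outcome term-by-term against (\ref{Z reduced}) for $M_1\cob{M}M_3$, exactly as in the proof of Proposition \ref{prop: gluing} given above but with the maps $\ddelta$, $\phi$, $p_2$, $\K$ replaced by their cohomology-level analogues; this reproduces the exponential, the torsion $\bT\bigl(\tau(M_I,M_2)\,\tau(M_{II},M_3)\bigr)$, and the boundary-torsion factors $\tau(M_1)^{\cdots},\tau(M_3)^{\cdots}$ with the correct $\xi_\hbar$-normalizations.
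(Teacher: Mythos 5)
Your proposal is correct and follows essentially the paper's own route: the paper proves this corollary precisely by invoking the identity (\ref{pairing H^A with H^B reduced}) to replace the interface pairing in Proposition \ref{prop: gluing} by its reduced counterpart (the reduction of the external boundaries and the collapse of the $\bigl(\tfrac{i}{\hbar}\widehat S_\dd^\can-i\hbar\Delta_\zm^\can\bigr)$-ambiguity to a $\Delta^\can_\zm$-ambiguity being implicit in the constructions of Section \ref{sec: reduced space of states}). Your write-up merely spells out the bookkeeping — including the $Z\mapsto i_\HH p_\HH Z$ step ensuring $\widehat S$-closedness at the interface — which the paper leaves tacit, and your alternative direct Gaussian computation is a valid but unnecessary second route.
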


Note that $Z^\red$ is an element of the space which is expressed in terms of cohomology of $M$ and its boundary, and thus is manifestly independent on the cellular decomposition $X$ of $M$. More precisely, one has the following.
\begin{Proposition}
The class of $Z^\red$ in cohomology of $\Delta_\zm^\can$ is independent of the cellular decomposition $X$.
\end{Proposition}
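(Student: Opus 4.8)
The plan is to show that the cohomology class $[Z^\red]$ with respect to $\Delta_\zm^\can$ is unchanged under the two elementary moves that generate all changes of cellular decomposition of a cobordism while keeping the boundary decomposition adjustable: subdivisions (cellular aggregations) of $X$, and — once we allow the boundary decomposition to vary — subdivisions of $X_\dd$. The key structural inputs are already in place: first, by Proposition \ref{prop: Z}, part (\ref{prop: Z (iii)}), any change of the gauge-fixing data (the retraction $C^\bt(X,X_\dout)\wavy{(\ii,\pp,\K)} H^\bt(M,M_\dout)$, and likewise the boundary induction data $\B_\dd\wavy{(i_\B,p_\B,K_\B)}\B^\red_\dd$) moves $Z$, and hence $Z^\red$, by a $\left(\frac{i}{\hbar}\hat S_\dd^\can-i\hbar\Delta_\zm^\can\right)$-exact term, which after reduction is a $\Delta_\zm^\can$-coboundary (Remark \ref{rem 7.16}); second, the gluing formula of Proposition \ref{prop: gluing} (in its reduced form, the Corollary) lets us compute $Z^\red$ on $(M,X)$ by cutting off a small piece near the locus where the refinement happens.

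The core step is a \emph{local} computation. Given a subdivision $Y$ of $X$, pick a small CW-ball (or a finite union of cells) $U\subset M$ on which $X$ and $Y$ differ, and write $M$ as a concatenation (or a Mayer–Vietoris-type decomposition, passing temporarily through the canonical picture of Section \ref{sec: non-ab BF I} as the authors do for aggregations) of the piece carrying $U$ and its complement, which is combinatorially unchanged. On the complement the partition function is literally the same; on the piece carrying $U$, the statement to prove is that the BV pushforward of $e^{\frac i\hbar S_Y}$ along the fibration of fields on $Y$ over fields on $X$ equals $e^{\frac i\hbar S_X}$ up to a canonical BV transformation — this is precisely the content of Lemma \ref{lemma: collapse} (elementary collapses) and Proposition \ref{prop: aggregations} (aggregations), which we invoke. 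Concretely: $U$ with the finer decomposition collapses to $U$ with the coarser one through a sequence of elementary collapses, each realized by a fiber BV integral; stringing these together and using that BV pushforward of a $\Delta$-cocycle is a $\Delta$-cocycle in the same class, we get that $Z^\red_Y$ and $Z^\red_X$ agree modulo $\Delta_\zm^\can$-exact terms. The residual fields are $H^\bt(M,M_\dout)$ either way, so there is no mismatch of target spaces to reconcile.

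To handle a change of the \emph{boundary} decomposition, the plan is to thicken: replace a subdivision of $X_\din$ (say) by a subdivision of $X$ supported in a collar $M_\din\times[0,\epsilon]$, chosen of product type, so that it restricts on $M_\din$ to the prescribed refinement; then the previous paragraph applies to the collar viewed as a cobordism $M_\din^{\text{fine}}\cob{} M_\din^{\text{coarse}}$, whose reduced partition function is, by Example \ref{example: Z red for only out-bdry} and the cylinder computation (\ref{Z reduced cylinder}), the identity morphism in $\mr{Hom}(\HH^{(A),\red}_{M_\din},\HH^{(A),\red}_{M_\din})$ after passing to reduced states — compatible with the gluing formula (\ref{gluing formula bdry reduced}). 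Composing with this ``change-of-decomposition cylinder'' via the reduced gluing formula leaves $[Z^\red]$ unchanged. Invariance under the full subdivision move then follows because any two cellular decompositions of $M$ (with arbitrary, possibly different, boundary decompositions) have a common subdivision.

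The main obstacle is bookkeeping rather than conceptual: tracking the normalization factors $\xi_\hbar$ and the half-density/torsion factors through each local BV pushforward so that the ``up to canonical transformation'' really holds on the nose for the densities and not just for the bare exponentials — this is where the extensive structure of $\xi_\hbar$ (Lemma \ref{lemma: zeta = xi/xi}) and the multiplicativity of $R$-torsion under the gluing maps $\bT$ (as used in the proof of Proposition \ref{prop: gluing}) must be combined carefully; and, relatedly, checking that the temporary passage to the non-manifold ``canonical'' picture (needed to decompose an aggregation into elementary expansions and collapses) does not affect the class, which is exactly the mechanism of Proposition \ref{prop: aggregations}. Once those normalization checks are done — they are the analogues of Footnote \ref{footnote: gluing normalization check} — the argument closes.
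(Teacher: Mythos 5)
Your overall strategy---bulk subdivision invariance relative to a fixed boundary decomposition via BV pushforward along aggregations/collapses, then cylinder gluing to absorb changes of the boundary decomposition, then passage to a common refinement---is viable, but it is not the paper's argument, and it is essentially the route the paper takes later for the \emph{non-abelian} theory (Theorem \ref{prop 8.15}, part (\ref{prop 8.15 (iii)}), and the footnote following it). The paper's own proof of this abelian statement is much shorter and uses none of the collapse/aggregation machinery: it attaches an ``out-out'' cylinder at $M_\din$ (and then an ``in-in'' cylinder), and combines the reduced gluing formula (\ref{gluing formula bdry reduced}) with two facts read off from the explicit formula (\ref{Z reduced}): the reduced cylinder amplitude (\ref{Z reduced cylinder}) is the identity, and for a cobordism with $M_\din=\varnothing$ the gauge-fixing--dependent term $\langle \phi^\vee(B_\din),\K\phi(A_\dout)\rangle$ is absent, so $Z^\red$ is \emph{manifestly} independent of $X$ (Example \ref{example: Z red for only out-bdry}). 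The cylinder trick converts the general cobordism into one of that manifestly invariant type; you never use this key observation, which is what makes the paper's proof purely formula-based.

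Two points in your write-up are genuine gaps. First, Lemma \ref{lemma: collapse} and Proposition \ref{prop: aggregations} do not literally apply: they concern the non-abelian ``canonical'' theory with untwisted coefficients on a CW complex without boundary, whereas here you need the analogous statement for the abelian theory twisted by $E$, formulated on relative cochains $C^\bt(X,X_\dout)$ in a family over $\B_\dd$. That adapted statement is true and not hard, but in the abelian case its entire mathematical content is precisely the ``bookkeeping'' you defer: combinatorial invariance and multiplicativity of the relative $R$-torsion together with the extensivity of $\xi_\hbar$ (a rel-boundary rerun of Lemmas \ref{lemma: torsion as BV integral} and \ref{lemma: zeta = xi/xi}); without carrying this out, the comparison of the $\K$-dependent exponents for $X$ and for a subdivision $Y$ is simply asserted, not proved. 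Second, the proposed localization---cutting out a small ball $U$ where $X$ and $Y$ differ and invoking the gluing formula on the two pieces---is not licensed by Proposition \ref{prop: gluing}: that formula composes cobordisms along a closed interface separating the in- from the out-boundary, and cutting along $\dd U$ while $\dd M$ is still present is a gluing with corners, which the paper explicitly does not treat. Fortunately this step is dispensable: the aggregation/collapse statements are already global (relative to the boundary), so the correct repair is to drop the localization and compare $X$ with its subdivision $Y$ directly; with that and the abelian rel-boundary pushforward statement supplied, your argument closes, at the cost of considerably more machinery than the paper's cylinder argument.
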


\begin{proof}
First, observe that if we glue to $(M_\din,X_\din)\cob{(M,X)}(M_\dout,X_\dout)$ at the in-boundary a cylinder $
(M_\din,\cdots)\cob{(M_\din\times [0,1],\cdots)}(M_\din,X_\din)$ (with arbitrary cellular decomposition inducing $X_\din$ on the out-boundary of the cylinder), this procedure does not change the reduced partition function:
$$Z_{\mr{cylinder}\;\cup\; M}^\red=Z_M^\red$$
This follows directly from the gluing formula (\ref{gluing formula bdry reduced}) and the explicit result for the cylinder (\ref{Z reduced cylinder}).

Now, let $X$ be a cellular decomposition of a cobordism $M_\din\cob{M} M_\dout$. Consider the ``out-out'' cylinder
$$\mr{Cyl}_\mr{out-out}\quad =\qquad \varnothing\cob{(M_\din\times [0,1],\cdots)} (M_\din\times\{0\},X_\din)\sqcup (M_\din\times\{ 1 \},Y) $$
and attach its $M_\din\times\{0\}$-boundary to the in-boundary of $M$. The result is a cobordism $\til M=\mr{Cyl}_\mr{out-out}\cup M$ with only out-boundary, thus for $\til M$ the reduced partition function is independent of cellular decomposition (cf. Example \ref{example: Z red for only out-bdry}). On the other hand, we can attach to $M_\din\times\{1\}\subset \mr{Cyl}_\mr{out-out}$ an ``in-in'' cylinder
$$\mr{Cyl}_\mr{in-in}\quad =\qquad (M_\din\times\{0\},Y)\sqcup (M_\din\times\{ 1 \},\cdots) \cob{(M_\din\times [0,1],\cdots)} \varnothing $$

\begin{figure}[!htbp]
\begin{center}
\includegraphics[scale=1]{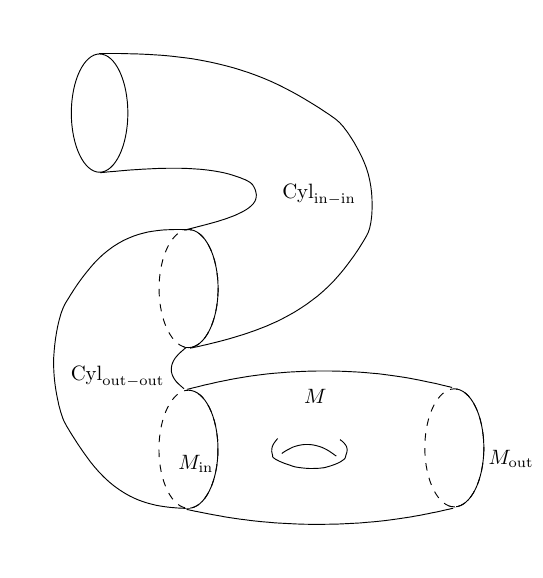}
\caption{A cobordism with a ``tail'' of two cylinders (out-out and in-in) attached.}
\end{center}
\end{figure}

The resulting cobordism $\til{\til M}=\mr{Cyl}_\mr{in-in}\cup\mr{Cyl}_\mr{out-out}\cup M$ differs from $M$ by attaching an ``in-out'' cylinder, thus $Z_{\til{\til M}}^\red=Z_M^\red$ by discussion above. On the other hand $Z_{\til M}^\red$ is independent of cellular decomposition $X$, as is, of course, $Z_{\mr{Cyl}_\mr{in-in}}^\red$ (which is also a case of 
Example \ref{example: Z red for only out-bdry}). Therefore $Z_M^\red=Z_{\til{\til M}}^\red=\mr{Gluing}(Z_{\mr{Cyl}_\mr{in-in}}^\red,Z_{\til M}^\red)$ is independent on $X$.  ($\mr{Gluing}(-,-)$ is a schematic notation for the r.h.s. of (\ref{gluing formula bdry reduced}).)

\end{proof}

\section{Non-abelian cellular $BF$ theory, I: ``canonical setting''}
\label{sec: non-ab BF I}
The goal of this section is the construction of a ``canonical" version  of non-abelian cellular $BF$ theory, where
the fields are a cochain and a chain of the same CW complex $X$ which is not required to be a manifold.
We construct cellular actions (Theorem~\ref{thm: cellBF}) that deform the abelian action, satisfy the BV quantum master equation, are compatible with restrictions to subcomplexes, and on $0$-cells have the canonical form for a $0$-dimensional non-abelian $BF$ theory (this may be viewed as a cellular replacement for the AKSZ construction \cite{AKSZ} for topological quantum field theories).

Next, in Section~\ref{s:BVpfsheca}, we prove that these cellular actions are
compatible with elementary collapses (Lemma \ref{lemma: collapse}) and
that the partition function, defined via BV pushforward to cohomology,  is a simple-homotopy invariant (Proposition \ref{prop: non-ab BF I simple homotopy invariance}). 


The version of non-abelian cellular $BF$ theory presented in this section is not of Segal type, since one of the fields has ``wrong''  (covariant) functoriality. On the other hand, we have a version of Mayer-Vietoris gluing formula for the cellular actions (see (\ref{simpBF (vi)}) of Theorem \ref{thm: simpBF}).

Throughout this section we adopt the formalism of \cite{SimpBF,DiscrBF} where the field $B$ of $BF$ theory is treated as covariant (as a cellular chain in discrete setting and as a de Rham current in continuous setting\footnote{This setting for (continuum) $BF$ theory is known as ``canonical $BF$ theory'' in the literature, cf. e.g. \cite{CR}.}) whereas the field $A$ is contravariant as usual. To reflect this, we denote the fields (and the space of fields) in a different font. Later, in Section \ref{sec: non-ab BF cobordism}, we will return to the formalism where both fields are contravariant.

Let $X$ be a finite CW 
complex (it is not required to be a triangulation of a manifold) and let $\g$ be a unimodular Lie algebra.
We introduce the graded vector space of fields $\mathsf{F}_{X}=C^\bt(X,\g)[1]\oplus C_\bt(X,\g^*)[-2]$.\footnote{Here we are not introducing the twist by a local system and the notation is simply $C^\bt(X,\g):=\g\otimes C^\bt(X,\mathbb{R})$ -- cellular cochains with coefficients in $\g$. Likewise, for the chains, $C_\bt(X,\g^*):=\g^*\otimes C_\bt(X,\mathbb{R})$.} As in Section \ref{sec: classical closed}, it is spanned by the superfields 
\begin{align*}
\sA_X  =\sum_{e\subset X} e^*\cdot\sA_e & \quad\in  \mr{Hom}_1(\sF_X,C^\bt(X,\g)),\\
\sB_X  =\sum_{e\subset X} \sB_e\cdot e & \quad \in  \mr{Hom}_{-2}(\sF_X,C_\bt(X,\g^*))
\end{align*}
Here $e^*, e$ are the standard basis integral cochain and chain, respectively, associated to a cell $e\subset X$; components $\sA_e$ are $\g$-valued functions on $\sF_X$ of degree $1-\dim e$ and components $\sB_e$ are $\g^*$-valued functions on $\F_X$ of degree $-2+\dim e$.

Note that here, unlike in the rest of the paper, the field $\sB_X$ is a \textit{chain} of $X$, as opposed to a \textit{cochain} of $X^\vee$ (moreover, in the setup of this subsection, $X^\vee$ is meaningless, as $X$ is not required to be a cellular decomposition of a manifold).

The canonical pairing $\langle,\rangle$ between cochains and chains induces a degree $-1$ symplectic form 
$$\omega_X=\langle \delta \sB_X, \delta \sA_X \rangle=\sum_{e\subset X}(-1)^{\dim e}  \langle\delta \sB_e\stackrel{\wedge}{,} \delta \sA_e\rangle_\g$$ and a BV Laplacian 
$$\Delta_X=\left\langle \frac{\dd}{\dd \sB_X}, \frac{\dd}{\dd \sA_X}\right\rangle=\sum_{e\subset X} (-1)^{\dim e} \left\langle\frac{\dd}{\dd \sB_e},\frac{\dd}{\dd \sA_e}\right\rangle_\g $$ on $\mr{Fun}(\sF_X)=\widehat{\Sym^\bt}\sF_X^*$ (the formal power series in fields). Symbol $\langle,\rangle_\g$ stands for the canonical pairing between $\g$ and $\g^*$.

In our notation $e\subset X$ stands for an \textit{open} cell; its closure $\bar{e}=e\cup\dd e\subset X$ is a closed ball (for $X$ a regular CW complex, which we always assume unless stated otherwise) inheriting a structure of CW complex from $X$. Thus, e.g., $\sA_e$ is a component of the superfield $\sA_X$, whereas $\sA_{\bar{e}}=\sum_{e'\subset \bar{e}}(e')^*\cdot \sA_{e'}$ is the entire superfield for the subcomplex $\bar{e}\subset X$ containing components of $\sA_X$ for the cell $e$ itself and cells belonging to the boundary $\dd e$. Likewise, $\dd e\subset X$ is a subcomplex and $\sA_{\dd e}=\sum_{e\subset \dd e}(e')^*\cdot\sA_{e'}$ is the corresponding $\sA$-superfield, and we have $\sA_{\bar e}=e^*\cdot \sA_e+\sA_{\dd e}$.

\subsection{Non-abelian $BF$ theory on a simplicial 
complex, 
after \cite{SimpBF,DiscrBF}
}\label{sec: simp BF reminder}
Let $X$ be a simplicial complex.

We denote by $\bar{\DDelta}^N$ the standard closed simplex of dimension $N\geq 0$, endowed with standard triangulation We view it as a simplicial complex with the top cell the open simplex $\DDelta^N$. 

Let $\Omega^\bt(X)$ stand for the complex of continuous piecewise polynomial differential forms on (the geometric realization of) $X$. Cochains $C^\bt(X,\mathbb{R})$ can be quasi-isomophically embedded into $\Omega^\bt(X)$ as \textit{Whitney forms} (continuous piecewise linear forms, linear on every simplex of $X$ w.r.t. its barycentric coordinates), see \cite{Whitney,Getzler} for details. We denote $\mathsf{i}_X: C^\bt(X)\hra \Omega^\bt(X)$ the realization  of cochains as Whitney forms. This embedding has a natural left inverse, the \textit{Poincar\'e map} $\mathsf{p}_X:\Omega^\bt(X)\ra C^\bt(X)$ which integrates a form over simplices of $X$, i.e. maps $\alpha\mapsto \sum_{e\subset X} e^*\cdot \left(\int_e \alpha \right)$. Dupont has constructed an explicit chain homotopy operator $\sfK_X$, contracting $\Omega^\bt(X)$ onto Whitney forms, see \cite{Dupont, Getzler}. Thus, in the terminology of Section \ref{sec: HPT}, we have a retraction $\Omega^\bt(X)\wavy{(\sfi_X,\sfp_X,\sfK_X)}C^\bt(X)$. It is glued (by fiber products) out of building blocks -- ``standard'' retractions for $\bar\DDelta^N$ for different $N$ (i.e.  when restricted to any simplex of $X$, it reduces to a ``standard'' retraction for a standard simplex).

We will denote by $\wed$ the pushforward of the wedge product of forms (defined piecewise on simplices of $X$) to cochains, $a\wed b:= \sfp_X (\sfi_X(a)\wedge \sfi_X(b))$ -- this is a 
graded-commutative 
non-associative product on $C^\bt(X)$. Tensoring this operation with the Lie bracket in $\g$, one gets a bilinear operation $[-\stackrel{\wed}{,}-]$ on $C^\bt(X,\g)$ defined by $[x\otimes a\stackrel{\wed}{,}y\otimes b]=[x,y]\otimes (a\wed b)$ for $x,y\in \g$ and $a,b\in C^\bt(X)$.

The following is a reformulation of one of the main results of \cite{DiscrBF}.

\begin{theorem} \label{thm: simpBF}
There exists a sequence of elements
$\bar{S}_{\DDelta^N}\in \mr{Fun}(\mathsf{F}_{\bar\DDelta^N})[[\hbar]]$, for $N=0,1,2,\ldots$, of the form
\begin{multline} \label{Sbar}
\bar{S}_{\DDelta_N}(\sA_{\bar\DDelta^N},\sB_{\DDelta^N};\hbar)=\\
=\sum_{\n=1}^\infty\sum_{\Gamma_0}\sum_{e_1,\ldots,e_\n\subset \bar\DDelta^N}\frac{1}{|\mr{Aut}(\Gamma_0)|} C^{\DDelta^N}_{\Gamma_0,e_1,\ldots,e_\n}
\left\langle \sB_{\DDelta^N},\mr{Jacobi}_{\,\Gamma_0}(\sA_{e_1},\ldots,\sA_{e_\n})\right\rangle_\g-\\
-i\hbar \sum_{\n= 2}^\infty \sum_{\Gamma_1} 
\sum_{e_1,\ldots,e_\n\subset \bar\DDelta^N}\frac{1}{|\mr{Aut}(\Gamma_1)|} C^{\DDelta^N}_{\Gamma_1,e_1,\ldots,e_\n}
\mr{Jacobi}_{\,\Gamma_1}(\sA_{e_1},\ldots,\sA_{e_\n})
\end{multline}
for some values of ``structure constants'' $C^{\DDelta^N}_{\Gamma_l,e_0,\ldots,e_\n}\in \mathbb{R}$, $l=0,1$, 
such that for any finite simplicial complex $X$ and any unimodular Lie algebra $\g$ the element
\begin{equation}\label{simpBF locality}
S_X(\sA_X,\sB_X;\hbar)=\sum_{e\subset X}\bar{S}_e(\sA_X|_{\bar{e}},\sB_e;\hbar)\;\;\in \mr{Fun}(\sF_X)[[\hbar]]
\end{equation}
satisfies
\begin{enumerate}[(a)]
\item\label{simpBF item QME} the quantum master equation $\Delta_X e^{\frac{i}{\hbar}S_X}=0$,
\item\label{simpBF item init cond} the 
property 
\begin{equation}\label{simpBF init cond}
S_X(\sA_X,\sB_X;\hbar)=
\langle \sB_X, d \sA_X \rangle+ \frac12 \langle \sB_X, [\sA_X \stackrel{\wed}{,} \sA_X] \rangle+ \mathcal{R}
\end{equation}
with the ``error term'' 
$\mathcal{R}\in \widehat{\Sym^{\geq 4}}\sF_X^*\oplus \hbar\cdot \widehat{\Sym^{\geq 2}}\sF_X^*$.
\end{enumerate}
Here the notations are:
\begin{itemize}
\item summation in (\ref{Sbar}) is over binary rooted trees $\Gamma_0$ (oriented towards the root) with $\n$ leaves and 1-loop connected 3-valent graphs $\Gamma_1$ (with every vertex having two incoming and one outgoing half-edge) with $\n$ leaves. Leaves of the graph are decorated by faces $e_1,\ldots, e_\n$ (of arbitrary codimension) of $\bar\DDelta^N$.
\item $\mr{Jacobi}_{\,\Gamma_0}(\sA_{e_1},\ldots, \sA_{e_\n})$ is the nested Lie bracket in $\g$, associated to the tree $\Gamma_0$, evaluated on elements $\sA_{e_1},\ldots, \sA_{e_\n}\in\g$. 
\item $\mr{Jacobi}_{\,\Gamma_1}(\sA_{e_1},\ldots, \sA_{e_\n})$ is the number obtained by cutting the loop of $\Gamma_1$ anywhere (resulting in a tree $\widetilde{\Gamma_1}$ with $\n+1$ leaves, one of which is marked) and taking the trace $\mr{tr}_\g\,\mr{Jacobi}_{\,\widetilde{\Gamma_1}}(\sA_{e_1},\ldots, \sA_{e_\n},\bullet) $ of the endomorphism of $\g$ corresponding to the tree.
\end{itemize}

\end{theorem}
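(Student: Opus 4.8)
The plan is to prove Theorem~\ref{thm: simpBF} by constructing the local building blocks $\bar{S}_{\DDelta^N}$ via an AKSZ-type transgression applied to the continuum canonical $BF$ action, pulled back through the Dupont retraction $\Omega^\bt(X)\wavy{(\sfi_X,\sfp_X,\sfK_X)}C^\bt(X)$, and then recognizing that the resulting effective action is local because the Dupont contraction is glued out of per-simplex standard blocks. Concretely, one starts from the (infinite-dimensional) canonical $BF$ action $S^{\mathrm{cont}}=\langle \mathsf{B}, d_{\nabla}\mathsf{A}\rangle$ on $\Omega^\bt(X,\g)\oplus \Omega_\bt(X,\g^*)$ (with $\mathsf{B}$ a de Rham current), which satisfies the continuum quantum master equation formally, and performs the BV pushforward along the fibration of fields determined by the Hodge-type decomposition $\Omega^\bt(X)=\sfi_X C^\bt(X)\oplus \mr{im}(\sfK_X)\oplus d\,\mr{im}(\sfK_X)$. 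The homological perturbation lemma (Lemma~\ref{lemma: homological perturbation}) applied to the perturbation $\delta=[\mathsf{A}\stackrel{\wedge}{,}-]$ of the de Rham differential produces the induced $L_\infty$-structure on $C^\bt(X,\g)$ whose Chevalley--Eilenberg (plus one-loop) action is exactly $S_X$; the tree part gives the $\Gamma_0$-sum and the single loop (which closes because $BF$ theory has no diagrams with more than one loop, the supertrace of the $\g$-bracket being the only source of $\hbar$) gives the $\Gamma_1$-sum. Crucially, because $\sfK_X$, $\sfi_X$, $\sfp_X$ restrict on each simplex to the standard simplicial retraction, each Feynman diagram contributing to $S_X$ localizes: its value is a sum over assignments of the diagram's leaves to faces $e_1,\dots,e_n$ all contained in the closure of a single top cell, with the internal propagators $\sfK$ supported there. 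This yields both the locality statement~\eqref{simpBF locality} and the universality of the structure constants $C^{\DDelta^N}_{\Gamma_l,e_1,\dots,e_n}$, which are integrals over standard simplices of products of Whitney forms and Dupont propagators.

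The next step is to verify the quantum master equation~\eqref{simpBF item QME}. This should \emph{not} be proved by a diagrammatic cancellation computed from scratch; rather, one invokes the general fact (see \cite{CMRpert}, and the treatment in \cite{DiscrBF}) that a BV pushforward of a solution of the quantum master equation is again a solution, provided the gauge-fixing Lagrangian (here $\mr{im}(\sfK_X)$, tensored with $\g$, together with its dual) is admissible and the relevant Gaussian integrals converge. Since $S^{\mathrm{cont}}$ satisfies $\Delta e^{\frac{i}{\hbar}S^{\mathrm{cont}}}=0$ formally (the only subtlety being $\Delta S^{\mathrm{cont}}=\mr{Str}\,d_\nabla$, which vanishes by degree reasons exactly as in the abelian closed case of Section~\ref{sec: classical closed}, and the unimodularity of $\g$ which kills the analogous trace anomaly for the bracket term), the induced $S_X$ inherits $\Delta_X e^{\frac{i}{\hbar}S_X}=0$. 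The unimodularity hypothesis enters precisely here: it guarantees $\mr{tr}_\g\,\mr{ad}_x=0$, so that the one-loop term is the unique quantum correction and no higher-loop obstruction appears. The initial-condition property~\eqref{simpBF item init cond} is then read off the lowest-order terms of the perturbative expansion: the $n=1$ tree (single leaf, root) contributes $\langle \sB_X, d\sA_X\rangle$ since the induced differential on $\sfi_X C^\bt(X)$ is just $d$; the $n=2$ tree contributes $\frac12\langle\sB_X,[\sA_X\stackrel{\wed}{,}\sA_X]\rangle$ since the first homotopy-transferred bracket is $\sfp_X\circ[\sfi_X(-)\stackrel{\wedge}{,}\sfi_X(-)] = [-\stackrel{\wed}{,}-]$; everything else is collected into $\mathcal{R}\in\widehat{\Sym^{\geq 4}}\sF_X^*\oplus\hbar\cdot\widehat{\Sym^{\geq 2}}\sF_X^*$ by counting powers of $\sA$ and $\hbar$.

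Finally, one should address convergence of the auxiliary Gaussian/Feynman integrals so that the formal manipulations above are legitimate, and check that the series~\eqref{Sbar} is well-defined order by order in the number of leaves. Convergence is automatic since each diagram involves only finitely many cells in a fixed simplex closure and the Dupont propagator is a polynomial form; and for fixed $n$ there are finitely many graphs $\Gamma_0,\Gamma_1$ and finitely many face-decorations, so the sums in~\eqref{Sbar} are finite in each arity, giving a well-defined element of $\mr{Fun}(\sF_X)[[\hbar]]$ as a formal power series in fields. I expect the main obstacle to be the \emph{bookkeeping of locality and of the combinatorial/automorphism factors}: one must carefully track how a Feynman diagram on $\Omega^\bt(X,\g)$ decomposes into per-simplex contributions, show that propagators never connect distinct top cells (this uses that $\sfK_X$ is \emph{strictly} local, i.e.\ preserves the support within a simplex, which is a genuine property of Dupont's construction and must be stated precisely), and match the symmetry factors $1/|\mr{Aut}(\Gamma_l)|$ with the overcounting of face-decorations — a somewhat delicate but essentially routine labelled-vs-unlabelled graph argument. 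A secondary subtlety worth flagging is the passage from de Rham currents (the continuum home of $\mathsf{B}$) to an honest BV pushforward; this is handled by working with a smooth mollification or, more cleanly, by restricting attention to the finite-dimensional subquotient $C^\bt(X,\g)\oplus C_\bt(X,\g^*)$ from the outset and defining the effective action directly by the homological perturbation formulae, taking the continuum picture only as motivation.
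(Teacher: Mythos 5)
Your tree-level construction essentially coincides with the paper's: there, $S^{(0)}_X$ is built exactly as the sum over binary rooted trees with leaves decorated by $\sfi_X(\sA_X)$, internal edges by $-\sfK_X$, and the root by $\sfp_X$ paired with $\sB_X$; locality and the universality of the structure constants follow, as you say, from the per-simplex nature of the Whitney/Dupont retraction, and the classical master equation $\{S^{(0)}_X,S^{(0)}_X\}=0$ is verified by the Leibniz/Jacobi diagrammatic cancellation. The genuine gap is in your treatment of the quantum part and of the QME. You propose to deduce the QME from the general ``BV pushforward of a QME solution is a QME solution'' theorem applied to the continuum canonical $BF$ action. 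But on $\Omega^\bt(X,\g)\oplus \Omega_\bt(X,\g^*)$ the BV Laplacian is not defined, $\Delta S^{\mathrm{cont}}=\mr{Str}\,d_\nabla$ is only a formal expression, and the one-loop ($\Gamma_1$) part of the would-be pushforward is a supertrace of compositions of $\sfK_X$ with multiplication operators $[\sfi_X\sA\stackrel{\wedge}{,}-]$ acting on the infinite-dimensional complement of the Whitney cochains. Your claim that ``convergence is automatic since each diagram involves only finitely many cells'' conflates the finiteness of the cellular sums in (\ref{Sbar}) with this infinite-dimensional trace, whose very definition requires regularization; this is precisely the regularized-supertrace machinery of \cite{SimpBF,DiscrBF} that the present paper explicitly sets out to avoid. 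Your fallback --- restrict to the finite-dimensional subquotient and ``define the effective action directly by the homological perturbation formulae'' --- does not close the gap either: homotopy transfer on $C^\bt(X,\g)$ produces only the classical $L_\infty$ operations (the $\Gamma_0$ sum) and the classical master equation; it yields neither the quantum operations nor the identity $\Delta S^{(0)}+\{S^{(0)},S^{(1)}\}=0$ that constitutes the order-$\hbar$ part of the QME.

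What the paper does at this point is purely finite-dimensional and genuinely different from your route: on each simplex it observes that $\Delta S^{(0)}_{\bar\DDelta^N}$ is closed under $Q_{\bar\DDelta^N}=\{S^{(0)}_{\bar\DDelta^N},\bt\}$ (by the CME and the bi-derivation property of $\Delta$) and is annihilated by the projection of $\mr{Fun}(C^\bt(\bar\DDelta^N,\g)[1])$ onto Chevalley--Eilenberg cochains of $\g$ --- this is where unimodularity enters --- so the chain homotopy $\kappa$ provided by homological perturbation theory gives $S^{(1)}_{\bar\DDelta^N}=-\kappa\,\Delta S^{(0)}_{\bar\DDelta^N}$ solving the order-$\hbar$ equation; the local blocks $\bar S^{(1)}_{\DDelta^N}$ are then extracted by inclusion--exclusion over faces, and the QME for a general simplicial complex follows from the additivity $S_X=S_{X_1}+S_{X_2}-S_Y$ under gluing along a common subcomplex. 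To repair your proposal you must either supply and justify the regularization of the one-loop supertraces (and then still verify the QME for the regularized object, which does not follow from the formal pushforward theorem), or replace that step by an obstruction-theoretic construction of $S^{(1)}$ of the above type.
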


\begin{remark}
The proof we present below differs from \cite{SimpBF,DiscrBF} in its treatment of $S^{(1)}$: here we avoid using regularized infinite-dimensional supertraces over the space of forms $\Omega^\bt(X)$ and instead construct $S^{(1)}$ by purely finite-dimensional methods, using homological perturbation theory. The proof is constructive and, in particular, we can make choices (the only ambiguity in the construction is the choice of induction data in part (\ref{simpBF (iv)}) in the proof below) that give \emph{rational} structure constants $C^{\DDelta^N}_{\Gamma_l,e_1,\ldots,e_\n}\in \mathbb{Q}$, $l=0,1$.
\end{remark} 
 
\begin{proof}[Sketch of proof] We split $S_X$ as $S_X=S_X^{(0)}-i\hbar S_X^{(1)}$ and treat the components $S_X^{(0)}$, $S_X^{(1)}$ separately. Likewise, in (\ref{Sbar}) we split $\bar{S}_{\DDelta^N}=\bar{S}_{\DDelta^N}^{(0)}-i\hbar \bar{S}_{\DDelta^N}^{(1)}$. We break the proof in several steps; we address the construction and properties of $S^{(0)}$ in (\ref{simpBF (i)}--\ref{simpBF (iii)}), construction and properties of $S^{(1)}$ for a single simplex in (\ref{simpBF (iv)},\ref{simpBF (v)}), and finally we put everything together in (\ref{simpBF (vi)},\ref{simpBF (vii)}).
\begin{enumerate}[(i)]
\item \label{simpBF (i)}
We construct $S_X^{(0)}:=\sum_{\Gamma_0}\Phi_X^{\Gamma_0}(\sA_X,\sB_X)$ where the sum is over binary rooted trees $\Gamma_0$ and the contributions $\Phi_X^{\Gamma_0}(\sA_X,\sB_X)$ are defined as follows, by putting decorations on half-edges of $\Gamma_0$ starting from leaves and going inductively to the root.\footnote{Our convention is that the leaves and the root are loose half-edges of the graph.}
\begin{itemize}
\item Leaves of $\Gamma_0$ are decorated by $\sfi_X(\sA_X)$.
\item In the internal vertices of $\Gamma_0$ one  
calculates the Lie bracket on $\Omega^\bt(X,\g)$ (coming from the wedge product on forms and the Lie bracket in $\g$) applied to the decorations of the two incoming half-edges and puts the result on the outgoing half-edge.
\item On internal edges one evaluates $-\sfK_X$ applied to the decoration of the in-half-edge and puts the result on the out-half-edge.
\item Finally, we define $\Phi_X^{\Gamma_0}(\sA_X,\sB_X):=\frac{1}{|\mr{Aut}(\Gamma_0)|}\langle \sB_X, \sfp_X ( \mathfrak{R}_X^{\Gamma_0}(\sA_X) ) \rangle$ where $\mathfrak{R}_X^{\Gamma_0}(\sA_X)$ is the decoration of the root coming from the assignments above.
\end{itemize}
By convention, the contribution of the ``trivial tree'' (with one leaf and no internal vertices or edges) is
$\Phi_X^{\mr{triv}}:= \langle \sB_X, d\sA_X \rangle$. We split $S_X^{(0)}=
S_{X}^{(0),2}
+S_{X}^{(0),\geq 3}$  where $S_{X}^{(0),2}$ is another notation for $\Phi_X^{\mr{triv}}$ 
and the second term is the contribution of non-trivial trees (the new superscripts $2$, $\geq 3$ denote the degree in fields).

\item \label{simpBF (ii)}
It follows from the fact that the data $\Omega^\bt(X)\wavy{(\sfi_X,\sfp_X,\sfK_X)} C^\bt(X)$ are assembled from the standard building blocks $\Omega^\bt(\bar\DDelta^N)\wavy{(\sfi_{\bar\DDelta^N},\sfp_{\bar\DDelta^N},\sfK_{\bar\DDelta^N})} C^\bt(\bar\DDelta^N)$, 
and from the factorization $\Omega^\bt(X,\g)=\Omega^\bt(X)\otimes\g$ into a tensor product of a cdga and a Lie algebra, 
that $S^{(0)}_X$ has the form
$S^{(0)}_X(\sA_X,\sB_X)=\sum_{e\subset X}\bar{S}^{(0)}_e(\sA_X|_{\bar{e}},\sB_e)$ where $\bar{S}^{(0)}_{\DDelta^N}$ satisfies the $\bmod\;\hbar$ part of the ansatz (\ref{Sbar}).

\item \label{simpBF (iii)}
Using Leibniz rule 
in $\Omega^\bt(X)$ and the identity $d\sfK_X+\sfK_X d=\mr{id}-\sfi_X\sfp_X$, one calculates the odd Poisson bracket 
$$\{\Phi_X^\mr{triv},\Phi_X^{\Gamma_0}\}=\sum_{\mr{edges}\;e\mr{\;of\;}\Gamma_0} \left(- \Phi_X^{\Gamma_0'} \frac{\overleftarrow{\dd}}{\dd \sA_X} \frac{\overrightarrow{\dd}}{\dd \sB_X} \Phi_X^{\Gamma_0''} +\Phi_X^{\Gamma_0,e} \right) $$ where on the r.h.s. we sum over edges $e$ of $\Gamma_0$; removing this edge splits $\Gamma_0$ into  trees $\Gamma_0'$ and $\Gamma_0''$. The term $\Phi_X^{\Gamma_0,e} $ is the contribution of $\Gamma_0$ with edge $e$ contracted; such contributions cancel when we sum over trees $\Gamma_0$ due to the combinatorics of trees and Jacobi identity in $C^\bt(X,\g)$. As a result, summing over $\Gamma_0$, we obtain 
$\{S_{X}^{(0),2},S_{X}^{(0),\geq 3}\}=-\frac12 \{S_{X}^{(0),\geq 3},S_{X}^{(0),\geq 3}\}$ which together with the obvious identity $\{S_{X}^{(0),2},S_{X}^{(0),2}\}=0 $ (a guise of $d^2=0$ on cochains) gives the classical master equation
\begin{equation}\label{simpBF CME} 
\{ S^{(0)}_X,S^{(0)}_X \}=0 
\end{equation}

\item \label{simpBF (iv)} For $N\geq 0$, denote by $Q_{\bar\DDelta^N}=\{S_{\bar\DDelta^N}^{(0)},\bt\}$ the cohomological vector field generated by $S_{\bar\DDelta^N}^{(0)}$ (the fact that it squares to zero follows from (\ref{simpBF CME}) for $X=\bar\DDelta^N$); it is tangent to $C^\bt(\bar\DDelta^N,\g)[1]\subset \sF_{\bar\DDelta^N}$. Observe that $\Delta S_{\bar\DDelta^N}^{(0)}\in \mr{Fun}_1(C^\bt(\bar\DDelta^N,\g)[1])$ is $Q_{\bar\DDelta^N}$-closed, as follows from (\ref{simpBF CME}) 
and the fact that $\Delta$ is a bi-derivation of $\{\bt,\bt\}$. 
Using homological perturbation theory (Lemma \ref{lemma: homological perturbation}), one constructs a retraction $\mr{Fun}(C^\bt(\bar\DDelta^N,\g)[1]),Q_{\bar\DDelta^N}\wavy{(\iota,\pi,\kappa)}\mr{Fun}(\g[1]),d_{CE}$ where the retract is the Chevalley-Eilenberg cochain complex of the Lie algebra $\g$.
By definition of chain homotopy, one has $Q_{\bar\DDelta^N}\,\kappa+\kappa\, Q_{\bar\DDelta^N}=\mr{id}-\iota\circ\pi$. Applying both sides of this equation to $\Delta S^{(0)}_{\bar\DDelta^N}$, and noticing that $\Delta S^{(0)}_{\bar\DDelta^N}$ is annihilated by $\pi$ (this follows from unimodularity of $\g$) and is $Q_{\bar\DDelta^N}$-closed, we obtain $Q_{\bar\DDelta^N}\kappa \Delta S^{(0)}_{\bar\DDelta^N}= \Delta S^{(0)}_{\bar\DDelta^N}$. Therefore, the element 
$S_{\bar\DDelta^N}^{(1)}(\sA_{\bar\DDelta^N})\in \mr{Fun}_0(C^\bt(\bar\DDelta^N,\g)[1])$ constructed as 
\begin{equation}\label{simpBF S^1 on Delta^N}
S_{\bar\DDelta^N}^{(1)}:=-\kappa\Delta S_{\bar\DDelta^N}^{(0)} 
\end{equation}
is a solution  of the equation 
\begin{equation}\label{simpBF QME Delta^N}
\Delta S_{\bar\DDelta^N}^{(0)}+ \{S_{\bar\DDelta^N}^{(0)}, S_{\bar\DDelta^N}^{(1)}\}=0
\end{equation}
%

\item \label{simpBF (v)} One defines 
\begin{equation}\label{simpBF barS^1 via S^1}
\bar{S}_{\DDelta^N}^{(1)}(\sA_{\bar\DDelta^N})=\sum_{e\subset \bar\DDelta^N} (-1)^{\mr{codim}(e)} S_{\bar{e} 
}^{(1)}(\sA_{\bar\DDelta^N}|_{\bar{e}})
\end{equation}
with summands as in (\ref{simpBF (iv)}) for faces $e$ of $\Delta^N$ of arbitrary codimension. One can check from the construction (\ref{simpBF S^1 on Delta^N}), that  $\bar{S}_{\DDelta^N}^{(1)}$ satisfies the part of ansatz (\ref{Sbar}) linear in $\hbar$; the property 
$S_{\bar\DDelta^N}^{(1)}=\sum_{e\subset \bar\DDelta^N} \bar{S}^{(1)}_e$ is obvious from (\ref{simpBF barS^1 via S^1}). 

\item \label{simpBF (vi)}
Assume that a simplicial complex $X$ is given as union of two simplicial subcomplexes $X_1$ and $X_2$ intersecting over $Y=X_1\cap X_2$ and assume that the elements $S_{X_1}$, $S_{X_2}$, $S_{Y}$ as defined by (\ref{simpBF locality}) satisfy the quantum master equation on $\sF_{X_1}$, $\sF_{X_2}$, $\sF_{Y}$ respectively. Then it is straightforward to check that $S_X=S_{X_1}+S_{X_2}-S_{Y}$ (which indeed also satisfies the ansatz (\ref{simpBF locality})) is a solution of the quantum master equation on $\sF_X$.\footnote{
Indeed, we already know by (\ref{simpBF CME}) that $\{S^{(0)}_X,S^{(0)}_X\}=0$; we are left to check that $\Delta S^{(0)}_X+\{S^{(0)}_X,S^{(1)}_X\}=0$. We calculate $\Delta S^{(0)}_X=\Delta_{X_1} S^{(0)}_{X_1}+\Delta_{X_2} S^{(0)}_{X_2}-\Delta_{Y} S^{(0)}_{Y}= -\{S^{(0)}_{X_1},S^{(1)}_{X_1}\}_{X_1}- \{S^{(0)}_{X_2},S^{(1)}_{X_2}\}_{X_2} +\{S^{(0)}_{Y},S^{(1)}_{Y}\}_{Y}=-\{S^{(0)}_X,S^{(1)}_{X_1}+S^{(1)}_{X_2}-S^{(1)}_Y\}=-\{S^{(0)}_X,S^{(1)}_X\}$. (Here we indicate explicitly where we calculate the odd Poisson brackets and BV Laplacians; no index means $X$.)
}

\item \label{simpBF (vii)}
We break the simplicial complex $X$ into individual simplices $e$; on each of them we have a ``building block'' solution of the quantum master equation of form $S_{\bar\DDelta^N}:=S^{(0)}_{\bar\DDelta^N}-i\hbar S^{(1)}_{\bar\DDelta^N}$ as constucted in (\ref{simpBF (i)}) (setting $X=\bar\DDelta^N$) and (\ref{simpBF (iv)}), with $N=\dim(e)$. The fact that $S_{\bar\DDelta^N}$ satisfies the quantum master equation is (\ref{simpBF CME}) specialized to $\bar\DDelta^N$, put together with (\ref{simpBF QME Delta^N}). 
Then, by (\ref{simpBF (vi)}), the element $S_X$ as defined by (\ref{simpBF locality}) satisfies the quantum master equation on $\sF_X$.
Finally, property (\ref{simpBF item init cond}) is obvious from the construction.
\end{enumerate}
\end{proof}

\begin{remark} Examples of values of structure constants $C$ for a simplex $\DDelta^N$ in Theorem \ref{thm: simpBF}:
\begin{center}
\begin{tabular}{c|l}
$\Gamma,\{e_i\}$ & $C^{\DDelta^N}_{\Gamma,\{e_i\}}$ \\
\hline
\includegraphics[scale=0.6]{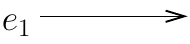} & 
$\left\{\begin{array} {ll}
\pm 1 & \mr{if}\; |e_1|=N-1 \\
0 & \mr{otherwise}
\end{array}\right.$
\\
$\vcenter{\hbox{\includegraphics[scale=0.6]{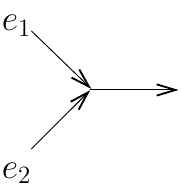}}}$ & 
$\left\{\begin{array} {ll}
\pm\frac{|e_1|!|e_2|!}{(N+1)!} & \mr{if}\; |e_1|+|e_2|=N\;\mr{and}\; |e_1\cap e_2|=0 \\
0 & \mr{otherwise}
\end{array}\right.$
\\
$\vcenter{\hbox{\includegraphics[scale=0.4]{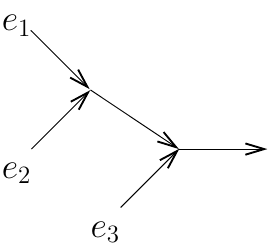}}}$ & 
$\left\{\begin{array} {ll}
\pm\frac{|e_1|!|e_2|!|e_3|!}{(|e_1|+|e_2|+1)\cdot(N+2)!} & \mr{if}\; |e_1|+|e_2|+|e_3|=N+1,\\ & \mr{and}\;|e_1\cap e_2|=0,\\ &  \mr{and}\;\{|e_1\cap e_3|,|e_2\cap e_3|\}=\{0,1\} \\
0 & \mr{otherwise}
\end{array}\right.$
\\
$\vcenter{\hbox{\includegraphics[scale=0.35]{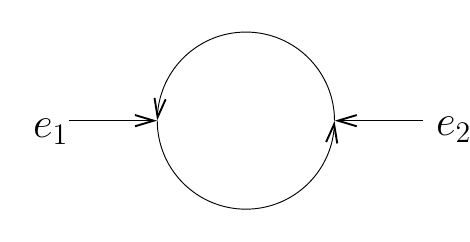}}}\qquad$ & 
$\left\{\begin{array} {ll}
\frac{(-1)^{N+1}}{(N+1)^2(N+2)} & \mr{if}\; e_1=e_2\;\mr{and}|e_1|=1  \\
0 & \mr{otherwise}
\end{array}\right.$
\end{tabular}
\end{center}
Here the signs and nonvanishing conditions are formulated in terms of combinatorics and orientations of the $\n$-tuple of (arbitrary codimension) faces $e_1,\ldots,e_\n$ of $\bar\DDelta^N$; $|\cdots|$ stands for dimension. The top two graphs correspond, upon summing over the simplices, as in (\ref{simpBF locality}), to the cellular differential and the bracket $[\bt\stackrel{*}{,}\bt]$ (the projected wedge product of forms tensored with the Lie bracket in $\g$) on cochains and thus to the first two terms of (\ref{simpBF init cond}). The two bottom graphs give first nontrivial contributions to $\mathcal{R}$ in (\ref{simpBF init cond}).
\end{remark}

\begin{remark} \label{rem: low dim simplices}
Building block (\ref{Sbar}) for a $0$-simplex $\DDelta^0$ is simply
$\bar{S}_{\DDelta^0}=\frac12 \left\langle \sB_{[0]},[\sA_{[0]},\sA_{[0]}]\right\rangle$ where we denoted the only cell $[0]:=\DDelta^{0}$ (and we are suppressing the subscript $\g$ in $\lan,\ran$ and $\tr$). This corresponds to having a single nonvanishing structure constant $C^{\DDelta^0}_{\vcenter{\hbox{\includegraphics[scale=0.2]{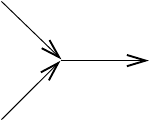}}},[0],[0]}=1$.

The nonvanishing structure constants $C$ of Theorem \ref{thm: simpBF} for the $1$-simplex are as follows. We denote the top $1$-cell as $[01]$ and the boundary $0$-cells as $[0]$ and $[1]$.
\begin{center}
\begin{tabular}{c|l}
$\Gamma,\{e_i\}$ & $C^{\DDelta^1}_{\Gamma,\{e_i\}}$ \\
\hline
$\vcenter{\hbox{\includegraphics[scale=0.4]{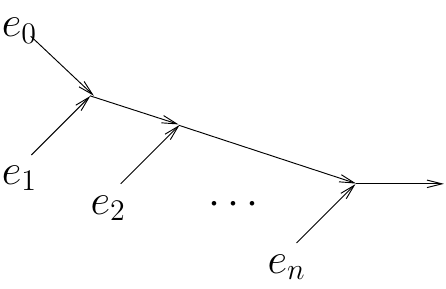}}}$ & 
$\left\{\begin{array}{ll} 
(-1)^n\frac{B_n}{n!} & \mr{if}\; \{e_0,e_1\}=\{[1],[01]\},\;e_2=\cdots=e_n=[01] \\
-\frac{B_n}{n!} & \mr{if}\; \{e_0,e_1\}=\{[0],[01]\},\;e_2=\cdots=e_n=[01]\\
0 & \mr{otherwise}
\end{array}\right.$
\\
$\vcenter{\hbox{\includegraphics[scale=0.35]{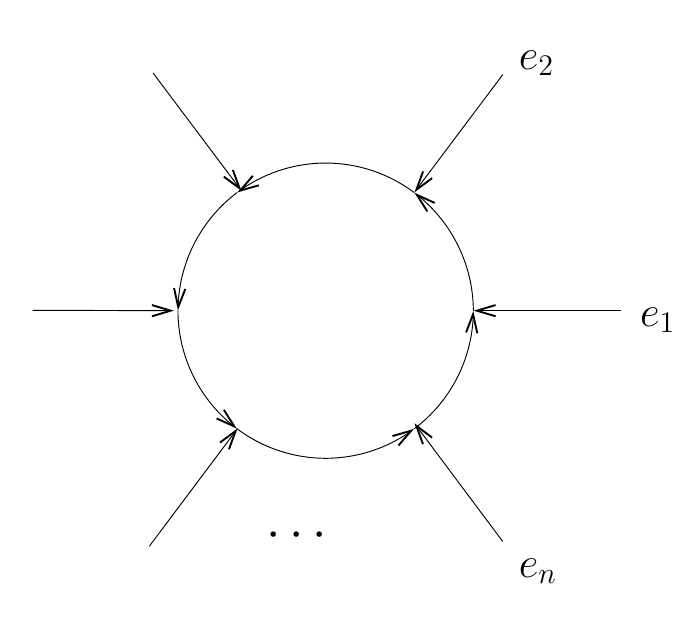}}}\qquad$ & 
$\left\{\begin{array}{ll}
\frac{B_n}{n!} & \mr{if}\; e_1=\cdots=e_n=[01]\\
0 & \mr{otherwise}
\end{array}\right.$
\end{tabular}
\end{center}
Here $B_n$ are the Bernoulli numbers, $B_0=1,\, B_1=-\frac12,\, B_2=\frac16,\, B_3=0, B_4=-\frac{1}{30},\ldots$ In particular, the building block 
(\ref{Sbar}) for $\DDelta^1$ is
\begin{multline}\label{Sbar interval} 
\bar{S}_{\DDelta^1}=\lan \sB_{[01]}, \sA_{[1]}-\sA_{[0]} \ran+\frac12 \lan \sB_{[01]}, [\sA_{[01]},\sA_{[0]}+\sA_{[1]}] \ran-\frac{1}{12}\lan \sB_{[01]}, [\sA_{[01]},[\sA_{[01]},\sA_{[1]}-\sA_{[0]}]] \ran + \cdots -\\
-i\hbar\left(\frac{1}{24} \mr{tr}\, [\sA_{[01]},[\sA_{[01]},\bt]]+\cdots \right)=\\
=\left\langle \sB_{[01]}, \frac12\left[\sA_{[01]},\sA_{[0]}+\sA_{[1]}\right]+\mathbb{F}(\mr{ad}_{\sA_{[01]}})\circ (\sA_{[1]}-\sA_{[0]}) \right\rangle -i\hbar\; \mr{tr} \log \mathbb{G}(\mr{ad}_{\sA_{[01]}}) 
\end{multline}
where we introduced the two functions 
\begin{equation}\label{simpBF interval F,G}
\mathbb{F}(x)=\frac{x}{2}\coth\frac{x}{2},\qquad \mathbb{G}(x)=\frac{2}{x}\sinh\frac{x}{2}
\end{equation}
\end{remark}

\begin{remark}
\label{rem: prismatic}
We have required $X$ to be a \emph{simplicial} complex rather than a general $CW$ complex because for the 
Theorem \ref{thm: simpBF} we need retractions $\Omega^\bt(\bar{e})\wavy{}C^\bt(\bar{e})$ for cells $e$ of $X$, compatible with restriction to cells of $\partial e$ (if we want ``standard building blocks'' as in Theorem \ref{thm: simpBF}, we should also require that the retraction depends only on the combinatorial type of $\bar{e}$ and is compatible with combinatorial symmetries of the cell). In the case of simplices, such retractions are provided by Whitney forms and Dupont's chain homotopy operator (whereas Poincar\'e map works for cells of any type). More generally, we can allow $X$ to be a \emph{prismatic complex}, with cells $e=\sigma_1\times\cdots\times \sigma_r$ being \emph{prisms} -- products of simplices (of arbitrary dimension). The respective retraction $\Omega^\bt(\bar{e})\wavy{}C^\bt(\bar{e})$ is constructed from Whitney/Dupont retractions for simplices by the tensor product construction of Section \ref{sec: induction data for dual and sym} (whenever we have several simplices of same dimension in $e$, we average over the order in which we contract the factors, in order to have the retraction compatible with the symmetries of the prism). The special case when the prisms are \emph{cubes} (products of 1-simplices) was considered in detail in \cite{DiscrBF}. As a result, one can allow $X$ in Theorem \ref{thm: simpBF} to be a prismatic complex; the building blocks $\bar{S}_e$ then depend on the combinatorial type of the prism $e$ (dimensions of the simplex factors) and the structure constants $C^{e}_{\Gamma_0,e_1,\ldots,e_n}$, $C^e_{\Gamma_1,e_1,\ldots,e_n}$ depend on the combinatorics of an $n$-tuple of faces of the prism $e$. In Section \ref{sec: non-ab CW} we will construct a further generalization of Theorem \ref{thm: simpBF} to general regular CW complexes.
\end{remark}

\subsection{Case of general CW complexes}\label{sec: non-ab CW}
One can extend Theorem \ref{thm: simpBF} to a general regular CW complex, with cells not required to be simplices or prisms.

\begin{theorem}\label{thm: cellBF}
Let $X$ be a finite regular CW complex and $\g$ be a unimodular Lie algebra. Then to every cell $e\subset X$ one can associate an element (a \emph{local building block})
$\bar{S}_e\in\mr{Fun}(\sF_{\bar{e}})[[\hbar]]$ of the form
\begin{multline} \label{Sbar cell}
\bar{S}_{e}(\sA_{\bar{e}},\sB_{e};\hbar)=\\
=\sum_{n=1}^\infty\sum_{\Gamma_0}\sum_{e_1,\ldots,e_n\subset \bar{e}}\frac{1}{|\mr{Aut}(\Gamma_0)|} C^e_{\Gamma_0,e_1,\ldots,e_n}
\left\langle \sB_{e},\mr{Jacobi}_{\,\Gamma_0}(\sA_{e_1},\ldots,\sA_{e_n})\right\rangle_\g-\\
-i\hbar \sum_{n= 2}^\infty \sum_{\Gamma_1} 
\sum_{e_1,\ldots,e_n\subset \bar{e}}\frac{1}{|\mr{Aut}(\Gamma_1)|} C^e_{\Gamma_1,e_1,\ldots,e_n}
\mr{Jacobi}_{\,\Gamma_1}(\sA_{e_1},\ldots,\sA_{e_n})
\end{multline}
with some real coefficients $C^e_{\Gamma_l,e_1,\ldots,e_n}\in \mathbb{R}$ for $l=0,1$, in such a way that the element (the \emph{cellular action})
\begin{equation}\label{cellBF locality}
S_X(\sA_X,\sB_X;\hbar)=\sum_{e\subset X}\bar{S}_e(\sA_X|_{\bar{e}},\sB_e;\hbar)\;\;\in \mr{Fun}(\sF_X)[[\hbar]]
\end{equation}
satisfies the quantum master equation $\Delta_X e^{\frac{i}{\hbar}S_X}=0$ and the following conditions:
\begin{enumerate}[(a)]
\item \label{cellBF thm a} the property 
\begin{equation}\label{cellBF: a eq}
S_X(\sA_X,\sB_X;\hbar)=\lan \sB_X, d\sA_X \ran+\mathfrak{r}
\end{equation}
with $\mathfrak{r}\in \widehat{\Sym^{\geq 3}} \sF_X^*\oplus \hbar\cdot \widehat{\Sym^{\geq 2}} \sF^*_X$,
\item \label{cellBF thm b} for $e$ any $0$-cell of $X$, one has
\begin{equation}\label{cellBF S_pt}
\bar{S}_e=\lan \sB_e,\frac12[\sA_e,\sA_e] \ran_\g
\end{equation}
\item \label{cellBF thm c} For $Y\subset X$ any subcomplex, $S_Y(\sA_Y,\sB_Y;\hbar):=\sum_{e\subset Y}\bar{S}_e\;\in\mr{Fun}(\sF_Y)[[\hbar]]$ satisfies the quantum master equation on $\sF_Y$.
\end{enumerate}
\end{theorem}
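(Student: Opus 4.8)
\textbf{Proof plan for Theorem \ref{thm: cellBF}.}

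The plan is to mimic the strategy of Theorem \ref{thm: simpBF} --- inductive construction of $S^{(0)}_X$ by summing over trees, then $S^{(1)}_X$ by homological perturbation theory --- but to replace the input ``Whitney forms plus Dupont homotopy'' (which only exists for simplices and prisms) by an \emph{inductive construction of the local retractions} $\Omega^\bt(\bar{e})\wavy{}C^\bt(\bar{e})$, or rather of their algebraic shadow, over the skeleta of $X$. More precisely, I would argue by induction on $\dim e$. For a $0$-cell $e$ the building block is dictated by condition (\ref{cellBF thm b}) and satisfies the quantum master equation trivially (this is just $\{S,S\}=0$ for the Lie algebra $\g$ together with $\mr{Str}\,\mr{ad}=0$, i.e. unimodularity). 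Assume now that for every cell $e'$ of dimension $<k$ we have constructed a building block $\bar{S}_{e'}$ satisfying (\ref{Sbar cell}), in such a way that for the subcomplex $\dd e\subset X$ (for $e$ any $k$-cell) the partial sum $S_{\dd e}=\sum_{e'\subset\dd e}\bar{S}_{e'}$ satisfies the quantum master equation on $\sF_{\dd e}$ --- this is property (\ref{cellBF thm c}) at the previous stage, and it is inherited because $\dd e$ is glued out of lower-dimensional cells and of the subcomplexes $\dd e''$ by the inclusion-exclusion / Mayer--Vietoris argument of step (\ref{simpBF (vi)}) in the proof of Theorem \ref{thm: simpBF}.

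The inductive step for a single $k$-cell $e$ is itself a two-stage obstruction-theory argument, exactly as flagged in the introduction. First I would extend the classical part: the cochain complex $C^\bt(\bar e)$ is $C^\bt(\dd e)$ augmented by one generator $e^*$ in degree $k$, with $d e^* $ a specific element of $C^\bt(\dd e)$ (the cellular boundary). One chooses a retraction of $C^\bt(\bar e)$ onto its cohomology (equivalently, a chain homotopy $\K_{\bar e}$) compatible with the already-chosen retraction of $C^\bt(\dd e)$; since $\bar e$ is a ball, $H^\bt(\bar e)=H^\bt(\mr{pt})$, and the space of such compatible choices is contractible (Section \ref{sec: deformations of ind data}). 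Feeding this retraction into the tree-sum construction of step (\ref{simpBF (i)}) --- now performed purely algebraically on $C^\bt(\bar e,\g)$ rather than via forms --- produces $\bar{S}^{(0)}_e$, and the combinatorial cancellation of contracted-edge contributions (step (\ref{simpBF (iii)}), which only used Leibniz, $d\K+\K d=\mr{id}-\ii\pp$, and the Jacobi identity) gives the classical master equation $\{S^{(0)}_{\bar e},S^{(0)}_{\bar e}\}=0$. One must check that the local building block $\bar S_e^{(0)}$, defined so that $S^{(0)}_{\bar e}=S^{(0)}_{\dd e}+\bar S^{(0)}_e$, involves only the component $\sB_e$ (not $\sB_{e'}$ for $e'\subset\dd e$) and has the tree form (\ref{Sbar cell}); this follows because the tree sum is built from a bracket and a homotopy that only create a non-trivial $\sB_e$-output when at least one input sits on $e$.

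Second, the quantum part: set $Q_{\bar e}=\{S^{(0)}_{\bar e},\bt\}$; then $\Delta S^{(0)}_{\bar e}$ is $Q_{\bar e}$-closed (because $\Delta$ is a biderivation of $\{\bt,\bt\}$ and $\{S^{(0)}_{\bar e},S^{(0)}_{\bar e}\}=0$) and is annihilated by the projection to $H^\bt_{CE}(\g)$ (by unimodularity of $\g$, exactly as in step (\ref{simpBF (iv)})); applying $Q_{\bar e}\kappa+\kappa Q_{\bar e}=\mr{id}-\iota\pi$ to it and setting $S^{(1)}_{\bar e}=-\kappa\,\Delta S^{(0)}_{\bar e}$ solves $\Delta S^{(0)}_{\bar e}+\{S^{(0)}_{\bar e},S^{(1)}_{\bar e}\}=0$, i.e. the quantum master equation on $\sF_{\bar e}$. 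Here $\kappa$ is the chain homotopy for the retraction of $\mr{Fun}(C^\bt(\bar e,\g)[1])$ onto Chevalley--Eilenberg cochains of $\g$, obtained from $\K_{\bar e}$ by Lemma \ref{lemma: homological perturbation} and the $\mr{Sym}$-construction (\ref{HPT Sym}). One then \emph{defines} $\bar S^{(1)}_e$ by the alternating-codimension sum (\ref{simpBF barS^1 via S^1}) restricted to faces of $\bar e$, checks that $\bar S^{(1)}_e$ has the $1$-loop graph form in (\ref{Sbar cell}), and that $S^{(1)}_{\bar e}=\sum_{e'\subset\bar e}\bar S^{(1)}_{e'}$. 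Finally, having built $\bar S_e$ for all cells, property (\ref{cellBF thm c}) for an arbitrary subcomplex $Y$ --- and in particular the quantum master equation on all of $X$ --- follows by the same Mayer--Vietoris/inclusion--exclusion step (\ref{simpBF (vi)}); property (\ref{cellBF thm a}) is immediate from the tree/graph expansion (the degree-$2$ part is $\lan\sB,d\sA\ran$ and everything else is higher order); uniqueness up to a canonical BV transformation (Lemma \ref{lemma: cellBF well-definedness}) follows from contractibility of the space of compatible retractions together with a standard homotopy-transfer argument, and rationality of the $C$'s from choosing the $\K_{\bar e}$'s with rational matrix entries.

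The main obstacle I anticipate is not any single computation but the \emph{coherence of the inductive choices}: one must choose, for every cell $e$, a retraction $\K_{\bar e}$ that restricts correctly to the retractions already fixed on all subcells $\dd e''\subset\dd e$, so that the local building blocks truly only depend on $\bar e$ and the sums over subcomplexes all satisfy the quantum master equation simultaneously. The clean way to organize this is to build the retractions cell-by-cell over the skeleta, using at each stage that the restriction map from compatible retractions on $\bar e$ to retractions on $\dd e$ is a fibration with contractible fibers (a consequence of the deformation analysis of Section \ref{sec: deformations of ind data}, since $\bar e$ is collapsible); then a section exists, and for ``standard'' building blocks one chooses the section equivariantly with respect to combinatorial automorphisms of cells (cf. Remark \ref{rem: standard building blocks}). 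A secondary subtlety is verifying that the perturbation series defining $S^{(0)}$, $S^{(1)}$, $\kappa$ all converge, i.e. truncate --- this holds because $BF$-type theories have no diagrams beyond one loop, so the relevant sums over trees and one-loop graphs with a fixed number of field insertions are finite, and the homological-perturbation series of Lemma \ref{lemma: homological perturbation} terminates by degree reasons on each graded piece.
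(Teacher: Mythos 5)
Your overall template (induction over skeleta, quantum part via a chain homotopy onto Chevalley--Eilenberg cochains of $\g$ with unimodularity killing the obstruction) is the right one, and your quantum step $S^{(1)}_{\bar e}=-\kappa\,\Delta S^{(0)}_{\bar e}$ is essentially the paper's. But the classical step has a genuine gap. The tree-sum of step (\ref{simpBF (i)}) of Theorem \ref{thm: simpBF} is a homotopy-transfer formula: it requires an ambient differential graded Lie algebra --- $\Omega^\bt(\bar e,\g)$, i.e.\ a graded-commutative product on forms compatible with $d$ --- whose bracket decorates the internal vertices, together with a retraction of that algebra onto cellular cochains whose homotopy decorates the internal edges. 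For a general ball complex neither ingredient is available: there is no natural product on $C^\bt(\bar e)$ (Whitney forms and Dupont homotopies exist only for simplices and prisms, cf.\ Remark \ref{rem: prismatic}), and the retraction you actually construct, $C^\bt(\bar e)\wavy{}H^\bt(\bar e)$, points the wrong way --- transfer along it would produce operations on $H^\bt(\bar e,\g)\cong\g$, not the desired extension of the structure from $C^\bt(\dd e,\g)$ to $C^\bt(\bar e,\g)$. So ``feeding this retraction into the tree-sum construction, performed purely algebraically on $C^\bt(\bar e,\g)$'' does not define anything: there is no bracket on $C^\bt(\bar e,\g)$ to place at the vertices, and the cancellation of step (\ref{simpBF (iii)}) (Leibniz plus Jacobi) has no content without one. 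Your opening suggestion of inductively building retractions $\Omega^\bt(\bar e)\wavy{}C^\bt(\bar e)$ for general cells would be a genuinely different route, but you never construct them, and the proposal silently switches to the cohomology retraction.

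The paper closes exactly this gap by an order-by-order obstruction-theoretic construction rather than a transfer formula: one writes $\bar S^{(0)}_e=\sum_{j\ge 1}\lan \sB_e,\sigma_j(\sA_e,\sA_{\dd e})\ran$ with $\sigma_1$ forced by condition (\ref{cellBF thm a}); the classical master equation becomes the recursion (\ref{cellBF CME induction}) in the weight $j$, whose right-hand side is shown to be $(D+Q^0_{\dd e})$-closed and annihilated by $p_\EE$ for degree reasons (the cohomology of $\EE$ is the CE complex of $\g$, where weight equals internal degree, while the recursion has weight $j\ge 2$ and degree $3-n\le 2$, with a separate check for $n=1$, $j=2$); the retraction $C^\bt(\bar e)\wavy{(i,p,K)}H^\bt(\bar e)$ enters only through the induced homotopy $K_\EE$ on $\Sym(C^\bt(\bar e,\g)[1])^*$, which produces the primitive $\sigma_j=K_\EE(\cdots)$. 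Two smaller discrepancies: the paper obtains property (\ref{cellBF thm c}) simply by re-ordering the cells (every subcomplex is an initial segment of some admissible ordering), not by a Mayer--Vietoris argument; and the coherence issue you flag as the main obstacle is handled in the paper by fixing the induction data (\ref{cellBF: cell ind data}) per cell once and for all, the real difficulty being the absence of a product on cochains, which your proposal does not address.
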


\begin{proof}
Choose an ordering of
the cells $e_1,\ldots,e_N$ of $X$ in such a way that $\dim e_1\leq\cdots\leq \dim e_N$. Then $X$ admits an increasing filtration by CW subcomplexes 
\begin{equation}\label{cellBF CW filtration}
X_1\subset \cdots \subset X_{N-1}\subset X_N=X
\end{equation}
with $X_k:=\cup_{i\leq k} e_i$. Then $X_k=X_{k-1}\cup e_k$. That is, $X_k$ is $X_{k-1}$ with a single new cell $e_k$ adjoined; its boundary $\dd e_k$ lies entirely in $X_{k-1}$.

Proceeding by induction in $k$, assume that a solution $S_{X_{k-1}}$ of the QME  on $\sF_{X_{k-1}}$ is constructed and we want to extend it to $\sF_{X_k}=\sF_{X_{k-1}}[\sA_{e},\sB_{e}]$ (we temporarily denote $e:=e_k$). Denote $n=\dim e$. First, we look for a solution of the classical master equation $S^{(0)}_{X_k}$ of the following form
\begin{equation}\label{cellBF: S induction}
S^{(0)}_{X_k}=S^{(0)}_{X_{k-1}}
+\underbrace{
\sum_{j\geq 1} \lan \sB_{e}, \sigma_{j}(\sA_{e},\sA_{\dd e}) \ran}_{\bar{S}^{(0)}_{e}} 
\end{equation}
where $\sigma_j$ is of polynomial degree $j$ in $\sA$-variables (we call this polynomial degree \textit{weight} to distinguish it from other gradings). We assume $n\geq 1$ (otherwise we simply set $\bar{S}_e=\lan \sB_e,\frac12 [\sA_e,\sA_e] \ran$ as prescribed by (\ref{cellBF thm b})).

We denote by
$$d_{\dd e\ra e}= e^*\cdot \mathsf{d}_{\dd e\ra e}:\quad  C^{n-1}(\dd e) \;\; \ra \;\; C^{n}(\bar{e},\dd e)=\mr{Span}(e^*)$$ the component of the cellular coboundary operator on $X_k$ proportional to $e^*$ (and $\sd_{\dd e\ra e}: C^{n-1}(\dd e)\ra \mathbb{R}$ picks the coefficient of $e^*$). Thus, the cellular coboundary operator on $C^\bt(\bar{e})$ splits as $d_{\bar{e}}=d_{\dd e}+d_{\dd e\ra e}$ with $d_{\dd e}$ the cellular coboundary operator on $C^\bt(\dd e)$. 
Let us choose some induction data 
\begin{equation}\label{cellBF: cell ind data}
C^\bt(\bar{e}),d_{\bar{e}} 
\wavy{(i,p,K)} H^\bt(\bar{e})
=\left\{\begin{array}{ll} \mathbb{R}, & \bt=0\\ 0,& \bt\neq 0 \end{array}\right. 
\end{equation}
Consider the space 
$$\EE^j:=\Sym^j(C^\bt(\bar{e},\g)[1])^* \qquad \ni f(\sA_{e},\sA|_{\dd e})$$
The operator $d_{\dd e\ra e}$ lifts to a weight zero operator\footnote{The sign is chosen in such a way that $D\sA=d_{\dd e\ra e}\sA$ holds.} 
$$D=(-1)^n\lan  \sd_{\dd e\ra e}\; \sA_{\dd e},\frac{\dd}{\dd \sA_{e}} \ran :\quad  \EE^{\bt}\ra \EE^{\bt}$$
Denote by $Q_{\dd e}$ the differential on $\Sym (C^\bt(\dd e,\g)[1])$ induced by $\{S_{X_{k-1}}^{(0)},\bt\}$; it extends (via $Q_{\dd e}\sA_e=0$) to $\EE^\bt$. We split $Q_{\dd e}$ according to weight as $Q_{\dd e}=Q_{\dd e}^0+Q_{\dd e}^1+\cdots$. In particular, $Q^0_{\dd e}+D$ is simply the lift of the cellular coboundary operator $d_{\bar{e}}$ to $\EE^\bt$.
Thus, using the construction (\ref{HPT Sym}), one produces, out of the triple $(i,p,K)$ chosen above, the induction data 
\begin{equation}\label{cellBF: E ind data}
\EE^\bt,Q^0_{\dd e}+D \wavy{(i_\EE,p_\EE,K_\EE)} H^\bt(\EE)\cong \Sym^\bt (\g[1])^*
\end{equation} Note that $i$ in (\ref{cellBF: cell ind data}) is canonical: it has to represent $H^0(\bar{e})$ by constant $0$-cochains; thus $p_\EE=i^*$ is also canonical and is given by evaluation on constant $0$-cochains.

We set in (\ref{cellBF: S induction}) the ``initial condition'' $\sigma_1:=\sd_{\dd e\ra e}\, \sA_{\dd e}$, which is forced by (\ref{cellBF: a eq}).
The classical master equation $\{S^{(0)}_{X_k},S^{(0)}_{X_k}\}=0$ is equivalent to a sequence of equations for the functions $\sigma_j$ for $j\geq 2$:
\begin{equation}\label{cellBF CME induction}
(D+Q^0_{\dd e}) \sigma_{j}=-\sum_{i=1}^{j-1}Q^i_{\dd e}\sigma_{j-i}-(-1)^n\sum_{i=2}^{j-1} \lan \sigma_{j+1-i},\frac{\dd}{\dd \sA_{e}} \ran \sigma_{i} 
\end{equation}
Note that the r.h.s. of (\ref{cellBF CME induction}) depends only on $\sigma_1,\ldots,\sigma_{j-1}$. We solve (\ref{cellBF CME induction}), as an equation for $\sigma_{j}\in \g\otimes \EE^{j}$, by induction in $j$. The r.h.s. is $(D+Q^0_{\dd e})$-closed\footnote{
Indeed, using the induction hypothesis we calculate
{\Tiny
\begin{multline*}
(D+Q^0_{\dd e})\left(\mbox{r.h.s. of }(\ref{cellBF CME induction})
\right)=
\sum_{i=1}^{j-1}\sum_{l=0}^{i-1}Q^{i-l}_{\dd e}Q^{l}_{\dd e}\sigma_{j-i}+\sum_{i=1}^{j-1}Q_{\dd e}^i D \sigma_{j-i}-(-1)^n\sum_{i=1}^{j-1}\lan Q^i_{\dd e}\sigma_1, \frac{\dd}{\dd \sA_e} \ran \sigma_{j-i}-\\
-(-1)^n \sum_{i=2}^{j-1} \lan (D+Q^0_{\dd e})\sigma_{j+1-i},\frac{\dd}{\dd \sA_e} \ran \sigma_i 
+(-1)^n \sum_{i=2}^{j-1} \lan \sigma_{j+1-i},\frac{\dd}{\dd \sA_e} \ran (D+Q^0_{\dd e})\sigma_i=:a+b+c+d+e
\end{multline*}
}
Further, we have 
{\tiny
\begin{multline*}
a+b=\sum_{i=1}^{j-1}Q^i_{\dd e}\left((D+Q^0_{\dd e})\sigma_{j-i}+\sum_{l=1}^{j-i-1}Q_{\dd e}^l \sigma_{j-i-l}\right)=-(-1)^n\sum_{i=1}^{j-1}\,\sum_{r,s\geq 2,r+s=j-i+1}Q^i_{\dd e}\left( \lan \sigma_r, \frac{\dd}{\dd \sA_e} \ran \sigma_s \right)\\
= -(-1)^n\sum_{i=1}^{j-1}\,\sum_{r,s\geq 2,r+s=j-i+1}\lan Q^i_{\dd e}\sigma_r, \frac{\dd}{\dd \sA_e} \ran \sigma_s +(-1)^n\sum_{i=1}^{j-1}\,\sum_{r,s\geq 2,r+s=j-i+1}\lan \sigma_r, \frac{\dd}{\dd \sA_e} \ran Q^i_{\dd e}\sigma_s =:f+g
\end{multline*}
}
Next we note that $c+d+f=\sum\limits_{r,s,t\geq 2, r+s+t=j+2}\lan \lan \sigma_r ,\frac{\dd}{\dd \sA_e} \ran \sigma_s, \frac{\dd}{\dd \sA_e}\ran \sigma_t$ and
$e+g=-\sum\limits_{r,s,t\geq 2, r+s+t=j+2} \lan \sigma_r ,\frac{\dd}{\dd \sA_e} \ran \left( \lan\sigma_s, \frac{\dd}{\dd \sA_e}\ran \sigma_t\right)$, and thus 
$c+d+f+e+g=\sum\limits_{r,s,t\geq 2, r+s+t=j+2}  \lan\sigma_r \sigma_s, \frac{\dd}{\dd \sA_e}\frac{\dd}{\dd \sA_e} \ran \sigma_t=0$ -- vanishes as a contraction of a symmetric and a skew-symmetric tensor. Thus we proved that $(D+Q^0_{\dd e})\left(\mbox{r.h.s. of }(\ref{cellBF CME induction})\right)=0$.
} and is annihilated by $p_\EE$.\footnote{
To see that the r.h.s. of (\ref{cellBF CME induction}) is annihilated by $p_\EE$, note that in $H^\bt(\EE)$, the weight coincides with the internal degree. On the other hand, the weight of (\ref{cellBF CME induction}) is $j\geq 2$ while the internal degree is $3-n\leq 2$. Thus $p_\EE(\mbox{r.h.s. of }(\ref{cellBF CME induction}))$ is zero for degree reasons, except for the case $n=1$ (i.e. $e$ is an interval) with $j=2$, where one sees explicitly that $-Q^1_{\dd e}\sigma_1=-\frac12 \left[\sA_{[1]},\sA_{[1]}\right]+\frac12 \left[\sA_{[0]},\sA_{[0]}\right]$ vanishes on constant $0$-cochains (we denoted the endpoints of the interval $e$ by $[0]$ and $[1]$, as in Remark \ref{rem: low dim simplices}).
} Therefore, it is exact, and one can construct the primitive as
$$\sigma_j:= K_\EE \left(\mbox{r.h.s. of }(\ref{cellBF CME induction})\right)$$

Thus we have constructed a solution of the classical master equation  (\ref{cellBF: S induction}) on $X_k$ by extension of the known one (by induction hypothesis) on $X_{k-1}$. 

Next, we want to construct $S^{(1)}_{X_k}=S^{(1)}_{X_{k-1}}+\bar{S}^{(1)}_e(\sA_e,\sA_{\dd e})$ in such a way that $S_{X_k}=S^{(0)}_{X_k}-i\hbar\, S^{(1)}_{X_k}$ satisfies the quantum master equation; we assume that $S^{(1)}_{X_{k-1}}$ is already constructed. We use the strategy of (\ref{simpBF (iv)}) of the proof of Theorem \ref{thm: simpBF}: the QME can be written as 
\begin{equation}\label{cellBF QME on e}
Q_{\bar{e}} S^{(1)}_{\bar{e}}=-\Delta S^{(0)}_{\bar{e}}
\end{equation} 
where $Q_{\bar{e}}=Q_{\dd e}+D+(-1)^n\sum_{j\geq 2}\lan\sigma_j,\frac{\dd}{\dd \sA_e}\ran$ is the differential on $\EE^\bt$ induced by $\{S^{(0)}_{X_k},\bt\}$ and $S^{(0,1)}_{\bar{e}}:=\sum_{e'\subset e}\bar{S}^{(0,1)}_{e'}$.  We deform the induction data (\ref{cellBF: E ind data}), using Lemma \ref{lemma: homological perturbation}, to
$$\EE^\bt,Q_{\bar{e}} \wavy{(\widetilde{i}_\EE,\widetilde{p}_\EE,\widetilde{K}_\EE)} \Sym^\bt(\g[1])^*,d_{CE}$$
The r.h.s. of (\ref{cellBF QME on e}) is $Q_{\bar{e}}$-closed, as follows from the classical master equation, and is annihilated by $\widetilde{p}_\EE$ (from 
unimodularity of $\g$), thus we construct a solution of (\ref{cellBF QME on e}) as
$S^{(1)}_{\bar{e}}:=-\widetilde{K}_\EE\Delta S^{(0)}_{\bar{e}}$ and set $$\bar{S}^{(1)}_e:=S^{(1)}_{\bar{e}}-\sum_{e'\subset \dd e}\bar{S}^{(1)}_{e'}$$

This finishes the construction of a solution of the quantum master equation on $X_k$ of the form 
$S_{X_k}=\sum_{i\leq k} \bar{S}_{e_i}$
with $\bar{S}_{e_i}=\bar{S}^{(0)}_{e_i}-i\hbar\,\bar{S}^{(1)}_{e_i}$; taking $k=N$ we obtain the statement of the Theorem. 

Property (\ref{cellBF thm c}) follows from the possibility to choose the ordering of cells $e_1,\ldots,e_N$ differently (preserving the nondecreasing dimension property) while preserving the choice of induction data (\ref{cellBF: cell ind data}) for the cells. Each $Y\subset X$ arises as $X_k$ for some ordering of cells and some $k$, which implies the QME on $Y$.
\end{proof}

\begin{lemma}\label{lemma: cellBF well-definedness}
Cellular action $S_X$ of the Theorem \ref{thm: cellBF} is well-defined up to a canonical BV transformation. More precisely, let $S_X$ and $S'_X$ be two cellular actions fulfilling the conditions of Theorem \ref{thm: cellBF} -- the quantum master equation,  the ansatz (\ref{cellBF locality},\ref{Sbar cell}) and  properties (\ref{cellBF thm a})--(\ref{cellBF thm c}). 
Then one can construct a family $S_{X,t}\in \mr{Fun}_0(\sF_X)[[\hbar]]$ of solutions of QME for $t\in [0,1]$ together with a generator $R_{X,t}\in \mr{Fun}_{-1}(\sF_X)[[\hbar]]$ such that
\begin{enumerate}[(i)]
\item \label{lemma 8.7 (i)} $S_{X,0}=S_X$, $S_{X, 1}=S'_X$,
\item \label{lemma 8.7 (ii)} $\frac{\dd}{\dd t}S_{X,t}=\{S_{X,t},R_{X,t}\}-i\hbar \Delta R_{X,t}$,
\item \label{lemma 8.7 (iii)} both $S_{X,t}$ and $R_{X,t}$ satisfy the ansatz (\ref{cellBF locality},\ref{Sbar cell}), with $t$-dependent structure constants $C^{S,e}_{\Gamma,e_1,\ldots, e_n}(t)$, $C^{R,e}_{\Gamma,e_1,\ldots, e_n}(t)$ for the action $S_{X,t}$ and the generator of infinitesimal canonical transformation $R_{X,t}$. 
Moreover, the \emph{trivial tree} $\Gamma_0^\mr{triv}$ with a single leaf and no internal vertices has coefficient $C^{R,e}_{\Gamma_0^\mr{triv},e_1}=0$).
\end{enumerate}
\end{lemma}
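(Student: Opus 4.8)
The plan is to mimic the skeleton-by-skeleton construction in the proof of Theorem~\ref{thm: cellBF}, carried out one cohomological degree higher — on the level of infinitesimal canonical transformations rather than actions — and to induct on the number of cells of $X$. If $X$ is a point then $S_X=S'_X=\lan\sB,\tfrac12[\sA,\sA]\ran$ by (\ref{cellBF thm b}) and one takes the constant family with $R_{X,t}\equiv0$. In general I would order the cells $e_1,\ldots,e_N$ by nondecreasing dimension, set $X_k=e_1\cup\cdots\cup e_k$ as in the proof of Theorem~\ref{thm: cellBF}, put $e:=e_N$ and $n:=\dim e$ (with nothing to do if $n=0$, by (\ref{cellBF thm b})), and invoke property (\ref{cellBF thm c}): the restrictions $S_X|_{X_{N-1}}$ and $S'_X|_{X_{N-1}}$ are again cellular actions satisfying the hypotheses of the lemma, so by the inductive hypothesis there is a path $S_{X_{N-1},t}$ with generator $R_{X_{N-1},t}$, both of the form (\ref{cellBF locality}),(\ref{Sbar cell}) and with vanishing trivial-tree coefficient in $R_{X_{N-1},t}$, connecting them. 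The task is then to extend the path across the single remaining cell $e$.

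I would do this in two moves. \emph{Transport}: extend $R_{X_{N-1},t}$ to $\sF_X$ as a function independent of $\sA_e,\sB_e$ (so its $e$-component is zero) and let $\bar S_{e,t}$ evolve by the linear ODE $\tfrac{d}{dt}\bar S_{e,t}=\{\bar S_{e,t},R_{X_{N-1},t}\}$ starting from $\bar S_{e,0}$ equal to the top building block of $S_X$. A direct check — using that $\{S_{X_{N-1},t},\cdot\}-i\hbar\Delta$ squares to zero by the quantum master equation on $X_{N-1}$ and that $\Delta$ is a biderivation of $\{,\}$ — shows that $S_{X_{N-1},t}+\bar S_{e,t}$ satisfies the quantum master equation on $\sF_X$ for all $t$, and that $\tfrac{d}{dt}(S_{X_{N-1},t}+\bar S_{e,t})=\big(\{S_{X_{N-1},t}+\bar S_{e,t},\cdot\}-i\hbar\Delta\big)R_{X_{N-1},t}$, i.e.\ (\ref{lemma 8.7 (ii)}) holds with $R_{X,t}=R_{X_{N-1},t}$. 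Because $R_{X_{N-1},t}$ carries no trivial-tree term, the bracket $\{\bar S_{e,t},R_{X_{N-1},t}\}$ is again a combination of nested commutators and traces in $\g$ with leaves decorated by faces of $\bar{e}$, so $\bar S_{e,t}$ stays of the ansatz form (\ref{Sbar cell}), and its weight-$1$ component remains $\sd_{\dd e\ra e}\sA_{\dd e}$ (forced by (\ref{cellBF: a eq})). This move ends, at $t=1$, on \emph{some} admissible top building block $\bar S_{e,1}$ over $S'_X|_{X_{N-1}}$ — of ansatz form, with the correct weight-$1$ part — but in general not equal to the top building block $\bar S'_e$ of $S'_X$.

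\emph{Correction}: connect $\bar S_{e,1}$ to $\bar S'_e$ by a path at fixed $X_{N-1}$-part. This ``one-cell'' uniqueness is the heart of the matter, and I expect it to go exactly like the existence argument inside the proof of Theorem~\ref{thm: cellBF}, run one cohomological degree higher: at each weight $j$ the difference of the two $\sigma_j$-components is closed for the lifted cellular coboundary operator $D+Q^0_{\dd e}$ on $\EE^\bt$ and annihilated by $p_\EE$, hence exact since $H^{>0}(\bar{e})=0$, with a primitive supplied by the contracting homotopy $K_\EE$ of (\ref{cellBF: E ind data}) that becomes the weight-$j$ part of the $e$-component of the generator; at order $\hbar$ the difference of the two $\bar S^{(1)}_e$'s is $Q_{\bar{e}}$-closed and, having no constant term by the ansatz (\ref{Sbar cell}), is $Q_{\bar{e}}$-exact via $\widetilde K_\EE$ and unimodularity of $\g$; all primitives stay of ansatz form, and none carries a weight-$1$ (trivial-tree) term because the weight-$1$ parts of $\bar S_{e,1}$ and of $\bar S'_e$ already coincide. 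Concatenating the two moves (and reparametrizing $[0,1]$) would yield the desired $S_{X,t}$ and $R_{X,t}$ on $X$, and taking $k=N$ completes the induction. The hard part is the \emph{correction} move: to check that the homological-perturbation machinery of Theorem~\ref{thm: cellBF}, applied to the space of infinitesimal canonical transformations rather than to actions, really does produce a generator respecting the local ansatz (\ref{cellBF locality}),(\ref{Sbar cell}) with vanishing trivial-tree coefficient, and that the families so constructed depend admissibly on $t$ and restrict compatibly to subcomplexes, as is needed to run the induction.
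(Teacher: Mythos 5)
Your proposal follows essentially the same route as the paper: induction over the cell filtration, a transport of the inductively given generator (your ``transport'' move is the paper's conjugation step, just run in the opposite direction -- the paper pulls $S'_{X_k}$ back to $S''_{X_k}=\mathcal{R}_{X_{k-1}}^{-1}S'_{X_k}$ over the common lower part $S_{X_{k-1}}$ and concatenates the generators at the end), followed by a one-cell correction over a fixed lower part, with the same bookkeeping that the absence of the trivial-tree term in $R$ keeps the weight-one part $\sd_{\dd e\ra e}\,\sA_{\dd e}$ frozen. For the correction step that you flag as the hard part, the paper's argument is: first connect the two top building blocks by a path of solutions of QME of the form $S_{X_{k-1}}+\bar S_{e,t}$ (this exists because the space of such one-cell extensions is contractible, by the obstruction-theoretic construction of Theorem \ref{thm: cellBF}), and then show that each tangent $\dot{\bar S}_{e,t}$ is an infinitesimal canonical transformation with generator in $\Xi^{(0)}\oplus(-i\hbar)\,\Xi^{(1)}$, using that the degree-zero cohomology of $\{S^{(0)}_{X_k,t},\bt\}$ on $\Xi^{(0)}$ and on $\Xi^{(1)}$ vanishes (by homological perturbation from the abelian differential, valid since $\dim e\geq 1$); this is the same cohomological input as your $K_\EE$/$\widetilde K_\EE$ argument. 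The only imprecision in your sketch is the claim that ``at each weight $j$ the difference of the two $\sigma_j$-components is closed'': this holds only after the lower-weight components have already been made to agree, so your version must be organized as an iterative weight-by-weight gauging (or packaged as a path of solutions, as the paper does) -- with that caveat the argument goes through.
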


\begin{proof}
Consider the filtration (\ref{cellBF CW filtration}). We proceed by induction in $X_k$: assuming that the Lemma holds for $X_{k-1}$, we aim to prove it for $X_k$. (Note that the Lemma holds trivially for $X_1$, since $S_{X_1}$ is fixed uniquely by (\ref{cellBF thm b}) of Theorem \ref{thm: cellBF}.) 
Set $S_{X_k}:=\sum_{l=1}^k \bar{S}_{e_l}$, $S'_{X_k}:=\sum_{l=1}^k \bar{S}'_{e_l}$. 
By induction hypothesis, cellular actions $S_{X_{k-1}}$ and $S'_{X_{k-1}}$ 
(defined 
as above, omitting the $k$-th term in the sums) 
are connected by a canonical BV transformation, which we denote schematically by $\mathcal{R}_{X_{k-1}}$, i.e.  $S'_{X_{k-1}}=\mathcal{R}_{X_{k-1}}\circ S_{X_{k-1}}$. 

Define 
\begin{equation}\label{cellBF lemma: S''}
S''_{X_k}:=\R_{X_{k-1}}^{-1}S'_{X_k}
\end{equation} 
We have then $S_{X_k}=S_{X_{k-1}}+\bar{S}_e$ and $S''_{X_k}=S_{X_{k-1}}+\bar{S}''_e$ (for some $\bar{S}''_e(\sA_{\bar{e}},\sB_e;\hbar)$ satisfying the ansatz (\ref{Sbar cell})). 
We can connect\footnote{
Indeed, the most general construction of extension of a solution of QME from $X_{k-1}$ to $X_k$ is as in our proof of Theorem \ref{thm: cellBF}, where on each step of induction in $j$ we can shift $\sigma_j\ra \sigma_j+(D+Q^{(0)}_{\dd e})(\cdots)$, also we can shift $\bar{S}^{(1)}_e\ra \bar{S}^{(1)}_e+Q_{\bar{e}}(\cdots)$. This amounts to a contractible space of choices. Thus the space of solutions of QME on $X_k$ of form $S_{X_{k-1}}+\bar{s}_{e}$, with a fixed solution $S_{X_{k-1}}$ of QME on $X_{k-1}$ and an indeterminate function $\bar{s}_{e}$ satisfying ansatz (\ref{Sbar cell}), is contractible; in particular, it is path-connected.
} $S_{X_k}$ and $S''_{X_k}$ by a path of solutions of QME on $\sF_{X_k}$ of the form $S_{X_{k},t}=S_{X_{k-1}}+\bar{S}_{e,t}$ for $t\in [0,1]$, with $\bar{S}_{e,t}=\bar{S}_{e,t}^{(0)}-i\hbar\, \bar{S}_{e,t}^{(1)}=\lan \sB_e, \sigma_t(\sA_{\bar{e}}) \ran-i\hbar \, \rho_t (\sA_{\bar{e}})$ for some $t$-dependent functions $\sigma_t\in \g\otimes \EE^{\geq 2}$, $\rho_t\in \EE^{\geq 2}$ of $\sA$-variables of weight $\geq 2$. (We are borrowing the notations of the proof of Theorem \ref{thm: cellBF}, in particular $e:=e_k$.)

Differentiating QME for $S_{X_t}$ in $t$, we obtain
\begin{eqnarray}
\{S_{X_k,t}^{(0)},\dot{\bar{S}}_{e,t}^{(0)}\}&=&0 \label{cellBF lemma: CME} \\
\{S^{(1)}_{X_k,t},\dot{\bar{S}}_{e,t}^{(0)}\}+ \{S^{(0)}_{X_k,t},\dot{\bar{S}}_{e,t}^{(1)}\}+\Delta \dot{\bar{S}}_{e,t}^{(0)} &=& 0 \label{cellBF lemma: QME}
\end{eqnarray}
where the dot stands for the derivative in $t$.

Observe that: 
\begin{enumerate}[(a)]
\item\label{cellBF lemma: coh a} The cohomology of the differential $\{\lan \sB_{X_k}, d\sA_{X_k}\ran,\bt\}$ on the subcomplex $\Xi^{(0)}:=\{\lan \sB_e, f(\sA_{\bar{e}}) \ran\,|\,f\in \g\otimes\EE^{\geq 2}\}\subset \mr{Fun}(\sF_{X_k})$ vanishes in internal degree zero.\footnote{
For this we assume $\dim e\geq 1$; in the case of $\dim e=0$ the induction step of the Lemma works trivially as $\bar{S}_e$ is fixed uniquely by (\ref{cellBF thm b}) of Theorem \ref{thm: cellBF}.
} 
This implies, by homological perturbation theory, that degree zero cohomology of $\{S^{(0)}_{X_k,t},\bt\}$ on $\Xi^{(0)}$ vanishes as well. 
\item\label{cellBF lemma: coh b} By a similar argument, degree zero cohomology of $\{S^{(0)}_{X_k,t},\bt\}$ on $\Xi^{(1)}:=\{g(\sA_{\bar{e}})\in \EE^{\geq 2}\}\subset \mr{Fun}(\sF_{X_k})$ also vanishes. 
\end{enumerate}

Thus, by (\ref{cellBF lemma: CME}) and (\ref{cellBF lemma: coh a}), we have $\dot{\bar{S}}_{e,t}^{(0)}=\{S_{X_k,t}^{(0)},r^{(0)}_{e,t}\}$ for some degree $-1$ element $r^{(0)}_{e,t}\in \Xi^{(0)}$. Substituting this into (\ref{cellBF lemma: QME}), we obtain, using Jacobi identity for $\{,\}$ and the QME on $S_{X_k,t}$, that
$\left\{S^{(0)}_{X_k,t},\dot{\bar{S}}_{e,t}^{(1)}-\{S^{(1)}_{X_k,t},r^{(0)}_{e,t}\}-\Delta r^{(0)}_{e,t}\right\}=0$. This implies, by (\ref{cellBF lemma: coh b}), that 
$\dot{\bar{S}}_{e,t}^{(1)}=\{S^{(1)}_{X_k,t},r^{(0)}_{e,t}\}+\Delta r^{(0)}_{e,t}+\{S^{(0)}_{X_k,t},r^{(1)}_{e,t}\}$
for some degree $-1$ element $r^{(1)}_{e,t}\in \Xi^{(1)}$. Thus we have proven that 
\begin{equation}\label{cellBF lemma: can transf}
\dot{S}_{X_k,t}=\{S_{X_k,t},r_{e,t}\}-i\hbar\,\Delta r_{e,t}
\end{equation}
with the generator $r_{e,t}=r^{(0)}_{e,t}-i\hbar\, r^{(1)}_{e,t}\in \, \Xi^{(0)}-i\hbar\,\Xi^{(1)}$. One can use homological perturbation theory to construct an explicit chain contractions of $\left(\Xi^{(0)},\{S^{(0)}_{X_k,t},\bt\}\right)$ and $\left(\Xi^{(1)},\{S^{(0)}_{X_k,t},\bt\}\right)$ and use them to construct $r^{(0)}_{e,t}$, $r^{(1)}_{e,t}$. By inspection of the construction, the resulting generator $r_{e,t}$ satisfies the ansatz (\ref{Sbar cell}) for some $t$-dependent structure constants $C^{r,e}_{\Gamma,e_1,\ldots,e_n}(t)$.

Combining (\ref{cellBF lemma: can transf}) and (\ref{cellBF lemma: S''}), we obtain that $S_{X_k}$ and $S'_{X_k}$ can be connected by a canonical transformation (\ref{lemma 8.7 (ii)}) with generator
$$R_{X_k,t}=\left\{\begin{array}{ll} 
2 r_{e,2t}, & t\in [0,\frac12) \\
2 R_{X_{k-1},2t-1}, & t\in [\frac12 ,1] 
\end{array} \right.$$
This proves the induction step $X_{k-1}\ra X_k$.
\end{proof}

\begin{remark} One can put together $S_{X,t}$ and $R_{X,t}$ of Lemma \ref{lemma: cellBF well-definedness} into a single non-homogeneous differential form on the interval,
$$\widetilde{S}_X:=S_{X,t}+dt\cdot R_{X,t}\in \Omega^\bt([0,1])\widehat{\otimes}\mr{Fun}(\sF_X)[[\hbar]]$$ 
Then the quantum master equation on $S_{X,t}$ together with (\ref{lemma 8.7 (ii)}) can be packaged as an \emph{extended} quantum master equation for $\Tilde{S}_X$,
$$ (d_t-i\hbar\,\Delta)\, e^{\frac{i}{\hbar}\widetilde{S}_X}=0 $$
where $d_t=dt\cdot \frac{\dd}{\dd t}$ -- the de Rham differential in $t$. By the Lemma, $\widetilde{S}_{X}$ satisfies the ansatz (\ref{cellBF locality},\ref{Sbar cell}) with structure constants $C$ taking values in $\Omega^\bt([0,1])$.
\end{remark}

\begin{remark}
The building block (\ref{Sbar cell}) of the cellular action is defined uniquely for $\dim e=0$ (fixed by (\ref{cellBF S_pt})) and for $\dim e=1$ (as follows from Lemma \ref{lemma: cellBF well-definedness}: for degree reasons, the generator of the canonical transformation has to vanish; $\bar{S}_e$ in this case is given by (\ref{Sbar interval})). For cells of dimension $\geq 2$, $\bar{S}_e$ is not uniquely defined. 
\end{remark}

\begin{remark}
A different approach to the proof of Theorem \ref{thm: cellBF} is to fix a simplicial refinement $W$ of $X$ (i.e. cells of $X$ are triangulated in $W$).
Then, proceeding again by induction in $X_k$, as in (\ref{cellBF CW filtration}), one constructs $S_{X_k}$ as a BV pushforward of $S_{W_k}$, as constructed in Theorem \ref{thm: simpBF} ($W_k$ here is the restriction of $W$ to $X_k$), using a choice of gauge-fixing compatible with one for the BV pushforward $W_{k-1}\ra X_{k-1}$ used in the previous step and involving a choice of induction data for relative cochains $C^\bt(W|_{e_k},\dd e_k)\wavy{}C^\bt(\bar{e}_k,\dd e_k)$.
\end{remark}

\begin{remark}[Standard building blocks for cells]
\label{rem: standard building blocks}
Assume that we have fixed some ``standard'' choice of the induction data (\ref{cellBF: cell ind data}) for all possible regular CW decompositions $\bar{e}$ of a closed $n$-ball with single top cell, considered up to homeomorphisms, for $n\geq 1$. Denote this collection of choices $\theta$. Then, by the construction of Theorem \ref{thm: cellBF}, we have a ``standard building block'' $\bar{S}_e^\theta$ satisfying ansatz (\ref{Sbar cell}) for any possible combinatorics of $\bar{e}$. For any finite regular CW complex $X$, we then have a ``standard'' cellular action $S^\theta_X=\sum_{e\subset X}\bar{S}^\theta_e$ satisfying the quantum master equation and properties (\ref{cellBF thm a}--\ref{cellBF thm c}) of Theorem \ref{thm: cellBF}. In particular, we can choose $\theta$ on simplices (and, more generally, on prisms, see Remark \ref{rem: prismatic}) to give the building blocks (\ref{Sbar}) of Theorem \ref{thm: simpBF}, coming from Whitney forms/Dupont's chain homotopy. 
\end{remark}

\begin{remark}[Rationality] We can choose the induction data (\ref{cellBF: cell ind data}) for cells to be \emph{rational}, i.e. to factor through induction data $C^\bt(\bar{e},\mathbb{Q})\wavy{(i_\mathbb{Q},p_\mathbb{Q},K_\mathbb{Q})}H^\bt(\bar{e},\mathbb{Q})$. Then, by inspection of the proof of Theorem \ref{thm: cellBF}, all the structure constants of the cellular action -- coefficients $C^e_{\Gamma_0,e_1,\ldots,e_n}$, $C^e_{\Gamma_1,e_1,\ldots,e_n}$ in (\ref{Sbar cell}) -- are \emph{rational} as well.
\end{remark}

\subsubsection{Cellular $BF$ action as a ``generating function'' of a unimodular $L_\infty$ algebra on cochains}\label{sec: uL_infty}
\begin{definition}\footnote{
This algebraic structure (and the example coming from Theorem \ref{thm: simpBF}) was introduced in \cite{SimpBF,DiscrBF} under the name of a \emph{quantum} $L_\infty$ algebra.  
In \cite{Granaker} it was named a \emph{unimodular} $L_\infty$ algebra and was studied as an algebra over a particular Merkulov's wheeled operad.
} \label{def: uL_infty}
A \emph{unimodular $L_\infty$ algebra} is a graded vector space $V^\bt$ endowed with two sequences of skew-symmetric multilinear operations,
\begin{itemize}
\item \emph{classical operations}  $l_n:\,\wedge^n V \ra V$ for $n\geq 1$ of degree $2-n$ and
\item \emph{quantum operations} $q_n:\, \wedge^n V\ra \mathbb{R}$ for $n\geq 1$ of degree $-n$,
\end{itemize}
such that the following two sequences of identities hold: homotopy Jacobi identities
\begin{equation}\label{L_infty relations}
\sum_{\sigma\in \Sigma_{n}}\sum_{r+s=n}\pm\frac{1}{r!s!}\,l_{r+1}(x_{\sigma_1},\ldots,x_{\sigma_r},l_s(x_{\sigma_{r+1}},\ldots,\sigma_{n}))=0
\end{equation}
and homotopy unimodularity relations
\begin{equation} \label{uL_infty homotopy unimodularity}
\sum_{\sigma\in\Sigma_n}\left(\pm\frac{1}{n!}\,\mr{Str}_V\;l_{n+1}(x_{\sigma_1},\ldots,x_{\sigma_n},\bt)+\sum_{r+s=n}\pm\frac{1}{r!s!}\,q_{r+1}(x_{\sigma_1},\ldots,x_{\sigma_r},l_s(x_{\sigma_{r+1}},\ldots,x_{\sigma_n}))\right)=0 
\end{equation}
Here $x_1,\ldots,x_n$ is an $n$-tuple of elements of $V$ and $\sigma$ runs over permutations of this $n$-tuple; $\pm$ are the Koszul signs. 

\end{definition}

\begin{definition}[\cite{DiscrBF}] For $V^\bt$ a graded vector space, consider the odd-symplectic space $\sF=V[1]\oplus V^*[-2]$. We say that an element $f\in \mr{Fun}(\sF)[[\hbar]]$ satisfies the ``$BF_\infty$ ansatz'' if 
\begin{equation}\label{BF_infty ansatz general}
f=\lan \sB , \alpha(\sA) \ran - i\hbar \beta(\sA)
\end{equation}
where $\sA\in \mr{Hom}_1(\sF,V)$, $\sB\in \mr{Hom}_{-2}(\sF,V^*)$ are the superfields (projections to summands of $\sF$ composed with shifted identity map) and  $\alpha(\sA) \in \widehat{\Sym^{\geq 1}}(V[1])\otimes V^*$, $\beta(\sA)\in \widehat{\Sym^{\geq 1}}(V[1])$ arbitrary elements.
\end{definition}
We have the following properties (see \cite{DiscrBF} for details and proofs):
\begin{enumerate}[(i)]
\item If $f$ and $g$ satisfy the $BF_\infty$ ansatz then $\{f,g\}$ and $\Delta f$ also satisfy it.
\item If $S$ satisfies the $BF_\infty$ ansatz and is of internal degree zero, then one can write 
\begin{equation}\label{BF_infty ansatz via l,q}
S(\sA,\sB;\hbar)=\sum_{n\geq 1}\frac{1}{n!} \langle \sB, l_n(\underbrace{\sA,\ldots,\sA}_n) \rangle- i\hbar \sum_{n\geq 1}\frac{1}{n!}q_n(\underbrace{\sA,\ldots,\sA}_n)
\end{equation}
where $l_n:\, \wedge^n V\ra V$ and $q_n:\, \wedge^n V\ra \mathbb{R}$ are certain multilinear operations on $V$. They endow $V$ with the structure of a unimodular $L_\infty$ algebra, as in Definition \ref{def: uL_infty}. Relations (\ref{L_infty relations},\ref{uL_infty homotopy unimodularity}) can be conveniently written in terms of the superfield $\sA_X$ as
\begin{align*}
\sum_{r+s=n}\frac{1}{r!s!}\,l_{r+1}(\underbrace{\sA,\ldots,\sA}_r,l_s(\underbrace{\sA,\ldots,\sA}_s))=0,\\
\frac{1}{n!}\,\mr{Str}\;l_{n+1}(\underbrace{\sA,\ldots,\sA}_n,\bt)+\sum_{r+s=n}\frac{1}{r!s!}\,q_{r+1}(\underbrace{\sA,\ldots,\sA}_r,l_s(\underbrace{\sA,\ldots,\sA}_s))=0 
\end{align*}
and are equivalent to the quantum master equation satisfied by $S$.
\item If $S_t$, for $t\in [0,1]$, is a family of solutions of the QME satisfying the $BF_\infty$ ansatz such that $S_t$ for different $t$ are related by a canonical BV transformation 
\begin{equation}\label{can transformation}
\frac{\dd}{\dd t}\, S_t=\{S_t,R_t\}-i\hbar\, R_t
\end{equation} 
then the degree $-1$ generator $R_t$ has to satisfy the $BF_\infty$ ansatz as well.
\item One can introduce a natural notion of \emph{equivalence of unimodular $L_\infty$ structures} on $V$: two structures $\{l_n,q_n\}$, $\{l'_n,q'_n\}$ are called \emph{equivalent} if the corresponding solutions (\ref{BF_infty ansatz via l,q}) of QME on $\sF$ can be related by a canonical BV transformation (\ref{can transformation}).
\end{enumerate}

In particular, for $V=C^\bt(X,\g)$, 
the action $S_X$ constructed in Theorem \ref{thm: cellBF} can be expanded as
\begin{equation}\label{BF_infty ansatz for S_cell}
S_X(\sA_X,\sB_X;\hbar)=\sum_{n\geq 1}\frac{1}{n!} \langle \sB_X, l_n^X(\underbrace{\sA_X,\ldots,\sA_X}_n) \rangle- i\hbar \sum_{n\geq 2}\frac{1}{n!}q_n^X(\underbrace{\sA_X,\ldots,\sA_X}_n)
\end{equation}
where $l_n^X:\, \wedge^n C^\bt(X,\g)\ra C^\bt(X,\g)$ and $q_n^X:\, \wedge^n C^\bt(X,\g)\ra \mathbb{R}$ are multilinear operations on $\g$-valued cochains,  endowing the space of cochains $C^\bt(X,\g)$ with the structure of a unimodular $L_\infty$ algebra.

We can split operations $l_n^X,q_n^X$ into contributions of individual cells:
$$l_n^X=\sum_{e\subset X} e^*\cdot l_n^e(\sA_{\bar{e}},\ldots,\sA_{\bar{e}}),\qquad q_n^X=\sum_{e\subset X} q_n^e(\sA_{\bar{e}},\ldots,\sA_{\bar{e}})$$
where $l_n^e:\, \wedge^n C^\bt(\bar{e},\g)\ra \g$ and $q_n^e:\,\wedge^n C^\bt(\bar{e},\g)\ra \mathbb{R}$ are the terms of the Taylor expansion in $\sA$-fields of the building block (\ref{Sbar cell}),
\begin{equation}\label{Sbar via l and q}
\bar{S}_e=\sum_{n\geq 1} \frac{1}{n!} \lan B_e, l_n^e(\sA_{\bar{e}},\ldots,\sA_{\bar{e}}) \ran_\g -i\hbar\, \sum_{n\geq 2}\frac{1}{n!}\, q_n^e(\sA_{\bar{e}},\ldots,\sA_{\bar{e}}) 
\end{equation}

\begin{example}[From \cite{SimpBF,DiscrBF}]
The explicit answer for the action $S_X$ for $X=I$ the interval 
(\ref{Sbar interval})
corresponds to the following unimodular $L_\infty$ algebra structure on $\g$-valued cochains $C^\bt(I,\g)=\mr{Span}_\g(\epsilon_0,\epsilon_1,\epsilon_{01})$ of the interval (we denote $e_0,e_1,e_{01}$ the basis cochains associated to the left/right endpoints and the bulk):
\begin{align*}
l_2(\alpha_1\otimes\epsilon_0,\alpha_2\otimes \epsilon_0)&=[\alpha_1,\alpha_2]\otimes \epsilon_0\\
l_2(\alpha_1\otimes\epsilon_1,\alpha_2\otimes \epsilon_1)&=[\alpha_1,\alpha_2]\otimes \epsilon_1 \\
l_{n+1}(\alpha_1\otimes \epsilon_{01},\ldots, \alpha_n\otimes \epsilon_{01},\beta\otimes \epsilon_1)&=
\frac{B_n^+}{n!}\;\sum_{\sigma}\mr{ad}_{\alpha_{\sigma_1}}\cdots\mr{ad}_{\alpha_{\sigma_n}}(\beta) \otimes \epsilon_{01} \\
l_{n+1}(\alpha_1\otimes \epsilon_{01},\ldots, \alpha_n\otimes \epsilon_{01},\beta\otimes \epsilon_0)&=
-\frac{B_n^-}{n!}\;\sum_{\sigma}\mr{ad}_{\alpha_{\sigma_1}}\cdots\mr{ad}_{\alpha_{\sigma_n}}(\beta)  \otimes \epsilon_{01} \\
q_{n}(\alpha_1\otimes \epsilon_{01},\ldots,\alpha_{n}\otimes \epsilon_{01})&=
\frac{B_n}{n\cdot n!}\; \sum_\sigma \mr{tr}_\g (\mr{ad}_{\alpha_{\sigma_1}}\cdots\mr{ad}_{\alpha_{\sigma_n}})
\end{align*}
Here we wrote out all the nonvanishing operations; $n\geq 0$ for $l_{n+1}$ and $n\geq 2$ for $q_n$; $\sigma$ runs over permutations of numbers $1,\ldots,n$;  $\alpha_i$ and $\beta$ are arbitrary elements in $\g$; $B_n^\pm$ are Bernoulli numbers with $B_1^\pm=\pm\frac12$ (and $B_0^\pm=1,\;B_2^\pm=\frac16,\ldots$ the standard Bernoulli numbers). Forgetting about the quantum operations $q_n$, we have an $L_\infty$ algebra on $\g$-valued cochains of the interval -- the Lie version of the ``algebra of  the interval''\footnote{This algebraic structure appeared  independently and nearly simultaneously in \cite{Lawrence-Sullivan}, in the preprint of \cite{Cheng-Getzler} and, in its Lie form, in the preprint of \cite{SimpBF}.} in Lawrence-Sullivan \cite{Lawrence-Sullivan} (more precisely, the $L_\infty$ algebra $C^\bt(I,\g),\{l_n\}$ is the $C_\infty$ algebra of Lawrence-Sullivan on $C^\bt(I)$ tensored with $\g$).
\end{example}

\begin{remark}
Theorem \ref{thm: cellBF} possesses a straightforward generalization whereby one replaces the unimodular Lie algebra $(\g,[,])$ of coefficients of cochains/chains by any finite-dimensional unimodular $L_\infty$ algebra $(\g,\{l_n^\g\},\{q_n^\g\})$. In this case, instead of (\ref{cellBF S_pt}), for $e$ a $0$-cell, we have $\bar{S}_{e}=\lan B_e,\sum_{n\geq 1}\frac{1}{n!}\,l_n^\g(\sA_e,\ldots,\sA_e) \ran_\g-i\hbar\, \sum_{n\geq 1} \frac{1}{n!}\, q_n^\g(\sA_e,\ldots,\sA_e)$; instead of (\ref{cellBF: a eq}) we have $S_X=\lan \sB_X,(d+l_1^\g)\sA_X \ran+\mathfrak{r}$. In (\ref{Sbar cell}), we then allow $\Gamma_0$ to be any rooted tree with in-valencies $\geq 2$ at vertices (not necessarily binary), with $\mr{Jacobi}_{\Gamma_0}$ defined as an nested composition of operations $l_n^\g$ (with $n$ the in-valence of a vertex of the tree) associated to $\Gamma_0$. Graph $\Gamma_1$ can be either a $1$-loop graph with every vertex having out-valency $1$ and in-valency $\geq 2$ (and then $\mr{Jacobi}_{\Gamma_1}$ is a supertrace of a nested classical operation), or a rooted tree with the root vertex decorated by $q_n^\g$, with $n$ the valency of the root vertex.
\end{remark}

\subsection{BV pushforward to cohomology, simple-homotopy equivalence, cellular aggregations
}
\label{s:BVpfsheca}

Similarly to Section \ref{sec: quantization closed}, we consider the BV pushforward of the half-density $e^{\frac{i}{\hbar}S_X}(\mu^\hbar_{\sF_X})^{1/2} \in \Dens^{\frac12,\Fun}_\mathbb{C}(\sF_X)$ to residual fields $\sF^{\zm}=H^\bt(X,\g)[1]\oplus H_\bt(X,\g^*)[-2]$. The factor $(\mu^\hbar_{\sF_X})^{1/2}=\xi_\hbar\cdot \mu_{\sF_X}^{\frac12}$ is the normalized cellular half-density on the space of fields as in Section \ref{sec: quantization closed} (for cochains with coefficients in a trivial local system 
with fiber $\g$).\footnote{
Half-density $\mu_{\sF_X}^{\frac12}$ is constructed using an a priori fixed density (i.e. a fixed normalization of the Lebesgue measure) $\mu_\g$ on $\g$, instead of the standard density on $\mathbb{R}^m$ as in Section \ref{sec: quantization closed}.
} Explictly, we define the partition function by the fiber BV integral
\begin{equation}\label{Z sec 8.3}
Z(\sA_\zm,\sB_\zm):=\int_{\LL\subset \sF_\fluct } e^{\frac{i}{\hbar}S_X}(\mu^\hbar_{\sF_X})^{1/2} \quad \in \Dens^{\frac12,\Fun}_\mathbb{C}(\sF^\zm) 
\end{equation}
where the gauge fixing data -- the splitting $\sF_X=\sF^\zm\oplus \sF_\fluct$ and the Lagrangian $\LL\subset  \sF_\fluct$ -- are constructed, as in Section \ref{sec: quantization closed}, from a choice of induction data $C^\bt(X)\wavy{} H^\bt(X) $ (tensored with identity in $\g$). 

By the general properties of BV pushforwards and by Theorem \ref{thm: cellBF}, we have $\Delta_\zm Z=0$ and a change of the data $C^\bt(X)\wavy{} H^\bt(X)$ changes $Z\mapsto Z+\Delta_\zm(\cdots)$.\footnote{By an abuse of notations, throughout Sections \ref{sec: non-ab BF I} and \ref{sec: non-ab BF cobordism} we are suppressing the superscript $\can$ for the BV Laplacian on half-densities. Similarly, in Section \ref{sec: non-ab BF cobordism} we will suppress this superscript for the operator $\widehat{S}_\dd$ acting on half-densities.}
More precisely, 
a change of induction data induces a canonical BV transformation,\footnote{This is a general property of BV pushforwards for a change of gauge-fixing data in a smooth family, cf e.g. \cite{CMRpert}, Section 2.2.2. Note that by the discussion of Section \ref{sec: deformations of ind data}, in our case the space of gauge-fixing data is contractible and in particular path-connected, thus any two choices of gauge-fixing can be connected by a smooth family.
We are slightly abusing the term ``canonical BV transformation'': for us it has  two related meanings -- for $\Delta$-closed half-densities, as in (\ref{Z non-ab can transf}), and for actions (functions solving QME), as in 
(\ref{can transformation}).
These meanings are equivalent for half-densities satisfying exponential ansatz $Z=e^{\frac{i}{\hbar}S}\mu^{1/2}$.
} i.e. if $Z$ and $Z'$ are constructed using two different gauge-fixings, we can construct a family $Z_t$, $t\in [0,1]$, such that $Z_0=Z$, $Z_1=Z'$ and 
\begin{equation}\label{Z non-ab can transf}
\frac{\dd}{\dd t}Z_t=\Delta_\zm(Z_t\cdot R_t)
\end{equation} 
for some $t$-dependent generator $R_t\in \mr{Fun}_{-1}(\sF_\zm)[[\hbar]]$ of form $R_t=\lan \sB_\zm, \alpha_t(\sA_\zm) \ran-i\hbar\, \beta_t(\sA_\zm)$. 

Computing the BV pushforward yields the result of the following form:
$$Z(\sA_\zm,\sB_\zm)= e^{\frac{i}{\hbar}S_\zm(\sA_\zm,\sB_\zm;\hbar)}\;\xi_\hbar^{H^\bt(X,\g)}\underbrace{\tau(X,\g)}_{\in \HDens \sF^\zm}$$
where the factor $\xi_\hbar^{H^\bt}$ and the torsion $\tau(X,\g)$ are for a trivial local system on $X$ of rank $\dim\g$. The effective action on residual fields $S_\zm$ is computed as a sum of Feynman diagrams 
and satisfies the $BF_\infty$ ansatz (\ref{BF_infty ansatz via l,q}) for some multilinear operations $l_n^\zm:\, \wedge^n H^\bt(X,\g)\ra H^\bt(X,\g)$ and $q_n^\zm:\, \wedge^n H^\bt(X,\g)\ra\mathbb{R}$ which endow the cohomology $H^\bt(X,\g)$ with the structure of a unimodular $L_\infty$ algebra. In particular, the classical operations $\{l_n\}$ determine the \textit{rational homotopy type} of $X$.\footnote{
Here we mean that we need to know $l_n$'s (Massey-Lie brackets) for a general Lie algebra of coefficients $\g$, 
which is tantamount to knowing the $C_\infty$ operations (Massey products) on $H^\bt(X)$, see (\ref{H(X,g)= H(X) otimes g}) below. In fact, one can recover the $n$-ary $C_\infty$ operation $m_n$ on $H^\bt(X)$ from $l_n$ with $\g=\mathfrak{b}^+_{n+1}$ the algebra of upper-triangular matrices of size $n+1$, simply from ${l_n(w_1\otimes t_{12},\ldots w_{n}\otimes t_{n\, n+1})} =m_n(w_1,\ldots,w_n)\otimes t_{1\, n+1}$ with $w_i\in H^\bt(X)$ and $t_{ij}$ the matrix with entry $1$ at $(ij)$-th place and all other entries being zero. 
We stress that one does not recover the $C_\infty$ structure on $H^\bt(X)$ by plugging $\g=\mathbb{R}$ into the formulae for operations $l_n$ on $H^\bt(X,\g)$  -- that would just kill all the operations.}

\begin{remark} \label{rem: L_infty and RHT}
Some comments on the $L_\infty$ algebra $H^\bt(X,\g),\{l_n\}_{n\geq 2}$ and its relation to the rational homotopy type:
\begin{enumerate}[(a)]
\item \emph{\textbf{Factorization.}}   
One has the following factorization property: the space of $\g$-valued cochains, viewed as an $L_\infty$ algebra (disregarding the quantum operations $q_n$), can be written as
\begin{equation} \label{C(X,g)=C(X) otimes g}
C^\bt(X,\g)=C^\bt(X)\otimes \g 
\end{equation}
-- the Lie algebra of coefficients $\g$ tensored with the $C_\infty$ algebra\footnote{Recall, see e.g. \cite{Cheng-Getzler} for details, that an $A_\infty$ algebra is a $\mathbb{Z}$-graded vector space $W$ together with a sequence of multilinear operations $m_n:W^{\otimes n}\ra W$, $n\geq 1$, satisfying the quadratic associativity-up-to-homotopy identities. An $A_\infty$ algebra $(W,\{m_n\})$ is called a $C_\infty$ algebra if in addition operations $m_n$ vanish on shuffle-products.
We refer the reader to Appendix \ref{Appendix: C-infty otimes Lie} on how to construct the tensor product $L_\infty$ structure on $W\otimes\g$, with $W$ a $C_\infty$ algebra and $\g$ a Lie algebra.
The $C_\infty$ structure on $C^\bt(X)$, for $X$ a simplicial complex, coinciding with the one read off of the tree part of (\ref{Sbar})  and constructed via homotopy transfer from piecewise-polynomial forms by Kontsevich-Soibelman formula using Dupont's chain homotopy operator
was considered in \cite{Cheng-Getzler}.

} 
$C^\bt(X)$ of cochains with coefficients  in $\mathbb{R}$ or $\mathbb{Q}$.
This follows by inspection of (\ref{Sbar cell}). Similarly, one has
\begin{equation}\label{H(X,g)= H(X) otimes g}
H^\bt(X,\g)=H^\bt(X)\otimes \g 
\end{equation}
-- the $L_\infty$ algebra of $\g$-valued cohomology equals  the coefficient Lie algebra $\g$ tensored with the cohomology $C_\infty$ algebra, regarded as a homotopy transfer of the $C_\infty$ algebra $C^\bt(X)$ given by Kontsevich-Soibelman sum-over-trees formula \cite{Kontsevich-Soibelman}. See also \cite{Cheng-Getzler} for the fact that the sum-over-trees formula  transfers  $C_\infty$ algebras
into $C_\infty$ algebras.\footnote{
Here the remark (see \cite{SimpBF,DiscrBF}) is that the perturbative evaluation of the integral (\ref{Z sec 8.3}) in the lowest order in $\hbar$ corresponds to the homotopy transfer formula for $L_\infty$ algebras to a subcomplex as a sum over (non-planar) rooted trees. This is the $L_\infty$ version of Kontsevich-Soibelman formula for homotopy transfer of $A_\infty$ algebras where the sum is over planar rooted trees. Also, one has that the homotopy transfer commutes with tensoring with a Lie algebra $\g$:
$$ \begin{CD} W,\{m_n\} @>{\otimes \g}>> W\otimes \g, \{l_n\}\\
@VVV  @VVV \\
W',\{m'_n\}  @>{\otimes \g}>> W'\otimes \g, \{l'_n\}
\end{CD}
$$
}
\item \emph{\textbf{Uniqueness of the $C_\infty$ structure:}}
\begin{itemize}
\item The $C_\infty$ algebra structure on $C^\bt(X)$ appearing in the r.h.s. of (\ref{C(X,g)=C(X) otimes g})  is inductively unique up to $C_\infty$ isomorphism (cf. Lemma \ref{lemma: cellBF well-definedness}). The $C_\infty$ structure on $H^\bt(X)$ appearing in the r.h.s. of (\ref{H(X,g)= H(X) otimes g}) is $C_\infty$ quasi-isomorphic to it. 
\item For $X$ a simplicial complex, $C^\bt(X)$ and $H^\bt(X)$ are both $C_\infty$ quasi-isomorphic to Sullivan's algebra $\Omega^\bt_\mr{poly}(X)$ of piecewise-polynomial differential forms on $X$. 
\item Also, for $X$ a cellular decomposition of a manifold $M$, $C^\bt(X,\mathbb{R})$ and $H^\bt(X,\mathbb{R})$ are both $C_\infty$ quasi-isomorphic to de Rham  algebra of smooth differential forms $\Omega^\bt(M)$.
\end{itemize}
\item \emph{\textbf{Massey products and rational homotopy type.}}
For $X$ a simply-connected CW complex, the $C_\infty$ algebra structure on $H^\bt(X,\mathbb{Q})$ determines the rational homotopy type of $X$ by a theorem of Kadeishvili \cite{Kadeishvili08}, \cite{Kadeishvili93}. In particular, this $C_\infty$ algebra is quasi-isomorphic (in the category of $C_\infty$ algebras) to Sullivan's minimal model cdga  \cite{Sullivan} of the space $X$, from which the rational homotopy groups $\mathbb{Q}\otimes \pi_*(X)$ can be directly recovered.
\end{enumerate}
\end{remark}

\begin{definition}[Whitehead \cite{Whitehead}, see also \cite{Cohen}] \label{def: simple-homotopy}
\begin{enumerate}[(i)]
\item Let $Y$ be a CW complex
containing an $n$-cell $e$ and an $(n-1)$-cell $e'\subset \dd e$ for some $n\geq 1$, such that $e'$ is a \emph{free face} of $e$, i.e. $e'$ is cobounded only by $e$. Let $X\subset Y$ be the subcomplex obtained by removing the pair $e,e'$ from $X$. Then one calls $X$ an \emph{elementary collapse} of $Y$ and  $Y$ an \emph{elementary expansion} of $X$. The customary notation is $X\nearrow Y$ or $Y\searrow X$.
\item Two CW complexes $X$ and $Y$ are called \emph{simple-homotopy equivalent} if they can be connected by a sequence of elementary expansions and collapses.\footnote{For example, Pachner's moves of triangulations of an $n$-manifold can be realized as a sequence of elementary expansions followed by a sequence of elementary collapses. Also, cellular subdivisions and aggregations can be realized as sequences of expansions and collapses.}
\end{enumerate}
\end{definition}


Let us assume that the collection $\theta$ of standard induction data on cells is chosen, as in Remark \ref{rem: standard building blocks}, so that we have standard cellular actions $S_X^\theta$. We will omit the superscript $\theta$, implying that $S_X$ always stands for $S_X^\theta$ (and similarly for the building block  $\bar{S}_e$) throughout this section.

\begin{lemma}\label{lemma: collapse}
Let $X$ and $Y$ be two regular CW complexes such that $Y$ is an elementary expansion of $X$ obtained by adjoining a pair of cells $e$, $e'\subset \dd e$. 
Then the BV pushforward $P_*^{Y\ra X}$  
from $\sF_Y$ to $\sF_X$ relates the standard cellular actions (\ref{cellBF locality}) 
as follows:
\begin{equation}\label{collapse: P_*}
P_*^{Y\ra X} \left( e^{\frac{i}{\hbar}S_Y}(\mu_{\sF_Y}^\hbar)^{1/2}\right)=
e^{\frac{i}{\hbar}S_X}(\mu_{\sF_X}^\hbar)^{1/2}+\Delta_X(\rho)
\end{equation}
There is a preferred (canonical) choice of gauge-fixing for the BV pushforward $P_*^{Y\ra X}$, for which 
the half-density $\rho\in\Dens^{\frac12,\Fun}_\mathbb{C}(\sF_X)$ above attains the form
\begin{equation}\label{collapse: rho}
\rho= -i\hbar\; r\cdot e^{\frac{i}{\hbar}S_X}(\mu_{\sF_X}^\hbar)^{1/2}
\end{equation}
with $r\in \mr{Fun}_{-1}(C^\bt(\dd e-e',\g)[1])$ -- a function of the $\sA$-field on $\dd e-e'$.
\end{lemma}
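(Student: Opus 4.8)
Here is my plan for proving Lemma \ref{lemma: collapse}.

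The plan is to reduce everything to an explicit two-cell computation, since an elementary expansion $X\nearrow Y$ only adds the cell $e$ together with its free face $e'\subset\dd e$. First I would observe that, the building blocks being local and standard, $S_Y=S_X+\bar{S}_e+\bar{S}_{e'}$, and that the relative cochain complex $C^\bt(Y,X)$ is the elementary acyclic complex $\mathbb{R}\langle(e')^*\rangle\xrightarrow{[e:e']}\mathbb{R}\langle e^*\rangle$ concentrated in degrees $n-1,n$, where $n=\dim e$ (the cell $e$ is automatically top-dimensional in $Y$: if $e\subset\dd g$ then $g\in X\subset Y$, contradicting $e\notin X$). This dictates the preferred gauge-fixing: take the induction data $C^\bt(Y),d\wavy{(i,p,K)}C^\bt(X),d'$ with $p$ the restriction of cochains, $K$ supported on the acyclic pair, $Ke^*=[e:e']^{-1}(e')^*$ and $K=0$ on all other basis cochains (so $i(f^*)=f^*-[e:f][e:e']^{-1}(e')^*$ on codimension-one faces $f\subset X$ of $e$, $i=\mr{id}$ otherwise); together with the dual data on chains (Section \ref{sec: induction data for dual and sym}) this splits $\sF_Y=\sF_X\oplus\sF_\fluct$ with $\sF_\fluct$ spanned by the superfield components $\sA_e,\sA_{e'},\sB_e,\sB_{e'}$, and produces the Lagrangian $\LL=\mr{im}(K)[1]\oplus\mr{im}(K^\vee)[-2]=\{\sA_e=0,\ \sB_{e'}=0\}$.

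Next I would evaluate $P_*^{Y\ra X}\big(e^{\frac{i}{\hbar}S_Y}(\mu_{\sF_Y}^\hbar)^{1/2}\big)=\int_\LL e^{\frac{i}{\hbar}S_Y}(\mu_{\sF_Y}^\hbar)^{1/2}$ by integrating out $(\sA_{e'},\sB_e)$. Since $\sB_e$ occurs only in $\bar{S}_e$ and there affinely, the $\sB_e$-integration is a (super-)delta function enforcing $G\big|_{\sA_e=0}=0$, where $G:=\dd\bar{S}_e^{(0)}/\dd\sB_e=[e:e']\,\sA_{e'}+(\text{rest})$ has $\sA_{e'}$-linear, $\sA|_{\dd e-e'}$-independent part $[e:e']\sA_{e'}$ with $[e:e']=\pm1$; hence the implicit function theorem in formal power series localizes the $(e')^*$-component of $\sA_Y$ to a series $\sA^\ast_{e'}$ in the components of $\sA$ on the horn $\dd e-e'$ (a closed $(n-1)$-ball and a subcomplex of $X$). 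Because $e'$ is free, no cell of $X$ has $e'$ in its closure, so $S_X^{(0)}$ depends on none of the integrated-out fields; on the constraint locus $\bar{S}_e^{(0)}=\langle\sB_e,G\rangle$ vanishes and $\bar{S}_{e'}^{(0)}=\langle\sB_{e'},\cdot\rangle$ vanishes since $\sB_{e'}=0$, so the classical part of the effective action is exactly $S_X^{(0)}(\sA_X,\sB_X)$. The only surviving correction is one-loop: the Gaussian determinant $\det(\dd G/\dd\sA_{e'})^{\pm1}=\det(\mr{id}_\g+M)^{\pm1}$ together with $\bar{S}_e^{(1)},\bar{S}_{e'}^{(1)}$ evaluated at $\sA_{e'}=\sA^\ast_{e'}$, which contributes at order $\hbar$ once moved into the $-i\hbar(\cdots)$-slot. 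Thus $P_*^{Y\ra X}\big(e^{\frac{i}{\hbar}S_Y}(\mu_{\sF_Y}^\hbar)^{1/2}\big)=e^{\frac{i}{\hbar}(S_X-i\hbar\,q_{\mr{new}})}(\mu_{\sF_X}^\hbar)^{1/2}$, with $q_{\mr{new}}$ a function of $\sA|_{\dd e-e'}$ only that vanishes at $\sA=0$ (using $[e:e']=\pm1$ so the determinant is $1$ at the origin, and that $\bar{S}_e^{(1)},\bar{S}_{e'}^{(1)}$ are at least quadratic). A short bookkeeping of the $\xi_\hbar$-powers, identical to the one in Footnote \ref{footnote: gluing normalization check} (pairing the $(n-1)$-cell $e'$ against the $n$-cell $e$), confirms that $(\mu_{\sF_Y}^\hbar)^{1/2}$ pushes forward to $(\mu_{\sF_X}^\hbar)^{1/2}$ with no extra constant.

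It remains to recast the $O(\hbar)$ discrepancy as $\Delta_X$ of a half-density of the asserted shape. Since $S_X-i\hbar\,q_{\mr{new}}$ and $S_X$ both satisfy the quantum master equation and have the same classical part, subtracting the two equations gives that $q_{\mr{new}}$ is closed for $Q_X^{(0)}:=\{S_X^{(0)},\bt\}$; as $q_{\mr{new}}$ is a degree-$0$ function of $\sA$ on the contractible horn $\dd e-e'$ vanishing at the origin, an HPT argument of the type used in step (iv) of the proof of Theorem \ref{thm: simpBF} (retracting $\big(\mr{Fun}(C^\bt(\dd e-e',\g)[1]),Q_X^{(0)}\big)$ onto Chevalley--Eilenberg cochains of $\g$, whose degree-$0$ cohomology is just the constants) yields $q_{\mr{new}}=Q_X^{(0)}\sigma$ for a degree-$(-1)$ function $\sigma$ of $\sA|_{\dd e-e'}$. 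Setting $r=\sigma\cdot\tfrac{e^{q_{\mr{new}}}-1}{q_{\mr{new}}}$ (a well-defined formal series, as $q_{\mr{new}}$ begins in field-degree $\geq1$) and $\rho=-i\hbar\,r\,e^{\frac{i}{\hbar}S_X}(\mu_{\sF_X}^\hbar)^{1/2}$, and using that $r$ does not involve $\sB$, that $\Delta_X\big(e^{\frac{i}{\hbar}S_X}(\mu_{\sF_X}^\hbar)^{1/2}\big)=0$, and that $Q_X^{(0)}$ is a square-zero derivation, one computes $\Delta_X(\rho)=e^{\frac{i}{\hbar}S_X}(e^{q_{\mr{new}}}-1)(\mu_{\sF_X}^\hbar)^{1/2}$, which is precisely the difference in (\ref{collapse: P_*}) with $\rho$ of the form (\ref{collapse: rho}) and $r\in\mr{Fun}_{-1}(C^\bt(\dd e-e',\g)[1])$. (Alternatively, one may bypass this last paragraph by checking that $S_X-i\hbar\,q_{\mr{new}}$ satisfies the ansatz (\ref{Sbar cell}) and the initial conditions of Theorem \ref{thm: cellBF} and invoking Lemma \ref{lemma: cellBF well-definedness}: the connecting family $(S_{X,t},R_{X,t})$ has $R_{X,t}$ of order $\hbar$ since the two actions differ by $O(\hbar)$, so the $\langle\sB,\mr{Jacobi}_{\Gamma_0}\rangle$-part of the ansatz for $R_{X,t}$ vanishes and $R_{X,t}=-i\hbar\,r_t(\sA)$ with $r_t$ local on $\dd e-e'$.) I expect the main obstacle to be the bookkeeping in the middle step: one must track carefully which faces of $\bar{e}$ feed into $G$, into the one-loop determinant, and into $\bar{S}_e^{(1)},\bar{S}_{e'}^{(1)}$, in order to be sure that $q_{\mr{new}}$, and hence $r$, genuinely depends only on $\sA|_{\dd e-e'}$ and not on the fields $\sA_e,\sA_{e'}$ that were integrated out.
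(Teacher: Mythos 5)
Your proposal is correct and follows essentially the same route as the paper's proof: the same preferred induction data collapsing the pair $(e,e')$, the observation that the pushforward changes $S_X$ only by an $O(\hbar)$ term $-i\hbar\,\phi(\sA_{\dd e-e'})$ vanishing at the origin, exactness of $\phi$ via $H^0_{Q}\cong H^0_{CE}(\g)=\mathbb{R}$ on the contractible horn, and the generator $r=\frac{e^\phi-1}{\phi}\chi$. The only difference is that you spell out the delta-function/Gaussian computation behind the claim that the correction is purely one-loop and local on $\dd e-e'$, which the paper asserts more briefly.
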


\begin{proof}
Define $h:=\dd e-e'\subset X$. 
Let  $k:=\dim (e')=\dim(e)-1$. 
Our 
preferred gauge-fixing for the BV pushforward $P_*^{Y\ra X}$ is associated to the induction data $C^\bt(Y)\simeq C^\bt(X)\oplus \mr{Span}(e^*,(e')^*)\wavy{(i,p,K)}C^\bt(X)$ with 
\begin{multline}\label{collapse ind data} 
p=\left\{\begin{array}{ll} \mr{id} & \mr{on}\; C^\bt(X)\\0 & \mr{on}\; e^*,(e')^* \end{array}\right.,\;
i:\; \varepsilon^*\mapsto \left\{\begin{array}{ll} \varepsilon^*-(e')^* & \mr{if}\; \varepsilon \;\mr{is\; a}\; k\mr{-cell\;of}\; h \\
\varepsilon^* & \mr{otherwise}  \end{array}\right.,\\
K:\;\left\{\begin{array}{l} e^* \mapsto (e')^* \\
C^\bt(X)\oplus \mr{Span}((e')^*) \mapsto 0  \end{array}\right.
\end{multline}
Here $\varepsilon$ is a cell of $X$. The map $i$ is the pullback by the projection $C_\bt(Y)\ra C_\bt(X)$ which sends $e'\mapsto -h$ and $e\mapsto 0$. 
The result of the BV pushforward has the form $e^{\frac{i}{\hbar}S'_X}(\mu^\hbar_{\sF_X})^{1/2}$ with $S'_X$ that can be different from 
the standard action $S_X$. 
However the difference $S'_X-S_X$ is a function of the form  $-i\hbar\, \phi(\sA_h)$ depending only on the field $\sA$ on cells of $h$. Thus we have two solutions of the quantum master equation $S_h$ (standard) and $S_h-i\hbar\, \phi$ (obtained by BV pushforward from $e$) on $\sF_h$.
One can connect them by a path of solutions of the quantum master equation $S_{h,t}=S_h-i\hbar\, t\cdot \phi$ with $t\in [0,1]$ such that $S_{h,0}=S_h$, $S_{h,1}=S_h-i\hbar\,\phi$. Similarly to the argument in the proof of Lemma \ref{lemma: cellBF well-definedness}, the fact that $\dd_t S_{h,t}$ can be written as an infinitesimal canonical BV transformation
\begin{equation}\label{can transf horn}
\dd_t S_{h,t}=\{S_{h,t},R_t\}-i\hbar\,\Delta R_t
\end{equation}
for some 
generator $R_t\in \mr{Fun}_{-1}(\sF_h)[[\hbar]]$ 
of form $R_t=-i\hbar\,\chi(\sA_h)$,  
follows from the computation of the zeroth cohomology of $Q_h=\{S_h^{(0)},\bt\}$ on functions of field $\sA$: 
\begin{align}
\label{Chevalley H^0} H^0_{Q_h}(\mr{Fun}(C^\bt(h,\g)[1]))&\simeq H^0_{CE}(\g)=\mathbb{R}
\end{align}
where we use 
the contractibility of $h$.\footnote{
In more detail, the quantum master equation for $S_h$ and for $S_h-i\hbar\,\phi$ implies 
$Q_h\phi=0$. 
Function $\phi$ vanishes at $\sA=0$ (this is the point where we use the normalization of the half-densities $(\mu^\hbar_{\sF_Y})^{1/2},(\mu^\hbar_{\sF_X})^{1/2}$) which implies, together with (\ref{Chevalley H^0}), that the obstruction to $Q_h$-exactness of $\phi$ vanishes, i.e. we have $\phi=Q_h \chi$ for some $\chi$, a function of $\sA_h$ of degree $-1$. Thus we have proved (\ref{can transf horn}) for the generator $R_t=-i\hbar \chi$.
}
Equation (\ref{can transf horn}) is equivalent to $\frac{\dd}{\dd t}e^{\frac{i}{\hbar}S_{h,t}}=\Delta\left(e^{\frac{i}{\hbar}S_{h,t}}R_t\right)$; integrating over $t\in [0,1]$, we obtain $e^{\frac{i}{\hbar}(S_h-i\hbar\,\phi)}-e^{\frac{i}{\hbar}S_h}=\Delta\left(\int_0^1 dt\, e^{\frac{i}{\hbar}S_{h,t}}R_t \right)
=\Delta\left( e^{\frac{i}{\hbar}S_h}\frac{e^\phi-1}{\phi}(-i\hbar\,\chi)\right)
$. Therefore, returning to the pair of complexes $X,Y$, we have obtained that the BV pushforward $e^{\frac{i}{\hbar}S_Y}(\mu^\hbar_{\sF_Y})^{1/2}$ to $\sF_X$ differs from $e^{\frac{i}{\hbar}S_X}(\mu^\hbar_{\sF_X})^{1/2}$ by $\Delta_X(\cdots)$ with $(\cdots)$ given by (\ref{collapse: rho}) with $r=\frac{e^\phi-1}{\phi}\,\chi 
$.
By general properties of BV pushforwards, another choice of gauge-fixing for $P_*^{Y\ra X}$ preserves the result (\ref{collapse: P_*}) but may change the ansatz for $\rho$.
\end{proof}

\begin{Proposition}\label{prop: non-ab BF I simple homotopy invariance}
Assume that two 
regular CW complexes $X$ and $Y$ are simple-homotopy equivalent. 
Let $Z_X$ and $Z_Y$ be the respective partition functions on $\sF^\zm_X\cong \sF^\zm_Y$. Then
\begin{equation}\label{Z_Y-Z_X simple homotopy}
Z_Y-Z_X=\Delta_\zm(\cdots)
\end{equation}
More precisely, $Z_X$ and $Z_Y$ can be connected by a canonical transformation, as in (\ref{Z non-ab can transf}).
\end{Proposition}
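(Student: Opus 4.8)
The plan is to reduce Proposition \ref{prop: non-ab BF I simple homotopy invariance} to the combination of Lemma \ref{lemma: collapse} (compatibility of cellular actions with a single elementary collapse) and the already-established behavior of BV pushforwards under change of gauge-fixing. Since $X$ and $Y$ are simple-homotopy equivalent, there is a finite chain $X=X_{(0)}, X_{(1)},\ldots,X_{(m)}=Y$ in which each consecutive pair is related by an elementary expansion or collapse. By symmetry of the statement in $X$ and $Y$, it suffices to treat the case of a single elementary expansion $X\nearrow Y$ (adjoining a pair of cells $e,e'$ with $e'$ a free face of $e$), and then compose the resulting canonical transformations along the chain --- composition of canonical BV transformations is again a canonical BV transformation, so the ``more precisely'' clause follows.

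\textbf{Key steps.} First I would fix the induction data $C^\bt(Y)\wavy{}H^\bt(Y)$ used to define $Z_Y$ to be the \emph{composition} (in the sense of (\ref{composition of ind data})) of the preferred retraction $C^\bt(Y)\wavy{(i,p,K)}C^\bt(X)$ of (\ref{collapse ind data}) with a chosen retraction $C^\bt(X)\wavy{}H^\bt(X)$; since $p_*$ is an isomorphism $H^\bt(Y)\xra{\sim}H^\bt(X)$, this is legitimate gauge-fixing data for $Z_Y$, and it identifies $\sF^\zm_Y\cong\sF^\zm_X$ as required. With this choice, the fiber BV integral defining $Z_Y$ factors through the intermediate space $\sF_X$: by the ``composition of BV pushforwards'' property (iterated fiber BV integrals equal the single fiber BV integral along the composite fibration), $Z_Y = P^{X\ra \sF^\zm}_* \circ P^{Y\ra X}_*\big(e^{\frac{i}{\hbar}S_Y}(\mu^\hbar_{\sF_Y})^{1/2}\big)$. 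Now apply Lemma \ref{lemma: collapse}: with its preferred gauge-fixing, $P^{Y\ra X}_*\big(e^{\frac{i}{\hbar}S_Y}(\mu^\hbar_{\sF_Y})^{1/2}\big)=e^{\frac{i}{\hbar}S_X}(\mu^\hbar_{\sF_X})^{1/2}+\Delta_X(\rho)$ with $\rho=-i\hbar\,r\cdot e^{\frac{i}{\hbar}S_X}(\mu^\hbar_{\sF_X})^{1/2}$. Pushing forward further to $\sF^\zm_X$: the first term gives exactly $Z_X$ by definition, and $\Delta_X$-exact terms are sent by a BV pushforward to $\Delta_\zm$-exact terms (BV-Stokes / the fact that pushforward intertwines the BV Laplacians), so $P^{X\ra\sF^\zm}_*(\Delta_X\rho)=\Delta_\zm(\widetilde\rho)$ for $\widetilde\rho=P^{X\ra\sF^\zm}_*(\rho)$. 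Hence $Z_Y-Z_X=\Delta_\zm(\widetilde\rho)$, which is (\ref{Z_Y-Z_X simple homotopy}).

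\textbf{Upgrading to a canonical transformation.} To get the ``more precisely'' statement, I would note that all the exact terms produced above arise, by the proofs of Lemma \ref{lemma: collapse} and of the gauge-independence of BV pushforward, as \emph{integrated infinitesimal} canonical transformations: Lemma \ref{lemma: collapse} produces $\rho$ by integrating (\ref{can transf horn}) over $t\in[0,1]$, and a change of the auxiliary induction data $C^\bt(X)\wavy{}H^\bt(X)$ moves $Z$ by a canonical transformation as recorded in (\ref{Z non-ab can transf}). Concatenating these $t$-families (reparametrizing each to a subinterval, as in the end of the proof of Lemma \ref{lemma: cellBF well-definedness}) yields a single family $Z_t$, $t\in[0,1]$, with $Z_0=Z_X$, $Z_1=Z_Y$, and $\frac{\dd}{\dd t}Z_t=\Delta_\zm(Z_t\cdot R_t)$ for a suitable $\hbar$-dependent generator $R_t\in\mr{Fun}_{-1}(\sF^\zm)[[\hbar]]$. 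For the general simple-homotopy equivalence one obtains such a family on each step and concatenates all of them.

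\textbf{Main obstacle.} The genuinely delicate point is not any single step but the bookkeeping of \emph{which} gauge-fixing data are used at each stage so that the factorization $Z_Y=P^{X\ra\sF^\zm}_*\circ P^{Y\ra X}_*$ holds \emph{on the nose} with Lemma \ref{lemma: collapse}'s preferred choice (\ref{collapse ind data}) for the inner pushforward; a different, more convenient, choice for $Z_Y$ would still only agree with the composite up to a further $\Delta_\zm$-exact (canonically generated) term, which must then be absorbed into the family $Z_t$. Equivalently, one must check that the preferred retraction (\ref{collapse ind data}) and the chosen retraction to $H^\bt(X)$ compose, via (\ref{composition of ind data}), to admissible induction data to cohomology for $Y$ (in particular that Assumption \ref{assumption on p} is preserved), and that all the intermediate identifications $\sF^\zm_Y\cong\sF^\zm_X$ are the ones induced by $p_*$ --- routine but necessary to make the chain of pushforwards literally compose.
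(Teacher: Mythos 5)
Your proposal is correct and follows essentially the same route as the paper's proof: reduce to a single elementary expansion, choose the gauge-fixing so that the pushforward to residual fields factors through $\sF_X$ (via composition of induction data), apply Lemma \ref{lemma: collapse}, and use that BV pushforward intertwines the BV Laplacians to transport the canonical transformation on $\sF_X$ to one on $\sF^\zm$. The only cosmetic difference is that the paper pushes forward the whole $t$-family $\zeta_t$ at once (obtaining $Z_t$ with $\frac{\dd}{\dd t}Z_t=\Delta_\zm(Z_t R_t)$ directly), whereas you first record $\Delta_\zm$-exactness and then upgrade to the canonical transformation.
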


\begin{proof}
We can assume without loss of generality that $Y$ is an elementary expansion of $X$ obtained by attaching a 
pair of cells $e$, $e'\subset \dd e$. 
Since the value of the BV pushforward from $\sF_Y$ to residual fields is independent of the gauge-fixing data 
when considered modulo canonical transformations, 
one can choose the gauge-fixing corresponding to first pushing forward from $\sF_Y$ to $\sF_X$ and then to $\sF^\zm$ (using the construction of composition of induction data (\ref{composition of ind data})). On the other hand, by Lemma \ref{lemma: collapse}, the pushforward $\sF_Y\ra \sF_X$, $\zeta_1:=P_*^{Y\ra X}\left(e^{\frac{i}{\hbar}S_Y}(\mu^\hbar_{\sF_Y})^{1/2}\right)$, differs from $\zeta_0:=e^{\frac{i}{\hbar}S_X}(\mu^\hbar_{\sF_X})^{1/2}$ by a canonical transformation, 
i.e. we have a family of half-densities $\zeta_t$ on $\sF_X$ with $\frac{\dd}{\dd t}\zeta_t=\Delta_X(\zeta_t R_{X,t})$. Hence, $P_*^{X\ra \zm}P_*^{Y\ra X}\left(e^{\frac{i}{\hbar}S_Y}(\mu^\hbar_{\sF_Y})^{1/2}\right)=P_*^{X\ra \zm}\zeta_1 $ is connected to $P_*^{X\ra\zm}\zeta_0$ by a family $Z_t=P_*^{X\ra\zm}\zeta_t$ satisfying $\frac{\dd}{\dd t}Z_t = \Delta_\zm(Z_t R_t)$ with $R_t:=Z_t^{-1}P_*^{X\ra \zm}(\zeta_t R_{X,t})$.  We used here the property of BV pushforward that it commutes with BV Laplacians. This proves the Proposition.
%
\end{proof}

\begin{remark} As we remarked above, in the case of simply-connected $X$, the partition function (if we know it for all $\g$) determines the rational homotopy type of $X$. One might ask, what kind of topological information is contained in the partition function for $X$ non-simply connected? The partition function $Z$ contains the following:
\begin{enumerate}[(a)]
\item\label{rem: non-sc (a)} The deformation-theoretic model (given by the homotopy Maurer-Cartan equation for the $L_\infty$ algebra on cohomology, or on cochains) for the singularity of the moduli space of flat connections at zero connection  (or at the chosen local system, if we twist the construction by it, as explained in Section \ref{sec: non-ab BF cobordism}).
\item\label{rem: non-sc (b)} A formal infinitesimal thickening of the moduli space - its graded/supermanifold part, corresponding to writing the homotopy Maurer-Cartan equation above without requiring the unknown (the tangent vector to the moduli space) to be in degree $1$ and allowing it to be an inhomogeneous element (and also allowing the generators of gauge transformations to be inhomogeneous rather than in degree $0$).
\item\label{rem: non-sc (c)} The part coming from unimodular/quantum operations $q_n$ on cohomology -- they encode the singular behavior of $R$-torsion as a function on the moduli space, near (not just at) the zero connection (or, more generally, a given local system if we twist by it).
\end{enumerate}
In the case $\pi_1(X)=0$, only (\ref{rem: non-sc (b)}) survives and gives the rational homotopy type. In the case $\pi_1(X)\neq 0$, (\ref{rem: non-sc (c)}) pertains to the simple-homotopy type, (\ref{rem: non-sc (a)}) is (a local model for) the character variety of the group $\pi_1(X)$ and one expects (\ref{rem: non-sc (b)}) to be again related to the rational homotopy type.
\end{remark}

\begin{Proposition}\label{prop: aggregations}
Let $X,Y$ be two 
regular CW
complexes such that $X$ is a \emph{cellular aggregation} of $Y$ (or, equivalently, $Y$ is a \emph{subdivision} of $X$). Then the BV pushforward $P_*^{Y\ra X}$ from $\sF_Y$ to $\sF_X$ relates the standard cellular actions $S_Y$ and $S_X$ by
\begin{equation}\label{aggreg}
P_*^{Y\ra X} \left( e^{\frac{i}{\hbar}S_Y}(\mu_{\sF_Y}^\hbar)^{1/2} \right)
= e^{\frac{i}{\hbar}S_X}(\mu_{\sF_X}^\hbar)^{1/2}+\Delta_X(\cdots)
\end{equation}
More precisely, 
the r.h.s. of (\ref{aggreg}) has the form $e^{\frac{i}{\hbar}S'_X}(\mu_{\sF_X}^\hbar)^{1/2}$ and for special ``geometric'' choices of gauge-fixing for the BV pusforward $P_*^{Y\ra X}$, we have a canonical transformation
\begin{equation}\label{aggreg can transf}
S'_X=S_X-i\hbar\{S_X,R^{(1)}\}
\end{equation}
with the generator $R^{(1)}(\sA_X)\in \mr{Fun}_{-1}(C^\bt(X,\g)[1])$  given by the linear in $\hbar$ term of the ansatz (\ref{Sbar cell},\ref{cellBF locality}) with coefficients $C^e_{\Gamma_1,e_1,\ldots,e_n}$ for a cell $e\subset X$ depending on the combinatorics of $Y|_e$ and the particular choice of geometric gauge-fixing. In this case, the $\Delta_X(\cdots)$ term in (\ref{aggreg}) is in fact  $\Delta_X\left( \frac{e^\phi-1}{\phi}\cdot(-i\hbar\, R^{(1)})\cdot e^{\frac{i}{\hbar}S_X}(\mu_{\sF_X}^\hbar)^{1/2} \right)$ where we denoted $-i\hbar\,\phi(\sA_X):=S'_X-S_X$.
\end{Proposition}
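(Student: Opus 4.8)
The plan is to extract from the left-hand side of (\ref{aggreg}) an action $S'_X$ on $\sF_X$, show it satisfies every hypothesis of Theorem \ref{thm: cellBF}, apply the uniqueness Lemma \ref{lemma: cellBF well-definedness} to get the rough statement (\ref{aggreg}), and then pin down the classical part of $S'_X$ and run the obstruction-theory step once more to get the refined statement (\ref{aggreg can transf}). First I would set up $P_*^{Y\ra X}$ concretely: since $Y$ subdivides $X$, the subdivision operator is a chain embedding $i\colon C^\bt(X)\hra C^\bt(Y)$ which is a quasi-isomorphism, so a choice of complement and contracting homotopy gives induction data $C^\bt(Y)\wavy{(i,p,K)}C^\bt(X)$; tensoring with $\mr{id}_\g$ and passing to the dual data on chains yields a splitting $\sF_Y=\sF_X\oplus\sF_\fluct$ and a Lagrangian $\LL=\mr{im}\,K\oplus\mr{im}\,K^\vee\subset\sF_\fluct$. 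A \emph{geometric} gauge-fixing is one for which $(i,p,K)$ is assembled from local retractions $C^\bt(Y|_{\bar e})\wavy{}C^\bt(\bar e)$ over the cells $e$ of $X$ (in the simplicial/prismatic case, the standard Whitney--Dupont retractions, cf. Remark \ref{rem: prismatic}, compatibly with restriction to $\dd e$). For any such data the integral $P_*^{Y\ra X}\big(e^{\frac{i}{\hbar}S_Y}(\mu^\hbar_{\sF_Y})^{1/2}\big)$ is a finite-dimensional Gaussian (one loop only, as always in $BF$), hence of the form $e^{\frac{i}{\hbar}S'_X}(\mu^\hbar_{\sF_X})^{1/2}$ for some $S'_X$ solving the quantum master equation on $\sF_X$.

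Next I would check $S'_X$ against the hypotheses of Theorem \ref{thm: cellBF}. Locality: as the retraction is built cell by cell and $S_Y=\sum_{e\subset X}\sum_{e'\subset Y|_{\bar e}}\bar S_{e'}$, pushing forward the block attached to $Y|_{\bar e}$ produces a local building block $\bar S'_e(\sA_{\bar e},\sB_e;\hbar)$ of the form (\ref{Sbar cell}) with coefficients depending on the combinatorics of $Y|_e$ and on the gauge choice, so $S'_X=\sum_{e\subset X}\bar S'_e$. Property (\ref{cellBF: a eq}): the $\hbar^0$-quadratic term of $S'_X$ equals $\lan \sB_X, d\sA_X\ran$ because $i$ is a chain map, the constant $\hbar$-term is absent by the normalization of the reference half-densities, and there is no $\hbar$-term of $\sA$-degree $1$ since such a term in the one-loop correction involves $\mr{tr}(\mr{ad}_{\sA_e})=0$ by unimodularity of $\g$; hence the error term lies in $\widehat{\Sym^{\geq 3}}\sF_X^*\oplus\hbar\cdot\widehat{\Sym^{\geq 2}}\sF_X^*$. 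Property (\ref{cellBF S_pt}): a $0$-cell is never subdivided, so $Y|_{\bar e}=\bar e$ and $\bar S'_e=\bar S_e$. Thus $S'_X$ is an admissible cellular action, and Lemma \ref{lemma: cellBF well-definedness} supplies ansatz-form families $S_{X,t},R_{X,t}$ with $S_{X,0}=S_X$, $S_{X,1}=S'_X$ and $\dot S_{X,t}=\{S_{X,t},R_{X,t}\}-i\hbar\Delta R_{X,t}$; integrating $\frac{d}{dt}\big(e^{\frac{i}{\hbar}S_{X,t}}(\mu^\hbar_{\sF_X})^{1/2}\big)=\Delta_X\big(e^{\frac{i}{\hbar}S_{X,t}}R_{X,t}(\mu^\hbar_{\sF_X})^{1/2}\big)$ over $t\in[0,1]$ gives (\ref{aggreg}) for an arbitrary gauge-fixing.

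For the refined statement (\ref{aggreg can transf}) I would fix a geometric gauge-fixing and first establish the identity $S'^{(0)}_X=S^{(0)}_X$, i.e. that the tree-level (homotopy) transfer of the classical $L_\infty$-algebra $C^\bt(Y,\g)$ along the local retraction reproduces the standard cellular operations $l_n^X$; in the simplicial/prismatic case this follows from naturality of Whitney forms and Dupont's homotopy under subdivision together with transitivity of homotopy transfer, and in general from running the inductive construction of Theorem \ref{thm: cellBF} with the standard data $\theta$ chosen coherently with the subdivision retractions. Granting this, $-i\hbar\,\phi:=S'_X-S_X$ is $\hbar$-independent, is a function of $\sA_X$ alone satisfying the $\Gamma_1$-part of the ansatz, vanishes at $\sA_X=0$, and is $Q_X$-closed ($Q_X:=\{S^{(0)}_X,\bt\}$; subtract the linear-in-$\hbar$ parts of the two quantum master equations). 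I would then solve $Q_X R^{(1)}=\phi$ for an $\sA_X$-only, $\Gamma_1$-type generator $R^{(1)}$ by the obstruction-theoretic induction over the filtration (\ref{cellBF CW filtration}) used to build $S^{(1)}$ in Theorem \ref{thm: cellBF}: on each $\bar e$ the local obstruction is annihilated by the projection to $CE(\g)$ because $\phi_e$ has weight $\geq 2$ and vanishes at $\sA=0$, hence is $Q_{\bar e}$-exact by unimodularity. Since $R^{(1)}$ and $S^{(1)}_X$ are $\sB$-independent, $\{S^{(1)}_X,R^{(1)}\}=0$ and $\Delta R^{(1)}=0$, so $\{S_X,R^{(1)}\}=\{S^{(0)}_X,R^{(1)}\}=Q_X R^{(1)}=\phi$, giving $S'_X=S_X-i\hbar\{S_X,R^{(1)}\}$; taking $S_{X,t}=S_X-i\hbar\,t\,\phi$ (a QME solution for all $t$ as $\phi$ is $Q_X$-closed), $R_{X,t}=-i\hbar R^{(1)}$, and integrating $\frac{d}{dt}\big(e^{\frac{i}{\hbar}S_{X,t}}(\mu^\hbar_{\sF_X})^{1/2}\big)=\Delta_X\big(e^{\frac{i}{\hbar}S_{X,t}}(-i\hbar R^{(1)})(\mu^\hbar_{\sF_X})^{1/2}\big)$ over $t\in[0,1]$ produces the $\Delta_X$-term in the asserted form $\Delta_X\big(\tfrac{e^{\phi}-1}{\phi}(-i\hbar R^{(1)})e^{\frac{i}{\hbar}S_X}(\mu^\hbar_{\sF_X})^{1/2}\big)$. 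The main obstacle is the classical-transfer identity $S'^{(0)}_X=S^{(0)}_X$ — this is precisely where the geometric gauge-fixing must be chosen coherently with the standard cell-wise data, in parallel with how compatibility of the Whitney--Dupont retractions under subdivision enters the simplicial case and with the preferred gauge-fixing in Lemma \ref{lemma: collapse}.
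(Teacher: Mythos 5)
Your route to the rough statement (\ref{aggreg}) is legitimate and genuinely different from the paper's: you build cell-wise local induction data $C^\bt(Y)\wavy{}C^\bt(X)$, check that the pushed-forward action satisfies the hypotheses of Theorem \ref{thm: cellBF} (locality, the ansatz, the initial conditions), and invoke the uniqueness Lemma \ref{lemma: cellBF well-definedness}; the paper instead decomposes the aggregation into single-cell aggregations, presents each as one elementary expansion $Y\nearrow W$ followed by collapses $W\searrow\cdots\searrow X$, and uses Lemma \ref{lemma: collapse} twice together with the exact compatibility $P_*^{Y\ra X}P_*^{W\ra Y}=P_*^{W\ra X}$ for the composed induction data. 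For this weaker part your argument is fine (modulo also verifying condition (\ref{cellBF thm c}), which does follow from the cell-wise locality of your gauge-fixing).

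The gap is in the refined statement (\ref{aggreg can transf}), and it sits exactly where you flag "the main obstacle": the claim $S'^{(0)}_X=S^{(0)}_X$ on the nose for your notion of geometric gauge-fixing. Your justification (naturality of Whitney forms under subdivision plus transitivity of homotopy transfer) does not deliver this. Transitivity identifies $S'^{(0)}_X$ with the transfer of the dg Lie algebra $\Omega^\bt(Y,\g)$ along the composed data with inclusion $\sfi_Y\circ i$, projection $p\circ\sfp_Y$ and homotopy $\sfK_Y+\sfi_Y K\,\sfp_Y$; even though $\sfi_Y\circ i=\sfi_X$ can be arranged (Whitney forms of $X$ do lie in the span of those of $Y$), the composed homotopy is \emph{not} $\sfK_X$, and transferred $L_\infty$ operations depend on the homotopy, not only on $(i,p)$. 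So generically the tree-level part of the pushforward differs from the standard $S^{(0)}_X$ by a nontrivial tree-level canonical transformation, and your subsequent steps (that $\phi:=\frac{i}{\hbar}(S'_X-S_X)$ is $\hbar$-independent, $\sA$-only and $Q_X$-closed, hence $\phi=Q_X R^{(1)}$, $\{S^{(1)}_X,R^{(1)}\}=0$, $\Delta R^{(1)}=0$) all collapse without it. The paper's proof secures precisely this point by \emph{defining} the geometric gauge-fixing as the one inherited from the expansion/collapse chain (\ref{aggreg ind data}): the refined form (\ref{collapse: rho}) of Lemma \ref{lemma: collapse} guarantees at each step a $\Delta$-exact discrepancy with generator $-i\hbar\, r(\sA)$, which (since $\Delta r=0$ and $\{S_X,r\}=Q_X r$) forces $S'_X-S_X$ to be purely of order $\hbar$ and a function of $\sA_X$, after which $R^{(1)}=r_{Y\ra X}\cdot\frac{\phi}{e^\phi-1}$ is written down directly, with no separate obstruction-theory step. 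To repair your argument you would either have to prove the classical-transfer identity for a specific coherent choice of local retractions (which is a nontrivial statement, not a formal consequence of Whitney--Dupont naturality), or route the refined claim through collapses as the paper does.
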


\begin{proof}
Let us write $Y\succ X$ if $X$ an aggregation of $Y$. We can always decompose $Y\succ X$ as a sequence 
\begin{equation}\label{aggreg decomp into single-cell aggreg}
Y=X_m\succ X_{m-1}\succ\cdots \succ X_0=X 
\end{equation}
where for each $0\leq k<m$, $X_{k+1}$ is a subdivision of $X_k$ where only a single cell $e$ of $X_k$ is subdivided and others (including cells of $\dd e$) are untouched.\footnote{
To obtain such a decomposition, order the cells $e_1,\ldots, e_m$ of $X$ in the order of non-decreasing dimension, as in the proof of Theorem \ref{thm: cellBF}. Then set $X_k:=\left(\bigcup_{i\leq k} Y|_{e_i}\right)\cup \left(\bigcup_{i>k} e_i\right)$.
}

First consider the case when 
$Y$ is a subdivision of a single cell $e$ of $X$. Then we can represent the aggregation $Y\succ X$ as a simple-homotopy equivalence -- a single elementary expansion (given by adjoining to $Y$ the pair of cells $\widetilde{e},e\subset \dd\widetilde{e}$) followed by a sequence of collapses
\begin{equation}\label{aggreg via simple-homotopy}
Y\nearrow\; (W=W_0)\; \searrow W_1\searrow\cdots \searrow\; (W_p=X)
\end{equation}
\begin{figure}[!htbp]
\begin{center}
\includegraphics[scale=0.5]{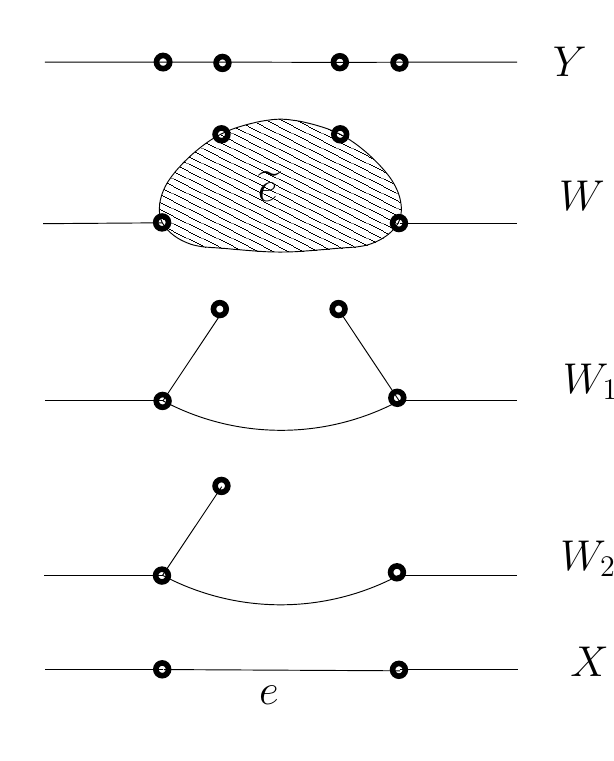}
\caption{Example of a single-cell aggregation presented as an expansion followed by a sequence of collapses.}
\end{center}
\end{figure}
Here one can regard $W$ as $(Y\cup_{X-e} X)\cup \widetilde{e}$; the boundary of the cell $\widetilde{e}$ is $\dd \widetilde{e}=\bar{e}\cup_{\dd e} \bar{e}_Y$, where $e_Y=Y|_e$ is a copy $e$ subdivided in $Y$, which is glued to a non-subdivided copy of $e$ along the equator $\dd e=\dd e_Y$ in the sphere $\dd \widetilde{e}$. 

Consider the standard cellular action $S_W$ for $W$. 
We have the standard induction data (\ref{collapse ind data}) for the collapses $W\searrow\cdots\searrow X$ which compose to $C^\bt(W)\wavy{(i_{W\ra X},p_{W\ra X},K_{W\ra X})} C^\bt(X)$ and also a standard induction data $C^\bt(W)\wavy{(i_{W\ra Y},p_{W\ra Y},K_{W\ra Y})} C^\bt(Y) $ for the collapse $W\searrow Y$. By Lemma \ref{lemma: collapse}, we have 
{\small
\begin{align}
\label{aggreg P_* W to X}
P_*^{W\ra X}\left( e^{\frac{i}{\hbar}S_W}(\mu_{\sF_W}^\hbar)^{1/2} \right) &= e^{\frac{i}{\hbar}S_X}(\mu_{\sF_X}^\hbar)^{1/2}+\Delta_X\left(-i\hbar\, 
r_{W\ra X}\cdot e^{\frac{i}{\hbar}S_X}(\mu_{\sF_X}^\hbar)^{1/2}\right),\\
\label{aggreg P_* W to Y}
P_*^{W\ra Y}\left( e^{\frac{i}{\hbar}S_W}(\mu_{\sF_W}^\hbar)^{1/2} \right) &= e^{\frac{i}{\hbar}S_Y}(\mu_{\sF_Y}^\hbar)^{1/2}+\Delta_Y\left(-i\hbar\, 
r_{W\ra Y}\cdot e^{\frac{i}{\hbar}S_Y}(\mu_{\sF_Y}^\hbar)^{1/2}\right)
\end{align}
}
for some degree $-1$ functions $r_{W\ra X}(\sA_X|_{\bar{e}})$ and $r_{W\ra Y}(\sA_Y|_{\bar{e}_Y})$. 
Here for the BV pushforwards we use the gauge-fixing associated with the standard induction data above. For the pushforward $P_*^{Y\ra X}$ corresponding to the aggregation $Y\ra X$, we will use the induction data 
\begin{equation}\label{aggreg ind data}
C^\bt(Y)\wavy{(p_{W\ra Y}\circ i_{W\ra X}, p_{W\ra X}\circ i_{W\ra Y}, p_{W\ra Y}\circ K_{W\ra X}\circ i_{W\ra Y})}C^\bt(X)
\end{equation}
-- this is the ``geometric'' gauge-fixing mentioned in the Proposition; it depends on choosing a particular presentation (\ref{aggreg via simple-homotopy}) of an aggregation as a simple-homotopy equivalence.\footnote{
Note also that induction data (\ref{aggreg ind data}) factors through \emph{integral} cochains, $C^\bt(Y,\mathbb{Z})\wavy{}C^\bt(X,\mathbb{Z})$.
}

Composition of this data with $C^\bt(W)\wavy{}C^\bt(Y)$ above yields the data $C^\bt(W)\wavy{} C^\bt(X)$ above. Thus with these choices one has 
$P_*^{Y\ra X}P_*^{W\ra Y}=P_*^{W\ra X}$ (a precise identity, not modulo canonical transformations); applying this to $e^{\frac{i}{\hbar}S_W}(\mu_{\sF_W}^\hbar)^{1/2}$ and using (\ref{aggreg P_* W to X},\ref{aggreg P_* W to Y}), we obtain 
\begin{multline}\label{aggreg P_* Y to X}
P_*^{Y\ra X}\left(e^{\frac{i}{\hbar}S_Y}(\mu_{\sF_Y}^\hbar)^{1/2}\right)=\\= 
e^{\frac{i}{\hbar}S_X}(\mu_{\sF_X}^\hbar)^{1/2}+\Delta_X\left(-i\hbar\, r_{Y\ra X}\cdot e^{\frac{i}{\hbar}S_X}(\mu_{\sF_X}^\hbar)^{1/2}\right)=: e^{\frac{i}{\hbar}S'_X}(\mu_{\sF_X}^\hbar)^{1/2}
\end{multline}
with $r_{Y\ra X}(\sA_X|_{\bar{e}}):=r_{W\ra X}-e^{-\frac{i}{\hbar}S_X} P_*^{Y\ra X}\left(r_{W\ra Y}\cdot e^{\frac{i}{\hbar}S_Y}(\mu_{\sF_Y}^\hbar)^{1/2}\right)\in \mr{Fun}_{-1}(C^\bt(\bar{e},\g)[1])$.
We then define $R^{(1)}(\sA_X|_{\bar{e}})$ as $R^{(1)}:=r_{Y\ra X}\cdot\frac{\phi}{e^{\phi}-1}$ with $\phi(\sA_X|_{\bar{e}}):=\frac{i}{\hbar}(S'_X-S_X)\in \mr{Fun}_0(C^\bt(\bar{e},\g)[1])$. Then, by a calculation similar to the one in the proof of Lemma \ref{lemma: collapse}, (\ref{aggreg P_* Y to X}) is equivalent to (\ref{aggreg can transf}). 
This proves the Proposition in the case when $Y$ is a single-cell subdivision of $X$.

In the case of a general (non single-cell) aggregation (\ref{aggreg decomp into single-cell aggreg}), 
we notice that $S'_X$, defined via $e^{\frac{i}{\hbar}S'_X}(\mu_{\sF_X}^\hbar)^{1/2}:= P_*^{Y\ra X}\left(e^{\frac{i}{\hbar}S_Y}(\mu_{\sF_Y}^\hbar)^{1/2}\right)$, with $P_*^{Y\ra X}=P_*^{X_1\ra X}\cdots P_*^{Y\ra X_{m-1}}$, satisfies the assumptions of Lemma \ref{lemma: cellBF well-definedness} and thus is connected to $S_X$ by a canonical transformation with generator satisfying the ansatz (\ref{Sbar cell},\ref{cellBF locality}). From the single-cell aggregation case we infer that the generator has only the linear in $\hbar$ part (while the $\hbar$-independent part vanishes). Such a transformation with a $t$-dependent generator $-i\hbar\, R^{(1)}_t$, $t\in [0,1]$ is equivalent to a transformation with constant ($t$-independent) generator $-i\hbar\,R^{(1)}=-i\hbar\,\int_0^1 dt\, R^{(1)}_t$.
%
%
%
\end{proof}

\subsection{Remarks}\label{sec: 8.4 remarks}
In this section we comment on the BV cohomology defined by the theory. We also prove that the theory converges in an appropriate sense, in the limit of dense triangulation (here we restrict to the simplicial case), to the standard continuum $BF$ theory on a manifold.

\subsubsection{BV cohomology} 

\begin{definition}
Let $\sF,\omega$ be an odd-symplectic space and $S\in \mr{Fun}_0(\sF)[[-i\hbar]]$ a solution of the quantum master equation on $\sF$. We define the \emph{perturbative BV cohomology at $S$}\footnote{We introduce this term to avoid confusion with the cohomology of the BV Laplacian $\Delta$ itself which is quite different (perturbative BV cohomology contains more information), see (\ref{BV coh 1}) vs. (\ref{BV coh 7}) below.} 
as the cohomology of the differential $\delta_S=\{S,\bt\}-i\hbar \Delta = e^{-\frac{i}{\hbar}S} \Delta e^{\frac{i}{\hbar}S}$ on $\mr{Fun}(\sF)[[-i\hbar]]$.
\end{definition}

In particular, cohomology of $\delta_S$ in degree zero controls infinitesimal deformations of $S$ as a solution of QME modulo canonical transformations (or, in other words, gives observables $O\in \mr{Fun}_0 (\sF)[[-i\hbar]]$ such that $\Delta(O\, e^{\frac{i}{\hbar}S})=0$ considered modulo infinitesimal equivalence $O\,e^{\frac{i}{\hbar}S}\sim O\, e^{\frac{i}{\hbar}S}+\epsilon\,\Delta(-i\hbar\, R\, e^{\frac{i}{\hbar}S})$).

Assume that we are in the setting of Section \ref{sec: uL_infty}, with $\sF=V[1]\oplus V^*[-2]$ for $V=V^\bt$ a graded vector space and $S$ satisfying the $BF_\infty$ ansatz (\ref{BF_infty ansatz general}). We note the following.  
\begin{enumerate}[(i)]
\item \label{BV coh 1}
Cohomology of $\Delta$ on $\mr{Fun}(\sF)$ is a line spanned by the element $\nu\in \bigotimes_k \mr{Det}(V^{2k})^* \otimes \bigotimes_k \mr{Det}\, V^{2k+1}=\mr{Det}(\sF^\mr{odd})^*$ given by the product of all components of $\sA$ ,$\sB$ of \emph{odd} internal degree, $H_\Delta (\mr{Fun}(\sF))=\nu\cdot \mathbb{R}$, with $\nu$ having internal degree $|\nu|=\sum_{k}(1-2k)\dim V^{2k}+\sum_k (-2+ (2k+1))\dim V^{2k+1}$.\footnote{This follows from writing $\sF$ as an odd cotangent bundle $T^*[-1]N$ of an evenly graded vector space $N=\bigoplus_k V^{2k+1}\oplus \bigoplus_k (V^{2k})^*$ spanned by \emph{even} components of $\sA$, $\sB$. Then $\mr{Fun}(\sF),\Delta$ is identified with the space of polyvector fields on $N$ with differential given by the divergence operator; this complex in turn is isomorphic to the de Rham complex of $N$ via \emph{odd Fourier transform} \cite{SchwarzBV}. Since $N$ is a vector space, its de Rham cohomology is given by constant $0$-forms on $V$; their odd Fourier transform gives $\nu\cdot \mathbb{R}$.
}  This cohomology was considered in \cite{Gwilliam}.
\item \label{BV coh 2}
We have 
\begin{equation}\label{BV coh classical}
H^k_{\{S^{(0)},\bt\}}(\mr{Fun}(\sF))= H^k_{CE}(V,\Sym V[2])
\end{equation}
where on the r.h.s. we have the Chevalley-Eilenberg cohomology of the $L_\infty$ algebra $V,\{l_n\}$ (with $l_n$ the $L_\infty$ operations corresponding to the $O(\hbar^0)$ term in $S$ via (\ref{BF_infty ansatz via l,q})) with coefficients in the sum of  symmetric powers of the adjoint module. We denote the cohomology (\ref{BV coh classical}) by $\mathfrak{H}$ -- one can view it as the ``classical'' ($\hbar$-independent) counterpart of perturbative BV cohomology.
\item \label{BV coh 3}
The operator $\{S^{(1)},\bt\}+\Delta$ can be viewed as a perturbation of the differential $\{S^{(0)},\bt\}$ on $\mr{Fun}(\sF)$ and thus induces, by homological perturbation theory (HPT) a differential $\mathfrak{D}$ on $\mathfrak{H}$. We then have
\begin{equation}
H^\bt_{\delta_S}(\mr{Fun}(\sF)[[-i\hbar]])=H^\bt_{-i\hbar \mf{D}}(\mf{H}[[-i\hbar]])=\left(\mf{H}^\bt\cap \ker\mf{D}\right)\oplus (-i\hbar)\cdot H^\bt_{\mf{D}}(\mf{H})[[-i\hbar]]
\end{equation}
This means that the perturbative BV cohomology in the order $O(\hbar^0)$ is given by $\mf{D}$-closed elements of classical BV cohomology (\ref{BV coh classical}), whereas in positive orders in $\hbar$ it is given by copies of cohomology of $\mf{D}$, one copy per order.
\item \label{BV coh 4}
Consider the ``abelian $BF$'' action $S^\mr{ab}:=\lan \sB, d\sA\ran $ on $\sF$, for $d=l_1$ the differential on $V$, -- it is the quadratic $\hbar$-independent term of the full $BF_\infty$ action $S$. We have $H^\bt_{\{S^\mr{ab},\bt\}}(\mr{Fun}(\sF))=\mr{Fun}(\sF_\zm)$ with $\sF_\zm=H^\bt(V)[1]\oplus (H^\bt(V))^*[-2]$ and the perturbation of the differential  $\{S^\mr{ab},\bt\} \mapsto \{S^\mr{ab},\bt\}+\Delta$ produces, as the induced differential, the standard BV Laplacian $\Delta_\zm$ on $\mr{Fun}(\sF_\zm)$. In particular, we have 
\begin{equation}\label{BV coh abelian}
H^\bt_{\{S^\mr{ab},\bt\}-i\hbar \Delta}=H^\bt_{-i\hbar \Delta_\zm}(\mr{Fun}(\sF_\zm)[[-i\hbar]])=\left(\mr{Fun}(\sF_\zm)\cap \ker \Delta_\zm\right)\oplus (-i\hbar)\nu_\zm\cdot \mathbb{R}[[-i\hbar]]
\end{equation}
with $\nu_\zm$ as in (\ref{BV coh 1}), with $V$ replaced by $H^\bt(V)$.

\item \label{BV coh 5} Assume that $V',d'$ is a retract of the complex $V,d$ and assume that $S'$ is the effective action on  $\sF'=V'[1]\oplus (V')^*[-2]$ induced from $S$ via BV pushforward. Note that $S'$ automatically satisfies $BF_\infty$ ansatz. 
Then, 
treating $\delta_S$ via HPT as a perturbation of the differential $\{S^\mr{ab},\bt\}-i\hbar\Delta$ on  $\mr{Fun}(\sF)[[-i\hbar]]$, one can prove that the induced differential on $\mr{Fun}(\sF')[[-i\hbar]]$ is precisely $\delta_{S'}$, with $S'$ given by Feynman diagrams for the BV pushforward (a 
version of this observation was made in \cite{GJF}; this statement is not specific to $BF_\infty$ ansatz).
Therefore, we have 
\begin{equation}
H^\bt_{\delta_S}(\mr{Fun}(\sF)[[-i\hbar]])=H^\bt_{\delta_{S'}}(\mr{Fun}(\sF')[[-i\hbar]])
\end{equation}

\item \label{BV coh 6} Let $S_\zm=S_\zm^{(0)}-i\hbar\, S_\zm^{(1)}\in \mr{Fun}(\sF_\zm)[[-i\hbar]]$ be the effective action induced on $\sF_\zm$ from $S$. Denote $w:=\left. S_\zm\right|_{\hbar=i}= S_\zm^{(0)}+ S_\zm^{(1)} \in \mr{Fun}(\sF_\zm)$. Using (\ref{BV coh 5}) and (\ref{BV coh 4}), we see that the cohomology
\begin{equation}
H^\bt_{\mf{D}}(\mf{H})=H^\bt_{\{w,\bt\}+\Delta_\zm}(\mr{Fun}(\sF_\zm))= e^{-w}\nu_\zm\cdot\mathbb{R}
\end{equation}
has rank $1$ and is concentrated in degree $|\nu_\zm|=\sum_k (1-2k)(\dim H^{2k}(V)-\dim H^{2k+1}(V))$.
\item \label{BV coh 7} 
Putting together (\ref{BV coh 3}) and (\ref{BV coh 6}), we obtain that
\begin{equation}
H^\bt_{\delta_S}(\mr{Fun}(\sF)[[-i\hbar]])=(\mf{H}^\bt\cap\ker\mf{D})\oplus (-i\hbar)e^{-w}\nu_\zm\cdot \mathbb{R}[[-i\hbar]]
\end{equation}

\end{enumerate}

Summarizing the observations above and applying to the case of cellular action of Theorem \ref{thm: cellBF}, and using Proposition \ref{prop: non-ab BF I simple homotopy invariance}, we have the following.

\begin{Proposition} For $X$ a regular CW complex, perturbative BV cohomology associated to the cellular action (\ref{cellBF locality}) has the form
\begin{equation}
H^\bt_{\delta_{S_X}}(\mr{Fun}(\sF_X)[[-i\hbar]])=(\mf{H}^\bt_X\cap\ker\mf{D}_X)\oplus (-i\hbar)\cdot \mathbb{R}[-\mf{p}][[-i\hbar]]
\end{equation}
where 
\begin{itemize}
\item $\mf{H}^\bt=H^\bt_{CE}(C(X,\g),\Sym C(X,\g)[2])\cong H^\bt_{CE}(H(X,\g),\Sym H(X,\g)[2])$ -- Chevalley-Eilenberg cohomology of cellular cochains and cellular cohomology, understood as $L_\infty$ algebras, with coefficients in the sum of symmetric powers of the adjoint module.
\item $\mf{D}_X$ is the differential on $\mf{H}^\bt$ induced by homological perturbation theory from the perturbation $\{S^{(0)},\bt\}\mapsto \{S^{(0)},\bt\}+\{S^{(1)}_X,\bt\}+\Delta_X$ of the differential on $\mr{Fun}(\sF_X)$.
Equivalently, $\mf{D}_X$ is induced from the perturbation  $\{S^{(0)}_\zm,\bt\}\mapsto \{S^{(0)}_\zm,\bt\}+\{S^{(1)}_\zm,\bt\}+\Delta_\zm$ of the differential on $\mr{Fun}(\sF_\zm)$.
\item Perturbative BV cohomology in higher orders in $\hbar$ has rank $1$ and is concentrated in degree
$$\mf{p}=\sum_k (1-2k)\left(\dim H^{2k}(X,\g)-\dim H^{2k+1}(X,\g)\right)$$
\item The generator $1\in\mathbb{R}$ of perturbative BV cohomology in positive degree in $\hbar$ is represented in $\mr{Fun}(\sF_X)$ by the element $p^* (e^{-w}\nu_\zm)$ where $p:\sF_X\ra \sF_\zm$ is the projection to residual fields 
used as a part of gauge-fixing in (\ref{Z sec 8.3}), $\nu_\zm\in \bigotimes_k\mr{Det}(H^{2k}(X,\g))^*\otimes \bigotimes_k \mr{Det}(H_{2k+1}(X,\g^*))^*=\mr{Det}(\sF_\zm^\mr{odd})^*$ is the product of all components of fields $\sA_\zm,\sB_\zm$ of odd internal degree; $w=S^{(0)}_\zm+S^{(1)}_\zm$ is the effective action on residual fields evaluated at $\hbar=i$.
\end{itemize}
If the complexes $X$ and $Y$ are simple-homotopy equivalent, the respective BV cohomology is canonically isomorphic,
$$H^\bt_{\delta_{S_X}}(\mr{Fun}(\sF_X)[[-i\hbar]])\cong H^\bt_{\delta_{S_Y}}(\mr{Fun}(\sF_Y)[[-i\hbar]])$$
\end{Proposition}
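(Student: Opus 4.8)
The plan is to prove the Proposition by assembling the seven observations \eqref{BV coh 1}--\eqref{BV coh 7} into a clean argument, and then invoking simple-homotopy invariance. First I would fix notation: we are in the setting of Section~\ref{sec: uL_infty}, so $\sF_X = C^\bt(X,\g)[1]\oplus C_\bt(X,\g^*)[-2]$, and $S_X=S_X^{(0)}-i\hbar\,S_X^{(1)}$ is the cellular action of Theorem~\ref{thm: cellBF}, which satisfies the $BF_\infty$ ansatz \eqref{BF_infty ansatz for S_cell} and the quantum master equation. The differential whose cohomology we compute is $\delta_{S_X}=\{S_X,\bt\}-i\hbar\Delta_X=e^{-\frac{i}{\hbar}S_X}\Delta_X e^{\frac{i}{\hbar}S_X}$ on $\mr{Fun}(\sF_X)[[-i\hbar]]$.

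The core of the argument proceeds exactly along observations \eqref{BV coh 2}--\eqref{BV coh 7}, specialised to $V=C^\bt(X,\g)$. I would first recall \eqref{BV coh 2}: the $\hbar$-independent piece $\{S_X^{(0)},\bt\}$ has cohomology $\mf{H}_X^\bt=H^\bt_{CE}(C^\bt(X,\g),\Sym C^\bt(X,\g)[2])$, the Chevalley-Eilenberg cohomology of the $L_\infty$ algebra $(C^\bt(X,\g),\{l_n^X\})$ with coefficients in the symmetric powers of the adjoint module; and this is canonically isomorphic to $H^\bt_{CE}(H^\bt(X,\g),\Sym H^\bt(X,\g)[2])$ because the induced $L_\infty$ structure on cohomology is an $L_\infty$-quasi-isomorphic model, so the CE cohomologies agree. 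Then \eqref{BV coh 3} treats $\{S_X^{(1)},\bt\}+\Delta_X$ as a perturbation of $\{S_X^{(0)},\bt\}$, producing via homological perturbation theory (Lemma~\ref{lemma: homological perturbation} applied to the retraction of $\mr{Fun}(\sF_X)[[-i\hbar]]$ onto $\mf{H}_X[[-i\hbar]]$) the induced differential $\mf{D}_X$; since the perturbation carries a factor $-i\hbar$, the resulting cohomology splits as $(\mf{H}_X^\bt\cap\ker\mf{D}_X)\oplus(-i\hbar)H^\bt_{\mf{D}_X}(\mf{H}_X)[[-i\hbar]]$. The remaining task is to identify $H^\bt_{\mf{D}_X}(\mf{H}_X)$: I would use \eqref{BV coh 5} (BV pushforward to residual fields intertwines $\delta_{S_X}$ with $\delta_{S_\zm}$, by the Feynman-diagram description of the pushforward) to reduce from $\sF_X$ to $\sF_\zm=H^\bt(X,\g)[1]\oplus H^\bt(X,\g)^*[-2]$, so that $\mf{D}_X$ is equivalently induced from the perturbation $\{S_\zm^{(0)},\bt\}\mapsto\{S_\zm^{(0)},\bt\}+\{S_\zm^{(1)},\bt\}+\Delta_\zm$. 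Finally \eqref{BV coh 6}: setting $w=S_\zm|_{\hbar=i}=S_\zm^{(0)}+S_\zm^{(1)}$, the substitution $f\mapsto e^{-w}f$ conjugates $\{w,\bt\}+\Delta_\zm$ into the bare $\Delta_\zm$ on $\mr{Fun}(\sF_\zm)$, whose cohomology by \eqref{BV coh 1} is the rank-one line spanned by $\nu_\zm$ — the product of all field components of odd internal degree — concentrated in degree $|\nu_\zm|=\mf{p}:=\sum_k(1-2k)(\dim H^{2k}(X,\g)-\dim H^{2k+1}(X,\g))$. Tracing the conjugation back, the generator in $\mr{Fun}(\sF_X)[[-i\hbar]]$ is $p^*(e^{-w}\nu_\zm)$, with $p:\sF_X\to\sF_\zm$ the projection used in \eqref{Z sec 8.3}. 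Assembling \eqref{BV coh 3}, \eqref{BV coh 5}, \eqref{BV coh 6} gives \eqref{BV coh 7} in the cellular case, which is precisely the first four bullet points of the Proposition.

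For the last assertion — simple-homotopy invariance of $H^\bt_{\delta_{S_X}}$ — I would reduce to the case where $Y$ is an elementary expansion of $X$, and invoke Lemma~\ref{lemma: collapse}: the BV pushforward $P_*^{Y\to X}$ along the collapse carries $e^{\frac{i}{\hbar}S_Y}(\mu^\hbar_{\sF_Y})^{1/2}$ to $e^{\frac{i}{\hbar}S_X}(\mu^\hbar_{\sF_X})^{1/2}$ up to a $\Delta_X$-exact term, i.e. $S_Y$ pushes forward to a canonical transform of $S_X$. By observation \eqref{BV coh 5}, $P_*^{Y\to X}$ intertwines $\delta_{S_Y}$ with $\delta_{S_X'}$ for the pushed-forward action $S_X'$, hence induces an isomorphism $H^\bt_{\delta_{S_Y}}(\mr{Fun}(\sF_Y)[[-i\hbar]])\cong H^\bt_{\delta_{S_X'}}(\mr{Fun}(\sF_X)[[-i\hbar]])$; and since $S_X'$ and $S_X$ differ by a canonical transformation $\frac{d}{dt}S_{X,t}=\{S_{X,t},R_t\}-i\hbar\Delta R_t$, conjugation by $\exp(\{R_t,\bt\}-i\hbar\Delta(\cdot))$-type flow — concretely, the chain isomorphism $f\mapsto f+\{R_t,f\}-\ldots$ integrated over $t$ — identifies $\delta_{S_X'}$ with $\delta_{S_X}$ up to chain homotopy, giving $H^\bt_{\delta_{S_X'}}\cong H^\bt_{\delta_{S_X}}$. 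Composing the two isomorphisms yields the canonical identification, and for a general simple-homotopy equivalence one concatenates over the sequence of expansions and collapses.

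I expect the main obstacle to be making precise the homological-perturbation-theory reductions in \eqref{BV coh 3} and \eqref{BV coh 5}: one must check that the relevant series (for the induced differential $\mf{D}_X$, and for the Feynman-diagram effective action $S'$ on a retract) converge in the $-i\hbar$-adic topology — which they do here because $BF$ theory has no diagrams beyond one loop, so the $\hbar$-expansion truncates — and that the HPT-induced differential genuinely agrees with $\delta_{S'}$ rather than merely being chain-homotopic to it; the cleanest route is to observe that $\delta_{S}$ acting on $\mr{Fun}(\sF)[[-i\hbar]]$ is literally the conjugate $e^{-\frac{i}{\hbar}S}\Delta e^{\frac{i}{\hbar}S}$, that BV pushforward commutes with $\Delta$, and that it sends $e^{\frac{i}{\hbar}S}$ to $e^{\frac{i}{\hbar}S'}$ up to $\Delta_\zm$-exact corrections, from which the intertwining of $\delta_S$ and $\delta_{S'}$ follows formally. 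Everything else is bookkeeping of internal degrees, for which the degree formula for $\nu_\zm$ in \eqref{BV coh 1} does the work.
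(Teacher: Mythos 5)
Your proposal is correct and follows essentially the same route as the paper: the Proposition is obtained by specializing the chain of observations (i)--(vii) on perturbative BV cohomology to $V=C^\bt(X,\g)$ and the cellular action of Theorem \ref{thm: cellBF}, with the $\hbar$-filtration/HPT argument identifying the $O(\hbar^{\geq 1})$ part as the rank-one line generated by $p^*(e^{-w}\nu_\zm)$ in degree $\mf{p}$. For the last assertion you re-derive the simple-homotopy invariance directly from Lemma \ref{lemma: collapse} plus the intertwining of $\delta_S$-differentials under BV pushforward and invariance under canonical transformations, which is exactly the mechanism behind Proposition \ref{prop: non-ab BF I simple homotopy invariance} that the paper cites, so the argument is the same in substance.
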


\begin{example}
For $X$ contractible, perturbative BV cohomology is the same as for $X$ a point, and thus we have $H^\bt_{\delta_{S_X}}=(H^\bt_{CE}(\g,\Sym \g[2])\cap \ker \mf{D})\oplus (-i\hbar)\mathbb{R}[-\dim\g][[-i\hbar]]$. One can understand $H^\bt_{CE}(\g,\Sym \g[2])\cap \ker \mf{D}$ as the subspace of \emph{unimodular classes} in Lie algebra cohomology. For example, in degree zero we have
$H^0_{\delta_{S_X}}=\bigoplus_k H^{2k}_{CE}(\g,\Sym^k \g)\cap \ker \mf{D}$. Contribution of $k=1$ here is the space of unimodular deformations of the Lie bracket on $\g$ modulo inner automorphisms (note that for $\g$ semi-simple, it vanishes since $H^2_{CE}(\g,\g)$ vanishes as a whole).
\end{example}

\begin{example} For $X$ arbitrary and $\g$ an abelian Lie algebra, perturbative BV cohomology is given by (\ref{BV coh abelian}). If we identify $\mr{Fun}(\sF_\zm)$ with polyvector fields on $H^\bt(X,\g)[1]$, then $\mr{Fun}(\sF_\zm)\cap \ker\Delta_\zm$ is the subspace of \emph{divergence-free} polyvector fields.
\end{example}

Returning to the setup of Section \ref{sec: uL_infty}, we can introduce a subspace 
\begin{multline*}
\Xi:=
\\:=\{\lan \sB, \alpha(\sA)\ran-i\hbar\, \beta (\sA)  \;\Big{|}\; \alpha(\sA)\in \widehat{\Sym^{\geq 2}} (V[1])^*\otimes V^*, \beta(\sA)\in \widehat{\Sym^{\geq 1}} (V[1])^* \}\\
=\Xi^{(0)}\oplus (-i\hbar)\cdot\Xi^{(1)}\quad \subset \mr{Fun}(\sF)[[-i\hbar]]
\end{multline*}
-- elements satisfying the $BF_\infty$ ansatz (\ref{BF_infty ansatz general}) with terms bi-linear in $\sA$ and $\sB$ prohibited. Note that $\Xi$ is a subcomplex w.r.t. $\delta_S$. Repeating the argument above, we obtain that the cohomology of $\delta_S$ on $\Xi$ is
$$H^\bt_{\delta_S}(\Xi)=H^\bt_{\{S^{(0)},\bt\}}(\Xi^{(0)})\cap \ker \mf{D}$$
with $\mf{D}$ as above; note that this cohomology does not have $O(\hbar^{\geq 1})$ terms.

\begin{remark} Note that $-i\hbar\cdot 1$ is a $\delta_S$-exact element in $\mr{Fun}(\sF)[[-i\hbar]]$. By inspecting the HPT argument above, one can construct an explicit primitive $\phi(\sA,\sB)$ with 
$\delta_S(\phi)=-i\hbar\cdot 1$; this element necessarily contains a component bi-linear in $\sA$ and $\sB$. Therefore, for any $c\in\mathbb{R}$ one can connect $S$ and $S-i\hbar\cdot c$ (or, equivalently, half-densities $e^{\frac{i}{\hbar}S}(\mu_\sF^\hbar)^{1/2}$ and $e^c\cdot e^{\frac{i}{\hbar}S}(\mu_\sF^\hbar)^{1/2}$) by a canonical transformation. But if the generator of the canonical transformation is prohibited to have a term bi-linear in $\sA$, $\sB$, then such transformation is prohibited. Thus our rigid normalization of half-densities $e^{\frac{i}{\hbar}S_X}(\mu_{\sF_X}^\hbar)^{1/2}$ and $Z$ (as defined by (\ref{Z sec 8.3})) is meaningful, since the restriction above for canonical transformations holds in all cases of relevance for us: in Lemmata \ref{lemma: cellBF well-definedness}, \ref{lemma: collapse}, in (\ref{Z non-ab can transf}) and Propositions \ref{prop: non-ab BF I simple homotopy invariance} and \ref{prop: aggregations}.
\end{remark}

\subsubsection{Continuum limit}\label{sss: cont limit}
Simplicial $BF$ action (\ref{simpBF locality}) approximates the standard action of non-abelian $BF$ theory defined on differential forms on a manifold, in the limit of ``dense'' triangulation, 
in the following sense.

Let $M$ be a smooth compact oriented $n$-dimensional manifold endowed with a Riemannian metric $g$.
Let $\{X_N\}$ for $N=1,2,\ldots$ be a sequence of triangulations of $M$ with the property that metric diameters of simplices of $X_N$ are bounded from above by $c/N$ for $c$ a constant. Fix $(A,B)\in \Omega^\bt(M,\g)[1]\oplus \Omega^\bt(M,\g)[n-2]$ a pair of \emph{smooth} non-homogeneous differential forms. We project this pair to an element of the cellular space of fields, $(\sA_N,\sB_N)\in \sF_{X_N}$, with $\sA_N:=\sum_{e\subset X_N} e^*\cdot \left(\int_e A\right)$, $\sB_N:=\sum_{e\subset X_N} \left(\int_M B\wedge \chi_e\right) \cdot e$ where $\chi_e$ is a piecewise-linear form on $M$ of degree $\dim e$ -- the Whitney form associated to $e$.

\begin{lemma} \label{lemma: cont limit}
We have the following asymptotics for the value of the $\hbar$-independent part of the simplicial action $S^{(0)}_{X_N}$  on $(\sA_N,\sB_N)$:
\begin{equation}
S_{X_N}^{(0)}(\sA_N,\sB_N)\underset{N\ra \infty}{\sim} \int_M \lan B \stackrel{\wedge}{,}dA+\frac12 [A\stackrel{\wedge}{,}A] \ran_\g + O\left(\frac{1}{N}\right)
\end{equation}
\end{lemma}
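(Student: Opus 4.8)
The plan is to unpack the definition of $S^{(0)}_{X_N}$ from part (\ref{simpBF (i)}) of the proof of Theorem \ref{thm: simpBF} as a sum over binary rooted trees $\sum_{\Gamma_0}\Phi^{\Gamma_0}_{X_N}$ and to isolate the contribution of the trivial tree and of the single $2$-valent tree, showing that these produce the discretizations of $\int_M\lan B, dA\ran_\g$ and $\frac12\int_M\lan B, [A\stackrel{\wedge}{,}A]\ran_\g$ respectively, while all trees with $\geq 3$ leaves contribute $O(1/N)$.

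First I would handle the trivial tree: $\Phi^{\mr{triv}}_{X_N}(\sA_N,\sB_N)=\lan \sB_N, d\sA_N\ran=\lan \sB_N, d_{X_N}\sA_N\ran$ where $d_{X_N}$ is the cellular coboundary. By Stokes' theorem, $(d_{X_N}\sA_N)_e=\int_{\dd e}A=\int_e dA$ for smooth $A$, so $\lan \sB_N,d\sA_N\ran=\sum_{e}\left(\int_M B\wedge\chi_e\right)\cdot\left(\int_e dA\right)$. Using that $\sfp_{X_N}$ and $\sfi_{X_N}$ are a retraction, $\sum_e \chi_e\cdot\int_e(dA)$ is the Whitney approximation $\sfi_{X_N}\sfp_{X_N}(dA)$ of $dA$, and the standard estimate for Whitney forms on a metrically dense triangulation (see \cite{Dodziuk, DodziukPatodi, Whitney}) gives $\|\sfi_{X_N}\sfp_{X_N}(dA)-dA\|_{L^2}=O(1/N)$ for smooth $dA$. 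Hence $\lan\sB_N,d\sA_N\ran=\int_M\lan B\stackrel{\wedge}{,}dA\ran_\g+O(1/N)$. For the quadratic tree, $\Phi^{\Gamma_0}_{X_N}=\frac12\lan\sB_N,\sfp_{X_N}[\sfi_{X_N}\sA_N\stackrel{\wedge}{,}\sfi_{X_N}\sA_N]\ran$; again $\sfi_{X_N}\sA_N\to A$ in $L^2$ with $O(1/N)$ error (and the wedge of two $L^2$-convergent piecewise-polynomial forms with bounded sup-norms converges), so this term tends to $\frac12\int_M\lan B\stackrel{\wedge}{,}[A\stackrel{\wedge}{,}A]\ran_\g$ with $O(1/N)$ correction.

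Next I would show the contributions of trees $\Gamma_0$ with $n\geq 3$ leaves vanish in the limit. Here the key structural input is that each internal edge of $\Gamma_0$ carries a factor $-\sfK_{X_N}$, Dupont's chain homotopy. The crucial estimate, already implicit in the proof of Lemma \ref{lemma: cont limit}'s simplicial-complex building blocks, is that $\sfK_{X_N}$ on a metrically dense triangulation is ``small'': on a simplex of diameter $O(1/N)$ Dupont's operator scales like the diameter, so $\|\sfK_{X_N}\omega\|\leq \mr{const}\cdot N^{-1}\|\omega\|$ for forms $\omega$ supported simplex-wise with controlled coefficients. Since any tree with $n\geq 3$ leaves has at least one internal edge, each such $\Phi^{\Gamma_0}_{X_N}$ acquires at least one factor of $N^{-1}$, while the remaining ingredients ($\sfi_{X_N}\sA_N$ on leaves, $\sfp_{X_N}$ at the root, pointwise Lie brackets and wedge products at vertices, and the pairing with $\sB_N$) stay bounded uniformly in $N$ by the smoothness of $A,B$. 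Summing the geometric-type series over all binary trees (which converges since the relevant operator norms are $O(1/N)$) gives a total contribution $O(1/N)$.

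The main obstacle I anticipate is making the operator-norm estimate on $\sfK_{X_N}$ precise and uniform: Dupont's homotopy is defined simplex-by-simplex via the standard retraction on $\bar\DDelta^N$, so one must track how its norm behaves under the affine rescaling identifying a geometric simplex of diameter $O(1/N)$ with the standard one, and check that gluing these local operators via the fiber-product (tensor-product of retractions) construction of Section \ref{sec: induction data for dual and sym} does not spoil the bound — in particular that the number of simplices meeting a given one (the local combinatorial complexity) stays bounded, which follows from the uniform shape-regularity implicit in the metric-diameter hypothesis together with the lower bound on simplex volumes it forces. Once this Lipschitz-type estimate for $\sfK_{X_N}$ is in hand, combined with the classical Whitney/Dodziuk approximation estimates for $\sfi_{X_N}$ and $\sfp_{X_N}$, the rest is bookkeeping over the tree expansion.
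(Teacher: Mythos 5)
Your argument is correct in substance, but it is organized differently from the paper's proof. The paper never estimates the operators $\sfi_{X_N},\sfp_{X_N},\sfK_{X_N}$ analytically: it works directly with the cellular ansatz (\ref{Sbar}), (\ref{simpBF locality}), observes that the structure constants $C^{\DDelta^m}_{\Gamma_0,e_1,\ldots,e_k}$ are $N$-independent numbers attached to the standard simplex (bounded at most exponentially in $k$), and extracts all the $N$-dependence from the scaling of the field components, $\sA_e=O(N^{-\dim e})$ and $\sB_e=O(N^{-n+\dim e})$, together with the $O(N^n)$ count of simplices and the degree-balance relation $\dim e-\dim e_1-\cdots-\dim e_k=2-k$; this gives $S^{(0),k}=O(N^{2-k})$ for each weight $k$, uniformly summable in $k$, while the weight $\le 2$ part is compared to the continuum action via $\mr{id}-\sfi_N\sfp_N=O(1/N)$ on smooth forms, exactly as in your first paragraph. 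Your route instead places the $1/N$ in a rescaling (Lipschitz-type) operator-norm bound for Dupont's homotopy on a mesh-$1/N$ triangulation and counts internal edges of the binary trees ($n-2\ge 1$ factors of $\sfK_{X_N}$ for $n\ge 3$ leaves), which reproduces the same $O(N^{2-n})$ per weight; it is the same scaling phenomenon seen on the analytic side. The paper's bookkeeping has the advantage that the only input about the retraction is the $N$-independent boundedness of the standard-simplex structure constants, so no conjugation analysis of $\sfK$ under affine rescaling is needed; your version has the advantage of applying verbatim to any local retraction satisfying a mesh-size homotopy bound, not only the Whitney--Dupont one.

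One caveat: your assertion that uniform shape-regularity and a lower bound on simplex volumes are ``implicit in'' or ``forced by'' the hypothesis that diameters are at most $c/N$ is not correct. The diameter bound alone permits arbitrarily thin simplices and arbitrarily many of them, and without some regularity neither your bound on $\sfK_{X_N}$ in ambient sup-norm nor the uniform boundedness of $\sfi_N\sfp_N$ follows. The paper's own proof tacitly makes an assumption of the same kind (it asserts $\sB_e=O(N^{\dim e-n})$ and that the number of simplices of each fixed dimension is $O(N^n)$, neither of which follows from the diameter bound by itself), so this does not put your argument at a disadvantage relative to the paper; but the regularity should be stated as a hypothesis on the sequence $\{X_N\}$ rather than presented as a consequence of the diameter bound.
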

The first term on the r.h.s. is the standard action of non-abelian $BF$ theory in BV formalism, see e.g. \cite{CR,SimpBF,CMR}.

\begin{proof}
Let $i_N:C^\bt(X_N)\ra \Omega^\bt(M)$ be the inclusion of cellular cochains of the triangulation into piecewise-polynomial forms given by Whitney forms, and let $p_N:  \Omega^\bt(M)\ra C^\bt(X_N)$ be the Poincar\'e projection, as in Section \ref{sec: simp BF reminder}. Then, for $\alpha$ any smooth form on $M$, we have 
\begin{equation}\label{cont limit eq1}
\alpha - i_N\circ p_N(\alpha)=O\left(\frac1N\right)
\end{equation}
In particular, $A-i_N(\sA_N)=O\left(\frac1N\right)$ and $[A\stackrel{\wedge}{,}A]-i_N([\sA_N,\stackrel{*}{,}\sA_N])=O\left(\frac1N\right)$. 

Note that components of the cellular field $(\sA_N,\sB_N)$  on simplices of $X_N$ behave in the asymptotics $N\ra\infty$ as $\sA_e=O(N^{-\dim e})$, $\sB_e=O(N^{-n+\dim e})$. The number of simplices $e$ of $X^N$ of any fixed dimension behaves as $O(N^n)$.  Thus, we can estimate the term 
$$S^{(0),k}:=\sum_{e\subset X_N}\sum_{\Gamma_0}\sum_{e_1,\ldots,e_k\subset \bar{e}} \frac{1}{|\mr{Aut}(\Gamma_0)|}C^{e}_{\Gamma_0,e_1,\ldots, e_k} \lan \sB_e,\mr{Jacobi}_{\Gamma_0}(\sA_{e_1},\ldots, \sA_{e_k})\ran $$
in the cellular action $S_{X_N}$ as $O(N^{n}\cdot N^{-n+\dim e- \dim e_1-\cdots - \dim e_k})=O(N^{2-k})$ (where we use the relation $\dim e-\dim e_1-\cdots - \dim e_k=2-k$ between dimensions of the $k$-tuple of faces of $e$, which arises from the fact that the associated monomial in components of fields, $\lan \sB_e,\mr{Jacobi}_{\Gamma_0}(\sA_{e_1},\ldots, \sA_{e_k})\ran$, should have internal degree zero).  
In particular, we have\footnote{For this, we observe that the bound above can be improved to be uniform in $k$: structure constants $C^{\DDelta^m}_{\Gamma_0,e_1,\ldots,e_k}$ (for a simplex of fixed dimension $m$) have at most exponential growth in $k$, as follows from analyzing the explicit integral expressions for the structure constants arising from the construction (\ref{simpBF (i)}) of the proof of Theorem \ref{thm: simpBF}. Thus, one has $|S^{(0),k}|<\gamma^k N^{2-k}$ for some $\gamma$ depending on $(A,B)$ but independent of $k$.} 
\begin{equation}\label{cont limit eq2}
\sum_{k\geq 3}S^{(0),k}(\sA_N,\sB_N)=O\left(\frac{1}{N}\right)
\end{equation}

On the other hand, by (\ref{simpBF init cond}) we have $S^{(0)}_{X_N}=\lan \sB_N,d\sA_N \ran+\frac12 \lan \sB_N,[\sA_N\stackrel{*}{,} \sA_N]\ran+\sum_{k\geq 3}S^{(0),k}(\sA_N,\sB_N)$ and
\begin{multline*}
\left(\lan \sB_N,d\sA_N \ran+\frac12 \lan \sB_N,[\sA_N\stackrel{*}{,} \sA_N]\ran \right)-
\int_M \lan B \stackrel{\wedge}{,}dA+\frac12 [A\stackrel{\wedge}{,}A] \ran_\g\\
=\int_M \lan B \stackrel{\wedge}{,} (i_N\circ p_N-\mr{id})\left(dA+\frac12 [A\stackrel{\wedge}{,}A]\right) \ran_\g\quad = O\left(\frac{1}{N}\right)
\end{multline*}
where we used (\ref{cont limit eq1}). Together with the estimate (\ref{cont limit eq2}), this finishes the proof of the Lemma.

\end{proof}

\begin{remark}
For the linear in $\hbar$ part of the simplicial action $S^{(1)}_{X_N}$, similar estimates yield $S^{(1),k}(\sA_N)=O(N^{n-k})$ (for the part of homogeneous degree $k$ in $\sA_N$) and $S^{(1)}_{X_N}(\sA_N)=O(N^{n-2})$ for the whole.
\end{remark}

\begin{remark}
In Lemma \ref{lemma: cont limit}, one can allow $X_N$ to be a sequence of prismatic cellular decompositions of $M$ instead of triangulations (with the same bound $c/N$ on the diameters of prisms), according to Remark \ref{rem: prismatic}.\footnote{
The argument we used to prove Lemma \ref{lemma: cont limit} uses Whitney forms, and it is not clear how to extend it to CW decompositions with arbitrary cells.
}
\end{remark}


\section{Non-abelian cellular $BF$ theory, II: case of a cobordism }
\label{sec: non-ab BF cobordism}
In this section we address the construction of non-abelian cellular $BF$ theory in the formalism of Section \ref{sec: classical cell ab BF on mfd with bdry}.

Fix $G$ a Lie group with bi-invariant Haar measure $\mu_G$. The Lie algebra $\g=\mr{Lie}(G)$ is automatically unimodular and carries the induced density $\mu_\g$.

Let $M$ be a compact oriented piecewise-linear $n$-manifold with boundary $\dd M=\overline{M_\din}\sqcup M_\dout$. Let $(P,\nabla_P)$ be a flat $G$-bundle over $M$ and $(E,\nabla_E)$ the vector bundle associated to $P$ via adjoint representation, $E=\mr{Ad}(P)=P\times_G \g$; by construction it carries a horizontal fiberwise density $\mu_E$ induced from $\mu_G$. Let $X$ be a cellular decomposition of $M$;  we assume that $X$ is a regular CW complex of product type near the boundary $X_\dout$ (see Definition \ref{def: product type}).
We call such $X$ an \textit{admissible} cellular decomposition of $M$.


We import directly from Section \ref{sec: classical cell ab BF on mfd with bdry}, without changes, the construction of: 
\begin{itemize}
\item The space of fields 
$\F=C^\bt(X,E)[1]\oplus C^\bt(X^\vee,E^*)[n-2]$. (By a slight abuse of notation, we omit subscripts in $E_X$ and $E^*_{X^\vee}$.)
\item The superfields 
$$A=\sum_{e\subset X}e^*\cdot A_e:\;\F\ra C^\bt(X,E),\qquad B=\sum_{e^\vee\subset X^\vee} B_{e^\vee}\cdot (e^\vee)^*:\;\F\ra C^\bt(X^\vee,E^*)$$ 
Note that $B$ is again a cochain, unlike in the formalism of Section \ref{sec: simp BF reminder}.
\item The presymplectic 2-form $\omega=\lan \delta B,\delta A\ran$. 
\item The boundary fields $\F_\dd=C^\bt(X_\dd,E)[1]\oplus C^\bt(X_\dd^\vee,E^*)[n-2]$. 
\item The symplectic form $\omega_\dd$ and its primitive 1-form $\alpha_\dd$.
\item The projection $\pi:\F\twoheadrightarrow \F_\dd$. Later we will need to deform it, see (\ref{Pi^*}) below.
\end{itemize}

We define the action $S(A,B;\hbar)\in \mr{Fun}(\F)_0[[\hbar]]$ as follows:
\begin{multline}\label{S cell non-ab}
S(A,B;\hbar):=
\sum_{e\subset X-X_\dout} \bar{S}_e(A|_{\bar{e}}, B_{\varkappa(e)};\hbar)+ \lan B,A\ran_\din\\
=\sum_{e\subset X-X_\dout}\sum_{\nnn\geq 1}\frac{1}{\nnn!} \lan B_{\varkappa(e)}, l_\nnn^e\left( 
A|_{\bar{e}},\cdots,  
A|_{\bar{e}}\right)\ran
-i\hbar \sum_{e\subset X-X_\dout}\sum_{\nnn\geq 2}\frac{1}{\nnn!} q_\nnn^e\left( 
A|_{\bar{e}},\cdots,  
A|_{\bar{e}}\right)+ \lan B, A\ran_\din
\end{multline}
with $\bar{S}_e$ the building blocks (\ref{Sbar cell}) 
of Theorem \ref{thm: cellBF} and $l_\nnn^e,q_\nnn^e$ the corresponding components of the operations of the unimodular $L_\infty$ algebra 
coming from (\ref{Sbar via l and q}). 
In (\ref{S cell non-ab}) we use the parallel transport $E(e>e')$  to trivialize the coefficient local system over the cell $e$ (see (\ref{cell local system from flat bundle})) and we set 
\begin{equation}\label{A|_e}
A|_{\bar{e}}:=\sum_{e'\subset e} (e')^*\cdot \mr{Ad}_{E(e>e')} A_{e'}:\quad \F\ra C^\bt(\bar{e})\otimes \g
\end{equation}
We also use a shorthand notation $\lan B,A\ran_\din:= \lan (\iota^\vee)^*B,\iota^* A\ran_\din=\sum_{e\subset X_\din}\lan B_{\varkappa_\din(e)},A_e\ran$ for the boundary term of the action, as in (\ref{S with bdry term}). As in Section \ref{sec: non-ab BF I}, the action has the structure  $S=S^{(0)}-i\hbar\, S^{(1)}$.

We also define $S_\dd=S_\dout-S_\din \in \mr{Fun}_1(\F_\dd)$ by
\begin{equation}\label{S bdry non-ab}
\!\!\!\!\!\!\!\!
S_\dd(A_\dd,B_\dd):=\sum_{e\subset X_\dout} \sum_{\nnn\geq 1}\frac{1}{\nnn!} 
\lan B_{\varkappa_\dout(e)}, l_\nnn^e\left( 
A|_{\bar{e}},\cdots,
A|_{\bar{e}}\right) \ran-
\sum_{e\subset X_\din} \sum_{\nnn\geq 1}\frac{1}{\nnn!} 
\lan B_{\varkappa_\din(e)}, l_\nnn^e\left( 
A|_{\bar{e}},\cdots,
A|_{\bar{e}}\right) \ran
\end{equation}

Further, we introduce the vector field $Q^{(0)}\in \mathfrak{X}(\F)_1$,
\begin{multline}\label{Q non-ab cob}
Q^{(0)}=\sum_{e\subset X}\sum_{\nnn\geq 1}(-1)^{\dim e}\frac{1}{\nnn!} \lan  l_\nnn^{e}(A|_{\bar{e}},\cdots,A|_{\bar{e}}),\frac{\dd}{\dd A_e} \ran-
\\-
\sum_{e\subset X-X_\dout}\sum_{\nnn\geq 0} \frac{1}{\nnn!} \lan B_{\varkappa(e)}, l_{\nnn+1}^e\left(A|_{\bar{e}},\cdots,A|_{\bar{e}},\sum_{e'\subset \bar{e}-X_\dout}(e')^*\cdot \mr{Ad}_{E(e>e')}\frac{\dd}{\dd B_{\varkappa(e')}} \right) \ran\\
+\sum_{e\subset X_\din}\sum_{\nnn\geq 0} \frac{1}{\nnn!} \lan B_{\varkappa_\din(e)}, l_{\nnn+1}^e\left(A|_{\bar{e}},\cdots,A|_{\bar{e}},\sum_{e'\subset \bar{e}}(e')^*\cdot \mr{Ad}_{E(e>e')}\frac{\dd}{\dd B_{\varkappa_\din(e')}} \right) \ran\\
-\sum_{e\subset X_\din} \lan B_{\varkappa_\din(e)},\frac{\dd}{\dd B_{\varkappa(e)}} \ran
\end{multline}

Introduce the algebra homomorphism $\Pi^*:\; \mr{Fun}(\F_\dd)\ra \mr{Fun}(\F)$ deforming the pullback by the standard projection $\pi^*$, defined on the generators as follows:
\begin{equation}\label{Pi^*}
\Pi^*:\left\{\begin{array}{l}  
A_\din\mapsto A_\din \\
B_\din \mapsto B_\din \\
A_\dout \mapsto A_\dout \\
\lan B_\dout, a\ran_\dout \mapsto 
\sum_{e\supset e'} \sum_{\nnn\geq 0} \frac{1}{\nnn!} \lan B_{\varkappa(e)},l_\nnn^{e}(A|_{\bar{e}},\cdots, A|_{\bar{e}}, (e')^*\cdot \mr{Ad}_{E(e>e')} a_{e'}  ) \ran  
\end{array}\right. 
\end{equation}
where $a=\sum_{e\subset X_\dout}e^*\cdot a_e\in C^\bt(X_\dout)$ is a test cochain; the sum is over pairs of cells $e\supset e'$ such that $e'\subset X_\dout$ while $e\subset X-X_\dout$. 

The following is proven by a straightforward (but lengthy) computation, using the $\bmod\,\hbar$ part of the result of Theorem \ref{thm: simpBF}.
\begin{Proposition}\label{prop: non-ab class BV-BFV}
The data $(\F,\omega,Q^{(0)},S^{(0)},\Pi^*)$, $(\F_\dd,\alpha_\dd,S_\dd)$ define a classical BV-BFV theory \cite{CMR}, i.e.
the following relations hold:
\begin{enumerate}[(a)]
\item $(Q^{(0)})^2=0$.
\item $\iota_{Q^{(0)}}\omega = \delta S^{(0)}+\Pi^* \alpha_\dd$.
\item $Q^{(0)}\Pi^*=\Pi^* Q_\dd$ where $Q_\dd \in \mathfrak{X}(\F_\dd)_1$ is the Hamiltonian vector field generated by $S_\dd$, i.e. is defined by $\iota_{Q_\dd}\omega_\dd=\delta S_\dd$. 
\end{enumerate}
\end{Proposition}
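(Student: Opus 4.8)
\textbf{Proof strategy for Proposition \ref{prop: non-ab class BV-BFV}.}
The plan is to verify the three structural relations one at a time, reducing each to an identity that is either the classical master equation for the cellular action of Theorem \ref{thm: cellBF} (equivalently, the homotopy Jacobi identities \eqref{L_infty relations} for the operations $l_n^e$), or the cellular Stokes' formula of Lemma \ref{lemma: Stokes}, or a combination of the two. Throughout, I would work with the $\bmod\,\hbar$ part only, so $S^{(0)}$, $Q^{(0)}$, and the $L_\infty$ operations $l_n^e$ are the sole players; the quantum pieces $q_n^e$ and the BV Laplacian do not enter a \emph{classical} BV-BFV statement. The key bookkeeping device is to organize everything in terms of the superfield $A|_{\bar e}$ of \eqref{A|_e} and the ``relative'' differentiations $\frac{\dd}{\dd A_e}$, $\frac{\dd}{\dd B_{\varkappa(e)}}$, keeping careful track of which cells are allowed (cells of $X-X_\dout$ for the $B$-slots in the bulk, cells of $X_\din$ for the boundary contributions) and of the parallel transports $\mr{Ad}_{E(e>e')}$, which are governed by functoriality of the local system exactly as in Section \ref{sec: reminder on cell local systems}.

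For part (a), $(Q^{(0)})^2=0$: I would compute $(Q^{(0)})^2$ acting on the generators $A_e$, $B_{\varkappa(e)}$, $B_{\varkappa_\din(e)}$ separately. On $A$-type generators, the vanishing is precisely the homotopy Jacobi identity \eqref{L_infty relations} for the operations $l_n^e$ summed over the closure of each cell, i.e.\ the statement that $C^\bt(X,\g)$ carries an $L_\infty$ structure, which is the $\bmod\,\hbar$ content of the quantum master equation of Theorem \ref{thm: cellBF} (here I use property \eqref{cellBF thm c} to restrict consistently to the subcomplex on which the $B$-slots live). On $B$-type generators one gets the same homotopy Jacobi identities in the coadjoint representation, plus cross-terms coupling the bulk $B$-block to the in-boundary $B$-block; these cancel because the extra summand $-\sum_{e\subset X_\din}\lan B_{\varkappa_\din(e)},\frac{\dd}{\dd B_{\varkappa(e)}}\ran$ in \eqref{Q non-ab cob} is exactly the chain map $\phi^\vee$-type contribution of Lemma \ref{lemma: Stokes}, and the interaction terms reorganize into $l_{n+1}$ applied to a closed combination. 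For part (b), $\iota_{Q^{(0)}}\omega=\delta S^{(0)}+\Pi^*\alpha_\dd$: I would pair $Q^{(0)}$ with $\omega=\lan\delta B,\delta A\ran$ and match term by term against $\delta S^{(0)}=\sum_e \lan \delta B_{\varkappa(e)},l_\nnn^e\ran+\lan B_{\varkappa(e)},\delta l_\nnn^e\ran+\delta(\lan B,A\ran_\din)$; the $\delta A$-contractions reproduce the $\frac{\dd}{\dd A_e}$-part of $Q^{(0)}$, the $\delta B$-contractions reproduce the $\frac{\dd}{\dd B}$-part, and the mismatch — the variation $\lan B,\delta A\ran$ living on cells of $X_\dout$ that cannot be absorbed into the bulk $B$-block — is precisely $\Pi^*\alpha_\dd$ once one recognizes the definition \eqref{Pi^*} of the deformed pullback as the $L_\infty$-covariant version of ``restrict $B_\dout$ to the boundary''. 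This is the cellular Stokes computation of Lemma \ref{lemma: Stokes}, promoted to the non-abelian setting by replacing $d$ with the full $Q^{(0)}$.

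For part (c), $Q^{(0)}\Pi^*=\Pi^*Q_\dd$: I would check this on each generator of $\mr{Fun}(\F_\dd)$. On $A_\din,B_\din,A_\dout$ it is immediate from \eqref{Pi^*}, modulo the same homotopy Jacobi manipulations; the only substantive computation is on the generator $\lan B_\dout,a\ran_\dout$, where both sides unfold into nested $L_\infty$ operations and the equality is, once more, a consequence of \eqref{L_infty relations} together with the compatibility of $\Pi^*$ with the coboundary on the boundary complex. I expect the main obstacle to be purely combinatorial: controlling the signs (Koszul signs from the degree shifts $[1]$ and $[n-2]$, plus the $(-1)^{\dim e}$ factors in \eqref{Q non-ab cob}) and ensuring that every parallel transport $\mr{Ad}_{E(e>e')}$ telescopes correctly when one composes incidences $e\supset e'\supset e''$ — functoriality of $E$ guarantees this, but making it visible in the computation is delicate. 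I would handle the signs by fixing a convention once (the comma carrying degree $-n$, as in the footnote to \eqref{omega = <delta B,delta A>}) and by checking the lowest-order terms ($l_1=d$, $l_2=[\,\cdot\,,\cdot\,]$ on $0$- and $1$-cells, cf.\ Remark \ref{rem: low dim simplices}) explicitly as a sanity check before invoking Theorem \ref{thm: cellBF} for the general pattern; the higher $L_\infty$ terms then follow formally because all three relations are generated by the single identity \eqref{L_infty relations}, which is the $\bmod\,\hbar$ quantum master equation.
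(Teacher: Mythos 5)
Your plan is essentially the paper's own route: the paper gives no detailed argument, stating only that the Proposition follows by a ``straightforward (but lengthy) computation, using the $\bmod\,\hbar$ part of the result of Theorem \ref{thm: simpBF}'', i.e.\ exactly the homotopy Jacobi identities \eqref{L_infty relations} (classical master equation of Theorem \ref{thm: cellBF}) combined with the cellular Stokes-type boundary bookkeeping and the local-system functoriality that you invoke. Your cell-by-cell verification on generators, with the out-boundary mismatch absorbed into $\Pi^*\alpha_\dd$ via \eqref{Pi^*}, is the intended computation, organized in the same way as the paper's proof of the quantum analogue (the modified QME proposition that follows).
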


The space of states $\HH_\dd=\mr{Fun}_\mathbb{C}(\mathcal{B}_\dd)\ni \psi(B_\din,A_\dout)$ is defined as in Section \ref{sec: quantum cell ab BF on mfd with bdry}, with the base of polarization $\mathcal{B}_\dd$ the same as in (\ref{polarization}).
It splits as $\HH_\dd=\HH_\din^{(B)}\widehat{\otimes} \HH_{\dout}^{(A)}$, as in (\ref{H = H_in otimes H_out}).\footnote{
Recall from Section \ref{sec: quantum cell ab BF on mfd with bdry} that we have, in fact, two models for the space of states, $\HH_\dd$ (functions on $\B_\dd$) and $\HH_\dd^\can$ (half-densities on $\B_\dd$). The two models are isomorphic and the comparison goes via multiplication by the (appropriately normalized) reference half-density $\psi\mapsto (\mu_{\B_\dd}^\hbar)^{1/2}\psi $. 
} 

We define the degree $1$ operator $\widehat{S}_\dd=\widehat{S}_\dout+\widehat{S}_\din $ on $\HH_\dd$ as 
the quantization 
of $S_\dd$, obtained by replacing 
$B_{\varkappa_\dout(e)}\mapsto -i\hbar (-1)^{\dim e}\frac{\dd}{\dd A_e}$ 
for $e\subset X_\dout$ and $A_e\mapsto -i\hbar \frac{\dd}{\dd B_{\varkappa_\din(e)}}
$ for $e\subset X_\din$ in  (\ref{S bdry non-ab}) and putting 
all derivatives in $A$ or $B$ to the right.

Explicitly, for $\widehat{S}_\dout,\widehat{S}_\din$ we have
\begin{eqnarray}\label{Shat out}
\widehat{S}_\dout &=& \sum_{e\subset X_\dout} \sum_{\nnn\geq 1}\frac{1}{\nnn!} 
\lan  l_\nnn^e\left( 
A|_{\bar{e}},\cdots,
A|_{\bar{e}}\right), -i\hbar\,(-1)^{\dim e}\frac{\dd}{\dd A_e} \ran \\
\label{Shat in}
\widehat{S}_\din &=& \sum_{e\subset X_\din} \sum_{\nnn\geq 1}
\frac{1}{\nnn!} 
\lan B_{\varkappa_\din(e)}, l_\nnn^e\left( 
\widehat{A}|_{\bar{e}},
\cdots,
\widehat{A}|_{\bar{e}}\right) \ran
\end{eqnarray}
where $\widehat{A}|_{\bar{e}}:=-i\hbar\sum_{e'\subset \bar{e}} (e')^*\cdot \mr{Ad}_{E(e>e')}\frac{\dd}{\dd B_{\varkappa_\din(e')}}$.

It is convenient to introduce, alongside $\widehat{S}_\dout$, its version acting on out-states \emph{from the right}, 
\begin{equation}\label{Shat out right}
\ola{\widehat{S}}_\dout= \sum_{e\subset X_\dout} \sum_{\nnn\geq 1}\frac{1}{\nnn!} 
\lan -i\hbar\,\frac{\ola\dd}{\dd A_e}, l_\nnn^e\left( 
A|_{\bar{e}},\cdots,
A|_{\bar{e}}\right) \ran
\end{equation} 
It satisfies $\widehat{S}_\dout \psi=(-1)^{|\psi|+1}\psi \ola{\widehat{S}}_{\dout}$ for any $\psi=\psi(A_\dout)\in \HH_\dout^{(A)}$.

\begin{remark} For $Y$ a cellular decomposition of a closed $(n-1)$-manifold, the operators  $\ola{\widehat{S}}_Y^{(A)}$ and $\widehat{S}_Y^{(B)}$ defined by formulae (\ref{Shat out right},\ref{Shat in}) on $Y$, are mutually adjoint w.r.t. the pairing (\ref{pairing H^A with H^B}): 
$$(\phi\circ \ola{\widehat{S}}^{(A)}_Y,\psi)=(\phi, \widehat{S}^{(B)}_Y\circ \psi)$$
for $\phi(A_Y)\in \HH_Y^{(A)}$ and $\psi(B_Y)\in \HH_Y^{(B)}$.
\end{remark}

\begin{Proposition}
\begin{enumerate}
[(a)]
\item \label{prop non-ab: item a} The operator $\widehat{S}_\dd$ defined as above squares to zero.
\item \label{prop non-ab: item b} The action (\ref{S cell non-ab}) on a  cobordism $M$ endowed with an admissible cellular decomposition $X$ satisfies the modified quantum master equation:
\begin{equation}\label{mQME non-ab}
\left(\frac{i}{\hbar}\widehat{S}_\dd-i\hbar\Delta_\bulk\right)\,e^{\frac{i}{\hbar}S} = 0
\end{equation}
\end{enumerate}
\end{Proposition}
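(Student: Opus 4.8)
The plan is to prove both statements by reducing them to the corresponding facts already established in the closed/``canonical'' setting (Theorem~\ref{thm: cellBF} and the inclusion-exclusion behaviour of $S_X$), together with the classical BV-BFV relations of Proposition~\ref{prop: non-ab class BV-BFV}. For part~(\ref{prop non-ab: item a}), I would first observe that $\widehat{S}_\dd=\widehat{S}_\dout+\widehat{S}_\din$ is a sum of two operators acting on the two tensor factors $\HH_\dout^{(A)}$ and $\HH_\din^{(B)}$ respectively, so $\widehat{S}_\dd^2=\widehat{S}_\dout^2+\widehat{S}_\din^2$ (the cross terms vanish because the two operators act on disjoint sets of variables and super-commute). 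Thus it suffices to check $\widehat{S}_\dout^2=0$ and $\widehat{S}_\din^2=0$ separately. For $\widehat{S}_\dout$, using the right-acting form $\ola{\widehat{S}}_\dout$ of~(\ref{Shat out right}) and the identity $\widehat{S}_\dout\psi=(-1)^{|\psi|+1}\psi\,\ola{\widehat{S}}_\dout$, the relation $\widehat{S}_\dout^2=0$ is equivalent to $\ola{\widehat{S}}_\dout{}^2=0$, which in turn is the statement that the differential operator $\ola{\widehat{S}}_\dout$ on $\mr{Fun}_\bC(C^\bt(X_\dout,E)[1])$ squares to zero; but this operator is precisely the cohomological vector field $Q$ generated (in the canonical sense of Section~\ref{sec: uL_infty}) by the cellular action $S_{X_\dout}=\sum_{e\subset X_\dout}\bar{S}_e$, so $\ola{\widehat{S}}_\dout{}^2=0$ follows from property~(\ref{cellBF thm c}) of Theorem~\ref{thm: cellBF} (the QME, hence the CME, on the subcomplex $X_\dout\subset X$). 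For $\widehat{S}_\din$ of~(\ref{Shat in}), the same argument applies after the Fourier-type substitution $A_e\mapsto\widehat{A}|_{\bar{e}}$: $\widehat{S}_\din$ is, up to this substitution, the vector field $Q$ for $S_{X_\din}$ transported to act on functions of $B_\din$, and again the CME on the subcomplex $X_\din$ gives $\widehat{S}_\din^2=0$.

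For part~(\ref{prop non-ab: item b}), the modified quantum master equation~(\ref{mQME non-ab}), I would follow the template of the proof of Lemma~\ref{lemma 7.3} in the abelian case, now keeping track of the non-abelian corrections. Write $S=S^{(0)}-i\hbar\,S^{(1)}$ and split the superfields into boundary-condition components on $\B_\dd$ and bulk (fibre) components, as in the abelian proof. The key computations are: first, the ``bulk'' classical master equation $\tfrac12\{S,S\}_{\omega_b}=\Pi^*S_\dd$ up to corrections --- more precisely I expect $\tfrac12\{S^{(0)},S^{(0)}\}_{\omega_b}=\Pi^* S_\dd$ to follow from relation~(b) of Proposition~\ref{prop: non-ab class BV-BFV} restricted to a fibre $\F_b$ (where $\omega$ becomes non-degenerate); second, $-i\hbar\,\Delta_\bulk S^{(1)}$ together with $\{S^{(0)},S^{(1)}\}_{\omega_b}$ should combine, via the QME of Theorem~\ref{thm: cellBF} applied cell-by-cell on $X-X_\dout$, into a term that cancels against $\Delta_\bulk$-type contributions; and third, the quantum boundary operator term $\tfrac{i}{\hbar}\widehat{S}_\dd\,e^{\frac{i}{\hbar}S}$ must be identified. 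For the latter, the crucial point is that acting with $\widehat{S}_\dout$ (resp.\ $\widehat{S}_\din$) on $e^{\frac{i}{\hbar}S}$ amounts to differentiating $S$ with respect to the boundary fields $A_\dout$ (resp.\ $B_\din$), and the deformed pullback $\Pi^*$ of~(\ref{Pi^*}) was engineered so that $Q^{(0)}\Pi^*=\Pi^* Q_\dd$ (relation~(c)); this should translate the boundary operator into $-\Pi^*S_\dd$ plus the $\hbar$-linear correction, matching the bulk CME term with the opposite sign. Assembling these, the $O(\hbar^0)$ and $O(\hbar^1)$ parts of~(\ref{mQME non-ab}) each vanish.

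A cleaner way to organise the $O(\hbar^1)$ bookkeeping, which I would use to avoid a brute-force expansion, is the inclusion-exclusion / locality argument: by property~(\ref{cellBF thm c}) of Theorem~\ref{thm: cellBF} the bulk part $\sum_{e\subset X-X_\dout}\bar S_e$ satisfies the QME on the corresponding subcomplex, and the boundary term $\lan B,A\ran_\din$ is quadratic hence contributes only to the linear part of $Q$ and a $c$-number to $\Delta$; so the only genuinely new phenomenon relative to Section~\ref{sec: non-ab BF I} is the appearance of the boundary operator $\widehat{S}_\dd$, whose role is exactly to absorb the ``missing'' QME contributions from cells that were dropped (the out-boundary cells) or that are paired with fixed boundary data (the in-boundary cells). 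Concretely, I would define the ``completed'' action $\widetilde{S}=S+\sum_{e\subset X_\dout}\bar S_e(\cdots)$ on the larger complex, note $\Delta e^{\frac{i}{\hbar}\widetilde S}=0$ by the closed-case theorem, and then integrate out / restrict to the polarization to recover~(\ref{mQME non-ab}); the boundary operator emerges as the residual of $\Delta$ on the boundary variables. The main obstacle I anticipate is the careful treatment of the parallel transports $\mr{Ad}_{E(e>e')}$ and the associated Koszul/orientation signs $(-1)^{\dim e}$ in matching the coefficients of $\widehat{S}_\dd$ against the boundary derivatives of $S$ --- in particular verifying that the particular normalization of $B_{\varkappa_\dout(e)}\mapsto -i\hbar(-1)^{\dim e}\frac{\dd}{\dd A_e}$ and $A_e\mapsto -i\hbar\frac{\dd}{\dd B_{\varkappa_\din(e)}}$ is exactly the one for which relation~(c) of Proposition~\ref{prop: non-ab class BV-BFV} quantizes correctly. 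This is routine but sign-heavy, and is where the bulk of the ``lengthy computation'' alluded to before the Proposition will sit.
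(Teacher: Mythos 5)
Your part (a) is essentially the paper's argument (square-zero follows from the classical $L_\infty$ relations on the boundary subcomplexes, which hold also with the local-system twist), and your primary route for part (b) is in spirit the computation the paper actually performs: split $S=S^{(0)}-i\hbar S^{(1)}$, compute the fiberwise bracket, and identify the out-boundary defect with $\tfrac{i}{\hbar}S\circ\ola{\widehat S}_\dout$. Three points where your plan is imprecise, though fixable: (i) the paper does not deduce the bracket identity from Proposition \ref{prop: non-ab class BV-BFV}; it proves $\tfrac12\{S^{(0)},S^{(0)}\}_{\omega_b}=\tfrac{i}{\hbar}S^{(0)}\circ\ola{\widehat S}_\dout-S_\din$ directly from the classical $L_\infty$ relations on all of $X$ (the sum restricted to cells $e'\subset X-X_\dout$ fails to vanish precisely by the out-boundary terms), and passing from relation (b) of that Proposition -- an identity of $1$-forms on the degenerate $(\F,\omega)$ -- to a statement about the fiberwise bracket $\{\cdot,\cdot\}_{\omega_b}$ requires an additional argument about the vertical part of $Q^{(0)}$, which you do not supply; (ii) $\Delta_\bulk S^{(1)}=0$ since $S^{(1)}$ depends only on $A$, so your proposed cancellation of ``$-i\hbar\Delta_\bulk S^{(1)}$'' is vacuous -- the identity actually needed is $\{S^{(1)},S^{(0)}\}_{\omega_b}=-\Delta_\bulk S^{(0)}+\tfrac{i}{\hbar}S^{(1)}\circ\ola{\widehat S}_\dout$, coming from the homotopy unimodularity relations on $X$ with the same out-boundary defect mechanism; (iii) the exact identity $\widehat S_\din\, e^{\frac{i}{\hbar}S}=S_\din\, e^{\frac{i}{\hbar}S}$ holds only because $S$ is \emph{linear} in $B_\din$ (it enters solely through $\lan B,A\ran_\din$); since $\widehat S_\din$ contains derivatives in $B_\din$ of arbitrary order, your assertion that it ``amounts to differentiating $S$'' is unjustified without this observation, which is the first step of the paper's proof.

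The genuine gap is the ``cleaner'' completion argument that you say you would actually use for the $O(\hbar)$ bookkeeping. First, $\widetilde S=S+\sum_{e\subset X_\dout}\bar S_e$ is the canonical action $S_X$ \emph{plus} the in-boundary term $\lan B,A\ran_\din$; the closed-case theorem gives $\Delta\, e^{\frac{i}{\hbar}S_X}=0$ for $S_X$ alone, and the extra term destroys this ($\{S_X,\lan B,A\ran_\din\}\neq 0$, and the variables $B_\din$ are not paired by the bulk Laplacian at all), so ``$\Delta e^{\frac{i}{\hbar}\widetilde S}=0$ by the closed-case theorem'' is false as stated. Second, even setting the in-boundary aside, the reinstated blocks $\bar S_e$ for $e\subset X_\dout$ contain the quantum parts $-i\hbar\, q^e_\nnn$, which appear neither in $S$ nor in $\widehat S_\dd$ -- the boundary operator (\ref{Shat out}) quantizes only the classical operations $l^e_\nnn$ -- so there is no formal ``integrate out / restrict to the polarization'' operation that converts the closed QME into (\ref{mQME non-ab}); in the paper's mechanism the out-boundary $q^e$ simply never enter, and the boundary operator arises as the defect of the $L_\infty$ and unimodularity relations when the cell sum is truncated to $X-X_\dout$, not as a residue of the full $\Delta$. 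To close the argument you should either carry out that defect computation (as the paper does) or prove a precise conjugation statement relating $\Delta_X$ to $\tfrac{i}{\hbar}\widehat S_\dd-i\hbar\Delta_\bulk$, which amounts to the same amount of work.
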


\begin{proof} 
Part (\ref{prop non-ab: item a}) is an immediate consequence of the classical $L_\infty$ relations (\ref{L_infty relations}) for the boundary complex $X_\dd$, which in turn follow from Theorem \ref{thm: simpBF} applied to $X_\dd$.\footnote{
Note that in our case the cochains on $X_\dd$ are twisted by a local system but the action (\ref{cellBF locality}) still satisfies the master equation, with the new definition (\ref{A|_e}) of $A|_{\bar{e}}$, as can be seen by inspecting the proof of Theorem \ref{thm: cellBF}: we have quantum master equations on cells, where the local system is trivialized and this implies (by the gluing procedure (\ref{simpBF (vi)}) of the proof of Theorem \ref{thm: simpBF}) that (\ref{cellBF locality}) is a solution of the master equation.
}

Let us prove part (\ref{prop non-ab: item b}). First, observe from (\ref{S cell non-ab}) that $S$ depends on $B_\din$ only via the boundary term $\lan B,A\ran_\din$. This implies that $  \widehat{S}_\din\circ e^{\frac{i}{\hbar}S}= S_\din\cdot e^{\frac{i}{\hbar}S}$.\footnote{
In more detail, 
we have
{\scriptsize
\begin{multline*}
\widehat{S}_\din\circ e^{\frac{i}{\hbar}S}= \left(\sum_{e\subset X_\din}\sum_{\nnn\geq 1}\frac{1}{\nnn!} 
\lan B_{\varkappa_\din(e)},l_\nnn^e\left(\widehat{A}|_{\bar{e}},\cdots, \widehat{A}|_{\bar{e}}\right) \ran
\right)\circ e^{\frac{i}{\hbar}S} \\=
\left(\sum_{e\subset X_\din}\sum_{\nnn\geq 1}\frac{1}{\nnn!} 
\lan B_{\varkappa_\din(e)},l_\nnn^e\left(\widehat{A}|_{\bar{e}}\circ\frac{i}{\hbar}S,\cdots, \widehat{A}|_{\bar{e}}\circ\frac{i}{\hbar}S\right) \ran
\right)\cdot e^{\frac{i}{\hbar}S} \\=
\left(\sum_{e\subset X_\din}\sum_{\nnn\geq 1}\frac{1}{\nnn!} \lan B_{\varkappa_\din(e)},l_\nnn^e\left( A|_{\bar{e}},\cdots,  A|_{\bar{e}}\right) \ran
\right)\cdot e^{\frac{i}{\hbar}S}= S_\din\cdot e^{\frac{i}{\hbar}S}.
\end{multline*}
}
} Also note that $\widehat{S}_\dout$ is a first order differential operator.
Therefore, we have the following
\begin{multline}\label{prop 8.8 eq1}
-i\hbar\; e^{-\frac{i}{\hbar}S}\left(\frac{i}{\hbar}\widehat{S}_\dd-i\hbar\Delta_\bulk\right)\,e^{\frac{i}{\hbar}S}=\frac12 \{S,S\}_{\omega_b}-\frac{i}{\hbar} S\circ\ola{\widehat{S}}_\dout+S_\din-i\hbar\Delta_\bulk S
\end{multline}
Here $\{,\}_{\omega_b}$ is the same Poisson bracket as in the proof of Lemma \ref{lemma 7.3}. 
We calculate 
\begin{multline*} 
\frac12 \{S^{(0)},S^{(0)}\}_{\omega_b}=
-\sum_{e\subset X-X_\dout}S^{(0)}\lan \frac{\ola\dd}{\dd A_e} ,\frac{\ora\dd}{\dd B_{\varkappa(e)}} \ran S^{(0)} 
\\
=-\sum_{e, e' \subset X-X_\dout,\;e\subset \bar{e}'}\; \sum_{r,s\geq 1} \frac{1}{r!s!} \lan B_{\varkappa(e')}, l^{e'}_{r+1}(A|_{\bar{e}'},\cdots,A|_{\bar{e}'},e^*\cdot\mr{Ad}_{E(e'>e)} l_s^{e}(A|_{\bar{e}},\cdots, A|_{\bar{e}})) \ran\\
-\underbrace{\sum_{e\subset X_\din}\lan B_{\varkappa_\din(e)} ,\frac{\ora \dd}{\dd B_{\varkappa(e)}} \ran S^{(0)}}_{S_\din}
\end{multline*}
Last term on the right is the contribution of the in-boundary term in $S$ to the Poisson bracket. If the first sum above were over all pairs of $e'$ -- a cell of $X-X_\dout$ and $e$ its (arbitrary codimension) face (which can be on $X_\dout$), the sum would vanish by classical $L_\infty$ relations on $X$ following from Theorem \ref{thm: cellBF}. Therefore, we continue:
\begin{multline}\label{prop 8.8 eq2}
\frac12 \{S^{(0)},S^{(0)}\}_{\omega_b}=\\
=\sum_{e'\subset X-X_\dout}\; \sum_{e\subset \bar{e}'\cap X_\dout}\; \sum_{r,s\geq 1} \frac{1}{r!s!} \lan B_{\varkappa(e')}, l^{e'}_{r+1}(A|_{\bar{e}'},\cdots,A|_{\bar{e}'},e^*\cdot\mr{Ad}_{E(e'>e)}l_s^{e}(A|_{\bar{e}},\cdots, A|_{\bar{e}})) \ran-S_\din \\
=\sum_{e\subset X_\dout}S^{(0)}\lan \frac{\ola\dd}{\dd A_e} ,\frac{\ora\dd}{\dd B_{\varkappa_\dout(e)}} \ran S_\dout-S_\din=\frac{i}{\hbar}S^{(0)}\circ \ola{\widehat{S}}_\dout-S_\din
\end{multline}
Similarly, we have
\begin{multline*}
\{S^{(1)},S^{(0)}\}_{\omega_b}= -\sum_{e\subset X-X_\dout}S^{(1)}\lan \frac{\ola\dd}{\dd A_e} ,\frac{\ora\dd}{\dd B_{\varkappa(e)}} \ran S^{(0)}\\
=-\sum_{e, e' \subset X-X_\dout,\;e\subset \bar{e}'}\; \sum_{r,s\geq 1} \frac{1}{r!s!}\, q^{e'}_{r+1}(A|_{\bar{e}'},\cdots,A|_{\bar{e}'},e^*\cdot\mr{Ad}_{E(e'>e)}l_s^{e}(A|_{\bar{e}},\cdots, A|_{\bar{e}}))
\end{multline*}
Which, by the unimodular $L_\infty$ relations (\ref{uL_infty homotopy unimodularity}) on $X$, and the argument as above, yields
\begin{equation}\label{prop 8.8 eq3}
\{S^{(1)},S^{(0)}\}_{\omega_b}=-\Delta_\bulk S^{(0)}+\frac{i}{\hbar}S^{(1)}\circ \ola{\widehat{S}}_\dout
\end{equation}
Putting (\ref{prop 8.8 eq2}) and (\ref{prop 8.8 eq3}) into (\ref{prop 8.8 eq1}), we obtain the modified quantum master equation (\ref{mQME non-ab}).

\end{proof}

\begin{example}
Consider $M$ an interval viewed as a cobordism between an in-point and an out-point, with $X$ a cellular decomposition with $N\geq 1$ $1$-cells which we denote  $[01],[12],\ldots,[N-1,N]$ and $N+1$ $0$-cells denoted $[0],[1],\ldots,[N]$. 
The dual CW complex $X^\vee$ has $0$-cells $[0]^\vee,\ldots, [N]^\vee$ and $1$-cells $[01]^\vee,\ldots, [N-1,N]^\vee$.
$$\includegraphics[scale=0.6]{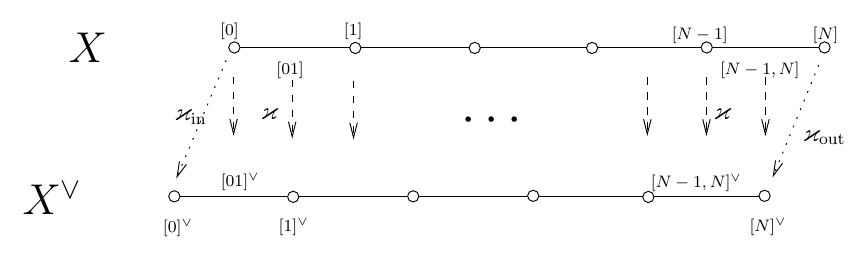}$$
The data of the local system $E_X$ is a collection of group elements 
$$u_k:= E([k,k+1]>[k])\;\in G,\qquad v_{k+1}:=E([k,k+1]>[k+1])\;\in G$$ 
for $k=0,1,\ldots,N-1$.
The superfields are:
$$ A= \sum_{k=0}^N [k]^* A_{[k]} + \sum_{k=0}^{N-1} [k,k+1]^* A_{[k,k+1]},\qquad
B=\sum_{k=0}^N ([k]^\vee)^* B_{[k]^\vee} + \sum_{k=0}^{N-1} ([k,k+1]^\vee)^* B_{[k,k+1]^\vee} $$
where $A_{[k]},A_{[k,k+1]}\in \g$ have degrees $1$ and $0$, respectively; degrees of $B_{[k]^\vee},B_{[k,k+1]^\vee}\in\g^*$ are $-1$ and $-2$, respectively.
We have 
\begin{multline*} 
S=\sum_{k=0}^{N-1} \lan B_{[k,k+1]^\vee},\frac12[A_{[k]},A_{[k]}] \ran+\\ 
+\sum_{k=1}^N \lan B_{[k]^\vee},\left[A_{[k-1,k]},\frac{\Ad_{u_{k-1}}A_{[k-1]}+\Ad_{v_k} A_{[k]}}{2}\right]+\mathbb{F}(\mr{ad}_{A_{[k-1,k]}})\circ (\Ad_{v_k} A_{[k]}-\Ad_{u_{k-1}}A_{[k-1]}) \ran\\
 -i\hbar \sum_{k=1}^N \tr \log 
 \mathbb{G}(\mr{ad}_{A_{[k-1,k]}}) + \lan B_{[0]^\vee},A_{[0]} \ran
\end{multline*}
with functions $\mathbb{F}$ and $\mathbb{G}$ as in (\ref{simpBF interval F,G}).
The nontrivial component of the map $\Pi^*$ (\ref{Pi^*}) is 
$$\Pi^*:\;\; B_{[N]^\vee}\mapsto \frac{\dd }{\dd A_{[N]}}S=\Ad^*_{v_N^{-1}}\mathbb{H}(\mr{ad}^*_{A_{[N-1,N]}}) B_{[N]^\vee}$$ where $\mathbb{H}(x):=-\frac{x}{2}+\mathbb{F}(-x)=\frac{x}{e^x-1}$.
Boundary action is
$$ S_\dd = \lan B_{[N]^\vee},\frac12 [A_{[N]},A_{[N]}] \ran - 
\lan B_{[0]^\vee},\frac12 [A_{[0]},A_{[0]}] \ran $$
\end{example}
The space of states
$$\HH_\dd=\mr{Fun}(\g^*[-1])\otimes \mr{Fun}(\g[1]) =  C_{-\bt}^{CE}(\g)\otimes C^\bt_{CE}(\g)\quad \ni \psi(B_{[0]^\vee},A_{[N]})$$
can be identified with the Chevalley-Eilenberg cochain complex tensored with its dual (Lie algebra chains with opposite grading). The differential on states is
$$ \widehat{S}_\dd= -i\hbar\; \lan \frac12 [A_{[N]},A_{[N]}], \frac{\dd}{\dd A_{[N]}} \ran 
-i\hbar \lan B_{[0]^\vee},\frac12 \left[\frac{\dd}{\dd B_{[0]^\vee}},\frac{\dd}{\dd B_{[0]^\vee}}\right] \ran $$
-- the sum of standard Lie algebra cochain and chain differentials (up to normalization). Its cohomology is 
$H^i_{\widehat{S}_\dd}(\HH_\dd)=\bigoplus_{-j+k=i} H_j^{CE}(\g)\otimes H^k_{CE}(\g) $.

\subsection{Perturbative partition function on a cobordism: pushforward to cohomology in the bulk}\label{sec: non-ab quantization cob}
We proceed as in Sections \ref{sec 7.1 gauge-fixing}, \ref{sec 7.2 Z pert} to define the perturbative partition function as the BV pushforward of the non-abelian $BF$ theory on a cobordism $M$ endowed with admissible cellular decomposition $X$ from ``cellular bulk fields'' $\F_b=C^\bt(X,X_\dout)[1]\oplus C^\bt(X^\vee,X^\vee_\din)[n-2]$ to $\F_b^\zm=H^\bt(M,M_\dout)[1]\oplus H^\bt(M,M_\din)[n-2]$, with gauge-fixing inferred from a choice of induction data $C^\bt(X,X_\dout)\wavy{(\ii,\pp,\K)}H^\bt(M,M_\dout)$. Namely, we define 
$Z(B_\din,A_\dout;A_\zm,B_\zm) \in \HH_\dd^\can \widehat\otimes \Dens^{\frac12,\Fun}_\mathbb{C}(\F_b^\zm)$ -- thought of as a boundary state with coefficients in half-densities of bulk residual fields -- by formula (\ref{Z integrating out bulk fields}), for the non-abelian cellular action (\ref{S cell non-ab}).

The following statement generalizes (\ref{Z integrating out bulk fields result}) to the non-abelian setting and is the result of a straightforward perturbative computation of the fiber integral defining $Z$.
\begin{Proposition} \label{prop: Z non-ab}
For a cobordism $M$ endowed with an admissible cellular decomposition $X$ and a $G$-local system $E$ in adjoint representation, we have the following.
Explicitly, the partition function $Z$ has the form
\begin{equation}\label{Z non-ab cob explicit}
Z=e^{\frac{i}{\hbar}S_\mr{eff}(B_\din,A_\dout;A_\zm,B_\zm)}\;\xi_\hbar^{H^\bt(M,M_\dout)}\cdot \tau(M,M_\dout)\cdot (\mu_{\B_\dd}^\hbar)^{1/2}
\end{equation}
with the constant factor as in (\ref{Z integrating out bulk fields result}) and with 
\begin{equation}\label{S_eff non-ab cob}
S_\mr{eff}=\sum_{\Gamma}\frac{(-i\hbar)^{\mr{loops}(\Gamma)+V^q(\Gamma)}}{|\mr{Aut}(\Gamma)|}\;\varphi_\Gamma(B_\din,A_\dout;A_\zm,B_\zm)
\end{equation}
where the sum runs over connected oriented graphs $\Gamma$ on $M$ with:
\begin{itemize}
\item Oriented edges, with source and target half-edge placed\footnote{We talk here about ``placing'' elements of the graph $\Gamma$ at cells and decorating them with particular tensors depending on the placement. In the end, to obtain the Feynman weight of the graph $\varphi_\Gamma$, we sum over placements the contraction of the respective tensors. Sum over placements here is a cellular analog of configuration space integrals defining the weights of Feynman graphs in \cite{CMRpert}.} at cells $e^\vee\subset X^\vee-X_\din^\vee$ and $e\subset X-X_\dout$ respectively, 
decorated with minus the propagator $-\KK(e,e^\vee)\in E_{\dot{e}}\otimes E^*_{\dot{e}^\vee}$ (see Remark \ref{rem: propagator cob}).
\item  $V_\dout$ univalent vertices (with outgoing half-edge), placed at cells $e\subset X_\dout$ with the adjacent half-edge placed at $\varkappa_\dout(e)$; such a vertex is decorated with $\mr{Ad}_{E(\varkappa^{-1}\varkappa_\dout(e)>e)}\circ (A_\dout)_e$.  
\item $V_\din$ univalent vertices (with incoming half-edge), placed at cells $e^\vee\subset X_\din^\vee$ with the adjacent half-edge placed at $\varkappa_\din^{-1}(e^\vee)$; such a vertex is decorated with $(B_\din)_{e^\vee}$.
\item $V^l$  
bulk vertices placed at cells $e\subset X-X_\dout$, with one outgoing half-edge also placed at $e$ and with $k\geq 2$ incoming half-edges placed at faces of arbitrary codimension $e_1,\ldots,e_k\subset e$; the decoration is: $$l^e_{k,e_1,\ldots,e_k}\circ(\mr{Ad}_{E(e>e_1)}\otimes\cdots \otimes \mr{Ad}_{E(e>e_k)}) \in \mr{Hom}(\bigotimes_{j=1}^k E_{\dot{e}_j}, E_{\dot{e}} )\cong E_{\dot{e}}\otimes \bigotimes_{j=1}^k E^*_{\dot{e}_j} $$
with $l^e_{k,e_1,\ldots,e_k}\in \mr{Hom}(\g^{\otimes k},\g)$ a local component of the $k$-ary $L_\infty$ operation on $C^\bt(X,\g)$ determined by (\ref{Sbar cell}).
\item $V^q\in\{0,1\}$ bulk vertices placed at cells $e\subset X-X_\dout$, with no outgoing and $k\geq 2 $ incoming half-edges placed at faces of arbitrary codimension $e_1,\ldots, e_k\subset e$; the decoration is:
$$q^e_{k,e_1,\ldots,e_k}\circ(\mr{Ad}_{E(e>e_1)}\otimes\cdots \otimes \mr{Ad}_{E(e>e_k)}) \in \mr{Hom}(\bigotimes_{j=1}^k E_{\dot{e}_j}, \mathbb{R} )\cong \bigotimes_{j=1}^k E^*_{\dot{e}_j}$$
with $q^e_{k,e_1,\ldots,e_k}\in \mr{Hom}(\g^{\otimes k},\mathbb{R})$ a local component of the $k$-ary \emph{unimodular} $L_\infty$ operation on $C^\bt(X,\g)$ determined by (\ref{Sbar cell}).
\item $V_\zm^A$ leaves (loose half-edges), oriented towards the vertex and placed at a cell $e$. Decoration: $(\ii A_\zm)_e$,
\item $V_\zm^B$ leaves, oriented from the vertex and placed at a cell $e^\vee$. Decoration: $(\pp^\vee B_\zm)_{e^\vee}$,
\end{itemize}
The value of $\varphi_\Gamma$ in (\ref{S_eff non-ab cob}) is the sum over all placements of half-edges and vertices of the graph at cells of $X$ (subject to restrictions to boundary strata and local relations between placement of half-edges and vertices as above), 
of products of all decorations (with tensors in the fibers of the local system contracted in the way prescribed by the graph $\Gamma$).\footnote{
Note that $\varphi_\Gamma$ is a polynomial in the variables $(B_\din,A_\dout,A_\zm,B_\zm)$ of degree $(V_\din,V_\dout,V^A_\zm,V^B_\zm)$.}
\end{Proposition}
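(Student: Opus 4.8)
The plan is to compute the fiber BV integral (\ref{Z integrating out bulk fields}) for the action (\ref{S cell non-ab}) by the stationary-phase/Feynman-diagram method and to match the result with the sum over graphs $\Gamma$ as stated. First I would split the superfields on $\F_b$ as $A=\til{A_\dout}+\ii\Azm+\Afluct$ and $B=\til{B_\din}+\pp^\vee\Bzm+\Bfluct$ with $\Afluct\in\mr{im}(\K)$, $\Bfluct\in\mr{im}(\K^\vee)$, so that the integration runs over $\LL=\mr{im}(\K)[1]\oplus\mr{im}(\K^\vee)[n-2]$, exactly as in Section \ref{sec 7.2 Z pert}. Using the ansatz (\ref{cellBF: a eq}), the action decomposes as $S=\lan B,dA\ran+\lan B,A\ran_\din+S_{\mr{int}}$, where the quadratic-in-fields part $\lan B,dA\ran+\lan B,A\ran_\din$ coincides with the abelian action and $S_{\mr{int}}$ collects the vertices $\lan B_{\varkappa(e)},\tfrac1{k!}l^e_k(A|_{\bar e},\ldots,A|_{\bar e})\ran$ with $k\geq 2$ together with the $O(\hbar)$ terms $-i\hbar\,\tfrac1{k!}q^e_k(A|_{\bar e},\ldots,A|_{\bar e})$, read off from the Taylor expansion (\ref{Sbar via l and q}) of the building blocks of Theorem \ref{thm: cellBF}. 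On $\LL$ the quadratic form $\lan\Bfluct,d\Afluct\ran$ is nondegenerate (Hodge decompositions (\ref{Hodge decomp},\ref{Hodge decomp dual})) and its inverse is the chain homotopy $\K$, whose parametrix is the propagator $\KK$ of Remark \ref{rem: propagator cob}.

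The second step is the Gaussian integration. Performing it for the quadratic part alone, against the normalized half-density $(\mu^\hbar_\F)^{1/2}=\xi_\hbar\cdot\mu_\F^{1/2}$, is the computation already carried out in Lemma \ref{lemma: torsion as BV integral} and Proposition \ref{prop: Z closed} (applied to the relative complex $C^\bt(X,X_\dout)$ with coefficients in $E$) and yields the constant prefactor $\xi_\hbar^{H^\bt(M,M_\dout)}\cdot\tau(M,M_\dout)\cdot(\mu^\hbar_{\B_\dd})^{1/2}$; the fluctuation--source cross terms, after completing the square (i.e.\ shifting $\Afluct$ by $\K\phi(A_\dout)$ and $\Bfluct$ by $\K^\vee\phi^\vee(B_\din)$), reproduce the abelian exponent of (\ref{Z integrating out bulk fields result}), which I then reinterpret as the Feynman graphs $\Gamma$ having no internal interaction vertex. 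The remaining factor $\langle e^{\frac i\hbar S_{\mr{int}}}\rangle$, evaluated with the Gaussian measure of covariance $i\hbar\,\KK$, is expanded by Wick's theorem. Each contraction $\ll\Afluct(e)\,\Bfluct(e^\vee)\gg=i\hbar\,\KK(e,e^\vee)$ becomes an oriented internal edge from $e\subset X-X_\dout$ to $e^\vee\subset X^\vee-X^\vee_\din$ carrying $-\KK(e,e^\vee)$ once the factors of $\frac i\hbar$ attached to the two adjacent vertices are accounted for; the $l^e_k$- and $q^e_k$-terms become the bulk vertices $V^l$ and $V^q$ (the latter bringing an extra $-i\hbar$); and the fixed insertions $\ii\Azm$, $\pp^\vee\Bzm$, $A_\dout$, $B_\din$ become the external leaves $V^A_\zm$, $V^B_\zm$, $V_\dout$, $V_\din$. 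The parallel transports $\Ad_{E(e>e')}$ built into $A|_{\bar e}$ by (\ref{A|_e}), together with the dual local system $E^*$ on $X^\vee$ entering $\KK$ via (\ref{dual loc system}), are precisely what decorate the half-edges with the morphisms of $E$ and $E^*$ listed; and the $\tfrac1{k!}$ prefactors in $S_{\mr{int}}$, combined with the sum over all placements of vertices and half-edges on cells, reassemble into the symmetry factors $1/|\mr{Aut}(\Gamma)|$. Passing to $S_{\mr{eff}}:=-i\hbar\log\big(Z/(\text{prefactor})\big)$ restricts the sum to \emph{connected} graphs, giving (\ref{S_eff non-ab cob}); note that, since every interaction vertex has valence $\geq 3$, every connected diagram containing an internal vertex necessarily has an external leaf, so the field-independent part of $Z$ is indeed exhausted by the free Gaussian, consistent with (\ref{Z non-ab cob explicit}).

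Finally I would settle the $\hbar$-content. In the Wick expansion of $\langle e^{\frac i\hbar S_{\mr{int}}}\rangle$ a connected graph with $V$ internal vertices and $E$ internal edges carries $(\tfrac i\hbar)^V(i\hbar)^E(-i\hbar)^{V^q}$; after the logarithm this becomes $(-i\hbar)^{E-V+1+V^q}=(-i\hbar)^{\mr{loops}(\Gamma)+V^q(\Gamma)}$ up to a sign absorbed into $\varphi_\Gamma$. To bound $V^q$, orient each propagator from its $A$-half-edge to its $B$-half-edge: every $l^e_k$-vertex then has out-degree $\leq 1$ (its unique $B$-slot) and every $q^e_k$-vertex has out-degree $0$, so the internal-edge digraph is a pseudoforest; hence $E\le V$ with equality only if every internal vertex has out-degree exactly $1$. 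Two $q$-vertices would force $E\le V-2$, impossible for a connected graph, so $V^q\in\{0,1\}$; and if $V^q=1$ then $E=V-1$, i.e.\ $\mr{loops}(\Gamma)=0$. Thus $Z$ has the claimed finite $\hbar$-content. I expect the genuinely delicate points of the write-up to be this loop count together with the meticulous bookkeeping of the local-system transports across cells (checking that the $\Ad_E$'s coming from $A|_{\bar e}$, from the in-boundary term $\lan B,A\ran_\din$, and from the dual local system inside $\KK$ combine into exactly the decorations stated), and the matching of the $\tfrac1{k!}$-factors with $1/|\mr{Aut}(\Gamma)|$; the rest is the routine Gaussian-plus-Wick computation that already underlies Proposition \ref{prop: Z closed}.
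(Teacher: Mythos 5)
Your proposal is correct and follows exactly the route the paper intends: the paper offers no written proof, declaring Proposition \ref{prop: Z non-ab} to be ``the result of a straightforward perturbative computation of the fiber integral defining $Z$'', and your splitting into the abelian quadratic part plus the $l^e_k$/$q^e_k$ vertices, the Gaussian/Wick expansion with propagator $\KK$, and the $\hbar$-counting $E-V^l+1=\mr{loops}+V^q$ is precisely that computation, with the free Gaussian reproducing the prefactor of (\ref{Z integrating out bulk fields result}). One cosmetic slip: with edges oriented from the $A$-half-edge to the $B$-half-edge, the unique $B$-slot of an $l$-vertex bounds the \emph{in}-degree (not out-degree) by $1$, but the resulting inequality $E\le V$ on the internal subgraph, and hence $V^q\in\{0,1\}$ and $\mr{loops}+V^q\le 1$, is unaffected.
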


\begin{remark} Graphs contributing to (\ref{S_eff non-ab cob}) fall into four types (we provide each type with a picture of a typical example):
\begin{enumerate}[(I)]
\item Rooted trees with the root decorated with $B_\din$ and leaves either decorated by $A_\zm$ or by $A_\dout$.
$$\includegraphics[scale=0.35]{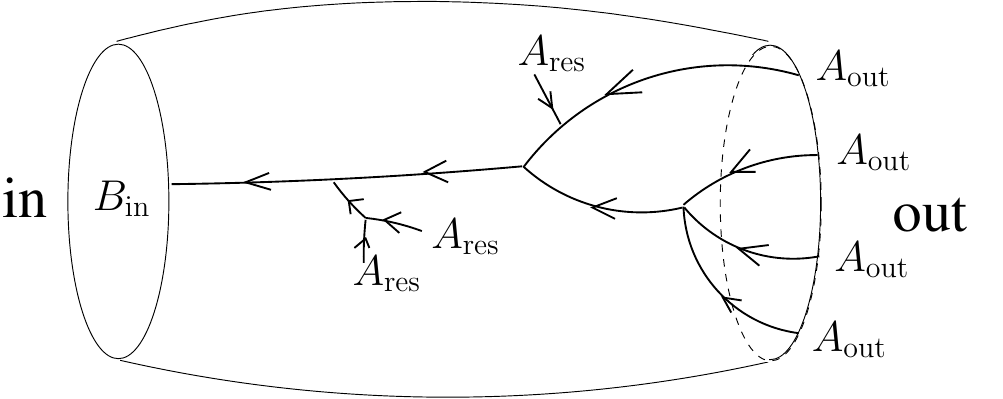}$$
\item   Rooted trees with the root decorated with $B_\zm$ and leaves decorated by $A_\zm$ or  $A_\dout$.
$$\includegraphics[scale=0.35]{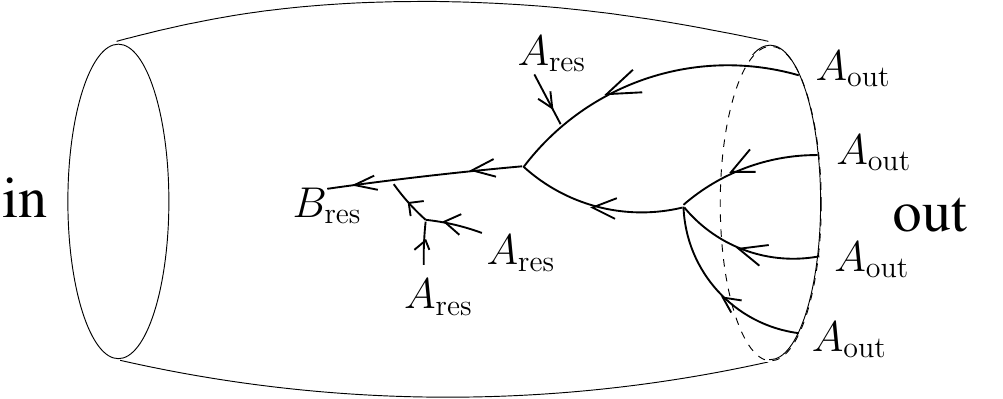}$$
\item Rooted trees with the root decorated with a quantum operation $q_k^e$ and leaves decorated by $A_\zm$ or $A_\dout$.
$$\includegraphics[scale=0.35]{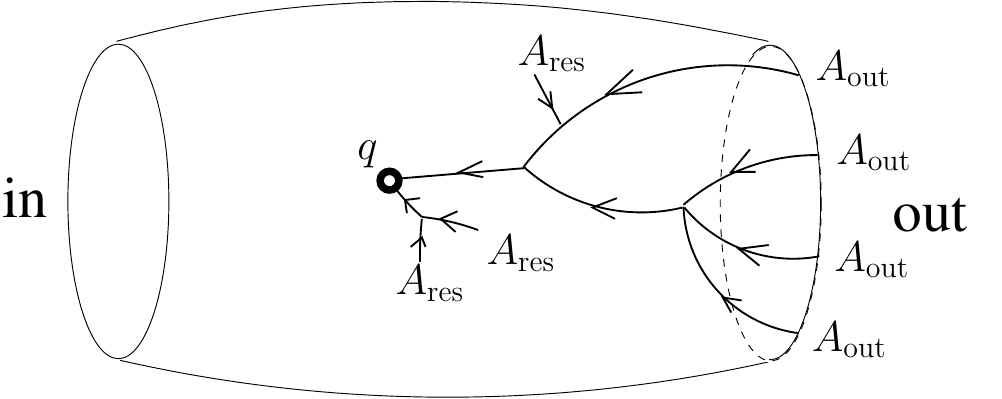}$$
\item One-loop graphs (a cycle with several trees attached to the cycle at the root) with leaves decorated by $A_\zm$ or $A_\dout$.
$$\includegraphics[scale=0.35]{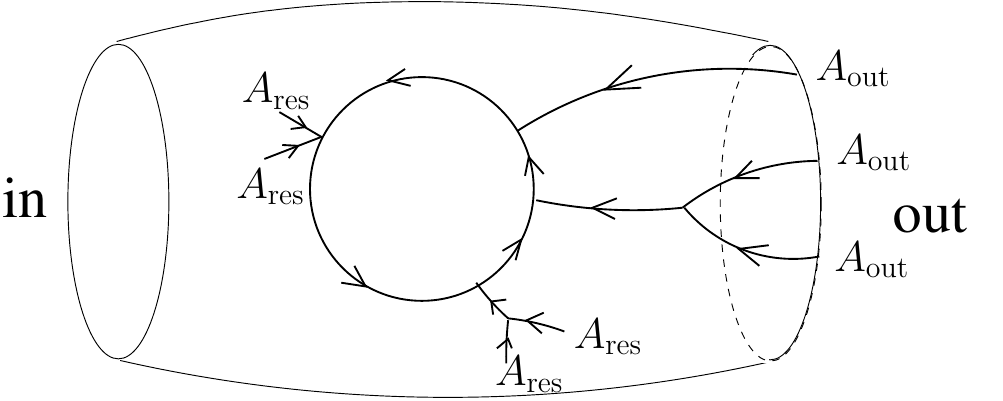}$$
\end{enumerate}
This classification of graphs yields the following ansatz for $S_\mr{eff}$:
$$S_\mr{eff}=\lan B_\din, \varphi^\mr{I}(A_\dout,A_\zm)\ran_{\din}+
\lan B_\zm, \varphi^\mr{II}(A_\dout,A_\zm)\ran_{\zm}-i\hbar\; \varphi^\mr{III+IV}(A_\dout,A_\zm)
$$
with $\varphi^\mr{I},\varphi^\mr{II},\varphi^\mr{III+IV}$ generally of unbounded degree in the variables $A_\dout,A_\zm$.
\end{remark}

\begin{theorem}
\label{prop 8.15} \leavevmode
\begin{enumerate}[(i)]
\item \label{prop 8.15 (i)} The partition function (\ref{Z non-ab cob explicit}) satisfies the modified quantum master equation\footnote{
Note that operators $\widehat{S}_\dd,\Delta_\zm$ here act on half-densities. So, in the conventions of Section \ref{sec: quantum cell ab BF on mfd with bdry}, we should be writing $\widehat{S}_\dd^\can,\Delta_\zm^\can$. We omit here the superscript $\can$ to lighten the notation.}
$$\left(\frac{i}{\hbar}\widehat{S}_\dd-i\hbar \Delta_\zm\right)Z=0$$
\item \label{prop 8.15 (ii)} A change of gauge-fixing data $(\ii,\pp,\K)$  changes $Z$ by a $\left(\frac{i}{\hbar}\widehat{S}_\dd-i\hbar \Delta_\zm\right)$-exact term.
\item \label{prop 8.15 (iii)} Considered modulo $\left(\frac{i}{\hbar}\widehat{S}_\dd-i\hbar \Delta_\zm\right)$-exact terms, $Z$ is independent of the cellular decomposition $X$ of $M$, provided that the cellular decomposition of the boundary is fixed.
\item \label{prop 8.15 (iv)} Assume, as in Proposition \ref{prop: gluing} that the cobordism $(M_1,X_1)\cob{(M,X)}(M_3,X_3)$ is obtained by composing (gluing) cobordisms $(M_1,X_1)\cob{(M_I,X_I)}(M_2,X_2)$ and $(M_2,X_2)\cob{(M_{II},X_{II})}(M_3,X_3)$. Then the partition function (\ref{Z non-ab cob explicit}) for the glued cobordism can be recovered from the partition functions for the constituent cobordisms by the same gluing formula (\ref{gluing formula}), as in abelian case. 

\end{enumerate}
\end{theorem}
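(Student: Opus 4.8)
The plan is to establish the four parts in turn, following closely the abelian template of Propositions \ref{prop: Z} and \ref{prop: gluing}, with the structural results of Section \ref{sec: non-ab BF I} supplying the new non-abelian input. For part (\ref{prop 8.15 (i)}) I would start from the bulk modified quantum master equation (\ref{mQME non-ab}) and apply the fiber BV integral over $\LL\subset\F_\fluct$ defining $Z$ in (\ref{Z integrating out bulk fields}). Since $\widehat{S}_\dd$ acts only on the boundary fields $(B_\din,A_\dout)$, which are not integrated out, it commutes with the fiber integral; the bulk BV Laplacian decomposes as $\Delta_\bulk=\Delta_\zm+\Delta^\can_\fluct$, and the fluctuation part integrates to zero over the gauge-fixing Lagrangian by the BV--Stokes theorem, exactly as in the proof of part (\ref{prop: Z (ii)}) of Proposition \ref{prop: Z}. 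That $\frac{i}{\hbar}\widehat{S}_\dd-i\hbar\Delta_\zm$ squares to zero follows from $\widehat{S}_\dd^2=0$ (already proven) and the fact that $\widehat{S}_\dd$ and $\Delta_\zm$ act on disjoint sets of variables.

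For part (\ref{prop 8.15 (ii)}) I would invoke the general fact that a smooth family of gauge-fixing choices for a BV pushforward induces a canonical transformation of the result, together with the contractibility — hence path-connectedness — of the space of induction data $C^\bt(X,X_\dout)\wavy{(\ii,\pp,\K)}H^\bt(M,M_\dout)$ established in Section \ref{sec: deformations of ind data}; so any two choices yield partition functions differing by a $\left(\frac{i}{\hbar}\widehat{S}_\dd-i\hbar\Delta_\zm\right)$-exact term. Alternatively one may run the infinitesimal computation of part (\ref{prop: Z (iii)}) of Proposition \ref{prop: Z} directly on the Feynman-diagram expansion (\ref{Z non-ab cob explicit})--(\ref{S_eff non-ab cob}), using the deformation generators of Section \ref{sec: deformations of ind data}.

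Part (\ref{prop 8.15 (iii)}) I expect to be the main obstacle. The plan is to connect any two admissible cellular decompositions of $M$ inducing the same decomposition of $\dd M$ by a finite sequence of cellular aggregations and subdivisions that can be localized in the interior of $M$ — equivalently, elementary expansions and collapses supported in balls in $\mathrm{int}(M)$ — chosen so as to leave $X_\dd$ and its product collar near $M_\dout$ untouched and to preserve admissibility. For a single such move relating a finer complex $Y$ to a coarser $X$, supported in a ball $D\subset\mathrm{int}(M)$, both the relevant fields (the $A$-field on $Y|_D$ and, via chain-level Poincar\'e duality, the $\g^*$-valued chains $B$ on the dual cells over $D$) and the building blocks $\bar{S}_e$ for $e\subset D$ coincide with those of the ``canonical'' theory of Section \ref{sec: non-ab BF I}; hence the cobordism-setting analogues of Lemma \ref{lemma: collapse} and Proposition \ref{prop: aggregations} apply, and $P^{Y\to X}_*\big(e^{\frac{i}{\hbar}S_Y}\mu^{1/2}_\hbar\big)$ is a canonical transformation of $e^{\frac{i}{\hbar}S_X}\mu^{1/2}_\hbar$. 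Combining transitivity of BV pushforwards (composition of induction data, (\ref{composition of ind data})), the gauge-fixing independence of part (\ref{prop 8.15 (ii)}), and the fact that pushforward commutes with BV Laplacians, one concludes that $Z_Y$ and $Z_X$ differ by a $\left(\frac{i}{\hbar}\widehat{S}_\dd-i\hbar\Delta_\zm\right)$-exact term, with $\widehat{S}_\dd$ literally unchanged since the boundary decomposition is fixed. The hard part is making the reduction to interior moves and the passage to the canonical setting near $M_\dout$ precise enough that the canonical-setting results genuinely transport over.

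For part (\ref{prop 8.15 (iv)}) the plan is to repeat the argument of Proposition \ref{prop: gluing} essentially verbatim. The one new ingredient is the non-abelian analogue of the factorization (\ref{gluing e^S}): on the glued cobordism, $e^{\frac{i}{\hbar}S}$ should equal the Gaussian integral over interface boundary conditions $(A_2^{*},B_2^{*})$ of $e^{\frac{i}{\hbar}S_I}\,e^{-\frac{i}{\hbar}\langle B_2^{*},A_2^{*}\rangle}\,e^{\frac{i}{\hbar}S_{II}}$, which I would derive from the locality of the building blocks together with the structure of the boundary pullback $\Pi^*$ of (\ref{Pi^*}) on the $M_I$-side (whose out-boundary is the interface): integrating out the interface data must reproduce exactly the building blocks attached to the interface cells in the glued action. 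Granting this, one evaluates the bulk BV pushforward of the right-hand side in two stages — first to the composite residual fields, then to the standard ones via the homological-perturbation retraction (\ref{gzm retraction}) with the glued induction data (\ref{glued ind data}) — exactly as in Proposition \ref{prop: gluing}, and the Feynman weights $\varphi_\Gamma$ of Proposition \ref{prop: Z non-ab} reassemble along graphs cut by the interface, with the $\K_\gl$-part of (\ref{glued ind data}) playing the role of the propagator-gluing rule. This yields the gluing formula (\ref{gluing formula}) modulo $\left(\frac{i}{\hbar}\widehat{S}_\dd-i\hbar\Delta_\zm\right)$-coboundaries; the secondary delicate point is precisely the interface factorization just described.
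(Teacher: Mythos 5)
Your plan for items (\ref{prop 8.15 (i)}), (\ref{prop 8.15 (ii)}) and (\ref{prop 8.15 (iv)}) is essentially the paper's proof. Parts (\ref{prop 8.15 (i)}) and (\ref{prop 8.15 (ii)}) are indeed deduced from the bulk modified quantum master equation (\ref{mQME non-ab}) together with the general properties of fiber BV integrals in a family over $\B_\dd$ (the chain-map property with respect to $\frac{i}{\hbar}\widehat{S}_\dd-i\hbar\Delta$ and the exactness of the change under a homotopy of gauge-fixing data); and for (\ref{prop 8.15 (iv)}) the paper does exactly what you describe: it observes that the interface factorization (\ref{gluing e^S}) holds verbatim in the non-abelian case -- your justification, that by locality of the building blocks $S_{II}$ depends on $B_2^*$ only through the boundary term $\lan B_2^*,A_2^{II}\ran$, so integrating out $(A_2^*,B_2^*)$ reassembles the glued action -- after which the two-stage pushforward of Proposition \ref{prop: gluing}, with the composed induction data (\ref{glued ind data}), goes through unchanged.

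The one place where you diverge, and where your reduction is weaker than the paper's, is (\ref{prop 8.15 (iii)}). The paper passes to the canonical picture exactly as you propose (the bulk fields are identified with $C^\bt(X,X_\dout)[1]\oplus C_\bt(X,X_\dout)[-2]$, with $(A_\dout,B_\din)$ treated as external parameters), but then it simply performs \emph{arbitrary} elementary expansions and collapses on cells outside $X_\dd$, applies Lemma \ref{lemma: collapse} together with the family chain-map property, and invokes the fact that simple-homotopy relative to $X_\dd$ connects any two CW decompositions of $M$ restricting to $X_\dd$ on the boundary. Your version instead restricts to subdivision/aggregation moves (equivalently, expansions/collapses) ``supported in balls in the interior'' and asks intermediate complexes to stay admissible. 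Neither restriction is needed, and the first is not obviously achievable: it is not clear that two decompositions agreeing on $\dd M$ can be connected by interior-supported subdivisions/aggregations that never touch $X_\dd$ (a common subdivision rel the boundary decomposition would require an argument you do not supply), whereas the relative simple-homotopy statement the paper uses is standard. Moreover, the intermediate complexes in an expansion/collapse chain are not cellular decompositions of $M$ at all -- which is precisely why one switches to the canonical setting -- so admissibility/product type is irrelevant for them. If you drop these extra constraints and allow all collapses/expansions with the collapsed pair outside $X_\dd$, your argument becomes the paper's and the gap disappears.
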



\begin{proof}
Items (\ref{prop 8.15 (i)}), (\ref{prop 8.15 (ii)}) are an immediate consequence of (\ref{mQME non-ab}) and the general properties of finite-dimensional BV puhforwards (that the ``family over $\B$ version'' of BV pushforward is a chain map w.r.t. $\left(\frac{i}{\hbar}\Omega_\B-i\hbar\Delta\right)$ and that a change of gauge-fixing induces a change of the pushforward of a closed element by an exact element, see Theorem 2.14 in \cite{CMRpert}; in our case $\Omega_\B=\widehat{S}_\dd$).

To prove (\ref{prop 8.15 (iii)}), we (partially) switch back to the formalism of Section \ref{sec: simp BF reminder}. Let us regard the cellular action 
(\ref{S cell non-ab}) as a function on $\sF_{X,X_\dout}:=C^\bt(X,X_\dout)[1]\oplus C_\bt(X,X_\dout)[-2]$ (which is canonically symplectomorphic to $C^\bt(X,X_\dout)[1]\oplus {C^\bt(X^\vee,X_\din^\vee)[n-2]}$) with $A_\dout$ and $B_\din$ as external parameters -- this point of view allows us to forget about the dual CW complex in the bulk. Then we can perform elementary expansions and collapses on $X-X_\dd$ and by Lemma \ref{lemma: collapse}, together with the chain map property of BV pushforwards ``in a family'' quoted above, we obtain (\ref{prop 8.15 (iii)}) for a change of $X$ by a simple-homotopy relative to $X_\dd$. Since such simple-homotopy allows one to pass between any two CW decompositions of $M$ restricting to $X_\dd$ at the boundary, this proves (\ref{prop 8.15 (iii)}).

For the item (\ref{prop 8.15 (iv)}), we note that the entire discussion of gluing of Section \ref{sec: quantum gluing}, starting with the gluing formula (\ref{gluing e^S}) for the exponential of cellular action, works in non-abelian case exactly as in abelian case.
\end{proof}

\subsubsection{Reduction of the spaces of states (passage to the cohomology of the abelian part of BFV operators)}
Let us split the boundary BFV operator as $\widehat S_\dd=\widehat S_\dd^\mr{ab}+\widehat S_\dd^\mr{pert}$ with $\widehat{S}_\dd^\mr{ab}$ the abelian part (i.e. $n=1$ term in (\ref{Shat in},\ref{Shat out})). We can pass to the reduced space of states 
$$
\HH^\red_\dd:=H^\bt_{\widehat{S}_\dd^\mr{ab}}(\HH_\dd)=
\Fun\left(H^\bt(M_\dout,E)[1]\oplus H^\bt(M_\din,E^*)[n-2]\right)
$$
as in Section \ref{sec: reduced space of states}; i.e., reduced states are functions $[\psi]$ of cohomology classes $[A_\dout]$ and $[B_\din]$. 

Unlike in abelian case, we have a nonzero BFV operator $\widehat{S}_\dd^\red=\widehat{S}_\dout^\red+\widehat{S}_\din^\red$ on $\HH^\red_\dd$ induced from $\widehat S_\dd$ via homological perturbation theory, with $\widehat{S}_\dout^\red$, $\widehat{S}_\din^\red$ satifying the ansatz
\begin{eqnarray}
\widehat{S}_\dout^\red &=&\sum_{\nnn\geq 2}\frac{1}{\nnn!} \lan l_\nnn^{\red,\dout}([A_\dout],\ldots, [A_\dout]),-i\hbar \frac{\dd}{\dd [A_\dout]} \ran, \label{Shat reduced out}\\
\widehat{S}_\din^\red &=& \sum_{\nnn\geq 2} \frac{1}{\nnn!} \lan [B_\din], l_\nnn^{\red,\din}(\widehat{[A_\din]},\ldots, \widehat{[A_\din]}) \ran \label{Shat reduced in}
\end{eqnarray}
with $\widehat{[A_\din]}:=-i\hbar \frac{\dd}{\dd [B_\din]} $. 
Here $l_\nnn^{\red,\dout}$ are the $L_\infty$ algebra operations on $H^\bt(M_\dout,E)$ induced, via homotopy transfer, from the cellular $L_\infty$ structure on $C^\bt(X_\dout,E)$ produced by Theorem \ref{thm: cellBF}. It can also be viewed as induced by homotopy transfer from dg Lie version of de Rham algebra $\Omega^\bt(M_\dout,E)$; case of $L_\infty$ opertaions on in-boundary is similar. In particular, note that the cochain complex $(\HH^\red_\dd,\widehat{S}^\red_\dd)$, regarded modulo chain isomorphisms, is independent on the cellular decomposition of the boundary.

\begin{remark}
Reduced BFV differential $\widehat{S}^\red_\dd$ can be viewed as a generating function for Massey operations on cohomology of the boundary and thus 
determines
the rational homotopy type of the boundary (at least, in the case of simply-connected boundary).
\end{remark}

\begin{remark}
One can consider the total reduction of the space of states -- the cohomology of $\widehat{S}_\dd^\red$ on $\HH^\red_\dd$ which coincides, by homological perturbation lemma, with cohomology of the total BFV differential $\widehat{S}_\dd$ on $\HH_\dd$. This total reduction is isomorphic to 
$$\HH_\dd^\mr{tot.\;red.}\cong  H^\bt_{CE}\left(H^\bt(M_\dout,E),\{l_k^{\red,\dout}\}\right)\otimes \left(H^\bt_{CE}\left(H^\bt(M_\din,E^*),\{l_k^{\red,\din}\}\right)\right)^*$$
-- the  Chevalley-Eilenberg cohomology of the $L_\infty$ structure on the de Rham cohomology of the out-boundary, tensored with the Chevalley-Eilenberg homology of the respective $L_\infty$ structure associated to the in-boundary.
\end{remark}

Theorem \ref{prop 8.15} holds for the reduced partition function $Z^\red([B_\din],[A_\dout];A_\zm,B_\zm)\in \HH_\dd^{\red,\can}\widehat\otimes \mr{Dens}^{\frac12,\Fun}(\F_b^\zm)$ (defined by evaluating the partition function $Z$ on representatives of classes $[A_\dout]$, $[B_\din]$ in cellular cochains of the boundary, as in Section \ref{sec: reduced space of states}. Here we replace the BFV operator by its reduced version $\widehat{S}^\red_\dd$. In part (\ref{prop 8.15 (ii)}) of the Theorem in addition to changes of $(\ii,\pp,\K)$ we are now also allowing 
changes of $(i_\B,p_\B,K_\B)$ -- the HPT induction data from cellular cochains of the boundary to cohomology, as in Remark \ref{rem 7.16}.\footnote{Note that only $i_\B$ (choice of cellular representatives for cohomology classes) is relevant for the construction of $Z^\red= i_\B^* Z$ whereas $p_\B$ and $K_\B$ are manifestly irrelevant for $Z^\red$; however, the whole package $(i_\B,p_\B,K_\B)$ is involved in the construction of $\widehat{S}^\red_\dd$.}

In part (\ref{prop 8.15 (iii)}) of the Theorem we can now allow changes of cellular decomposition of $X$ that change the decomposition of the boundary.\footnote{Sketch of proof: for $Y$ an arbitrary cell decomposition of the cylinder $\Sigma\times [0,1]$ (regarded as a cobordism from $\Sigma$ to $\Sigma$) the reduced partition function $Z^\red: \HH^\red_{\Sigma}\ra \HH^\red_\Sigma$ is chain homotopic to identity (proven from the gluing property -- (\ref{prop 8.15 (iv)}) of Theorem \ref{prop 8.15}). Now let $X$ and $X'$ be two cellular decompositions of $M$. We can attach two cylinders at in- and out-boundaries of $X'$ to obtain a cell decomposition $\widetilde X$ of $\widetilde M$ -- a copy of $M$ with collars attached at in- and out-boundary, such that $\widetilde{X}_\din\simeq X_\din$ and $\widetilde{X}_\dout\simeq X_\dout$. By the previous observation about cylinders yielding identity up to homotopy, and by gluing formula, we have $Z_{\widetilde{X}}^\red\sim Z_{X'}^\red$ (where $\sim$ stands for 
equality up to $(\frac{i}{\hbar}\widehat{S}_\dd^\red-i\hbar \Delta_\zm)$-exact terms). On the other hand, we can view $\widetilde{X}$ and $X$ as two cellular decompositions of $M$ coinciding on the boundary, thus (\ref{prop 8.15 (iii)}) of Theorem \ref{prop 8.15} applies and we have $Z_{\widetilde{X}}^\red\sim Z_X^\red$. Thus, we have $Z_{X}^\red\sim Z^\red_{X'}$.}

\appendix


\section{Determinant lines, densities, $R$-torsion}\label{appendix: det lines, densities, torsion}\label{AA}
\subsection{Determinant lines, torsion of a complex of vector spaces}\label{appendix: det lines}
In what follows, {\it line} stands for an abstract 1-dimensional real vector space.
\begin{definition}For $V$ a finite-dimensional real vector space, 
the determinant line is defined as the top exterior power $\Det\; V=\wedge^{\dim V} V$. For $V^\bt$ a $\bZ$-graded vector space, one defines the determinant line as
$$\Det\; V^\bt=\bigotimes_k (\Det\; V^k)^{(-1)^k}$$
where for $L$ a line, $L^{-1}=L^*$ denotes the dual line.
\end{definition}

Here are several useful properties of determinant lines.
\begin{enumerate}[(i)]
\item The determinant line of the dual graded vector space is
$$\Det\; V^*\cong\left(\Det\; V\right)^{-1}$$
(with the grading convention $(V^*)^k=(V^{-k})^*$). In the case of a vector space concentrated in degree $0$, the pairing between $\Det\; V^*$ and $\Det\;V$ is given by
$$\langle v_n^*\wedge\cdots\wedge v_1^*\;,\;v_1\wedge\cdots\wedge v_n \rangle={\det}\langle v_i^*,v_j\rangle$$
with $v_i\in V$, $v_i^*\in V^*$ for $i=1,\ldots,n=\dim V$. Extension to the graded case is straightforward.
\item Determinant line of the degree-shifted vector space is
$$\Det\; V^\bt[k]\cong\left(\Det\; V^\bt\right)^{(-1)^k}$$
\item\label{det lines: multiplicativity} Given a short exact sequence of graded vector spaces $0\ra U^\bt\ra V^\bt\ra W^\bt\ra 0$, one has
\be \Det\;V^\bt\cong \Det\;U^\bt\otimes \Det\;W^\bt \label{Det V = Det U otimes Det W}\ee
In the case of non-graded vector spaces, the isomorphism sends
$$\underbrace{(u_1\wedge\cdots\wedge u_{\dim U})}_{\in\; \Det\;U}\otimes \underbrace{(w_1\wedge\cdots\wedge w_{\dim W})}_{\in\;\Det\; W}\quad\mapsto\quad \underbrace{u_1\wedge\cdots\wedge u_{\dim U}\wedge w'_1\wedge\cdots \wedge w'_{\dim W}}_{\in\; \Det\;V}$$
where on the right, $w'_i$ is some lifting of the element $w_i$ to $V$. Extension to the graded case is, again, straightforward.
\item If $V^\bt,d$ is a cochain complex with cohomology $H^\bt(V)$, there is a canonical isomorphism of determinant lines
    \be \bT:\; \Det \; V^\bt \xra{\cong} \Det\; H^\bt(V) \label{T: Det V to Det H}\ee
    Indeed, one applies property (\ref{Det V = Det U otimes Det W}) to the two short exact sequences
    $$ V_\mr{closed}^\bt\hra V^\bt \xra{d} V^{\bt+1}_\mr{exact},\qquad V^\bt_\mr{exact}\hra V^\bt_\mr{closed}\ra H^\bt(V) $$
    to obtain isomorphisms
    $$ \Det\; V^\bt\cong\Det\;V^\bt_\mr{closed} \otimes (\Det\;V^\bt_\mr{exact})^{-1},\qquad \Det\;V^\bt_\mr{closed}\cong \Det\;V^\bt_\mr{exact}\otimes \Det\; H^\bt(V) $$ which combine to (\ref{T: Det V to Det H}).

\end{enumerate}

All isomorphisms above are canonical (functorial).

It is convenient to work with determinant lines modulo signs, so that one can ignore the question of orientations and Koszul signs. We will use the notation $\Det\;V^\bt/\{\pm 1\}$ for {\it non-zero} elements of the determinant line considered modulo sign; so the precise notation should have been $(\Det\;V^\bt-\{0\})/\{\pm 1\}$.

\begin{definition}\label{def: torsion of a cx of vect spaces} For $V^\bt,d$ a cochain complex and $\mu\in \Det\;V^\bt/\{\pm 1\}$ a preferred element of the determinant line, defined up to sign, the torsion is defined as
$$\tau(V^\bt,d,\mu)=\bT(\mu)\quad\in \Det\;H^\bt(V)/\{\pm 1\} $$
with $\bT$ as in (\ref{T: Det V to Det H}).
\end{definition}

The following Lemma has important consequences in the setting of $R$-torsion (Section \ref{appendix: R-torsion}).
\begin{lemma}[Multiplicativity of torsions with respect to short exact sequences]\label{lemma: torsions multiplicativity}
Let $0\ra U^\bt\ra V^\bt \ra W^\bt\ra 0$ be a short exact sequence of complexes, equipped with elements $\mu_U,\mu_V,\mu_W$ in respective determinant lines, such that $\mu_V=\mu_U\cdot \mu_W$. Then for the torsions we have
$$\bT_\mr{LES}(\bT_U(\mu_U)\cdot \bT_V(\mu_V)^{-1}\cdot \bT_W(\mu_W))=1\quad \in \bR/\{\pm 1\}$$
where $\bT_{U},\bT_V,\bT_W$ are the maps (\ref{T: Det V to Det H}) for $U^\bt,V^\bt,W^\bt$. We denoted $\mr{LES}$ the induced long exact sequence in cohomology $\cdots\ra H^k(U)\ra H^k(V)\ra H^k(W)\ra H^{k+1}(U)\ra\cdots$ viewed as an acyclic cochain complex, and
$$\bT_\mr{LES}: \Det\; H^\bt(U)\otimes (\Det\; H^\bt(V))^{-1}\otimes \Det\; H^\bt(W)\ra \bR$$
is the corresponding isomorphism (\ref{T: Det V to Det H}).
\end{lemma}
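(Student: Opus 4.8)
The statement is a standard ``torsion is multiplicative in short exact sequences'' lemma, and the natural way to prove it is to reduce everything to pure linear algebra by choosing compatible bases. First I would dispose of the functoriality: all the maps $\bT$ in (\ref{T: Det V to Det H}) are canonical, so it suffices to verify the identity for one convenient choice of bases and splittings, and then observe that any other choice differs by an element of $\Det$ of a quotient that cancels in the product. Concretely, I would choose, for each of $U^\bt,V^\bt,W^\bt$, a Hodge-type decomposition $C^\bt = \mr{im}(i)\oplus C^\bt_{\mr{exact}}\oplus C^\bt_{K\mr{-exact}}$ as in (\ref{HPT: Hodge decomp}), with the differential restricting to an isomorphism $C^\bt_{K\mr{-exact}}\xra{\sim} C^{\bt+1}_{\mr{exact}}$; I would also arrange (using the splitting of $V^\bt$ into $U^\bt$ and a complement mapping isomorphically onto $W^\bt$, degreewise) that the preferred elements satisfy $\mu_V = \mu_U\cdot\mu_W$ on the nose. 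Then Definition \ref{def: torsion of a cx of vect spaces} expresses each $\bT_U(\mu_U),\bT_V(\mu_V),\bT_W(\mu_W)$ as a product over degrees of determinants of the ``exact'' blocks of the differential, times the induced density on cohomology.

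The core of the argument is then to relate the exact blocks of $d_V$ to those of $d_U$ and $d_W$ together with the connecting maps of the long exact sequence. Here I would use the snake-lemma picture: writing the LES of cohomology as an acyclic complex, its own torsion $\bT_{\mr{LES}}$ is again, by Definition \ref{def: torsion of a cx of vect spaces}, a product of determinants of the blocks of the LES differential (the maps $H^k(U)\to H^k(V)$, $H^k(V)\to H^k(W)$, and the connecting $H^k(W)\to H^{k+1}(U)$) with respect to the bases of $H^\bt(U),H^\bt(V),H^\bt(W)$ that were fixed above. The claim
$$\bT_\mr{LES}\bigl(\bT_U(\mu_U)\cdot \bT_V(\mu_V)^{-1}\cdot \bT_W(\mu_W)\bigr)=1$$
then becomes the statement that a certain alternating product of determinants of block triangular maps equals $\pm 1$. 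I would prove this by a filtration/block-triangularity argument: choosing bases of $V^\bt$ adapted to the filtration $U^\bt\subset V^\bt$, the matrix of $d_V$ (restricted to the relevant exact blocks) is block upper triangular with diagonal blocks given by $d_U$ and $d_W$ up to the action of the connecting homomorphisms, so its determinant factors accordingly, and the cohomology contributions reorganize precisely into $\bT_{\mr{LES}}$.

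The step I expect to be the main obstacle is bookkeeping the signs and the degree shifts: one has to be careful that the grading conventions $(V^*)^k=(V^{-k})^*$, the identification $\Det(W\oplus W^*[-1])\cong(\Det W)^{\otimes2}$, and the orientation of the LES (which shifts degree by one at each connecting map) are all tracked consistently. Since the lemma is only asserted modulo sign, this obstacle is not serious in substance, but it is where essentially all the work lies; I would handle it by doing the two-term case $0\to U\to V\to V/U\to 0$ with $V$ concentrated in two degrees first (where the LES is short and the connecting map is the only nontrivial piece), checking the sign there by hand, and then reducing the general case to iterated two-term extensions along the filtration by skeleta, invoking the canonicity of $\bT$ at each stage so that the intermediate choices do not matter.
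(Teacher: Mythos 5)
The paper does not actually prove this lemma: it is stated with a pointer to Milnor's \emph{Whitehead torsion} \cite{Milnor66} (and to \cite{MnevTorsions} for the determinant-line language), so the relevant comparison is with that classical argument --- and your plan is essentially that argument. You fix degreewise splittings $V^\bt\cong U^\bt\oplus W^\bt$ compatible with $\mu_V=\mu_U\cdot\mu_W$, fix Hodge-type decompositions, express each of $\bT_U(\mu_U)$, $\bT_V(\mu_V)$, $\bT_W(\mu_W)$ as an alternating product of determinants of the differential from the coexact to the exact block (exactly as in Lemma \ref{lemma: torsion as BV integral}), and then compare the blocks of $d_V$ with those of $d_U$, $d_W$ and the connecting maps. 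That is the standard route.

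Two cautions about how you phrase the key step. First, $d_V$ on the exact blocks is \emph{not} block triangular with diagonal blocks only $d_U$ and $d_W$: one has $\rk d_V^k=\rk d_U^k+\rk d_W^k+\rk\delta^k$ (with $\delta^k: H^k(W)\to H^{k+1}(U)$ the connecting map), so the coexact part of $V^k$ is strictly larger than the sum of those of $U^k$ and $W^k$, and an adapted basis must contain lifts of cocycles of $W$ whose $d_V$-images lie in $U^{k+1}$ and represent $\mr{im}\,\delta^k$. It is these extra diagonal blocks, together with the maps $H^\bt(U)\to H^\bt(V)\to H^\bt(W)$, that reassemble into $\bT_\mr{LES}$; your phrase ``up to the action of the connecting homomorphisms'' gestures at this, but that dimension count and the explicit choice of lifts \emph{is} the content of the proof and has to be written out. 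Second, the concluding reduction to complexes concentrated in two degrees is both unnecessary (the statement lives in $\bR/\{\pm1\}$, so there are no signs to check) and not available as stated: when $\delta\neq 0$ --- the only interesting case --- the short exact sequence of complexes does not decompose into elementary two-term pieces compatibly with $U^\bt\subset V^\bt$. With the first point spelled out and the second dropped, your argument is the proof the paper is citing.
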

See \cite{Milnor66} for details; cf. also \cite{MnevTorsions} for discussion in the language of determinant lines.

\subsection{Densities} \label{appendix: densities}
\begin{definition} For $\alpha\in\bR$ and $V$ a finite-dimensional real vector space, the space $\Dens^\alpha (V)$ of $\alpha$-densities on $V$ is defined as
the space of maps $\phi:F(V)\ra \bR_+$ from the space of bases (frames) in $V$ to positive half-line satisfying the equivariance property: for any automorphism $g\in GL(V)$ and any frame $\underline{v}=(v_1,\ldots,v_{\dim V})\in F(V)$, one has
$$\phi(g\cdot \underline{v})=|\det g|^\alpha\cdot \phi(\underline{v})$$
$\Dens^\alpha (V)$ is a torsor over $\bR_+$ (viewed as a multiplicative group), and in the setting of $\bZ$-graded vector spaces, one defines
$$\Dens^\alpha (V^\bt)=\bigotimes_k \left(\Dens^\alpha (V^k)\right)^{(-1)^k}$$
(tensor product is over $\bR_+$); $\alpha$ is called the {\it weight} of the density.
\end{definition}
By default a ``density'' has weight $\alpha=1$ (and then we write $\Dens$ instead of $\Dens^1$), and a ``half-density'' has, indeed, $\alpha=1/2$.

If $\phi_\alpha$, $\phi_\beta$ are two densities on $V^\bt$ of weights $\alpha,\beta$, then the product $\phi_\alpha\cdot\phi_\beta$ is an $(\alpha+\beta)$-density. Also, $\phi_\alpha$ can be raised to any real power $\gamma\in\bR$ to yield a density $(\phi_\alpha)^\gamma$  of weight $\alpha\cdot\gamma$. In particular, one has mutually inverse 
maps
$$\Dens^{1/2}V^\bt\xra{(\ast)^2}\Dens\;V^\bt,\quad \Dens\;V^\bt\xra{\sqrt{\ast}}\Dens^{1/2}V^\bt$$


Evaluation pairing $(\Det\; V^\bt/\{\pm 1\})\otimes \Dens\;V^\bt\ra\bR_+$ induces a canonical isomorphism of $\bR_+$-torsors
$$\Det\;V^\bt/\{\pm 1\}\cong \Dens\;V^\bt[1]$$

\subsection{$R$-torsion}\label{appendix: R-torsion}
Let $X$ be a finite CW-complex and $Y\subset X$ a CW-subcomplex. Let
\be h:\pi_1(X)=\pi_1\ra SL_{\pm}(m,\bR)\label{h}\ee
be some representation of the fundamental group of $X$ by real matrices of determinant $\pm 1$. It extends to a ring homomorphism $h:\bZ[\pi_1]\ra \mr{Mat}(m,\bR)$ from the group ring of $\pi_1$ to all real matrices of size $m$. Let $p: \til X\ra X$ be the universal cover of $X$ and denote $\til Y=p^{-1}(Y)\subset \til X$. Consider the cochain complex of vector spaces
\be C^\bt(X,Y;h)=\bR^m\otimes_{\bZ[\pi_1]} C^\bt(\til X,\til Y;\bZ) \label{C(X,Y;h)}\ee
where on the right we have integral cellular cochains of the pair $(\til X,\til Y)$, which is a complex of free $\bZ[\pi_1]$-modules with elements of $\pi_1$ acting on cells of $\til X$ by covering transformations, tensored with $\bR^m$ using the representation $h$. In $C^\bt(X,Y;h)$ one has a preferred basis of the form
\be \{v_i\otimes (\til e)^* \}_{1\leq i\leq m,\; e\subset X-Y} \label{cell basis}\ee
where $\{v_i\}$ is the standard basis on $\bR^m$ (or any unimodular basis, i.e. such that the standard density on $\bR^m$ evaluates on it to $\pm 1$) and $\til e$ are some liftings of cells $e$ of $X$ not lying in $Y$ to the universal cover; $(\til e)^*$ is the corresponding basis cochain.

Associated to the basis (\ref{cell basis}) by construction (\ref{coord density}) is an element
$\mu\in \Det \;C^\bt(X,Y;h)/\{\pm 1\}$, independent of the choices of liftings of cells $e\mapsto \til e$ and independent of the choice of unimodular basis in $\bR^m$.

\begin{definition} The $R$-torsion of the pair $(X,Y)$ of CW-complexes, associated to the representation (\ref{h}), is defined as the torsion (in the sense of Definition \ref{def: torsion of a cx of vect spaces}) of the complex $C^\bt(X,Y;h)$ equipped with element $\mu$:
$$\tau(X,Y;h)=\bT(\mu)\quad \in \Det \;H^\bt(X,Y;h)/\{\pm 1\}$$
Torsion of a single CW-complex $X$ is defined as $\tau(X;h):=\tau(X,\varnothing;h)$.
\end{definition}
Of particular importance (and historically the most studied) is the acyclic case, when $H^\bt(X,Y;h)=0$. Then the torsion takes values in the trivial line and thus is a number (modulo sign).

Instead of choosing a representation $h$ of $\pi_1$, one can choose a cellular local $SL_\pm(m)$-system $E$ on $X$, in the sense of Section \ref{sec: reminder on cell local systems}, and define $h$ as the holonomy of $E$. Cochain complex $C^\bt(X,Y;E)$ (dual to the chain complex $C_\bt(X,Y;E^*)$ constructed in Section \ref{sec: reminder on cell local systems}) is isomorphic to (\ref{C(X,Y;h)}). When we prefer to think in terms of a local system $E$ rather than a representation $h$ of $\pi_1$ (e.g. when we consider restriction to a CW-subcomplex, or gluing of two complexes along a subcomplex), we will write the torsion as $\tau(X,Y;E)$.

The following two properties are consequences of the multiplicativity of the algebraic torsion  with respect to short exact sequences of cochain complexes (Lemma \ref{lemma: torsions multiplicativity}).
\begin{enumerate}[(A)]
\item \label{torsions (A)} For $X\supset Y$ a pair of CW-complexes, one has
\be \tau(X;E)=\tau(X,Y;E)\cdot \tau(Y;E|_Y) \label{tau(X) = tau(X,Y) tau(Y)}\ee
The formula makes sense because $\Det\;H^\bt(X;E)\cong \Det\;H^\bt(X,Y;E)\otimes \Det\;H^\bt(Y;E|_Y)$, since the determinant line of the long exact sequence in homology of the pair $(X,Y)$ (regarded itself as a complex) is
$\Det\;H^\bt(X,Y;E)\otimes(\Det\;H^\bt(X;E))^{-1} \otimes\Det\;H^\bt(Y;E|_Y)$ and, on the other hand, is the trivial line, by (\ref{T: Det V to Det H}) applied to the long exact sequence.
\item For $Z=X\cup Y$ a CW-complex represented as a union of two intersecting subcomplexes, one the gluing (inclusion/exclusion) formula
    \be\tau(X\cup Y;E)=\tau(X;E|_X)\cdot \tau(Y;E|_Y)\cdot \tau(X\cap Y;E|_{X\cap Y})^{-1} \label{tau inclusion-exclusion}\ee
    The reason why l.h.s. and r.h.s. can be at all compared is as in (\ref{torsions (A)}), but one replaces the long exact sequence of a pair by Mayer-Vietoris sequence.\footnote{
    More pedantically, (\ref{tau(X) = tau(X,Y) tau(Y)}) should be written as
$\bT_\mr{LES}(\tau(X,Y;E)\cdot \tau(X;E)^{-1}\cdot \tau(Y;E|_Y))=1$,
with $\bT_\mr{LES}$ the isomorphism (\ref{T: Det V to Det H}) between the determinant line of the long exact sequence of cohomology of the pair and the standard line $\bR$. Likewise, (\ref{tau inclusion-exclusion}) should be written as $\bT_\mr{MV}(\tau(X\cup Y;E)\cdot \tau(X;E|_X)^{-1}\cdot \tau(Y;E|_Y)^{-1}\cdot \tau(X\cap Y;E|_{X\cap Y}))=1$ with $\bT_\mr{MV}$ the isomorphism (\ref{T: Det V to Det H}) for the Mayer-Vietoris long exact sequence.
    }
\end{enumerate}
In the acyclic case (i.e. when all relevant cohomology spaces vanish), (\ref{tau(X) = tau(X,Y) tau(Y)},\ref{tau inclusion-exclusion}) are equalities of numbers.

\begin{theorem}[Combinatorial invariance of $R$-torsion] If $(X',Y')$ is a cellular subdivision of the pair $(X,Y)$, then
$$\tau(X',Y';h)=\tau(X,Y;h)$$
\end{theorem}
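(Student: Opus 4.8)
The plan is to reduce the statement to the multiplicativity properties of torsion already established (Lemma \ref{lemma: torsions multiplicativity} and its consequences \eqref{tau(X) = tau(X,Y) tau(Y)}, \eqref{tau inclusion-exclusion}), together with the standard fact that any two cellular subdivisions of a pair of CW complexes have a common refinement. First I would observe that it suffices to treat a single elementary subdivision step: subdividing one cell $e$ of $X$ (not lying in $Y$) into a pair of cells $e', e''$ separated by a new cell $f$ of one lower dimension, leaving all other cells untouched. A general subdivision is obtained by iterating such steps, so combinatorial invariance follows by induction once the elementary step is handled.

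For the elementary step I would argue as follows. Write $X = X_0 \cup \bar e$ where $X_0 = X - e$, and let $X'$ be the subdivided complex, so $X' = X_0 \cup \overline{(e' \cup f \cup e'')}$; the subcomplex $Y$ is contained in $X_0$ in both pictures and is unchanged. Applying the gluing formula \eqref{tau inclusion-exclusion} to $X = X_0 \cup \bar e$ (here $X_0 \cap \bar e = \partial e$) and to $X' = X_0 \cup \overline{(e' \cup f \cup e'')}$ (with the same intersection $\partial e$), and using that the torsion of the pair is insensitive to $Y \subset X_0$ via \eqref{tau(X) = tau(X,Y) tau(Y)}, the problem reduces to comparing $\tau(\bar e; E|_{\bar e})$ with $\tau(\overline{(e' \cup f \cup e'')}; E|)$ — i.e.\ to the case where the whole complex is a single (subdivided) ball rel its boundary. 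In that case the relative cochain complex $C^\bullet(\bar e, \partial e; E)$ is concentrated in a single degree $n = \dim e$ and is one copy of $\bR^m$, while $C^\bullet(\overline{(e'\cup f\cup e'')}, \partial e; E)$ has $\bR^m$ in degree $n-1$ (from $f$) and $\bR^m \oplus \bR^m$ in degree $n$ (from $e', e''$), with differential an explicit matrix built from the incidence numbers and the holonomies of $E$ across the new faces. Since both pairs have the same (rel-boundary) cohomology, namely $\bR^m$ in degree $n$, and the preferred bases are the cellular ones with unimodular coefficients, one checks directly from Definition \ref{def: torsion of a cx of vect spaces} that the two torsions agree: the acyclic subcomplex contributed by the new cells $f, e''$ (say) has torsion $\pm 1$ because the connecting isomorphism is, up to sign, a product of holonomy matrices in $SL_\pm(m,\bR)$, and the surviving copy of $\bR^m$ maps isomorphically and unimodularly onto the old one. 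This last verification is exactly the classical computation behind Whitehead's theorem; I would cite \cite{Milnor66} for the bookkeeping of signs and incidence numbers rather than reproduce it.

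The main obstacle is the final direct computation of the elementary-subdivision torsion, i.e.\ pinning down that the comparison isomorphism between $H^\bullet(\bar e,\partial e; E)$ computed before and after subdivision is unimodular (evaluates to $\pm 1$ on the preferred determinant elements). This is where the hypothesis $h(\pi_1) \subset SL_\pm(m,\bR)$ is essential: it guarantees that the holonomy matrices appearing in the connecting maps of the auxiliary acyclic complexes have determinant $\pm 1$, so they contribute trivially to the torsion. Without that hypothesis one would only get invariance up to a determinant factor. Everything else — the reduction to a single cell via \eqref{tau inclusion-exclusion}, the reduction to an elementary subdivision, and the induction — is formal and uses only the multiplicativity results already proved in the appendix.
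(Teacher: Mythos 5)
The paper itself does not prove this theorem: it simply cites \cite{Milnor66} (the case $Y=\varnothing$ going back to Reidemeister--Franz--de Rham), so your proposal has to be measured against the classical argument. Your reductions are partly sound: using (\ref{tau(X) = tau(X,Y) tau(Y)}) and (\ref{tau inclusion-exclusion}) to localize the comparison to a single subdivided closed cell relative to its boundary is the right kind of move, and you correctly identify where the hypothesis $h(\pi_1)\subset SL_\pm(m,\bR)$ enters (the units appearing in the auxiliary acyclic complexes have determinant $\pm1$). The genuine gap is the very first step, on which everything else rests: the assertion that an arbitrary cellular subdivision is a composition of ``elementary bisections'' (one cell split into exactly two chambers by a single new codimension-one cell, all other cells untouched). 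You offer no argument for this, and it is not a formal fact. Run backwards it amounts to removing one interior wall at a time so as to terminate exactly at $X$, and for ball complexes in dimension $\geq 3$ this runs into shellability-type obstructions; at best it is a hard combinatorial theorem, and it is precisely what the classical proofs are designed to avoid. Because of this, your explicit torsion computation only covers the bisection case; for a general subdivision of a cell rel its (subdivided) boundary the relative cochain complex has new cells in all dimensions, and you have not shown that its torsion, taken with the preferred cellular basis, agrees with that of the single original cell. The appeal to \cite{Milnor66} ``for the bookkeeping of signs'' conceals that the cited argument has a different structure and does not supply the missing factorization.

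The standard route (Milnor \cite{Milnor66}, see also \cite{Cohen}) inducts over the skeleta of $X$ and shows directly that the subdivision chain map is a simple equivalence: for each cell one compares $C^\bt((\bar e)',(\dd e)';E)$ with the one-dimensional complex spanned by $e^*$, using multiplicativity (Lemma \ref{lemma: torsions multiplicativity}) together with the contractibility/collapsibility of the subdivided ball and the induction hypothesis on the subdivided boundary; no decomposition of the subdivision into elementary moves is needed. To repair your argument you would either have to prove your factorization claim (a substantial and unjustified combinatorial statement) or replace your final computation by such an induction on dimension, i.e.\ prove $\tau\bigl((\bar e)',(\dd e)';E\bigr)=\tau(\bar e,\dd e;E)$ for an arbitrary subdivision of a cell. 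Two minor further points: the ``common refinement'' remark at the start is unnecessary here (and delicate outside the PL ball-complex setting), since $X'$ is already given as a subdivision of $X$; and when $H^\bt(X,Y;h)\neq 0$ one must also check that all your Mayer--Vietoris and long-exact-sequence identifications are compatible with the canonical isomorphism $H^\bt(X',Y';h)\cong H^\bt(X,Y;h)$ in which the stated equality of torsions takes place -- that part is routine bookkeeping, but it should be said.
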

For the proof, see e.g. \cite{Milnor66}. The case $Y=Y'=\varnothing$ is due to 
Reidemeister, Franz and de Rham.

The combinatorial invariance theorem implies in particular that, for $M$ a compact PL manifold with two different cellular decompositions $X$ and $Y$, one has
$\tau(X;h)=\tau(Y;h)$. Thus in this case it makes sense to talk about the $R$-torsion of a manifold $M$, $\tau(M;h)$, forgetting about the cellular subdivision.

\begin{theorem}[Milnor, \cite{Milnor62}] If $M$ is a piecewise-linear compact oriented $n$-manifold with boundary $\dd M=\dd_1 M\sqcup \dd_2 M$, one has
$$\tau(M,\dd_1 M;h)=(\tau(M,\dd_2 M;h^*))^{(-1)^{n-1}}$$
where $h^*$ is the dual representation to $h$.
\end{theorem}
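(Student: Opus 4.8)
The statement is the Poincaré--Lefschetz duality for $R$-torsion: for a compact oriented PL $n$-manifold $M$ with $\partial M = \partial_1 M \sqcup \partial_2 M$, one has $\tau(M,\partial_1 M; h) = \tau(M,\partial_2 M; h^*)^{(-1)^{n-1}}$ in the appropriate determinant line modulo signs. The plan is to deduce this from the chain-level Poincaré--Lefschetz duality recalled in Section~\ref{sec: reminder on Poincare duality} together with the behavior of the algebraic torsion under passage to the dual complex. First I would fix a cellular decomposition $X$ of $M$ which is of product type near $\partial_2 M$ (Assumption~\ref{assumption: X of product type near out-boundary}), so that the dual complex $X^\vee$ of Section~\ref{sec: Lefschetz for cell decomp, cobordism} is defined and we have the nondegenerate intersection pairing $C_k(X,X_{\partial_2}) \otimes C_{n-k}(X^\vee, X^\vee_{\partial_1}) \to \mathbb{Z}$, hence a chain isomorphism $C_\bullet(X,X_{\partial_2};E) \xrightarrow{\sim} C^{n-\bullet}(X^\vee, X^\vee_{\partial_1}; E^*)$ once we tensor with coefficients (using that $E$ has holonomies of determinant $\pm 1$, so passing to $E^*$ via the dual representation $h^*$ is exactly what appears on the right-hand side).

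The key algebraic input is the following. If $C^\bullet$ is a based cochain complex with based determinant element $\mu_C$, then its dual (shifted) complex $(C^*)^k = (C^{n-k})^*$ carries the dual basis, and the associated determinant element $\mu_{C^*}$ satisfies $\mathbf{T}(\mu_{C^*}) = \mathbf{T}(\mu_C)^{(-1)^{\text{something}}}$ under the canonical identification $\mathrm{Det}\, H^\bullet(C^*) \cong (\mathrm{Det}\, H^\bullet(C))^{\mp 1}$, with the sign governed by the degree shift $n$; concretely one checks that dualizing a complex inverts its torsion up to the factor $(-1)^{n-1}$ coming from the sign in $(d^*)_k = (-1)^{n-k-1}(d_{n-k-1})^T$ (the same sign appearing in the dual induction data (\ref{dual ind data})). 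I would verify this first in the acyclic case, where torsions are numbers and one can compute directly from a Hodge-type splitting: choose induction data $(\mathsf{i},\mathsf{p},\mathsf{K})$ for $C^\bullet$, use (\ref{dual ind data}) to get $(\mathsf{i}^*,\mathsf{p}^*,\mathsf{K}^*)$ for $C^{*\bullet}$, and observe that the super-determinant of $d^*$ restricted to the $\mathsf{K}^*$-exact part is the $(-1)^{n-1}$ power (up to sign) of the corresponding super-determinant for $d$ — this is essentially the computation in Lemma~\ref{lemma: torsion as BV integral}. The non-acyclic case then follows by the multiplicativity of torsions with respect to short exact sequences (Lemma~\ref{lemma: torsions multiplicativity}), applied to the short exact sequences computing $\mathbf{T}$, keeping careful track of the identification of the determinant lines of cohomology under Poincaré duality $H^\bullet(M,\partial_2 M; E) \cong H^{n-\bullet}(M, \partial_1 M; E^*)^*$.

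Finally, combining the chain isomorphism with the algebraic statement: $\tau(M, \partial_2 M; h) = \tau(C_\bullet(X,X_{\partial_2};E), \mu_C)$ and under the Poincaré isomorphism this maps to the torsion of $C^{n-\bullet}(X^\vee, X^\vee_{\partial_1}; E^*)$ with the \emph{dual} basis, which by the sign computation equals $\tau(M, \partial_1 M; h^*)^{(-1)^{n-1}}$. One subtlety to handle is that the canonical basis $\mu_C$ of cellular chains is dual (up to sign, which we ignore throughout) to the canonical basis of cellular cochains of the dual complex — this is exactly the content of the intersection pairing being $e_i \cdot \varkappa(e_j) = +\delta_{ij}$, so no extra determinant factor is introduced, and it is the reason the statement is clean modulo signs. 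The main obstacle I anticipate is bookkeeping: getting the degree-shift sign $(-1)^{n-1}$ to come out correctly requires care in matching the grading conventions for the dual complex (the shift by $n$ built into $C^{*\,k} = (C^{n-k})^*$), the orientation conventions on $\partial_1 M$ versus $\partial_2 M$, and the convention for which side of the Poincaré--Lefschetz pairing is twisted by $E$ versus $E^*$; since we work modulo sign this is tractable, but it is the place where an error would hide. I would therefore organize the proof so that the sign is pinned down once, in the acyclic single-determinant computation, and then propagated formally through the multiplicativity arguments.
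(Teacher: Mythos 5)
This statement is not proved in the paper at all: it is Milnor's duality theorem, imported with a citation to \cite{Milnor62}, and the only thing the paper adds is the remark that the two sides live in determinant lines that are canonically identified by Poincar\'e--Lefschetz duality. So there is no internal proof to compare against; what you have written is essentially a reconstruction of Milnor's original argument, phrased in the paper's own language (the dual complex $X^\vee$ of Section \ref{sec: Lefschetz for cell decomp, cobordism}, the intersection pairing $e_i\cdot\varkappa(e_j)=\delta_{ij}$, the dual induction data (\ref{dual ind data}), and the superdeterminant computation of Lemma \ref{lemma: torsion as BV integral}). The outline is sound: the chain isomorphism $C_\bullet(X,X_{\partial_2};E)\cong C^{n-\bullet}(X^\vee,X^\vee_{\partial_1};E^*)$ identifies the preferred (cellular, unimodular-in-fiber) bases up to sign precisely because the holonomies have determinant $\pm 1$, and the exponent $(-1)^{n-1}$ drops out of the alternating product of determinants after the substitution $k\mapsto n-k-1$, exactly as you indicate.

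Two points deserve more care than your sketch gives them. First, after the dualization you obtain the torsion of the CW pair $(X^\vee,X^\vee_{\partial_1})$ with coefficients $h^*$; to conclude that this equals $\tau(M,\partial_1 M;h^*)$ you must invoke combinatorial invariance (the dual decomposition and $X$ have a common subdivision, e.g.\ the barycentric one) --- this is used silently in your last paragraph and is in fact an essential ingredient of Milnor's proof, not mere bookkeeping. Second, for the non-acyclic case the detour through Lemma \ref{lemma: torsions multiplicativity} is more awkward than necessary: the cleaner route is to run the same Hodge-splitting/superdeterminant computation with a chosen basis of cohomology representatives carried along (as in the proof of Lemma \ref{lemma: torsion as BV integral}), observing that the dual Hodge decomposition pairs the cohomology factor of one side with the cohomology factor of the other, which simultaneously produces the identification $\mathrm{Det}\,H^\bullet(M,\partial_1 M;h)\cong(\mathrm{Det}\,H^\bullet(M,\partial_2 M;h^*))^{(-1)^{n-1}}$ under which the equality is asserted. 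With those two points made explicit, your argument is a correct proof of the cited theorem.
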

Note that the l.h.s. belongs to $\Det\; H^\bt(M,\dd_1 M;h)$ while the r.h.s. belongs to $(\Det\; H^\bt(M,\dd_2 M;h^*))^{(-1)^{n-1}}$ (modulo signs); these determinant lines are canonically isomorphic due to Poincar\'e-Lefschetz duality $H^k(M,\dd_1 M;h)\cong (H^{n-k}(M,\dd_2 M;h^*))^*$. Thus it does make sense to compare the two torsions.

\begin{corollary} For $M$ a closed even-dimensional manifold and $h$ such that $H^\bt(M;h)=0$, the torsion is trivial, $\tau(M;h)=1$.
\end{corollary}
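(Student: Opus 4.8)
The statement I want to establish is the corollary: for $M$ a closed even-dimensional piecewise-linear manifold and a representation $h$ with $H^\bt(M;h)=0$, one has $\tau(M;h)=1$. The plan is to deduce this directly from Milnor's duality theorem for $R$-torsion (the theorem stated immediately above) by taking $\dd M=\varnothing$, so that the boundary splits trivially as $\dd_1 M=\dd_2 M=\varnothing$. Applying the duality formula with $n=\dim M$ even gives $\tau(M;h)=\tau(M,\varnothing;h)=(\tau(M,\varnothing;h^*))^{(-1)^{n-1}}=(\tau(M;h^*))^{-1}$, since $(-1)^{n-1}=-1$ when $n$ is even.

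The next step is to relate $\tau(M;h^*)$ back to $\tau(M;h)$. Here I would use the fact that the intersection/Poincar\'e--Lefschetz pairing on a closed oriented $n$-manifold gives a perfect pairing between $C^\bt(X;h)$ and $C^{n-\bt}(X^\vee;h^*)$ (as recalled in Section~\ref{sec: reminder on Poincare duality} and Section~\ref{sec: reminder on cell local systems}), realizing $C^\bt(X^\vee;h^*)$ as the dual complex of $C^\bt(X;h)$ up to degree reversal. Passing to determinant lines, the torsion of a based complex and the torsion of its dual based complex are inverse to each other (this is the behaviour of $\Det$ under dualization recorded in Appendix~\ref{appendix: det lines}, together with the fact that the Poincar\'e-dual cellular basis is again unimodular, by the choice of orientations making $e\cdot\varkappa(e)=+1$). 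In the acyclic case $H^\bt(M;h)=0$ all determinant lines in sight are canonically $\bR$, so these identifications become honest equalities of numbers modulo sign, and we obtain $\tau(M;h^*)=\tau(M;h)^{\pm 1}$; combined with the previous paragraph this yields $\tau(M;h)=\tau(M;h)^{-1}$, hence $\tau(M;h)^2=1$, i.e. $\tau(M;h)=1$ in $\bR/\{\pm1\}$.

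Alternatively — and this is really the same argument packaged differently — I could bypass the explicit dualization and simply feed $\dd_1 M=\dd_2 M=\varnothing$ into Milnor's theorem and then observe that, because the cohomology vanishes, the Poincar\'e--Lefschetz identification $H^k(M;h)\cong (H^{n-k}(M;h^*))^*$ that makes the two sides of the duality formula comparable is an identification of the trivial line $\bR$ with itself; so the formula already reads $\tau(M;h)=\tau(M;h^*)^{-1}$ as numbers. Then I still need $\tau(M;h^*)=\tau(M;h)$ (mod sign), which follows from combinatorial invariance together with the observation that computing $\tau(M;h^*)$ on the dual cell decomposition $X^\vee$ reproduces, up to inversion coming from the degree-reversing duality isomorphism of based complexes, the computation of $\tau(M;h)$ on $X$.

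The only genuinely delicate point is bookkeeping of signs and of the canonical isomorphisms between determinant lines: one must check that the Poincar\'e-dual basis is unimodular (guaranteed by the normalization $e_i\cdot\varkappa(e_j)=+\delta_{ij}$ chosen in Section~\ref{sec: Poincare duality, closed mfd}), and that the degree shift by $n$ in the definition of the dual complex contributes a sign $(-1)^n$ which, for $n$ even, is harmless. Since we only ever work modulo sign in this paper, all of these ambiguities collapse, and the main obstacle — keeping track of which determinant line a given torsion lives in — is exactly what disappears under the acyclicity hypothesis. So I expect the proof to be short: cite Milnor's duality theorem, specialize to empty boundary and even dimension, use dualization of acyclic based complexes via Poincar\'e duality to equate $\tau(M;h)$ and $\tau(M;h^*)$, and conclude $\tau(M;h)^2=1$.
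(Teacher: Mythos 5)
Your first step is fine: with $\dd M=\varnothing$ and $n$ even, Milnor's theorem gives $\tau(M;h)=\tau(M;h^*)^{-1}$. The genuine gap is in the second step, where you claim $\tau(M;h^*)=\tau(M;h)$ (up to a sign exponent that you then fix to $+1$). The argument you offer for this — Poincar\'e duality identifying $C^\bullet(X^\vee;h^*)$ with the dual of $C^\bullet(X;h)$ up to degree reversal, plus the behaviour of torsion under dualization of a based acyclic complex — is not an independent input: it is precisely the proof of Milnor's duality theorem, and when the parity bookkeeping is done it returns $\tau(M;h^*)=\tau(M;h)^{(-1)^{n-1}}=\tau(M;h)^{-1}$ for $n$ even (the exponent $+1$ occurs exactly in odd dimensions, which is why odd-dimensional duality is the nontrivial self-constraint $\tau=\tau^*$ while even-dimensional duality only says $\tau\cdot\tau^*=1$). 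So your two relations are the same equation, and combining them yields the tautology $\tau(M;h)=\tau(M;h)$ rather than $\tau(M;h)^2=1$. The ``$\pm1$'' you leave undetermined is the entire content of the step, and it comes out with the wrong sign for your purpose.

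To close the argument one needs a reason for $\tau(M;h^*)=\tau(M;h)$ that does not pass through Poincar\'e duality again. The classical case is $h$ orthogonal (or more generally self-dual), where $h^*\cong h$ as representations, hence $E^*\cong E$ as local systems and the two torsions coincide tautologically; then duality gives $\tau(M;h)^2=1$ and, since $\tau$ is a nonzero real number modulo sign, $\tau(M;h)=1$. In the generality in which the corollary is stated here ($h$ valued in $SL_\pm(m,\bR)$, not necessarily preserving an inner product), the equality $\tau(M;h^*)=\tau(M;h)$ is exactly the nontrivial point and requires a separate argument; it cannot be extracted from the dualization and unimodularity-of-the-dual-basis facts you cite, which only reproduce the duality theorem itself.
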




\section{Two points of view on ``$C_\infty\otimes \mr{Lie}=L_\infty$''}\label{Appendix: C-infty otimes Lie}
In connection with Remark \ref{rem: L_infty and RHT}, we recall two ways to see the $L_\infty$ algebra structure on the tensor product of a $C_\infty$ algebra and a Lie algebra. 

Given a $C_\infty$ algebra $W$ with multlinear operations $m_n:W^{\otimes n}\ra W$,
and given a Lie algebra $\g$, one can construct the tensor product $L_\infty$ algebra structure on the graded vector space $W\otimes \g$ by defining
\begin{equation}\label{l_n from m_n}
l_n(w_1\otimes \alpha_1,\ldots,w_n\otimes\alpha_n)=\sum_{\sigma\in S_n}\pm m_n(w_{\sigma_1},\ldots,w_{\sigma_n})\otimes (\alpha_{\sigma_1}\cdots \alpha_{\sigma_n})
\end{equation}
with $w_1,\ldots,w_n\in W$ and $\alpha_1,\ldots,\alpha_n\in\g$  arbitrary elements. The sum on the r.h.s. is over permuations $\sigma$ of $1,\ldots,n$. Here the product of $\alpha_i$'s is seen as a product in the universal enveloping algebra $U\g$. The $C_\infty$ property of the operation $m_n$  (vanishing on shuffle-products) implies that the result lands in $W\otimes \g\subset W\otimes U\g$.
 
Another way to present the same  tensor product $L_\infty$ structure on $W\otimes \g$ is as follows.
The $C_\infty$ operations $m_n$ can be written in the form 
\begin{equation} \label{m_n from m_n^T}
m_n(w_1,\ldots,w_n)=\sum_{T,\pi} m_n^T\circ\pi^{-1}(w_1\otimes \cdots \otimes w_n)
\end{equation} 
where the sum runs over binary rooted trees $T$ with $n$ leaves (viewed up to graph automorphism; for each $T$ we fix arbitrarily a ``standard'' planar realization) and their planar realizations $\pi$; $m_n^T\in \mr{Hom}(W^{\otimes n},W)^{\mr{Aut}(T)}$ are some multilinear operations invariant w.r.t. automorphisms of $T$ acting by permutations of factors in $W^{\otimes n}$ (with appropriate signs); $w_1,\ldots,w_n\in W$ are arbitrary vectors; $\pi^{-1}(\cdots)$ is understood as a permutation of factors in $W^{\otimes n}$ corresponding to going from the planar representative $\pi$ to the ``standard'' representative of $T$. Then the tensor product $L_\infty$ algebra structure on $W\otimes \g$ is given by 
\begin{multline}\label{l_n from m_n^T and Jacobi}
l_n(w_1\otimes \alpha_1,\ldots, w_n\otimes\alpha_n)=\\
=\sum_{\sigma\in S_n}\sum_T \pm \frac{1}{|\mr{Aut}(T)|} m_n^T(w_{\sigma_1},\ldots,w_{\sigma_n}) \otimes \mr{Jacobi}_T(\alpha_{\sigma_1},\ldots \alpha_{\sigma_n})
\end{multline} 
with 
$\mr{Jacobi}_T(\cdots)$ the nested commutator determined by the tree $T$. 

Here the first point of view on the tensor product (\ref{l_n from m_n}) is more direct and does not 
require
splitting $m_n$ into pieces $m_n^T$ possessing different symmetries. However, we wanted to also present the second point of view (\ref{l_n from m_n^T and Jacobi}) since it compares directly to the tree part of (\ref{Sbar cell}) and explains how to construct the corresponding $C_\infty$ algebra (via (\ref{m_n from m_n^T})).


\thebibliography{9}

\bibitem{AKSZ} M. Alexandrov, M. Kontsevich, A. Schwarz, O. Zaboronsky, 
\textit{The geometry of the master equation and topological quantum field theory,}
Int. J. Mod. Phys. {A12} (1997) 1405--1430.
\bibitem{1DCS} A. Alekseev, P. Mnev, \textit{One-dimensional Chern-Simons theory}, 
Commun. Math. Phys. 307.1 (2011) 185--227.

\bibitem{CSinvar} A. S. Cattaneo, P. Mnev, \textit{Remarks on Chern-Simons invariants}, 
Commun. Math. Phys. 293.3 (2010) 803--836.

\bibitem{CMR} A. S. Cattaneo, P. Mnev, N. Reshetikhin, \textit{Classical BV theories on manifolds with boundary,} 
Commun. Math. Phys. 332.2 (2014) 535--603.

\bibitem{CMRpert} A. S. Cattaneo, P. Mnev, N. Reshetikhin, \textit{Perturbative quantum gauge theories on manifolds with boundary,}  
Commun. Math. Phys. 357 (2018) 631--730.

\bibitem{CMRsurvey} A. S. Cattaneo, P. Mnev, N. Reshetikhin, \textit{Perturbative BV theories with Segal-like gluing,}  arXiv:1602.00741 (math-ph).

\bibitem{CR} A. S. Cattaneo, C. Rossi, \textit{Wilson surfaces and higher dimensional knot invariants,} Commun. Math. Phys. 256.3 (2005) 513--537.

\bibitem{Cheng-Getzler} X. Z. Cheng, E. Getzler, ``Transferring homotopy commutative algebraic structures,'' Journal of Pure and Applied Algebra 212.11 (2008) 2535--2542.

\bibitem{Cohen} M. M. Cohen, \textit{A course in simple-homotopy theory,} Springer (2012).

\bibitem{Dupont} J. Dupont, \textit{Curvature and characteristic classes,} Lecture Notes in Math., no. 640, Springer-Verlag, Berlin-New
York (1978).

\bibitem{Getzler} E. Getzler, \textit{Lie theory for nilpotent L-infinity algebras}, 
Ann. Math. 170.1 (2009) 271--301.

\bibitem{GKR} A. L. Gorodentsev, A. S. Khoroshkin, A. N. Rudakov, \textit{On syzygies of highest weight orbits,} arXiv:math/0602316.

\bibitem{Granaker}  J. Gran\r{a}ker, \textit{Unimodular L-infinity algebras,} arXiv:0803.1763 (math.QA).

\bibitem{GugenheimLambe} V. K. A. M. Gugenheim, L. A. Lambe, J. Stasheff, \textit{Perturbation theory in differential homological algebra. I,} Illinois J. Math. 33.4 (1989) 566--582.

\bibitem{Gwilliam} O. Gwilliam, \textit{Factorization algebras and free field theories,} Ph.D. diss., Northwestern University (2012) http://people.mpim-bonn.mpg.de/gwilliam/thesis.pdf

\bibitem{GJF} O. Gwilliam, Th. Johnson-Freyd, \textit{How to derive Feynman diagrams for finite-dimensional integrals directly from the BV formalism,} 
in ``Topological and Geometric Methods in Quantum Field Theory,'' AMS (2018) 175--185.

\bibitem{Kadeishvili93} T. Kadeishvili, ``$A_\infty$-algebra structure in cohomology and rational homotopy type,'' (in Russian), \textit{Proc. A. Razmadze Math. Inst}, vol. 107  (1993) 1--94.

\bibitem{Kadeishvili08} T. Kadeishvili, ``Cohomology $C_\infty$-algebra and Rational Homotopy Type,'' 
Banach Center Publications 85, no. 1 (2009) 225--240.

\bibitem{Khudaverdian} H. Khudaverdian, \textit{Semidensities on odd symplectic supermanifolds,} Commun. Math. Phys. 247.2 (2004) 353--390.

\bibitem{Kontsevich-Soibelman} M. Kontsevich, Y Soibelman, ``Homological mirror symmetry and torus fibrations,'' arXiv:math/0011041 [math.SG].

\bibitem{Lawrence-Sullivan} R. Lawrence, D. Sullivan, ``A free differential lie algebra for the interval,'' arXiv:math.AT/0610949.

\bibitem{Manin} Yu. I. Manin, \textit{Gauge field theory and complex geometry}, Springer 1988.

\bibitem{McClure} J. E. McClure, \textit{On the chain-level intersection pairing for PL manifolds,} Geometry \& Topology 10.3 (2006) 1391--1424.


\bibitem{Milnor62} J. Milnor, \textit{A duality theorem for Reidemeister torsion,} Ann. Math. 76 (1962) 137--147.

\bibitem{Milnor66} J. Milnor, \textit{Whitehead torsion,} Bull. AMS 72.3 (1966) 358--426.

\bibitem{PM_2005draft} P. Mnev, \textit{Towards simplicial Chern-Simons theory, I,} unpublished draft (2005), \begin{verbatim}
https://www3.nd.edu/~pmnev/Towards_simplicial_CS.pdf
\end{verbatim} 

\bibitem{SimpBF} P. Mnev, \textit{Notes on simplicial $BF$ theory}, Moscow Math. J. 9.2 (2009) 371--410.

\bibitem{DiscrBF} P. Mnev, \textit{Discrete $BF$ theory}, Ph.D. thesis, arXiv:0809.1160 (hep-th).

\bibitem{MnevTorsions} P. Mnev, \textit{Lecture notes on torsions}, arXiv:1406.3705 (math.AT).

\bibitem{Schwarz79} A. S. Schwarz, \textit{The partition function of a degenerate functional,} Commun. Math. Phys. 67.1 (1979) 1--16.

\bibitem{SchwarzBV} A. S. Schwarz, \textit{Geometry of Batalin-Vilkovisky quantization,} Commun. Math. Phys. 155 (1993) 249--260.

\bibitem{Severa} P. \v{S}evera, \textit{On the origin of the BV operator on odd symplectic supermanifolds,} 
    Lett. Math. Phys. 78.1 (2006) 55--59.
    
\bibitem{Sullivan} D. Sullivan, ``Infinitesimal computations in topology,'' Publications Math\'matiques de l'IH\'ES 47 (1977) 269--331.
    
\bibitem{Turaev} V. Turaev, \textit{Introduction to combinatorial torsions,} Birkh\"auser (2012).

\bibitem{Whitehead}  J. H. C. Whitehead, \textit{Simple homotopy types,} Amer. J. Math. 72 (1950) 1--57.
    
\bibitem{Whitney}  H. Whitney, \textit{Geometric integration theory,} Princeton University Press, Princeton, N. J. (1957).

\end{document}